\documentclass[11pt]{article}

\usepackage{amsmath}
\usepackage{amsthm}
\usepackage{amsfonts}
\usepackage{amssymb}
\usepackage{graphics}
\usepackage{graphicx}
\usepackage{listings}
\usepackage{enumitem}
\usepackage{multirow}
\usepackage{mathtools}
\usepackage{xcolor}
\usepackage{setspace}
\usepackage[colorlinks=true, linkcolor=red, citecolor=blue, pdfencoding=auto, psdextra]{hyperref}
\usepackage[top=1in, bottom=1in, left=1in, right=1in]{geometry}
\usepackage{makecell}
\usepackage{array}
\usepackage{booktabs}
% \usepackage{algorithm}

%%%%%%%%%%%%%%%%%%%%%%%%%%%%%%%%%%%%%%%%%%%%%%%%%%%%%%%%
% ============ SPECIAL CHARACTERS ======================
%%%%%%%%%%%%%%%%%%%%%%%%%%%%%%%%%%%%%%%%%%%%%%%%%%%%%%%%
\newcommand{\R}{\mathbb{R}}
\newcommand{\C}{\mathbb{C}}
\newcommand{\N}{\mathbb{N}}
\newcommand{\Z}{\mathbb{Z}}

%{\operatorname{I}}
%\newcommand{\J}{\operatorname{J}}

\newcommand{\eps}{{\varepsilon}}

\newcommand{\td}{\textup{d}}

\newcommand{\defeq}{:=}%{\hspace{0.25em}\mathrel{\overset{\makebox[0pt]{\mbox{\normalfont\tiny\sffamily def}}}{=}}\hspace{0.25em}}

\newcommand{\one}{\mathbf{1}}

\renewcommand{\epsilon}{\eps}
\renewcommand{\phi}{\varphi}
\renewcommand{\iff}{\Leftrightarrow}
\renewcommand{\Pr}{{\mathbf P}}

\DeclareMathOperator*{\argmin}{\operatorname{argmin}}

\newcommand{\rank}{\operatorname{rank}}

%%%%%%%%%%%%%%%%%%%%%%%%%%%%%%%%%%%%%%%%%%%%%%%%%%%%%%%%
% ============ SPECIAL ENVIRONMENTS ====================
%%%%%%%%%%%%%%%%%%%%%%%%%%%%%%%%%%%%%%%%%%%%%%%%%%%%%%%%
\theoremstyle{plain}
\newtheorem{theorem}{Theorem}[section]

\newtheorem{lemma}[theorem]{Lemma}

\newtheorem{claim}[theorem]{Claim}

\theoremstyle{definition}

\newtheorem{remark}[theorem]{Remark}
\newtheorem{setting}[theorem]{Setting}

\newtheorem{algorithm}[theorem]{Algorithm}

%%%%%%%%%%%%%%%%%%%%%%%%%%%%%%%%%%%%%%%%%%%%%%%%%%%%%%%%
% =============== COMMANDS =============================
%%%%%%%%%%%%%%%%%%%%%%%%%%%%%%%%%%%%%%%%%%%%%%%%%%%%%%%%

\newcommand{\E}[1]{\ensuremath{\mathbf{E}\left[#1\right]}}
\newcommand{\Var}[1]{\ensuremath{\mathbf{Var}\left[#1\right]}}

\newcommand{\sym}[1]{#1_{\scalebox{0.5}[0.5]{\textup{sym}}}}

\newcommand{\bound}[1]{#1^{{}^{{}_{\scalebox{0.5}[0.5]{\textup{end}}}}}}

\newcommand{\myD}{\tau}

\newcommand{\myL}{\textup{L}}
\newcommand{\myM}{\mathcal{M}}
\newcommand{\myC}{\mathcal{C}}
\newcommand{\myT}{\mathcal{T}}
\newcommand{\uppnormE}{\mathcal{H}}
\newcommand{\noise}{Z}
\newcommand{\mcK}{K}
\newcommand{\stdE}{\varsigma}
\newcommand{\upperE}{M}

\newcommand{\oldcomment}{}

\newcolumntype{M}[1]{>{\centering\arraybackslash}m{#1}}

%%%%%%%%%%%%%%%%%%%%% BETTER SUBSCRIPTS %%%%%%%%%%%%%%%%%%%%%%%%
% \DeclareMathSizes{11}{11}{2}{1}
% \catcode`_=\active
% % \newcommand_[1]{\ensuremath{\sb{\scalebox{0.5}[0.5]{#1}}}}
% \newcommand_[1]{\ensuremath{\sb{\scriptscriptstyle #1}}}

% \usepackage{authblk}

\title{Fast exact recovery  of noisy matrix from few entries: \\the infinity norm approach}
\author{BaoLinh Tran$^1$,
% \footnote{\label{affil} Department of Mathematics, Yale University, CT, USA}
Van Vu$^1$
% \footnotemark[\ref{affil}]
}
\date{$^1$Department of Mathematics, Yale University}
% \affil[1]{Department of Mathematics, Yale University, CT, USA}
% \date{}                     %% if you don't need date to appear
% \setcounter{Maxaffil}{0}
% \renewcommand\Affilfont{\itshape\small}

\begin{document}

\maketitle

\begin{abstract}
The matrix recovery (completion) problem, a central problem in data science and theoretical computer science, is to recover a matrix $A$ from a relatively small sample of entries.

While such a task is impossible in general, it has been shown that  one can recover $A$ exactly in polynomial time, 
with high probability, from a random subset of entries, under three (basic and necessary) assumptions: (1) the rank of $A$ is very small compared to its dimensions (low rank), (2) $A$ has delocalized singular vectors (incoherence), and (3) the sample size is sufficiently large.

There are many different algorithms for the task, including convex optimization by Candes, Recht and Tao \cite{candesRecht2009,candesTao2010,recht2009}, alternating projection by Hardt and Wooters \cite{hardtAP2014,hardtWootersAP2014}, and low rank approximation with gradient descent by Keshavan, Montanari and Oh \cite{keshavan2010,keshavanNoisy2009}.

In applications, it is more realistic to assume that data is noisy. In this case, these approaches provide an approximate recovery with small root mean square error. However, it is hard to transform such approximate recovery to an exact one. 

Recently, results by Abbe et al. \cite{abbefan2017} and Bhardwaj et al. \cite{bhardwajVu2023} concerning approximation in the infinity norm showed that we can achieve exact recovery even in the noisy case, given that the ground matrix has bounded precision. Beyond the three basic assumptions above, they required either the condition number of $A$ is small \cite{abbefan2017} or the gap between consecutive singular values is large \cite{bhardwajVu2023}.   

In this paper, we remove these extra spectral assumptions. As a result, we obtain a  simple algorithm for exact recovery in the noisy case, under only three basic assumptions. The core of the analysi of the algorithm is a new  infinity norm analouge of (recent improved versions of) the clasicall Davis-Kahan  perturbation theorem. Our proof relies on a combinatorial contour intergration argument, and is different from all previous approaches.

\end{abstract}

{\it Acknowledgment.} The research is partially supported by Simon Foundation award SFI-MPS-SFM-00006506 and NSF grant AWD 0010308.
\newpage
\tableofcontents
\newpage

% \section{Supporting materials}

% References:
% \begin{itemize}
%     \item Collaborative filtering \cite{candesRecht2009,candesTao2010,candesPlan2009,keshavan2010,liSurvey2019}

%     \item Remote sensing \cite{candesPlan2009,liSurvey2019}
    
%     \item System identification \cite{candesPlan2009}

%     \item Global positioning \cite{candesRecht2009,candesPlan2009}

%     \item Image sharpening \cite{liSurvey2019}
% \end{itemize}

\section{Introduction}

\subsection{Problem description}

A large  matrix $A \in \R^{m\times n}$ is hidden, except for a few revealed entries in a set $\Omega \in [m]\times [n]$.
We call $\Omega$ the set of \emph{observations} or \emph{samples}.
The matrix $A_\Omega$, defined by
\begin{equation}  \label{eq:sample-matrix}
    (A_\Omega)_{ij} = A_{ij}
    \ \text{ for } \
    (i, j)\in \Omega,
    \ \text{ and } 0
    \text{ otherwise},
\end{equation}

\noindent is called the \emph{observed} or \emph{sample} matrix.
The task is to recover $A$, given  $A_\Omega$. This is the \emph{matrix recovery} (or \emph{matrix completion}) problem, a central 
problem in data science which has been received lots of attention in recent years, motivated by a number of real-life applications.
{%\oldcomment{}
Examples include building recommendation systems, notably the \textbf{Netflix challenge} \cite{netflix}; reconstructing a low-dimensional geometric surface based on partial distance measurements from a sparse network of sensors \cite{linialSensorNetwork2001, soYeSensorNetwork2005}; repairing missing pixels in images \cite{candesTao2010}; system identification in control \cite{mesbahiSystemId1997}.
See the surveys \cite{liSurvey2019} and \cite{davenportSurvey2016} for more applications.}
% {\bf add few surveys here as refenreces.}
In this paper, we focus on \emph{exact recovery}, where we want to recover all entries of $A$ exactly. 

It is standard to assume that the set $\Omega$ is \emph{random}, and researchers have proposed two models: (a) $\Omega$ is sampled uniformly among subsets with the same size, or (b) that $\Omega$ has independently chosen entries, each with the same probability $p$, called the \emph{sampling density}, which can be known or hidden. When doing mathematical analysis, it is often simple to replace the former model by the latter, using a simple conditioning trick. One samples the entries independently, and condition on the event that the sample size equals a given number.

% , while others obtain some type of \emph{norm recovery} with less running time.
% In the latter, the output $\hat{A}$ only has to satisfy $\|\hat{A} - A\|_* \le \eps$ for a chosen matrix norm $\|\cdot\|_*$ and error margin $\eps$.
% Frobenius norm recovery is equivalent to \emph{root mean squared error (RMSE) approximation} in some papers.
% In this paper, we aim for infinity norm recovery, meaning to approximate every entry of $A$ within a chosen error margin.
% This is equivalent to exact recovery in real-life settings, since the entries of $A$ are always discrete multiples of some unit, so we can choose the error margin to be half the unit and round the output.

\subsection{Some parameters and notation }  \label{sec:matcom-setting}

To start, we introduce a set of notation: 

\begin{itemize}
    \item Let the SVD of $A$ be given by $A = U\Sigma V^T = \sum_{i = 1}^r \sigma_iu_iv_i^T$, where $r := \rank A$, and 
    the singular values $\sigma_i$ are ordered: $\sigma_1 \ge \sigma_2 \ge \dots \ge \sigma_r $. 

    \item For each $s\in [r]$, let $A_s = \sum_{i = 1}^s \sigma_iu_iv_i^T$ be the best  rank-$s$ approximation of $A$.
    Define $B_s$ analogously for any matrix $B$.

    \item
    % For two real numbers $a$ and $b$, let $a \vee b \defeq \max\{a, b\}$ and $a \wedge b \defeq \min\{a, b\}$.
    When discussing $A$, we denote $N \defeq \max\{m, n\}$.

    \item The \emph{coherence parameter} of $U$ is given by
    \begin{equation}  \label{eq:candesRecht2009-A0}
        \mu(U) \defeq \max_{i\in [m]} \frac{m}{r}\|e_i^TU\|^2
        = \frac{m\|U\|_{2, \infty}}{r},
    \end{equation}
    where the 2-to-$\infty$ norm of a matrix $M$ is given by $\|M\|_{2, \infty} \defeq \sup\{\|Mu\|_\infty: \|u\|_2 = 1\}$, which is the largest row norm of $M$.
    Define $\mu(V)$ similarly.
    When $U$ and $V$ are singular bases of $A$, we let $\mu_0(A) = \max\{\mu(U), \mu(V)\}$ and simply use $\mu_0$ when $A$ is clear from the context.
    
    The notion of coherence appears in many fields, and many works use the definition $\mu(U) = m\|U\|_\infty^2$, where the infinity norm is  the absolute value of the largest entry.
    We stick to the definition above, which is consistent with the parameter $\mu_0$ in many popular papers in this area \cite{candesRecht2009,candesTao2010,recht2009,keshavan2010,keshavanNoisy2009,candesPlan2009}.

    \item The following parameter has also been used: 
    \begin{equation}  \label{eq:candesRecht2009-A1}
        \mu_1 = \max_{i\in [m], j\in [n]}
        \frac{\sqrt{mn}}{\sqrt{r}} |e_i^TU^T Ve_j|
        = \frac{\sqrt{mn}}{\sqrt{r}}\|UV^T\|_\infty.
    \end{equation}
    
    We could not find any widely used name for this parameter in the literature. We temporarily use the name \emph{joint coherence} in this paper.
It is simple to see that  

\begin{equation} \label{boundmu} \mu_1 \le \mu_0\sqrt{r}. \end{equation}

\item We use $C$ to denote a positive universal constant, whose value changes from place to place.
The asymptotics notation 
are used under the assumption that $N \rightarrow \infty$. 
\end{itemize}

\subsection{Basic assumptions}

If we observe only $A_\Omega$, filling out the missing entries is clearly impossible, unless extra assumptions are given. Most  existing works on the topic  made the following three assumptions:

\begin{itemize}
    \item \emph{Low-rank:}
   One assumes that $r:= \rank A $ is much smaller than $\min \{m, n \} $. This assumption is crucial as it reduces the degree of freedom of $A$ significantly, making the problem solvable, at least from an information 
   theory stand point.
   Many papers assume $r $ is bounded ($r=  O(1)$), while $m, n\to \infty$. 
    
    %Others have no explicit bounds but their results are only meaningful (better than the trivial choice $|\Omega| = mn$) when $r$ is smaller than some function of $m$ and $n$.
    % $r \lesssim \min\{m, n\}^{1/c_1}\log^{-c_2} N$ for some constants $c_1, c_2 \ge 1$;
    % for reasons we will discuss further below.

    \item \emph{Incoherence:} This assumption ensures that the rows and columns of $A$ are sufficiently ``spread out'', so the information does not concentrate in a small set of entries, which could be easily overlooked by random sampling.
    In technical terms, one requires $\mu_0$, and (sometimes) also $\mu_1$  to be small.
   
   % Like the low rank condition, some papers require $\mu_0$ (and $\mu_1$ for some) to be $O(1)$, while others have implicit bounds through the \emph{sample size condition}, discussed in more details below.
    % In technical terms,
    % one requires the left and right singular vectors of $A$ to be sufficiently delocalized (having small infinity norms); we will provide the exact form latter.

    \item {\it Sufficient sampling size/density:} Due to a coupon collector effect, both random sampling models above need at least $N\log N$ observations to avoid empty rows or columns.
    Another lower bound is given by the degree of freedom: One needs to know $r(m + n - 1)$ parameters to compute $A$ exactly.
    A more elaborate argument in \cite{candesTao2010} gives the lower bound $|\Omega| \ge  C rN\log N$. 
    This is equivalent to $p \ge Cr(m^{-1} + n^{-1})\log N$ for the independent sampling model.

    \vskip2mm 

   For more discussion about the necesity of these assumptions, we refer to \cite{candesRecht2009,candesTao2010,davenportSurvey2016}.
   We will refer to these assumptions as the {\it basic assumptions}. These have been assumed in all results we discuss in this paper. 

\end{itemize}

\subsection{Exact recovery in practice: finite precision} \label{sec:exact-recovery}

Let us make an  important comment on what  {\it exact recovery} means. 
Clearly, if an entry of $A$ is irrational, then it is impossible for our computers to write it down exactly, let alone computing it. 
Thus, exact recovery only makes sense when the entries of $A$ have finite precision, which is the case in all real-life applications. 
 To this end, we says that a  matrix  $A$ have a finite precision $\epsilon_0$,  if its entries are integer multiples of a parameter $\epsilon_0 >0$.
For instance, if all entries have two decimal places, then $\epsilon=.01$. 

In many pratical problems, such as compelting/rating  a  recommendation system, the parameter $\epsilon_0$ is actually quite large. For instance, in the most  influential  problem in the field, the Netflix Challenge \cite{netflix}, the entries of $A$ are ratings of movies, which are half integers from 1 to 5, so  $\epsilon_0 =1/2$.  The algorithm we propose and analyze in this paper will exploit this fact to our advantage.  (One should not confuse 
this notion with the machine precision or machine epsilon, which is very small.)

    %This needs to be smaller than $mn$ to be meaningful, which implicitly gives bounds on the rank and the coherence parameters.

%Strictly speaking, the first three assumptions are only justified if one aims for full exact recovery.
%Nevertheless, they play crucial roles in the proofs of correctness for the solutions discussed in the next %section, most of which only attain Frobenius norm recovery.

\subsection{A brief summary of existing methods}  \label{sec:matcom-solutions}

There is a huge literature on matrix completion. 
In this section, we try to summarize some of the main methods. 

\begin{itemize}
    \item \emph{Nuclear norm minimization:}
    This method is based on convexifying the intuitive but NP-hard approach of minimizing the rank given the observations.
    This method is guaranteed to achieve exact recovery under perhaps the most general assumptions.
    However, the time complexity includes high powers of $N$, which makes it less practical, and the calculation may be sensitive to noise \cite{candesTao2010} The best-known solvers for this problem run in time $O(|\Omega|^2N^2)$, which is $O(N^4\log^2N)$ in the best case \cite{liSurvey2019}.

    % \item \emph{Iterative singular value thresholding (ISVT):}
    % This approach solves a regularized version of the nuclear norm minimization problem by getting closer to the true solution with each iteration.
    % Compared to the above, it has improved time and space complexities and more robustness to noise, but only guaranteed recovery in the Frobenius norm.

    \item \emph{Alternating projections:}
    This is based on another intuitive but NP-hard approach of fixing the rank, then minimizing the RMSE with the observations.
    The basic version of the algorithm switches between optimizing the column and row spaces, given the other, in alternating steps.
    Existing variants of alternating projections run well in practice.

    %{\bf Is this true that this and the previous one is sensitive to noise ? If so we may want to mention}

    \item \emph{Low-rank approximation:}
    The general idea here is to view the sample matrix $A_\Omega$ as a rescaled and unbiased random perturbation of $A$. This way, it is natural to  first approximate $A$ by taking a low rank approximation  of $p^{-1} A_\Omega$ (where $p$ is the density). Next, one can  use 
    an extra cleaning step to make the recovery exact. 
    The first step  (truncated SVD in this case) here has only one operation, and runs fast in practice. Thus the main complexity issue is the cleaning step. 
    Our new  algorithm in Section \ref{sec:matcom-our-algo}  belongs to this category, and has a trivial cleaning step (which is just the rounding-off of the entries).
\end{itemize}

% Specifically, we will discuss in details the series of works by Candes, Recht and Tao \cite{candesRecht2009,candesTao2010,recht2009} for nuclear norm minimization, and Bhardwaj and Vu \cite{bhardwajVu2023} and Keshavan, Montanari and Oh \cite{keshavan2010} for low-rank approximation.
% Some other numerical methods, with few theoretical guarantees, will be briefly introduced for completeness.
% We will also introduce a ``noisy'' version of matrix completion and discuss the adaptations to handle noise by Candes and Plan \cite{candesPlan2009}, and Keshavan et al \cite{keshavanNoisy2009}.
% We will then introduce our own approach later in this introduction and compare it with the aforementioned methods in both noiseless and noisy cases.

Now we discuss the three approaches in more details. 

\subsubsection{Nuclear norm minimization}

This approach starts from the intuitive idea that if $A$ is mathematically recoverable, it has to be the matrix with the lowest rank agreeing with the observations at the revealed entries.
Formally, one would like to solve  the following optimization problem:
\begin{equation}  \label{eq:rank-optimize-prob}
    \text{minimize} \quad \rank X \quad \text{ subject to } \quad X_\Omega = A_\Omega.
\end{equation}

Unfortunately, this problem is NP-hard, and all existing algorithms take doubly exponential time in terms of the dimensions of $A$ \cite{chistov2006}.
To overcome this problem, Candes and Recht  \cite{candesRecht2009},  motivated by an idea from the \emph{sparse signal recovery} problem in the field of \emph{compressed sensing} \cite{candesRomberg2006,donoho2006},  proposed to replace  the rank with the nuclear norm of $X$, leading to 

\begin{equation}  \label{eq:nuclear-optimize-prob}
    \text{minimize} \quad \|X\|_* \quad \text{ subject to } \quad X_\Omega = A_\Omega.
\end{equation}
%The authors of \cite{candesRecht2009} were

The paper \cite{candesRecht2009} was shortly followed by Candes and Tao \cite{candesTao2010},  with both improvements and trade-offs, and ultimately by Recht \cite{recht2009}, 
who improved both previous results, proving that $A$ is the unique solution to \eqref{eq:nuclear-optimize-prob},  given the sampling size bound
\begin{equation}  \label{eq:recht2009-sample-size}
    |\Omega| \ge C\max\{\mu_0, \mu_1^2\} rN\log^2 N,
\end{equation}
for the coherence parameters $\mu_0$ and $\mu_1$ defined previously.

If one replaces  $\mu_1$ with $\mu_0\sqrt{r}$ (see \eqref{boundmu}, the RHS becomes $C\mu_0^2 r^2N\log^2 N$.
This attains the optimal power of $N$ while missing slightly from the optimal powers of $r$ and $\log N$.
%With a structure on $UV^T$ that allows $\mu_1$ to be constant, the power of $r$ is optimal.

The key advantage of replacing the rank in Problem \eqref{eq:rank-optimize-prob} with the nuclear norm is that Problem \eqref{eq:nuclear-optimize-prob} is a convex program, which can be further translated into a semidefinite program \cite{candesRecht2009,candesTao2010}, solvable in polynomial time by a number of algorithms. However, convex 
optimization program usually runs slowly in practice. 
The survey \cite{liSurvey2019} mentioned the interior point-based methods SDPT3 \cite{sdpt3} and SeDuMi \cite{sedumi}, which can take up to $O(|\Omega|^2N^2)$ floating point operations (FLOPs) assuming \eqref{eq:recht2009-sample-size}, even if one takes advantage of the sparsity of $A_\Omega$. Indeed, as $\Omega $ is at least $N log N$ (by coupon collector), the number of operations is 
$\Omega (N^4)$, which is too large even for moderate $N$.  An \emph{iterative singular value thresholding} method aiming to solve a regularized version of nuclear norm minimization, trading exactness for performance, has been proposed \cite{caiISVT2010}. It is thus desirable to develop faster algorithms for the problem, and in what follows we discuss two other methods, which achieve this goal, under extra assumptions.

\subsubsection{Modified alternating projections}

% In the presence of additive noise, the convex optimization approach above (and in \cite{candesRecht2009,candesTao2010,recht2009}) loses its accuracy edge, since exact recovery is impossible.
% Another reason for the inefficiency of \cite{candesPlan2009} is that their result works for all $E$, so it is not able to take advantage of the ``cancellation'' effect when aggregating information from a large number of independent random variables.
% This opens the door for other approaches, notably spectral algorithms.

The intuition behind this approach is to fix the rank, then attempt to match the observations at much as possible.
If we {\it know} the rank $r$ of $A$ precisely, then it is natural to look at the following optimization problem 

\begin{equation}  \label{eq:low-rank-optimization-prob}
    \text{minimize} \quad \|(A - XY^T)_\Omega\|_F^2 \quad \text{ over } X\in \R^{m\times r}, Y\in \R^{r\times n}.
\end{equation}
This,  unfortunately, like \eqref{eq:rank-optimize-prob}, is NP-hard \cite{davenportSurvey2016}.
There have been many studies proposing variants of \emph{alternating projections}, all of which involve the following basic idea: suppose one already obtains an approximator $X^{(l)}$ of $X$ at iteration $l$, then $Y^{(l)}$ and $X^{(l + 1)}$ are defined by
\begin{equation*}
    Y^{(l)} \defeq
    \argmin_{Y\in \R^{r\times n}} \|(A - X^{(l)}Y^T)_\Omega\|_F^2,
    \quad \quad
    X^{(l + 1)} \defeq
    \argmin_{X\in \R^{r\times n}} \|(A - X(Y^{(l)})^T)_\Omega\|_F^2.
\end{equation*}
The survey \cite{davenportSurvey2016} pointed out that these methods tend to outperform nuclear norm minimization in practice. Oh the other hand,  there are few rigorous guarantees for recovery.
The convergence and final output of the basic algorithm above also depends highly on the choice of $X^{(0)}$ \cite{davenportSurvey2016}.

Jain, Netrapalli and Sanghavi (2012) \cite{jainAP2012} developed one of the first alternating projections variants for matrix completion with rigorous recovery guarantees.
They proved that, under the same setting in Section \ref{sec:matcom-setting} and the sample size condition
\begin{equation*}
    |\Omega| \ge C\mu_0 r^{4.5} \left( \frac{\sigma_1}{\sigma_r} \right)^4 N\log N \log\frac{r}{\eps},
\end{equation*}
the AP algorithm in \cite{jainAP2012} recovers $A$ within an Frobenius norm error $\eps$ in $O(|\Omega|r^2\log(1/\eps))$ time with high probability. 
Since the Frobenius norm is larger than the infinity norm, this gives us an exact recovery if we set $\epsilon =\epsilon_0/3$, where $\epsilon_0$ is the precision level of $A$; see subsection \ref{sec:exact-recovery}.

Compared to the previous approach, there are two new essential requirements here. First one needs to know  the rank of $A$ precisely. Second, there is a strong dependence on the condition number $\kappa:= \sigma_1/\sigma_r $; the result is effective only if $\kappa$ is small.

%The bound depends on the \emph{condition number} $\sigma_1/\sigma_r$, which can be large if $\sigma_r$ is small.

The condition number factor was reduced to quadratic by Hardt \cite{hardtAP2014} and again by Hardt and Wooters \cite{hardtWootersAP2014} to logarithmic, at 
the cost of an increase in the powers of $r$, $\mu_0$ and $\log N$.

\begin{remark} \label{tryingr} ({\it A problem with trying many ranks})
In practice, the common situation is that we do not know 
the rank $r$ exactly, but have some estimates
(for instance, $r$ is between  known values  $r_{\min}$ and $r_{\max}$). It has been  suggested (see, for instance, \cite{keshavanOh2009}) that one tries all integers in this range as the potential value of $r$. From the complexity view point, this  only increases the running time by a small factor
$r_{\max} -r_{\min}$, which is acceptable. However, the main trouble 
with this idea is that it is not clear that among the ouputs, which one we should choose. If we go for exact recovery, then what should we do if there are two different outputs which agree on $\Omega$ ?  We have not found a rigorious treatment of this issue. 
\end{remark}

\subsubsection{Low rank approximation with Gradient descent}

%A natural approximation of $A$ is the best rank $r$ approximation of $A_{\Omega}$, since $A$ itself has rank $r$.

As discussed earlier, if one assumes the independent sampling model with probability $p$, then the rescaled sample matrix $p^{-1} A_\Omega$ can be viewed as a \emph{random perturbation} of $A$.
Since $\E{A_\Omega/p} = A$, this perturbation is unbiased, and the matrix $E \defeq  p^{-1} A_\Omega - A$ is a random  matrix with mean zero.

%A low rank approximation approach aims to ``clean up'' this noise and recover $A$.
% Naturally, one does not expect to cleaning to be perfect, so an extra step after the low rank approximation is typically added to recover $A$ exactly.

Assuming that the rank $r$ is known, 
Keshavan, Montanari and Oh \cite{keshavan2010} first use  the best  rank-$r$ approximation of $p^{-1} A _{\Omega} $
to obtain an approximation of $A$, then add  a cleaning step, using optimization via gradient descent,  to achieve exact recovery. 
Here is the description of their algorithm:

\begin{enumerate} 
   \item \emph{Trimming:} first zero out all columns in $A_\Omega$ with more than $2|\Omega|/m$ entries, then zero out all rows with more than $2|\Omega|/n$ entries, producing a matrix $\widetilde{A_\Omega}$.

    \item \emph{Low-rank approximation:} Compute the best rank-$r$ approximation of $\widetilde{A_\Omega}$ via truncated SVD.
    Let $\textsf{T}_r(\widetilde{A_\Omega}) = \tilde{U}_r\tilde{\Sigma}_r\tilde{V}_r^T$ be the output.

    \item \emph{Cleaning:} Solve for $X, Y, S$ in the following optimization problem:
    \begin{equation}  \label{eq:keshavan-optimization-prob}
        \text{minimize } \quad \bigl\|
            A_\Omega - (XSY^T)_\Omega
        \bigr\|_F^2 \quad
        \text{ for } \quad X\in \R^{m\times r}, Y\in \R^{n\times r}, S\in \R^{r\times r},
    \end{equation}
    using a gradient descent variant \cite{keshavan2010}, starting with $X_0 = \tilde{U}_r$, $Y_0 = \tilde{V}_r$ and $S_0$ be the $r\times r$ matrix minimizing the objective function above given $X_0$ and $Y_0$.
    
    Let $(X_*, Y_*, S_*)$ be the optimal solution.
    Output $X_* S_* Y_*^T$.
\end{enumerate}
The last cleaning step resembles the optimization problem in alternating projections methods, but they used gradient descent instead.
The authors \cite{keshavan2010} showed that the algorithm returns an output arbitrarily close to $A$, given {\it  enough}  iterations in the cleaning step, provided the following sampling size condition:
\begin{equation}  \label{eq:keshavan2010-sample-size}
    |\Omega| 
    \ge C\max\left\{
        \mu_0\sqrt{mn} \left(\frac{\sigma_1}{\sigma_r}\right)^2 r\log N
        , \quad
        \max\{\mu_0, \mu_1\}^2r \min\{m, n\} \left(\frac{\sigma_1}{\sigma_r}\right)^6
    \right\}.
\end{equation}

% Besides the clear dependence on the condition number, the powers of $r$ and $\log N$ are optimal by the coupon-collector limit.
% This shows that, at least in the case where the singular values of $A$ are closely tied, SVD-based approaches can achieve arbitrarily accurate Frobenius norm recovery with the optimal sampling size \cite{candesTao2010}.

The powers of $r$ and $\log N$ are optimal by the coupon-collector limit, answering a question from \cite{candesTao2010}. 
On the other hand, the bound depends heavily on the 
condition number $\kappa:= \sigma_1/ \sigma_r$. Furthermore, similar to the situation in the previous subsection, one  needs to know the rank $r$ in advance; see Remark \ref{tryingr}.

In a later paper \cite{keshavanOh2009}, Keshavan and Oh showed that one can compute $r$ (with high probability) if 
the condition number satisfies $\kappa = O(1)$; see also Remark \ref{tryingr}. 
Thus, it seems that the critical extra  assumption for this algorithm (apart from the three basic assumptions)  to be efficient is that the singular values of $A$ are of the same order of magnitude ($\kappa = O(1)$).
This assumption is strong, and we do not know how often it holds in practice.
In many data sets, it has been noted that the leading singular values decay rapidly, meaning $\kappa$ is large.
For instance, Figure \ref{fig:yale-face-sing-val} shows this phenomenon for the Yale face database, which is often used in demonstrations of the Principal Component Analysis \cite{wainwrightbook2019} method for dimensionality reduction.
\begin{figure}  \label{fig:yale-face-sing-val}
    \centering
    \includegraphics[width=0.9\linewidth]{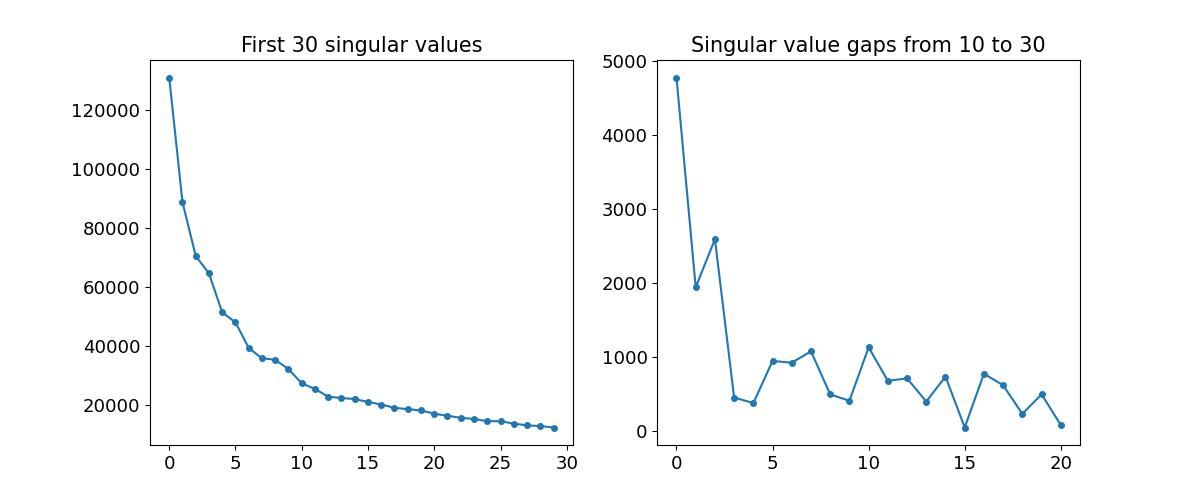}
    \caption{
    The Yale face database \cite{wainwrightbook2019} has $165$ greyscale face images of dimension $243 \times 320$, which can be turned into vectors in $\R^{77760}$ and arranged onto a matrix in $\R^{77760 \times 165}$.
    Center this matrix by subtracting from every column their average.
    Shown above are the first 30 singular values and first 30 consecutive singular value gaps of this centered matrix. The picture on the left shows that  matrix has a small numer of large eigenvalues (about 30) and the rest looks insignificant. On the other hand, the picture on the right shows that several eigenvalue gaps between the significant ones are quite small.    }
\end{figure}

 From the complexity point of view, the first part (low rank approximation) of the algorithm is very fast, in both theory and practice,  as it used truncated SVD only once. On the other hand, the authors of \cite{keshavan2010} did not include a full convergence rate analysis of their (cleaning)  gradient descent part, only briefly mentioning that quadratic convergence is possible.

\subsubsection{Low rank approximation with rounding-off} \label{section:AR}

In this approach, one exploits the fact that $A$ has finite precision (which is a necessary assumption for exact recovery); see subsection \ref{sec:exact-recovery}.  It is clear that if each entry of $A$ is an integer multiple of $\epsilon_0$, then  to achieve an exact recovery, it suffices  to compute each entry with error less than $\epsilon_0/2$, and then round it off. In other words, it is sufficient to obtain an approximation of $A$ in the inifnity norm.  It has been shown, under different extra assumptions, that low rank approximation fullfils this purpose.

The first infinity norm result was obtained by 
Abbe, Fan, Wang, and Zhong  \cite{abbefan2017}. They showed that the  
best rank-$r$ approximation of $p^{-1} A_\Omega$ is close to $A$ in the infinity norm \cite[Theorem 3.4]{abbefan2017}.
Technically, they proved that if   $p \ge 6N^{-1}\log N$, then
\begin{equation*}
    \|p^{-1}(A_\Omega)_r - A\|_\infty
    \le C\mu_0^2\kappa^4\|A\|_\infty \sqrt{\frac{\log N}{pN}},
\end{equation*}
for some universal constant $C$, provided $\sigma_r \ge C\kappa \|A\|_\infty \sqrt{\frac{N\log N}{p}}$, where $\kappa = \sigma_1/\sigma_r$ is the condition number.
% Although the bound $p \ge 6N^{-1}\log N$ is optimal, to get an output within $\eps$ in absolute error, one needs
% \begin{equation}  \label{eq:abbefan2017-sample-size-cond}
%     p \ge \frac{1}{\eps^2} C^2 \mu_0^4\kappa^8\|A\|_\infty^2 N^{-1}\log N,
% \end{equation}
% which is efficient only if $\kappa$ is small.

If we turn this result into an algorithm (by simply rounding off the approximation), then we face the same
two issues discussed in the previous subsection. We need to know  the rank $r$ and the condition number $\kappa$ has to be small. As discussed, this boils down to the strong assumption that 
the condition number is bounded by a constant ($\kappa= O(1)$).

\vskip2mm 

% Again, this algorithm requires knowledge of $r$ and the bound (which contains $\kappa^4$) is efficient only if $\kappa$ is small. 
% Thus, basically we reduce to the same assumption $\kappa = O(1)$ as before.

\noindent {\it Eliminating the condition number.} Very recently, Bhardwaj and Vu \cite{bhardwajVu2023} proposed and analyzed a slightly different algorithm, where they do not need to know the rank of $A$. Their bound on $|\Omega|$ does not include the condition number $\kappa$. Thus, they completely eliminated the condition number. 
However, the cost here is that they need a new assumption on the gaps between consecutive singular values.

As this work is the closest to our new result, let us state their result 
for matrices with integer entries (the precision $\eps_0 = 1$).
One can reduce the  case of general case to this by scaling.

\begin{algorithm}[Approximate-and-Round (\textbf{AR})]  \label{algo:bhardwajVu2023}\ 
\begin{enumerate}
    \item Let $\tilde{A} \defeq p^{-1}A_\Omega$ and compute the SVD: $\tilde{A} = \tilde{U}\tilde{\Sigma}\tilde{V}^T = \sum_{i = 1}^{m\wedge n} \tilde{\sigma_i}\tilde{u_i}\tilde{v}_i^T$.

    \item Let $\tilde{s}$ be the last index such that $\tilde{\sigma}_i \ge \frac{N}{8r\mu}$, where $\mu \defeq N\max\{\|U\|_\infty^2, \|V\|_\infty^2\}$ is known.

    \item Let $\hat{A} \defeq \sum_{i = 1}^{\tilde{s}} \tilde{\sigma_i}\tilde{u_i}\tilde{v}_i^T$.
    
    \item Round off every entry of $\hat{A}$ to the nearest integer. 
    % \item Round each entry of $B$ to the nearest integer and output the result.
\end{enumerate}
\end{algorithm}

They showed that with probability $1 - o(1)$, before the rounding step, $\| \hat{A} - A \|_{\infty} < 1/2$, guaranteeing an exact recovery of $A$, under the following assumptions:  

\begin{itemize}
    \item \emph{Low-rank:} $r = O(1)$.

    \item \emph{Incoherence:} $\mu = O(1)$.

    \item \emph{Sampling density:} $p \ge N^{-1}\log^{4.03}N$.

    \item \emph{Bounded entries:} $\|A\|_\infty \le K_A$ for a known constant $K_A$.
    
    % \item \emph{Large singular value:} $\sigma_s \ge Cr^3K \sqrt{N/p} \log^{2.01}N$, where $s$ is the last index such that $\sigma_s \ge \frac{N}{16r\mu^2}$ and $C$ is a sufficiently large constant.  
    
    \item \emph{Gaps between consecutive singular values:} $\min_{i\in [s]} (\sigma_i - \sigma_{i + 1}) \ge Cp^{-1}\log N$.
\end{itemize}
%
%When $A$ has integer entries, rounding the entries of $\hat{A}$ to the nearest integer will recover $A$ exactly.

Aside from the first three basic assumptions, the new assumption that the entries are bounded is standard for real-life datasets.
In the step of finding the threshold, it seems that one  needs to know both $r$ and $\mu$, but a closer look at the analysis reveals that it is  possible to relax  to knowing only their upper bounds. (We will do exactly this later in our algorithm, which is a variant of \textbf{AR}.)

As discussed, the  main improvement of \textbf{AR} over the previous spectral approaches  is  the removal of 
the dependence on  the condition number. This removal was based on an  entirely different mathematical analysis, which shows that the leading singular vectors of 
$A$ and  $p^{-1} A_{\Omega}$
are close in the infinity norm. 

This removal, however, comes at the cost of the (new) 
gap assumption.
While the required bound for the gaps is mild (much weaker than what one requires for the application of Davis-Kahan theorem; see \cite{bhardwajVu2023} for more discussion), we do not know 
how often matrices in practice satisfy it.
{%\oldcomment{}
In fact, Figure \ref{fig:yale-face-sing-val} shows that the Yale face database matrix \cite{wainwrightbook2019} has several steep drops in singular value gaps.
}

  The reader may have already noticed  that 
 this assumption goes into the opposite direction of the small condition number assumption. 
Indeed, if the condition number is large, then the  least singular value $\sigma_r$
is considerably smaller than the largest one
$\sigma_1$, which suggests, at the intuitve level at least, that  the gaps between the consecutive singular values are large. So, from the mathematical view point, the situation is quite intriguing. 
We have  two valid theorems with {\it constrasting extra assumptions}  (beyond the three basic assumptions). 
The most logical explanation here should be that  neither assumption is in fact needed. 
This conjecture, in the 
(more difficult) noisy setting, presented in the next section, is the motivation of our study.

%A minor issue is the use of $\mu$, which is slightly larger than $\mu_0$, making the bounds slightly less effective.

% Nevertheless, a different cutoff point is required to adapt the algorithm to the general case where the entries have unit $\eps$.
% This is not discussed in the paper.
% However, it is unclear how to adapt this algorithm to the general case where the entries has unit $\eps$.
% From the proof, one can replace $\frac{N}{8r\mu}$ with $\frac{N}{r\mu\eps}$ to obtain bring the absolute error to at most $\eps/2$.
% The issue is that one will need $\|A\|_\infty N \ge \sigma_1 \ge \frac{N}{r\mu\eps}$, which means this naive change does not work when $\eps < (r\mu\|A\|_\infty)^{-1}$.

%\subsection {Summary.} The bounds in different approaches are  incomparable, as they all %involved many parameters. The nuclear norm method has the weakest set of assumptions, as it %uses only three basic assumptions.   The other three methods seem to have better running %times. In particular, the ones which rely on low rank approximation (subsections 2.3 and %2.4) basically require just one round of truncated SVD, which is known to be fast both %theoretically and in practice. On the other hand, for a theoretical guarantee, they all %need to require extra assumptions on the spectrum of $A$, such as small condition number of %large gaps.  The natural question here is whether 

\subsection{Recovery with Noise}  \label{sec:matcom-noisy}

 Candes and Plan, in their influential survey \cite{candesPlan2009}, pointed out that
 data is often noisy, and a more realistic model of the recovery problem is to consider 
 $A' = A+ \noise$, for $A$ being the low rank ground truth matrix and $\noise$ the noise. We 
 observe a sparse matrix $A'_\Omega $, where each entry of $A'$ is observed with probability $p$ and set to $0$ otherwise. In other words, we have access to a small random set of noisy entries. Notice that in this case, the truth matrix $A$ is still low rank, but the noisy matrix $A'$, whose entries we  observe,  can have full rank. In what follow, we denote our input by $A_{\Omega, \noise}$, emphasizing the presence of the noise.

% Except for nuclear norm minimization, the methods discussed in the previous section are all equipped to handle additive noise, with some having theoretical guarantees for recovery given an appropriate noise model.

Recovery from noisy observation is clearly a harder problem, and most 
papers concerning  noisy recovery aim for recovery in  {\it the normalized Frobenius norm}  (root mean square error; RMSE), rather than exact recovery. 

Continuing  the nuclear norm minimization approach, 
Candes and Plan \cite{candesPlan2009} adapted to the noisy situation by relaxing the constraint on the observations, leading to the following problem:
\begin{equation}  \label{eq:nuclear-optimize-prob-noisy}
    \text{minimize} \quad \|X\|_* \quad \text{ subject to } \quad \left\|X_\Omega - A_{\Omega, \noise} \right\|_F \le \delta,
\end{equation}
where $\delta$ is a known upper bound on $\|\noise_\Omega\|_F$.
The authors showed that, under the same sample size condition in \cite{recht2009}, with probability $1 - o(1)$, the optimal solution $\hat{A}$ 
satisfies 

\begin{equation}  \label{eq:candesPlan2009-error-bound}
    \frac{1}{\sqrt{mn}}\|\hat{A} - A\|_F \le
    C\|\noise_\Omega\|_F \sqrt{\frac{\min\{m, n\}}{|\Omega|}}.
\end{equation}

If one would like the RMSE to be at most $\eps$, then one needs to require 

\begin{equation}  \label{eq:candesPlan2009-sample-size-implicit}
    |\Omega| \ge C\frac{\|\noise_{\Omega} \|_F^2\min\{m, n\}}{\eps^2},
\end{equation}
which grows quadratically with $1/\eps$.

For exact recovery, one needs to turn the approximation in the Frobenius norm 
into an approximation in the infinity norm; see subsection \ref{sec:exact-recovery}. This is a major mathematical challange, and in general, there 
is no efficient way to do this. 
The trivial bound that $\| M \| _{\infty}  \ge \| M\| _{F} $ is too generous. If we use this and then use  
\eqref{eq:candesPlan2009-error-bound} to bound the RHS, then the corresponding 
 bound on $|\Omega |$  in \eqref{eq:candesPlan2009-sample-size-implicit} becomes larger than $mn$, which is meaningless.  This is the common situation with all Frobenius norm bounds 
 discussed in this section. 

% For ISVT, the paper \cite{caiISVT2010} included experiments with noisy data, showing robustness if the stopping condition is chosen carefully, but did not prove any rigorous results for any noise model.

\vskip2mm 

Concerning the alternating method, a corollary of  \cite[Theorem 1]{hardtWootersAP2014} shows  that we can obtain 
an approximation $\hat A$ of rank $r$, where 

\begin{equation} \label{noisyAP} 
    \| \hat A - A \|  \le (2+ o(1)) \| \noise \|  + \epsilon \sigma_1,
\end{equation}
given that
\begin{equation*}
    p = \tilde \Omega \left(
        \frac{1}{n} \Bigl(1 + \frac{\| \noise \|_F } { \epsilon \sigma_1 }\Bigl)
    \right)^2.
\end{equation*}
The bound here is in the spectral norm, and one 
can translate into Frobenius norm by the fact that $\| M \| _F \le \sqrt {\rank M } \| M \|$. Again, it is not clear of how to obtain exact recovery from here.

\vskip2mm 

 Continuing the spectral approach, Keshavan, Montanari and Oh \cite{keshavanNoisy2009} also  
 extended their result from \cite{keshavanOh2009} to the noisy case, using the same 
 algorithm. They proved that with the same sample size condition as \eqref{eq:keshavan2010-sample-size}, the output satisfies w.h.p.
\begin{equation}  \label{eq:keshavanNoisy2009-error-bound}
    \|\hat{A} - A\|_F \le
    C \left(\frac{\sigma_1}{\sigma_r}\right)^2 \frac{r^{1/2}mn}{|\Omega|}\|\noise_\Omega\|_{\textup{op}}.
\end{equation}
%This is better in general than the bound \eqref{eq:candesPlan2009-error-bound} in \cite{candesPlan2009}, unless the condition number is very large.
If one would like to have  $\frac{1}{\sqrt {mn}} \|\hat{A} - A\|_F \le \eps$, this translates to the following  sample size condition:

\begin{equation}  \label{eq:keshavanNoisy2009-sample-size-implicit}
    |\Omega| \ge C\frac{\sigma^2rN}{\eps^2}\left(\frac{\sigma_1}{\sigma_r}\right)^2, 
\end{equation} where the dependence on $\epsilon$ is again quadratic. 

{%\oldcomment{}
Chatterjee \cite{chatterjee2012} uses a spectral approach with a fixed truncation point independent from $\rank A$, and thus does not require knowing it.
He required that $p \ge CN^{-1}\log^6 N$ and achieved the bound
\begin{equation*}
    \E{\frac{1}{mn}\|\hat{A} - A\|_F^2}
    \le C\min\left\{
        \sqrt{\frac{r}{p}\left(\frac{1}{m} + \frac{1}{n}\right)},
        1
    \right\} + o(N).
\end{equation*}
The advantage over previous methods is the absence of the incoherence assumption.
However, to translate the bound in expectation above to obtain $\frac{1}{\sqrt {mn}} \|\hat{A} - A\|_F \le \eps$ with probability at least $1 - \delta$, assuming Markov's inequality is used, one will need
\begin{equation*}
    p \ge \frac{Cr}{\eps^4\delta^2}\left(\frac{1}{m} + \frac{1}{n}\right),
\end{equation*}
The dependence on $\eps$ grows substantially faster than \cite{keshavanNoisy2009}.
In fact, P. Tran's and Vu's recent result in random perturbation theory \cite{phuctranVu2023} can be used to prove a high-probability mean-squared-error bound, again without incoherence, requiring only a quadratic dependence on $\eps$.
}

%This result also has a strong dependence on the condition number, as in the noiseless case. 

\vskip2mm 

If we insist on exact recovery, the only  approach which  adapts well  to the noisy situation is the infinity norm approach.  As a matter of fact, the  infinity norm bounds presented in  Section 2.4
hold in both 
noiseless and noisy case (with some modification).  The reason is this:  even  in the noiseless case, one already views the (rescalled) input matrix $p^{-1} A _{\Omega} $ as the sum of $A$ and a random matrix $E$. Thus, adding a new noise matrix $\noise$ just changes $E$ to $E+\noise$. This changes few parameters in the analysis, but the key mathematical arguments remain valid. 

The result by Abbe et al. \cite[Theorem 3.4]{abbefan2017}  yields 
the same approximation as in the noiseless case, given 

\begin{equation}  \label{eq:abbefanNoisy2017-sample-size-cond}
    p \ge C^2\eps^{-2} \mu_0^4\kappa^8(\|A\|_\infty + \sigma_\noise)^2 N^{-1}\log N,
\end{equation}
where $\sigma_\noise$ is the standard deviation of each entry of $\noise$. If we set $\epsilon < \epsilon_0/3 $ (see subsection \ref{sec:exact-recovery}), then again rounding off 
would give as an exact recovery.
Similarly, algorithm {\bf AR} works in the noisy case; see \cite{bhardwajVu2023} for the exact statement.
{%\oldcomment{}
The paper only proves this fact for the case where $A$ has integer entries, but a careful examination of the proof shows that for a general absolute error tolerance $\eps$, one requires $m, n = \Theta(N)$ and
\begin{equation}  \label{eq:bhardwajVu2023-sample-size-cond}
    p \ge C \eps^{-5} \max\left\{
        C'(r, \|A\|_\infty, \mu), \log^{3.03}N
    \right\} N^{-1}\log N,
\end{equation}
where $\mu = N^{1/2}\max\{\|U\|_\infty, \|V\|_\infty\}$ and $C'$ is a term depending on $r$, $\|A\|_\infty$ and $\mu$ only.
}

%Similar to noiseless case, this sample size has a strong dependence on  the condition number, and the algorithm requirews 

% \vskip2mm 

\vskip2mm 

\noindent {\it Summary.} To summarize, in the noisy case, the infinity norm approach is currently the only one that yields exact recovery.  The latest results in this directions, \cite{abbefan2017} and \cite{bhardwajVu2023}, however, requires the  extra assumptions that the condition number is small and the gaps are large, respectively.  As disucssed at the end of the previous subsection, these conditions  constrast each other, and we conjecture that both of them could be removed.  This leads to the main question of this paper: 

\vskip2mm 

\noindent \textbf{Question 1.} {\it Can we use the infinity norm approach to obtain  exact recovery 
in the noisy case with only the three basic assumptions (low rank, incoherence, density) ? }

\vskip2mm 

%An affirmative answer here, when restricted to the noiseless case, is already of interest. 

\subsection{New results: an affirmative answer to Question 1}  \label{sec:matcom-our-algo}

\subsubsection {An overview and the general setting }
The  goal of this paper is to give an affirmative answer to   Question 1, 
in a sufficiently general  setting.  We  will  show that a variant of Algorithm \textbf{AR} will do the job. The technical core  is a new mathematical method to prove infinity norm estimates. This is  entirely different from all previous techniques, and is  of independent interest. 

\vskip2mm 

This affirmative answer  is not only a unifying result for the noisy case, providing the first and efficient exact recovery algorithm using only three basic assumptions.

Resctriced to the noiseless case, it also provides some improvements.  As discussed in Section 1.4, in the noiseless case, the only approach which does not need  extra assumptions is the nuclear norm minimization
(subsection 2.1).  However,  the running time for this approach is not great, and various attemps have been made to improve the running time, leading to spectral algorithms such as those by Keshavan et al., at the cost of new assumptions, most notably the small condition number one.

Our algorithm achieve both goals. It uses only the three basic assumptions, and is also the simplest from the practical view point. 
Indeed, the algorithm is basically a truncated SVD, which is  effective in both theory and practice. Compared to the spectral approach by Keshavan et al, it does not need the second, cleaning,  phase. 
Finally, the  density bound  is comparable to all previous works; see Remarks \ref{comparison} and \ref{quadratic}.

%We consider  the following setting. 

\begin{setting}[Matrix completion with noise]  \label{set:matrix-completion}
    Consider the truth matrix $A$, the observed set  $\Omega$, and noise matrix $\noise$. We assume  
    \begin{enumerate}
        \item \emph{Known bound on entries:}  We assume $\|A\|_\infty \le K_A$ for some known parameter $K_A$.
        This is the case for most real-life applications, as entries have physical meaning. 
        For instance, in the Netflix Challenge $K_A=5$.

        \item \emph{Known bound on rank:} We 
        {\it do not assume}  the knowledge of the rank $r$, but assume that we know some 
        upper bound  $r_{\max} $.

        \item \emph{Independent, bounded, centered noise:} $\noise$ has independent entries satisfying $\E{\noise_{ij}} = 0$ and $\E{|\noise_{ij}|^l} \le K_\noise^l$ for all $l\in \N$ and $i\in [m]$, $j\in [n]$. We assume the knowledge of the upper bound 
        $K_{\noise}$. We do not require the entries to have the same distribution or even the same variance. 
\end{enumerate}
\end{setting}

We allow the  parameters  $r$, $r_{\max}$, $K_A$, $K_\noise$ to depend on $m$ and $n$. 

\subsubsection {Our algorithm and theorem}

We propose the following algorithm to recover $A$:

\begin{algorithm}[\textbf{AR2}]  \label{algo:matrix-completion}
    Input: the $m\times n$ matrix $A_{\Omega, \noise}$ and the discritization unit $\epsilon_0$ of $A$'s entries.
    
    \begin{enumerate}
        \item \emph{Sampling density estimation:}
        Let $\hat{p} \defeq (mn)^{-1}|\Omega|$.

        \item \emph{Rescaling:}
        Let $\hat{A} :=  \hat{p}^{-1}A_{\Omega, \noise}$.
    
        \item \emph{Low-rank approximation:}
        % Compute the SVD of the observed matrix:
        Compute the truncated SVD $\hat{A}_{r_{\max}} = \sum_{i = 1}^{r_{\max}} \hat{\sigma}_i \hat{u}_i \hat{v}_i^T$.
        
        Take the largest index $s \le r_{\max} - 1$ such that $\hat{\sigma}_s - \hat{\sigma}_{s + 1} \ge 20(K_A + K_\noise)\sqrt{\frac{r_{\max}(m + n)}{\hat{p}}}$.
        
        If no such $s$ exists, take $s = r_{\max} $.
        Let $\hat{A}_s \defeq \sum_{i\le s} \hat{\sigma}_i \hat{u}_i \hat{v}_i^T$,

        % If a routine that iteratively calculates the singular values and vectors in descending order is available, we can use it to save time from the previous step.

        \item \emph{Rounding off:}
        Round each entry of $\hat{A}_s$ to the nearest multiple of $\epsilon_0$.
        Return $\hat{A}_s$.
        
    \end{enumerate}
\end{algorithm}

Compared to \nameref{algo:bhardwajVu2023}, a minor difference is that 
we use an estimate $\hat{p}$ of $p$, which is very accurate with high probability.
We next use a different cutoff point for the truncated SVD step that does not require knowing the parameter $\mu_0$.

%We allow all of $K_A$, $K_\noise$, $r$ and $\mu_0$ to depend on $m$ and $n$.
From the complexity view point, our algorithm very fast. It is basically just truncated SVD, 
taking only $O(|\Omega|r) = O(pmnr)$ FLOPs.
Our main theorem below gives sufficient conditions for exact recovery.

\begin{theorem}  \label{thm:matrix-completion}
    There is a universal constant $C > 0$ such that the following holds.
    Suppose $r_{\max} \le \log^2 N$.
    Under the model \ref{set:matrix-completion}, assume the following:
    \begin{itemize}
        \item \emph{Large signal:} $\sigma_1 \ge 100r \mcK\sqrt{\frac{r_{\max}N}{p}}$, 
        for $\mcK \defeq K_A + K_\noise$.
        % % and $\kappa_\noise$ satisfy  
        % .
        
        % \item \emph{Incoherence:} $\sqrt{m}\|U\|_{2, \infty} \vee \sqrt{n}\|V\|_{2, \infty} \le \sqrt{\mu_0 r}$ for some $\mu_0 > 0$.
    
        \item \emph{Sampling density:}
        \begin{equation}  \label{eq:matcom-sample-density-cond}
        \begin{aligned}
            p \ge C\left(\frac{1}{m} + \frac{1}{n}\right)
            \max \left\{
                \log^4N, \
                \frac{r^3 \mcK^2}{\eps_0^2}
                \left(1 + \frac{\mu_0^2}{\log^2 N}\right)
            \right\} \log^6N .
        \end{aligned}
        \end{equation}
    \end{itemize}
    Then with probability $1 - O(N^{-1})$, the first three steps of \nameref{algo:matrix-completion} recovers every entry of $A$ within an absolute error $\eps_0/3$.
    %Consequently, if one assumes Eq. \eqref{eq:matcom-sample-density-cond} for $\eps = 0.4$ and $A$ has integer entries, then the cleaning step recovers $A$ exactly.
    Consequently, if all entries  are multiples integer 
    of $\eps_0$, the rounding-off step recovers $A$ exactly.
\end{theorem}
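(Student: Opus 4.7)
The plan is to decompose $\hat A_s - A = (\hat A_s - A_s) + (A_s - A)$ and control each piece in the infinity norm, aiming for a total below $\eps_0/3$; the rounding-off step then finishes the proof. First, Chernoff gives $\hat p = p(1 + o(1))$ with probability $1 - O(N^{-1})$, so we may work with the noise matrix $E := \hat A - A$ as if $\hat p = p$. A standard matrix Bernstein / Bandeira--Van Handel bound, using the bounded entries of $A$ and the moment hypothesis on $\noise$, delivers $\|E\|_{\textup{op}} \le C\mcK\sqrt{N/p}$ with probability $1 - O(N^{-1})$; this is the baseline spectral control used throughout.

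Next, by Weyl's inequality combined with the spectral bound, the algorithm's gap threshold $20\mcK\sqrt{r_{\max}(m+n)/\hat p}$ is a constant multiple of $\|E\|_{\textup{op}}$. Since at most $r_{\max} \le \log^2 N$ indices lie above $s$ and consecutive $\hat\sigma$-gaps beyond $s$ are smaller than the threshold, one shows: (a) every residual $\sigma_j$ with $j > s$ is at most $O(r_{\max}\mcK\sqrt{r_{\max}N/p})$, and (b) $\hat\sigma_s - \hat\sigma_{s+1}$ is comfortably larger than $\|E\|_{\textup{op}}$. Property (a) combined with incoherence ($\|u_jv_j^T\|_\infty \le \mu_0 r/\sqrt{mn}$) yields
\[
\|A - A_s\|_\infty \le \sum_{j=s+1}^{r} \sigma_j \frac{\mu_0 r}{\sqrt{mn}} \le r \cdot O\!\left(r_{\max}\mcK\sqrt{r_{\max}N/p}\right) \frac{\mu_0 r}{\sqrt{mn}},
\]
which the sampling density \eqref{eq:matcom-sample-density-cond} forces below $\eps_0/6$.

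The main and most delicate piece is controlling $\|\hat A_s - A_s\|_\infty$, for which the plan is to invoke the new infinity-norm perturbation theorem promised by the abstract. Let $P_s$ and $\hat P_s$ denote the orthogonal projections onto the top-$s$ left singular spaces of $A$ and $\hat A$, with analogous right-projections $Q_s$, $\hat Q_s$, so that $A_s = P_s A Q_s$ and $\hat A_s = \hat P_s \hat A \hat Q_s$. Each projection admits a resolvent representation such as $P_s = \frac{1}{2\pi i} \oint_\gamma (zI - AA^T)^{-1} dz$, where $\gamma$ encloses the top $s$ eigenvalues of $AA^T$ and excludes the rest; property (b) guarantees that $\gamma$ can be placed at a safe distance from the spectrum of both $AA^T$ and $\hat A \hat A^T$. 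Expanding each resolvent as a Neumann series in $E$ turns an entry of $\hat A_s - A_s$ into a sum over combinatorial walks alternating factors of $A$ and $E$; the key is to estimate each walk's contribution using incoherence of $A$'s singular vectors together with the spectral and entrywise control on $E$, then to sum over all walks and obtain a bound of the form $\|\hat A_s - A_s\|_\infty \le \mathrm{poly}(r, \mu_0)\,\|E\|_{\textup{op}}/\sqrt{mn}$, which \eqref{eq:matcom-sample-density-cond} again forces below $\eps_0/6$.

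Adding the two infinity-norm bounds gives $\|\hat A_s - A\|_\infty < \eps_0/3$ with probability $1 - O(N^{-1})$, and rounding each entry to the nearest multiple of $\eps_0$ recovers $A$ exactly. The main obstacle is the combinatorial contour-integration step: the contour must be located using only information about $\hat A$'s spectrum, and the combinatorial sum over walks must yield a bound polynomial (not exponential) in $r$ and $\mu_0$, with no spurious dependence on condition number or singular-value gaps. This is precisely the novelty over a direct Davis--Kahan-style approach, which would yield only operator- or Frobenius-norm control and thus fall short of exact recovery.
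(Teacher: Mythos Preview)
Your proposal is correct and follows essentially the same route as the paper: the same decomposition, the same use of Weyl and the spectral bound on $E$ to control the residual singular values and certify a large gap at $s$, and the same contour-integration infinity-norm perturbation bound (packaged in the paper as Theorem~\ref{thm:dk-matcom}) for the main piece $\|\hat A_s - A_s\|_\infty$. The only items you gloss over that the paper handles explicitly are (i) verifying that the algorithm's cutoff $s$ both exists and satisfies $s\le r$ (both via Weyl, with existence relying on the large-signal assumption on $\sigma_1$), and (ii) treating the multiplicative error from $\hat p\neq p$ as a separate term in the triangle inequality rather than folding it into $E$ at the outset.
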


\subsubsection{Remarks}

Notice that we have  removed the gap condition from \cite{bhardwajVu2023}, and thus  obtained 
an exact recovery algorithm, using only the three basic assumptions: low rank,
incoherence, density.  

\vskip2mm 

{\it Well, almost! }  The reader, of course,  has noticed that we have (formally, at least) a new  assumption that the leading singular value $\sigma_1$ of $A$ has to be sufficiently large (large signal). But is this really a new assumption ?

It is clear that, even at heuristic level, that an assumption on the magnitude of 
$\sigma_1$ is necessary.   From a pure engineering view point,  if the intensity of the noise dominates the signal of 
the data, then the noisy data is often totally corrupted. 
Now comes the mathematical rigour. It is a well known fact in random matrix theory, called the BBP threshold phenomenon \cite{deform:baik2005}, that if 
$\| \noise \| \ge c \|A\| = c \sigma_1$, for a specific constant $c$, then the sum 
$A+\noise$ behaves like a random matrix; see \cite{deform:baik2005,deform:feral2006,deform:peche2006,deform:capitaine2007,deform:benaych-georges2011,deform:haddadi2021} for many results. For instance, the leading singular vectors of 
$A+\noise$ look totally random and have nothing to do with the leading singular vectors of $A$. This shows that there  is no chance that
one can recover $A$ from the (even fully observed) noisy matrix $A+\noise $.  In the results discussed earlier, this large signal assumption  (if not explictly stated) is  implicit 
in the setting of the model.

\vskip2mm 

 Second, we emphasize that the bound required for $\sigma_1$ is very mild. In most 
  cases, it is automatically satisfied by  the simple fact that 
 $\sigma_1 ^2 \ge r^{-1} \| A \|_F ^2$. 
    To see this, let us consider the base case when $ n = \Theta(m)$,  $\frac{1}{n} \| A \|_F = \Theta(1)$ (a normalization, for convenience), $r = O(1)$ .
    In this case, 
    
    \begin{equation}  \label{boundonnorm}
        \sigma_1 \ge r^{-1} \| A \| _F  = \tilde \Omega (n ).
    \end{equation}

    If we assume $K_A, K_{\noise} =  O(1)$, then our requirement on $\sigma_1$ in the theorem becomes 
    \begin{equation*}
        \sigma_1  =   \Theta (\sqrt {n/p}), 
    \end{equation*}
    which is guaranteed automatically (with room to spare) by \eqref{boundonnorm}, since we need $p =\Omega ( \log n/n )$.  As a matter of fact, the requirement $ \sigma_1  =   \Theta (\sqrt {n/p})$ is consistend with the BBP phenomenon discussed above.  Indeed, if we consider the rescalled input 
    
    $$p^{-1} A_{\Omega, \noise } = p^{-1} A _{\Omega } + p^{-1} \noise , $$ then 
    the $\sqrt {n/p} $ is the order of magnitude of the 
    spectral norm of noisy part $\| \frac{1}{p} \noise_{\Omega} \|$, and the spectral norm of 
    $p^{-1} A_{\Omega}$ is approximately $\| A \|$, with high probability. (It is well known that one can approximate the spectral norm by random sampling.)

% Note that another work that has largely the same noise model as ours and also finds a low rank approximation of $\hat{A}$ is \cite{keshavan2010}.
% However, they can only show that the Frobenius norm of the error is small.

% In contrast, we will show that under the given settings this algorithm recovers $A$ exactly with high probability, given that $A$ and $p$ are ``large enough'' (but not too large).

\begin{remark}[Density bound] \label{comparison} 
    The condition \eqref{eq:matcom-sample-density-cond} looks complicated, but in the base case where $A$ has constant rank, constantly bounded entries, and uniformly random singular vectors, we have $\mu_0 = O(\log N)$, $K_A, K_\noise = O(1)$ and 
    $r_{\max} = O(1)$, it reduces to
    \begin{equation}  \label{eq:matcom-sample-density-cond-simple}
        p \ge C \max \left\{
            \log^4N, \ \eps_0^{-2}
        \right\}
        \left(m^{-1} + n^{-1}\right) \log^6N,
    \end{equation}
    % where $N = m + n$.
    which is equivalent to $|\Omega| \ge CN\log^6 N\max\{\log^4 N, \eps_0^{-2}\}$ in the uniform sampling model.
    The power of $\log N$ can be further reduced but the details are tedious, and the improvement is not really important from the pratical view point.

Even when reduced to the noiseless case, this bound is comparable, up to a polylogarithmic factor, to all previous results, while having the key advantage that it does not contain a  power of the condition number. 
%The dependence on $\mu$ {\bf |linh: comment on the dependence on $\mu$. }
\end{remark}

\begin{remark}[Quadratic growth in $1/\epsilon_0$] \label{quadratic}
    Notice that $|\Omega |$ is  $O(N\log^{10}N)$ until $\eps_0  < \log^{-2}N$, then it grows quadratically with $1/\eps_0$.
    Interestingly, this quandratic dependence also appears in all results discussed in Section \ref{sec:matcom-noisy}, for both RMSE and exact recoveries.
\end{remark}

\begin{remark}[Bound on $r_{\max}$]  \label{remark:r_max-bound}
    The condition $r_{\max} \le \log^2 N$ in Theorem \ref{thm:matrix-completion} can be avoided, at the cost of a more complicated sampling density bound.
    The full form of our bound is
    \begin{equation}  \label{eq:matcom-sample-density-cond-full}
    \begin{aligned}
        p \ge C\left(\frac{1}{m} + \frac{1}{n}\right)
        \max \left\{
            \log^{10}N, \
            \frac{r^4r_{\max}\mu_0^2\mcK^2}{\eps_0^2},
            \frac{r^3 \mcK^2}{\eps_0^2}
            \left(1 + \frac{\mu_0^2}{\log^2 N}\right)
            \left(1 + \frac{r^3\log N}{N}\right) \log^6N
        \right\}.
    \end{aligned}
    \end{equation}
    The proof of the more general version of Theorem \ref{thm:matrix-completion} with Eq. \eqref{eq:matcom-sample-density-cond-full} replacing Eq. \eqref{eq:matcom-sample-density-cond} will be in Appendix \ref{sec:matrix-completion-full-proof}.
    We do not know of a natural setting where one expects $r_{\max} > \log^2 N$. Nevertheless this shows that our technique does not require any extra condition besides the large signal assumption.
\end{remark}

{\bf \noindent  Structure of the rest of the paper.} In the next section, we will prove Theorem \ref{thm:matrix-completion}, asserting the correctness of our algorithm, \nameref{algo:matrix-completion}.
We will reframe the problem from a matrix perturbation perspective, then introduce 
our main tool, Theorem 
\ref{thm:dk-matcom}.  We will give a short proof of Theorem \ref{thm:matrix-completion} using Theorem \ref{thm:dk-matcom}. This concludes the next section. 

In Section 3, we discuss Theorem \ref{thm:dk-matcom} from the matrix perturbation point of view. We first discuss the classical Davis-Kahan-Wedin theorem, which gives a perturbation bound in the spectral norm, together with a number of recent improvements. 
These improvements make use of modern assumptions on $A$ and $E$, but still using the spectral norm. Our main mathematical  result of the paper is Theorem \ref{thm:main-deterministic}. This has been movitaved by these rencent improvements, but focuses on the infinity norm.  

Theorem \ref{thm:main-deterministic} is determinisitc
and is sharp under quite general conditions. Furthermore, it is interesting to point out that in the deterministic setting, the critical {\it incohenrence } assumption is replaced by a more general assumption on the relation between the matrix $E$ and the singular vectors of $A$. 

The form of incohenrece discussed in the introduction only takes shape when we start to apply the deterministic theorem to the random setting, in which case the matrix $EE^T$ has  a particular form. 

We believe that Theorem \ref{thm:main-deterministic} is of independent interest. To start, there have been very few 
non-trivial estimates in matrix theory which involves the infinity norm. Next, our mathematical approach to the 
problem is new, and totally different from the previous ones. It combines tools from the classical perturbation theory (contour representation), with new ideas from combinatorics and random matrix theory. 

Most of the rest of the paper is devoted to the proof of Theorem \ref{thm:main-deterministic}, which is fairly technical. We write a brief summary of the proof at the beginning of Section 4, which leads to a number of key lemmas, whose proofs will appear later. 

We next use Theorem \ref{thm:main-deterministic}
to derive a random version, Theorem \ref{thm:main-random}(where $E$ is random). Theorem 
\ref{thm:dk-matcom} mentioned above is an easy corollary of 
Theorem \ref{thm:main-random}; for the proof see the last part of Section 3.

 The deriviation of Theorem \ref{thm:main-random} from Theorem \ref{thm:main-deterministic} is not straightforward. 
 It has turned out that 
some parameters that appear in Theorem \ref{thm:main-deterministic} are not easy to estimate in the random setting. This task requires a delicate moment computation, and we need to dedicate the whole Section 5 for the proof of Theorem \ref{thm:main-random}.

\section{Proof of guarantees for recovery}  \label{sec:matcom-proof}

\subsection{Proof sketch}

Theorem \ref{thm:matrix-completion} aims to bound the difference $\hat{A}_s - A$ in the infinity norm.
We achieve this using  a series of intermediate comparisons, outlined below.
\begin{enumerate}
    \item Let $\rho \defeq \hat{p}/p$.
    We bound $\|A - \rho^{-1}A\|_\infty$.
    As shown by a Chernoff bound, $\rho$ is very close to $1$, making this error small.
    It remains to bound
    \begin{equation*}
        \|\rho^{-1}A - \hat{A}_s\|_\infty = \rho^{-1}\|A - (p^{-1}A_{\Omega, \noise})_s\|_\infty.
    \end{equation*}
    Let $\tilde{A} \defeq p^{-1}A_{\Omega, \noise}$, this is equivalent to bounding $\|A - \tilde{A}_s\|_\infty$, since $\rho = \Theta(1)$.

    \item We bound $\|A - A_s\|_\infty$.
    Some light calculations give $\|A - A_s\|_\infty \le \sigma_{s + 1}\|U\|_{2,\infty}\|V\|_{2,\infty}$.
    Using the fact that $\hat{\sigma}_i - \hat{\sigma}_{i + 1}$ is small for all $i > s$, we can deduce that $\hat{\sigma}_{s + 1}$ is small, making $\sigma_{s + 1}$ small too.
    Coupled with the incoherence property, this error will be small.

    \item We bound $\|A_s - \tilde{A}_s\|_\infty$.
    Most of the heavy lifting is done here.
    We discuss it in detail below.
    % We show that under the assumptions of Theorem \ref{thm:matrix-completion}, this error is small with the result below.
\end{enumerate}
Observe that $\E{A_\Omega} = pA$ and $\E{\noise_\Omega} = 0$, we have
\begin{equation*}
    \mathbf{E}\bigl[\tilde{A}\bigr]
    = \E{p^{-1}A_{\Omega, \noise}}
    = \E{p^{-1}(A_\Omega + \noise_\Omega)}
    = \E{p^{-1}A_\Omega} + \E{p^{-1}\noise_\Omega} = A.
\end{equation*}
Let $E \defeq \tilde{A} - A$.
The above shows that $E$ is a random matrix with mean $0$.
From a \emph{matrix perturbation theory} point of view, $\tilde{A}$ is an \emph{unbiased} perturbation of $A$.
Establishing a bound on $(A + E)_s - A_s$ in the infinity norm is one of the major goals of perturbation theory, and is the main technical contribution of our paper.

Since both $A_\Omega$ and $\noise$ have independent entries, so does $E$.
The entries of $E$ satisfy
\begin{equation}  \label{eq:matcom-E-defn}
    E_{ij} = \begin{cases}
    p^{-1}(A_{ij} + \noise_{ij}) - A_{ij}
    = A_{ij}(p^{-1} - 1) + p^{-1}\noise_{ij}
    & \text{ with probability } p,
    \\
    -A_{ij}
    & \text{ with probability } 1 - p.
\end{cases}
\end{equation}
Consider the moments of $E_{ij}$.
For each $l\ge 2$, e have
\begin{equation}  \label{eq:matcom-E-moments-bound}
\begin{aligned}
    \E{|E_{ij}|^l}
    & = \frac{\E{|A_{ij}(1 - p) + \noise_{ij}|^l}}{p^{l - 1}} + (1 - p)|A_{ij}|^l
    \le \frac{(K_A(1 - p) + K_\noise)^l}{p^{l - 1}} + (1 - p)K_A^l
    \\
    & \le \frac{1}{p^{l - 1}} \left(
        \sum_{k = 0}^{l - 1} \binom{l}{k} K_A^k(1 - p)^k K_\noise^{l - k}
        + K_A^l(1 - p)^l + p^{l - 1}(1 - p)K_A^l
    \right)
    \\
    & \le \frac{1}{p^{l - 1}} \left(
        \sum_{k = 0}^{l - 1} \binom{l}{k} K_A^k K_\noise^{l - k}
        + K_A^l
    \right)
    = \frac{(K_A + K_\noise)^l}{p^{l - 1}}
    % = \frac{K_{A,\noise}^l}{p^{l - 1}}
    .
\end{aligned}
\end{equation}

For the third step, we prove the following theorem. This is the technical core of the paper. The essence of this theorem is that under certain conditions, a low rank approximation of the noisy matrix $A+Z$ approximates $A$ very well in the infinity norm. This can be seen as an improved version of the main theorem of \cite{bhardwajVu2023}. The approach here is entirely different from that of \cite{bhardwajVu2023}. 

\begin{theorem}  \label{thm:dk-matcom}
    Consider a fixed matrix $A\in \R^{m\times n}$. and a random matrix $E \in \R^{m\times n}$ with independent entries satisfying $\E{E_{ij}} = 0$ and
    $\E{|E_{ij}|^l} \le p^{1 - l}K^l$ for some $K > 0$ and $0 < p < 1$.
    Let $\tilde{A} = A + E$.
    Let $s\in [r]$ be an index satisfying
    \begin{equation*}
        \delta_s \defeq \sigma_s - \sigma_{s + 1} \ge 40rK\sqrt{N/p},
    \end{equation*}
    There are constant $C$ and $C'$ such that, if $p \ge C(m^{-1} + n^{-1})\log N$ where $N = m + n$, then
    % If
    % \begin{equation*}
    %     p \ge C\left(\frac{1}{m} + \frac{1}{n}\right)
    %     \max\left\{
    %         \log^{10}N,
    %         \ \frac{r_{\max}r^4\mu_0^2K^2}{\eps^2},
    %         \ \left(1 + \frac{\mu_0^2}{\log^2 N} \right)
    %         \left(1 + \frac{r^3\log N}{N} \right)
    %         \frac{r^3K^2\log^6 N}{\eps^2}
    %     \right\}.
    % \end{equation*}
    \begin{equation}  \label{eq:dk-matcom}
        \|\tilde{A}_s - A_s\|_\infty
        \le C' \frac{(\log N + \mu_0)\log^2 N}{\sqrt{mn}}
        \cdot
        r\sigma_s \left(
            \frac{K}{\sigma_s}\sqrt{\frac{N}{p}}
            + \frac{rK\sqrt{\log N}}{\delta_s\sqrt{p}}
            + \frac{r^2\mu_0 K\log N}{p\delta_s\sqrt{mn}}
        \right).
    \end{equation}
\end{theorem}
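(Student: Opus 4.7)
My plan is to obtain Theorem \ref{thm:dk-matcom} as a specialization of Theorem \ref{thm:main-random}, the random-perturbation version of the deterministic infinity-norm bound (Theorem \ref{thm:main-deterministic}) mentioned in the outline. The first task is to verify that the hypotheses of that general theorem hold in our setting: the matrix $E$ has independent mean-zero entries with the stated moment growth $\E{|E_{ij}|^l} \le p^{1-l}K^l$, while the gap condition $\delta_s \ge 40rK\sqrt{N/p}$ and the density lower bound $p \ge C(m^{-1}+n^{-1})\log N$ should match (or imply) the corresponding hypotheses of Theorem \ref{thm:main-random}. Once this is done, \eqref{eq:dk-matcom} should drop out after some algebraic simplification.

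To see why such a theorem should be true, I would carry out the contour-integration setup behind the deterministic bound. Writing $A_s$ through the resolvent of $A^T A$,
\[
    A_s = \frac{1}{2\pi i}\oint_\gamma A(zI - A^T A)^{-1}\, dz,
\]
with $\gamma$ a contour separating $\sigma_s^2$ from $\sigma_{s+1}^2$, and analogously for $\tilde{A}_s$, the difference becomes a contour integral of a Neumann expansion in $E$ (or in $A^T E + E^T A + E^T E$). The three summands on the right-hand side of \eqref{eq:dk-matcom} should correspond, respectively, to the first-order contribution in $E$, a second-order term that acquires an extra $\|E\|/\delta_s$ factor, and a higher-order tail that additionally couples to the row-incoherence $\mu_0$ of $A$. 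The prefactor $\sigma_s$ outside the parenthesis together with $K/\sigma_s \sqrt{N/p}$ inside reflects the ``energy'' that the leading contribution carries, matching the standard Davis--Kahan--Wedin scaling.

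The main obstacle, and the reason previous methods required either a small condition number or a uniform singular value gap, is bounding each of these Neumann terms in the infinity norm rather than the spectral or Frobenius norm. A naive chain of operator-norm estimates loses a factor of order $\sqrt{mn}$ compared to what is needed. What rescues the argument is the combinatorial structure of the matrix entries themselves: expanding $\bigl((zI - A^T A)^{-1} E \cdots\bigr)_{ij}$ as a sum over index walks, one exploits independence and the moment bound on $E$ to compute the variance of each walk directly, producing the entrywise gain of $(mn)^{-1/2}$ that appears as the prefactor in \eqref{eq:dk-matcom}. The coherence $\mu_0$ enters here through the row norms of $U$ and $V$, which govern the size of the resolvent entries of $A^T A$. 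Assembling these pieces, integrating around $\gamma$, and summing the Neumann series (whose convergence on $\gamma$ is guaranteed by the gap condition) yields the stated infinity-norm bound.
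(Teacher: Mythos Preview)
Your high-level plan in the first paragraph is exactly the paper's proof: set $\stdE = K/\sqrt{p}$ and $\upperE = 1/\sqrt{p}$, verify the moment condition \eqref{eq:noise-model}, check that the density bound forces $\upperE \le c\sqrt{N}\log^{-5}N$, verify the gap condition \eqref{eq:main-random-S-cond} for $S=[s]$ using $\delta_s \ge 40rK\sqrt{N/p}$, and then massage the output of Theorem~\ref{thm:main-random} into \eqref{eq:dk-matcom}. The only real work is bounding the product $\myD_1\myD_2$ by $O\bigl((\log N+\mu_0)\log^2 N/\sqrt{mn}\bigr)$ and noting that the term $2r\stdE^2 N/(\delta_s\sigma_s)$ in $R_s$ is absorbed by the first term thanks to the gap assumption.

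Two corrections to your heuristic paragraphs, which describe the machinery behind Theorem~\ref{thm:main-random} rather than the proof of Theorem~\ref{thm:dk-matcom} itself. First, the paper's contour expansion is set up through the symmetrization $\sym{A} = \begin{psmallmatrix}0&A\\A^T&0\end{psmallmatrix}$ and its resolvent, not through $A(zI-A^TA)^{-1}$; this is what allows the singular vectors to be treated as eigenvectors and the low-rank projection to be read off directly from a single contour integral. Second, your attribution of the three summands in \eqref{eq:dk-matcom} to first-order, second-order, and higher-order contributions in $E$ is not quite right: the second and third terms both arise from the single quantity $r\|U^TEV\|_\infty/\delta_s$, which a Bernstein bound splits into a sub-Gaussian piece $r\stdE\sqrt{\log N}/\delta_s$ and a sub-exponential piece $r\stdE\upperE\|U\|_\infty\|V\|_\infty\log N/\delta_s$. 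The genuinely second-order term $\|E\|^2/(\delta_s\sigma_s)$ is present in $R_s$ but disappears from the final bound because the gap hypothesis makes it smaller than $\|E\|/\sigma_s$.
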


\begin{remark}  \label{remark:dk-matcom-deterministic}

    This theorem essentially gives a bound on the perturbation of the best rank-$s$ approximation, given that the perturbation on $A$ is random with sufficiently bounded moments and the singular values up to rank $s$ are sufficiently separated from the rest. In matrix theorem, we know very few estimates using the inifnity norm, and we believe that this theorem is of independent interest.

    In practice, when applied to the matrix completion problem, 
    this separation property is guaranteed thanks to the fact that the matrix $A$ has low rank, thus such index $s$ exists. We do not need to make an extra assumption here.

    Consider the case $m = \Theta(n)$, making both $\Theta(N)$.
    Suppose either of the following two popular scenarios: $A$ is completely incoherent, namely $\mu_0 = O(1)$;
    or $A$ has independent Gaussian $N(0, 1)$ entries, making $\mu_0 = O(\log N)$.
    The theorem is just the random version of the following:
    \begin{equation}  \label{eq:dk-matcom-deterministic}
        \text{If } \delta_s \ge C_1r\|E\|,
        \quad
        \text{then }
        \|\tilde{A}_s - A_s\|_\infty
        \le \frac{C'\log^3 N}{N}
        r\sigma_s \left(\frac{\|E\|}{\sigma_s} + \frac{r\|U^TEV\|_\infty}{\delta_s} \right).
    \end{equation}
    To see this, note that in the above setting, the first factor in the bound is simply $O\left(\frac{\log^3N}{N}\right)$.
    As shown near the end (Section \ref{sec:main-proof-random}), with probability $1 - O(N^{-1})$,
    \begin{equation*}
        \|E\| = O\left(K\sqrt{\frac{N}{p}}\right),
        \quad
        \|U^TEV\|_\infty = O\left(
            \frac{r\mu_0 K\log N}{p\sqrt{mn}} + \frac{K\sqrt{\log N}}{\sqrt{p}}
        \right).
    \end{equation*}
    Plugging the above into Eq. \eqref{eq:dk-matcom-deterministic} recovers Eq. \eqref{eq:dk-matcom}.
    Eq. \eqref{eq:dk-matcom-deterministic} is important because it reveals the structure of the problem.
   
    % The second factor is simply the following in disguise:
    % \begin{equation*}
    %     O\left(
    %         r\sigma_s \left(\frac{\|E\|}{\sigma_s} + \frac{r\|U^TEV\|_\infty}{\delta_s} \right)
    %     \right).
    % \end{equation*}
    % Then the entire bound has the form
    % \begin{equation*}
    %     \|\tilde{A}_s - A_s\|_\infty
    %     \le C\frac{\operatorname{polylog} N}{N}
    %     r\sigma_s \left(\frac{\|E\|}{\sigma_s} + \frac{r\|U^TEV\|_\infty}{\delta_s} \right).
    % \end{equation*}
\end{remark}
% We reserve the proof of this theorem to the next section, where we introduce the main mathematical inventions of the paper and apply them.

\subsection{Proof of  Theorem \ref{thm:matrix-completion} using Theorem \ref{thm:dk-matcom}} 

In this section, we give a short proof of Theorem \ref{thm:matrix-completion} using Theorem \ref{thm:dk-matcom}.

\begin{proof}
    % As discussed, the scaled version of $A_{\Omega, \noise}$ is a mean-zero perturbation of $A$.
    % Let $\tilde{A} \defeq p^{-1}A_{\Omega, \noise}$ and $E \defeq \tilde{A} - A$.
    % Then $E$ is a random matrix with independent entries where:
    % \begin{equation*}
    %     E_{ij} = \begin{cases}
    %         A_{ij}\left(1/p - 1 \right) + \noise_{ij}/p & \text{ with prob. } p
    %         \\ -A_{ij} & \text{ with prob. } 1 - p.
    %     \end{cases}
    % \end{equation*}
    % For each $i, j$, we have $\E{E_{ij}} = 0$ and for each $l\ge 2$,
    % \begin{equation*}
    % \begin{aligned}
    %     \E{|E_{ij}|^l}
    %     & = \frac{\E{|A_{ij}(1 - p) + \noise_{ij}|^l}}{p^{l - 1}} + (1 - p)|A_{ij}|^l
    %     \le \frac{(K_A(1 - p) + K_\noise)^l}{p^{l - 1}} + (1 - p)K_A^l
    %     \\
    %     & \le \frac{1}{p^{l - 1}} \left(
    %         \sum_{k = 0}^{l - 1} \binom{l}{k} K_A^k(1 - p)^k K_\noise^{l - k}
    %         + K_A^l(1 - p)^l + p^{l - 1}(1 - p)K_A^l
    %     \right)
    %     \\
    %     & \le \frac{1}{p^{l - 1}} \left(
    %         \sum_{k = 0}^{l - 1} \binom{l}{k} K_A^k K_\noise^{l - k}
    %         + K_A^l
    %     \right)
    %     = \frac{(K_A + K_\noise)^l}{p^{l - 1}}
    %     = \frac{\mcK^l}{p^{l - 1}}.
    % \end{aligned}
    % \end{equation*}
    % Therefore $E$ fits the noise model \eqref{eq:noise-model} with the parameters $K = p^{-1/2}$ and $\kappa = \mcK K = p^{-1/2}\mcK$.
    % % Let us translate from $K_A, K_\noise$ and $p$ in the assumptions into $K$ and $\kappa$.
    For convenience, let $K \defeq K_{A, \noise} = K_A + K_\noise$.
    % Before entering the three steps discussed in the proof sketch, let us establish a few bounds to be used throughout the proof.
    % Let $\rho \defeq \hat{p}/p$.
    Recall that in the discussion, we defined the following terms:
    \begin{equation*}
        \rho \defeq \frac{\hat{p}}{p},
        \quad \tilde{A} = \rho \hat{A} = p^{-1}A_{\Omega, \noise},
        \quad E = \tilde{A} - A,
    \end{equation*}
    and $E$ is a random matrix with mean $0$, independent entries satisfying $\E{|E_{ij}|^l} = p^{1 - l}K^l$ by Eq. \eqref{eq:matcom-E-moments-bound}.
    Recall that we have the SVD $\hat{A} = \sum_i \hat{\sigma}_i\hat{u}_i\hat{v}_i^T$.
    Similarly, denote the SVD of $\tilde{A}$ by
    \begin{equation*}
        \tilde{A} = \sum_i^{\min\{m, n\}} \tilde{\sigma}_i \tilde{u}_i \tilde{v}_i^T.
    \end{equation*}
    We have the relation $\tilde{u}_i = \hat{u}_i$, $\tilde{v}_i = \hat{v}_i$ and $\tilde{\sigma}_i = \rho^{-1}\hat{\sigma}_i$ for each $i$.
    
    From the sampling density assumption, a standard application of concentration bounds \cite{hoeffding1963,chernoff1952} guarantees that, with probability $1 - O(N^{-2})$,
    \begin{equation}  \label{eq:matcom-proof-fact1}
        0.9 \le 1 - \frac{1}{\sqrt{N}}
        \le 1 - \frac{\log N}{\sqrt{pmn}}
        \le \rho
        \le 1 + \frac{\log N}{\sqrt{pmn}}
        \le 1 + \frac{1}{\sqrt{N}}
        \le 1.1
        % \ \text{ with prob. } 1 - O(N^{-2})
        .
    \end{equation}
    Furthermore, an application of well-established bounds on random matrix norms gives
    \begin{equation}  \label{eq:matcom-proof-fact2}
        \|E\|
        \le 2K\sqrt{N/p}
        ,
        % \frac{2\mcK\sqrt{N}}{\sqrt{p}}
        % \ \text{ with prob. } 1 - O(N^{-1}).
    \end{equation}
    with probability $1 - O(N^{-1})$.
    See \cite{bandeira2014,vu2005}, \cite[Lemma A.7]{tranVu2022} or \cite{bandeira2014} for detailed proofs.
    Therefore we can assume both Eqs. \eqref{eq:matcom-proof-fact1} and \eqref{eq:matcom-proof-fact2} at the cost of an $O(N^{-1})$ exceptional probability.
    %
    %
    % Let $C_2 = 1/c$ for the constant $c$ in Theorem \ref{thm:main-random}.
    The sampling density condition (Eq. \eqref{eq:matcom-sample-density-cond} is equivalent to the conjunction of two conditions:
    \begin{align}
        % \label{eq:matcom-sample-density-cond-1}
        % p & \ge \frac{Cr^4r_{\max}\mu_0^2\mcK^2}{\eps^2}
        %     \left(\frac{1}{m} + \frac{1}{n}\right),
        % \\
        \label{eq:matcom-sample-density-cond-2}
        p & \ge C\left(\frac{1}{m} + \frac{1}{n}\right)\log^{10}N,
        \\
        \label{eq:matcom-sample-density-cond-3}
        p & \ge \frac{Cr^3 \mcK^2}{\eps^2}
        \left(1 + \frac{\mu_0^2}{\log^2 N}\right)
        % \left(1 + \frac{r^3\log N}{N}\right)
        \left(\frac{1}{m} + \frac{1}{n}\right)\log^6N.
    \end{align}

    % Let $C_0 \defeq 40$.
    % The index $s$ chosen in the SVD step of \nameref{algo:matrix-completion} is the largest such that
    % \begin{equation*}
    %     \hat{\delta}_s \ge C_0\mcK\sqrt{r_{\max}N/\hat{p}}
    %     = C_0\rho^{-1/2}\kappa\sqrt{r_{\max}N}.
    % \end{equation*}

    Before entering the three steps outlined in the proof sketch, we show that the SVD step is guaranteed to choose a valid $s\in [r]$ such that $\hat{\delta}_s \ge 20K\sqrt{r_{\max}N/\hat{p}}$.
    Choose an index $l\in [r]$ such that $
        \delta_l \ge \sigma_1/r
    $, which exists since $\sum_{l\in [r]} \delta_l = \sigma_1$.
    We have, by Weyl's inequality,
    \begin{equation*}
        \tilde{\delta}_l \ge \delta_l - \|E\| \ge \frac{\sigma_1}{r} - 2K\sqrt{\frac{N}{p}}
        \ge (100r_{\max}^{1/2} - 4)K\sqrt{\frac{N}{p}} \ge 90K\sqrt{\frac{r_{\max}N}{p}}.
    \end{equation*}
    Therefore
    \begin{equation*}
    \begin{aligned}
        \hat{\delta}_l \ge \rho^{-1}\tilde{\delta_l}
        \ge 90\rho^{-1}\mcK\sqrt{\frac{r_{\max}N}{p}}
        \ge 80\rho^{-1/2}\mcK\sqrt{\frac{r_{\max}N}{p}}
        = 80\mcK\sqrt{\frac{r_{\max}N}{\hat{p}}},
    \end{aligned}
    \end{equation*}
    so the cutoff point $s$ is guaranteed to exist.
    To see why $s\in [r]$, note that, again by Weyl's inequality,
    \begin{equation*}
        \tilde{\delta}_{r + 1} \le \tilde{\sigma}_{r + 1}
        \le \sigma_{r + 1} + \|E\| = \|E\| \le 2K\sqrt{N/p}.
    \end{equation*}
    Therefore,
    \begin{equation*}
        \hat{\delta}_{r + 1} = \rho^{-1}\tilde{\delta}_{r + 1}
        \le 2\rho^{-1}K\sqrt{\frac{N}{p}}
        \le 3\rho^{-1/2}K\sqrt{\frac{N}{p}}
        = 3K\sqrt{\frac{N}{\hat{p}}}
        < 20K\sqrt{\frac{r_{\max}N}{\hat{p}}}.
        % < C_0\rho^{-1/2}\kappa\sqrt{r_{\max}N}.
    \end{equation*}
    
    We want to show that the first three steps of \nameref{algo:matrix-completion} recover $A$ up to an absolute error $\eps$, namely $\|\hat{A}_s - A\|_\infty \le \eps$.
    We will now follow the three steps in the sketch.

    \begin{enumerate}
        \item \emph{Bounding $\|A - \rho^{-1}A\|_\infty$.}
        We have
        \begin{equation*}
            \|A - \rho^{-1}A\|_\infty
            = \left|\rho^{-1} - 1 \right| \|A\|_\infty
            \le \frac{K_A}{.9\sqrt{N}} < \eps/4.
        \end{equation*}
        % \begin{equation*}
        %     \|\rho^{-1}A - \hat{A}_s\|_\infty = \rho^{-1}\|A - (p^{-1}A_{\Omega, \noise})_s\|_\infty.
        % \end{equation*}
        % Let $\tilde{A} \defeq p^{-1}A_{\Omega, \noise}$, this is equivalent to bounding $\|A - \tilde{A}_s\|_\infty$, since $\rho = \Theta(1)$.
    
        \item \emph{Bounding $\|A_s - A\|_\infty$.}
        Firstly, we bound $\sigma_{s + 1}$.
        We have, since $\tilde{\delta}_s$ is the last singular value gap of $\tilde{A}$ such that $\tilde{\delta}_s \ge 20\rho^{-1/2}\mcK\sqrt{\frac{r_{\max}N}{p}}$, then
        \begin{equation}  \label{eq:matcom-proof-sigma(s+1)-bound}
            \sigma_{s + 1} \le \tilde{\sigma}_{s + 1} + \|E\|
            \le 20\rho^{-1/2}r\mcK \sqrt{\frac{r_{\max}N}{p}} + 2\mcK \sqrt{\frac{N}{p}}
            \le 22r\mcK\sqrt{\frac{r_{\max}N}{p}}.
        \end{equation}
        For each fixed indices $j, k$, we have
        \begin{equation*}
        \begin{aligned}
            |(A_s - A)_{jk}|
            & = \left| U_{j, \cdot}^T\Sigma_{[s + 1, r]}V_{k, \cdot} \right|
            \le \sigma_{s + 1} \|U\|_{2, \infty} \|V\|_{2, \infty}
            \le 22r\mcK\sqrt{\frac{r_{\max}N}{p}} \frac{r\mu_0}{\sqrt{mn}}
            \\
            &
            = \sqrt{
                \frac{22^2r^4r_{\max}\mu_0^2\mcK^2}{p}
                \left(\frac{1}{m} + \frac{1}{n}\right)
            }
            \le \sqrt{
                \frac{22^2rr_{\max}}{C\log^4 N}
            }
            \le \eps/4,
            % = 2C_0r^2\mu_0^2\mcK\sqrt{\frac{N}{pmn}}
            % \le 2C_2^{-1/2}C_0r^2\mu_0 \mcK\log^{-5}N < .1.
            % \\
            % & \le \frac{2C_0CD^2r^3 \mcK\sqrt{N}}{\sqrt{pmn}}
            % \le \frac{2C_0CD^2r^3 \mcK}{\log^3N} < 0.1,
        \end{aligned}
        \end{equation*}
        where the last inequality comes from Eq. \eqref{eq:matcom-sample-density-cond-3} and the assumption that $r_{\max} \le \log^2 N$, if $C$ is large enough.
        Since this holds for all pairs $(j, k)$, we have
        $\|A_s - A\|_\infty \le \eps/4$.
        % \begin{equation}  \label{eq:matcom-proof-bound3}
        %     \|A_s - A\|_\infty \le 0.1.
        % \end{equation}

        \item \emph{Bounding $\|\tilde{A}_s - A_s\|_\infty$, with probability $1 - O(N^{-1})$.}
        The condition \eqref{eq:matcom-sample-density-cond-2} guarantees that we can apply Theorem \ref{thm:dk-matcom}.
        We will get, by Eq. \eqref{eq:dk-matcom}
        \begin{equation}  \label{eq:matcom-proof-step3-temp-1}
        \begin{aligned}
            & \|\tilde{A}_s - A_s\|_\infty
            % \le C' \frac{(\mu_0 + \log N)\log^2 N}{\sqrt{mn}}
            % \cdot r\sigma_s \left(
            %     \frac{K}{\sigma_s}\sqrt{\frac{N}{p}}  +  \frac{rK\log N}{p\delta_s}
            % \right)
            % \\
            % & \quad
            = C' \frac{(\mu_0 + \log N)\log^2 N}{\sqrt{mn}}
            \cdot r\mcK\left(
                \sqrt{\frac{N}{p}}  +  \frac{\log N}{p}\frac{\sigma_s}{\delta_s}
            \right)
            % = C' \frac{rK(\mu_0 + \log N)\sqrt{N}\log^2 N}{\sqrt{pmn}}
            % + C' \frac{rK(\mu_0 + \log N)\log^3 N}{p\sqrt{mn}} \frac{\sigma_s}{\delta_s}
            % \cdot \left(
            %     rK\sqrt{\frac{N}{p}}  +  \frac{rK\log N}{p}\frac{\sigma_s}{\delta_s}
            % \right)
            .
        \end{aligned}
        \end{equation}
        Under the assumption $r_{\max} \le \log^2 N$, we can further simplify.
        We have
        \begin{equation*}
            \delta_s \ge \tilde{\delta}_s - 2\|E\|
            \ge 20\rho^{-1/2} \sqrt{\frac{r_{\max}N}{p}} - 4K \sqrt{\frac{N}{p}}
            \ge 18\sqrt{\frac{r_{\max}N}{p}}.
        \end{equation*}
        Therefore, by Eq. \eqref{eq:matcom-proof-sigma(s+1)-bound}, $\sigma_{s + 1} < 2\delta_s$, so
        \begin{equation*}
            \frac{\sigma_s}{\delta_s}
            = 1 + \frac{\sigma_{s + 1}}{\delta_s}
            < 1 + 2r \le 3r.
        \end{equation*}
        Back to Eq. \eqref{eq:matcom-proof-step3-temp-1}, consider the second factor of the right-hand side, we have
        \begin{equation*}
            \frac{\log N}{p}\frac{\sigma_s}{\delta_s}
            \le \frac{3r\log N}{p}
            \le 3r\sqrt{\frac{N}{p}} \frac{\log N}{\sqrt{pN}}
            \le \frac{3r}{\log^4 N} \cdot \sqrt{\frac{N}{p}}
            < \sqrt{\frac{N}{p}}.
        \end{equation*}
        Therefore Eq. \eqref{eq:matcom-proof-step3-temp-1} becomes
        \begin{equation*}
            \|\tilde{A}_s - A_s\|_\infty
            \le 2C' \frac{(\mu_0 + \log N)r\mcK\sqrt{N} \log^2 N}{\sqrt{pmn}}.
        \end{equation*}
        We want this term to be at most $\eps/4$.
        We have
        \begin{equation*}
        \begin{aligned}
            2C' \frac{(\mu_0 + \log N)r\mcK\sqrt{N}\log^2 N}{\sqrt{pmn}}
            \le \frac{\eps}{4}
            \ \iff \
            p \ge (2C')^2r^2K^2(\mu_0 + \log N)^2 \cdot \frac{N}{mn} \log^4 N
            \\
            \ \iff \
            p \ge (2C')^2r^2K^2\left(1 + \frac{\mu_0}{\log N}\right)^2
            \left(\frac{1}{m} + \frac{1}{n}\right)
            \log^6 N.
        \end{aligned}
        \end{equation*}
        This is satisfied by the condition \eqref{eq:matcom-sample-density-cond-3}, when $C$ is large enough.
        The third step is complete.
    \end{enumerate}
    Now we combine the three steps.
    We have, by triangle inequality,
    \begin{equation*}
    \begin{aligned}
        \|\hat{A}_s - A\|_\infty
        = \|\rho^{-1}\tilde{A}_s - A\|_\infty
        & \le \|A - \rho^{-1}A\|_\infty + \rho^{-1}\left(
             \|\tilde{A}_s - A_s\|_\infty + \|A_s - A\|_\infty
        \right)
        \\
        & \le \frac{\eps}{4} + 1.2\left(\frac{\eps}{4} + \frac{\eps}{4}\right)
        < .9\eps.
    \end{aligned}
    \end{equation*}
    The total exceptional probability is $O(N^{-1})$.
    The proof is complete.
\end{proof}

\section{Davis-Kahan-Wedin theorem in the infinity norm}  \label{sec:intro-DK}

% Consider the independent random sampling model with density $p$ and assume the noise matrix $\noise$ has independent, mean-zero entries.
% Consider the sampling and noise models in Setting \ref{set:matrix-completion}.
% As discussed in \cite{bhardwajVu2023}, the matrix $A_{\Omega, \noise}/p$ is random matrix whose expectation is $A$ (they only mentioned the pure case, but this holds with centered noise too).
% One can treat $A_{\Omega, \noise}/p$ as an \emph{unbiased} perturbed version of $A$, with the perturbation $E \defeq A_{\Omega, \noise}/p - A = A_\Omega/p - A + \noise/p$, which is also a random matrix with independent, mean-zero entries.
% %
% Let $\tilde{A} \defeq A_{\Omega, \noise}/p = A + E$ for convenience.
% Since the goal is to recover $A$, which is a low-rank matrix, it makes sense to consider the best low-rank approximations of $\tilde{A}$, motivating the spectral approach.
% To bound the error, one needs tools from matrix perturbation theory.

Now that our matrix completion algorithm (\nameref{algo:matrix-completion}) has been verified, 
we will focus on proving Theorem \ref{thm:dk-matcom}.
From this point onwards, let us put aside the matrix completion context and simply consider 
matrix perturbation model $\tilde{A} = A + E$. Here the noise matrix $E$ can be {\it deterministic}. The heart of the matter is a perturbation estimate in the inifnity norm.  We are going to disucss this result in steps. First, we describe the setting. Next, we 
state recent developments concerning spectral norm estimates. Finally, we 
describe the new result in the inifnity norm, and make a connection with previous results.

% We call $A$ the \emph{original matrix}, $E$ is the \emph{perturbation}, or noise matrix, and $\tilde{A}$ is the \emph{perturbed matrix}.
% :

\begin{setting}[Matrix perturbation]  \label{set:main}
    Consider a fixed $m\times n$ matrix $A$ with SVD
    \begin{equation*}
        A = U\Sigma V^T = \sum_{i=1}^r \sigma_i u_i v_i^T,
        \quad \text{ where } \sigma_1 \ge \sigma_2 \ge \ldots \sigma_r.
    \end{equation*}
    % In most of our applications, we always assume $A$ has low rank, i.e. $r$ is constant or much smaller than $m\wedge n$ in settings where the latter is allowed to go to infinity, but it is not necessary to state and prove the main theorem.
    Consider a $m\times n$ matrix $E$, which can be deterministic or random, which we called the \emph{perturbation matrix}.
    Let $\tilde{A} = A + E$ be the \emph{perturbed matrix} with the following SVD:
    \begin{equation*}
        \tilde{A} = \tilde{U}\tilde{\Sigma}\tilde{V}^T
        = \sum_{i = 1}^{m\wedge n} \tilde{\sigma}_i \tilde{u}_i \tilde{v}_i^T
        \quad \text{ where } \tilde{\sigma}_1 \ge \tilde{\sigma}_2 \ge \ldots \tilde{\sigma}_r.
    \end{equation*}
    Define the following terms related to $A$ and $\tilde{A}$:
    \begin{enumerate}
        \item For $k\in [r]$, $\delta_k \defeq \sigma_k - \sigma_{k + 1}$, using $\sigma_{r + 1} = 0$, and let $\Delta_k \defeq \delta_k \wedge \delta_{k - 1}$.
        
        % \item For $a, b\in \R$, $a \wedge b \defeq \min\{a, b\}$ and $a \vee b \defeq \max\{a, b\}$, the notations $a_1 \wedge a_2 \wedge \ldots \wedge a_k$ and $a_1 \vee a_2 \vee \ldots \vee a_k$ are analogously defined.
        
        \item For $S\subset [r]$, let $\sigma_S \defeq \min \{\sigma_i: i\in S\}$ and $\Delta_S \defeq \min\{|\sigma_i - \sigma_j|: i\in S, j\in S^c\}$.
        
        \item For $S\subset [r]$, define the following matrices:
        \begin{equation*}
            V_S \defeq \bigl[v_i\bigr]_{i\in S},
            \quad
            U_S \defeq \bigl[u_i\bigr]_{i\in S},
            \quad
            A_S \defeq \sum_{i\in S} \sigma_i u_i v_i^T.
        \end{equation*}
        When $S = [s]$ for some $s\in [r]$, we also use $V_s$, $U_s$, $A_s$ respectively to denote the three above.
    \end{enumerate}
    Define analogous notations $\tilde{\delta}_k$, $\tilde{\Delta}_k$, $\tilde{\sigma}_S$, $\tilde{\Delta}_S$, $\tilde{V}_S$, $\tilde{U}_S$, and $\tilde{A}_S$ for $\tilde{A}$.
\end{setting}

\noindent\textbf{Some extra notation.} To aid the presentation, we define the notation:
\begin{itemize}
    \item $a_1 \wedge a_2 \wedge \ldots \wedge a_n \defeq \min\{a_1, a_2, \ldots, a_n\}$ for every $a_1, a_2, \ldots, a_n\in \R$.

    \item $a_1 \vee a_2 \vee \ldots \vee a_n \defeq \max\{a_1, a_2, \ldots, a_n\}$ for every $a_1, a_2, \ldots, a_n\in \R$.

    \item For each $a\in \R$, let $[a] \defeq \{x\in \Z: 1\le x\le a\}$, and $[[a, b]] \defeq \{x\in \Z: a\le x\le b\}$ for $a \le b$.
\end{itemize}

\subsection{The Davis-Kahan-Wedin theorem for the spectral norm}

One of the most well-known results in perturbation theory is the \textbf{Davis-Kahan $\sin\Theta$ theorem} \cite{daviskahan1970}, which bounds the change in eigenspace projections by the ratio between the perturbation and the eigenvalue gap.
The extension for singular subspaces, proven by Wedin \cite{wedin1972}, states that:
\begin{equation}  \label{eq:DK-original}
    \|\tilde{U}_s\tilde{U}_s^T - U_sU_s^T\|
    \vee \|\tilde{V}_s\tilde{V}_s^T - V_sV_s^T\|
    \le \frac{C\|E\|}{\delta_s}
    \ \text{ for a universal constant } C.
\end{equation}

% In order to prove Theorem \ref{thm:dk-matcom}, we want a bound similar to Eq. \eqref{eq:dk-matcom-deterministic} from Remark \ref{remark:dk-matcom-deterministic}, at least for the simple setting there.
% Ideally, we would like a bound of the following form
% \begin{equation*}
%     \|\tilde{A}_s - A_s\|_\infty
%     \le Cf(\|U\|_{2, \infty}, \|V\|_{2, \infty}, r, \log N)
%     \sigma_s \left(
%         \frac{\|E\|}{\sigma_s} + \frac{\|}
%     \right)
% \end{equation*}
There are three challenges if one wants to apply this theorem in order to attack our problem: 
\begin{enumerate}
    \item The inequality above only concerns the change in the singular subspace projections, while the change in the low-rank approximation $\tilde{U}_s\tilde{\Sigma}_s\tilde{V}_s^T$ is needed.

    \item The bound on the right-hand side requires the spectral gap-to-noise ratio at index $s$ to be large to be useful, which is a strong assumption.

    \item The left-hand side is the operator norm, while an infinity norm bound is needed for exact recovery after rounding.
\end{enumerate}

A key observation is that, similarly to the Frobenius norm bound in \cite{candesPlan2009}, Eq. \eqref{eq:DK-original} works for all perturbation matrices $E$.
Per the discussion in \cite{phuctranVu2023}, the worst case (equality) only happens when there are special interactions between $E$ and $A$.
A series of papers by Vu and coauthors \cite{orourkeVuWang2013,phuctranVu2023} exploited the improbability of such interactions when $E$ is random and $A$ has low rank, and improved the bound significantly. For instance, O'Rourke, Vu and Wang \cite{orourkeVuWang2013} proved the following:
\begin{equation*}  \label{eq:orourkeVuWang}
     \|\tilde{V}_s\tilde{V}_s^T - V_sV_s^T\|
     \le C \sqrt{s} \left(
        \frac{\|E\|}{\sigma_s} + \frac{\sqrt{r}\|U^TEV\|_\infty}{\delta_s} + \frac{\|E\|^2}{\delta_s\sigma_s}
     \right),
\end{equation*}
with high probability, effectively turning the \emph{noise-to-gap} on the right-hand side of Eq. \eqref{eq:DK-original} into the \emph{noise-to-signal ratio}, which can be much smaller than the former in many cases.

P. Tran and Vu then \cite{phuctranVu2023} improved the third term, at the cost of an extra factor of $\sqrt{r}$, which does not matter when $A$ has constant rank.
They showed that when
\begin{equation*}
    \frac{\|E\|}{\sigma_s} \vee \frac{2r\|U^TEV\|_\infty}{\delta_s} \vee \frac{\sqrt{2r}\|E\|}{\sqrt{\delta_s\sigma_s}}
    \le \frac{1}{8},
\end{equation*}
then
\begin{equation*}
    \|\tilde{V}_s\tilde{V}_s^T - V_sV_s^T\|
    \le C r \left(
        \frac{\|E\|}{\sigma_s} + \frac{2r\|U^TEV\|_\infty}{\delta_s} + \frac{2ry}{\delta_s\sigma_s}.
     \right),
\end{equation*}
% where $y \defeq \frac{1}{2}\max_{i \neq j} (|u_i^TEE^Tu_j| + |v_i^TE^TEv_j|)$.
replacing the term $\|E\|^2$ in \cite{orourkeVuWang2013} with the smaller $y \defeq \frac{1}{2}\max_{i \neq j} (|u_i^TEE^Tu_j| + |v_i^TE^TEv_j|)$.
It is clear that this quantity is at most  $\|E\|^2$. However, it  can be significantly smaller in many cases, notably when $E$ is \emph{regular} \cite{phuctranVu2023}, meaning there is a common $\overline{\sigma}$ such that:
\begin{equation}  \label{eq:regular-matrix}
    \overline{\sigma}^2 = \frac{1}{m}\sum_{i = 1}^m \Var{E_{ij}}
    = \frac{1}{n}\sum_{j = 1}^n \Var{E_{ij}}
    \ \text{ for all } i\in [m],\ j\in [n].
\end{equation}

The notion of random regular matrices cover most models of random matrices used in practice, such as Wigner matrices or genearalized Wigner matrices. 

\vskip2mm 

Their method is  easily adaptable to prove similar bounds for the deviation of other spectral entities, with respect to difference matrix norms.
The main idea is to write the difference of the two entities as a contour integral, then use a combinatorial expansion to split this integral into many subsums, each of which can be treated using tools from complex analysis, linear algebra, and combinatorics.
See Section 3 of \cite{phuctranVu2023} for a detailed discussion.

\subsection{A new Davis-Kahan-Wedin theorem for the infinity norm}
% \subsection{Contour Integral with combinatorial expansion}

% Our approach here is strongly motivated by the developments in  \cite{phuctranVu2023}. The main idea is to write the difference of two 
% matrix operators as a contour integral, and then use a combinatorial expansion to split this integral into many subsums, each of which can be treated using tools from complex analysis, linear algebra, and combinatorics. In \cite{phuctranVu2023}, the authors used this idea to obtain 
% new perturbation bounds in matrix spectral (operator) norm; see Section 3 of \cite{phuctranVu2023} for a detailed discussion.

Our main result can be seen as the ìnfinity norm version of the results discussed above. 
Proving ìnfinity norm bounds is a recent trend in matrix theory. In particular, most 
recent progesses concerning universality of random matrices crucially rely on strong infinity norm bounds of the eigenvectors of the matrix {\bf cite Yau et al book which you read}. 
Obtaining a strong  infinity norm bound for a vector or a matrix is usually a highly non-trivial task,  as demonstrated in the theory of random matrices, and we hope that the method introduced in this paper will be of independent interest.

{\oldcomment{}
\begin{theorem}  \label{thm:main-deterministic}
    Consider the objects in Setting \ref{set:main}.
    % Suppose some positive numbers $\myD_1$, $\myD_2$, and $\uppnormE$ satisfy $\|E\| \le \uppnormE$ and for all $0\le a\le 10\log(m + n)$, we have:
    Define the following terms:
    \begin{equation}  \label{eq:main-determistic-myD12} %\label{eq:main-determistic-E-power}
    \begin{aligned}
        \myD_1 & \defeq \max_{0\le a\le 10\log(m + n)} \frac{1}{\sqrt{r}}
        \max\left\{
            \frac{\left\|(EE^T)^aU\right\|_{2, \infty}}{\|E\|^{2a}}, \
            \frac{\left\|(EE^T)^aEV\right\|_{2, \infty}}{\|E\|^{2a + 1}}
        \right\},
        \\
        \myD_2 & \defeq \max_{0\le a\le 10\log(m + n)} \frac{1}{\sqrt{r}}
        \max\left\{
            \frac{\left\|(E^TE)^aV\right\|_{2, \infty}}{\|E\|^{2a}}, \
            \frac{\left\|(E^TE)^aE^TU\right\|_{2, \infty}}{\|E\|^{2a + 1}}
        \right\}.
    \end{aligned}
    \end{equation}
    Consider an arbitrary subset $S\subset [r]$.
    Suppose that $S$ satisfies
    \begin{equation}  \label{eq:main-deterministic-R123}
    \begin{aligned}
        \frac{\|E\|}{\sigma_S}
        \ \vee \ \frac{2r\|U^TEV\|_\infty}{\Delta_S}
        \ \vee \ \frac{\sqrt{2r}\|E\|}{\sqrt{\Delta_S\sigma_S}} \le \frac{1}{8},
    \end{aligned}
    \end{equation}
    Then there is a universal constant $C$ such that
    \begin{align}
        \label{eq:s-sing-vec-entry}
        \left\|\tilde{V}_S\tilde{V}_S^T - V_SV_S^T\right\|_\infty
        & \le C \myD_1^2
        r\left(
            \frac{\|E\|}{\sigma_S}
            + \frac{2r\|U^TEV\|_\infty}{\Delta_S}
            + \frac{2ry}{\Delta_S\sigma_S}
        \right),
        % + \frac{1}{m + n},
        \\[3pt]
        \label{eq:s-sing-vec-row}
        \left\|\tilde{V}_S\tilde{V}_S^T - V_SV_S^T\right\|_{2, \infty}
        & \le C \myD_1
        r\left(
            \frac{\|E\|}{\sigma_S}
            + \frac{2r\|U^TEV\|_\infty}{\Delta_S}
            + \frac{2ry}{\Delta_S\sigma_S}
        \right),
        % + \frac{1}{m + n},
    \end{align}
     where
     \begin{equation*}
         % \myD_1 \defeq \myD_1(U) + \myD_0(V),
         % \quad
         % \myD_2 \defeq \myD_1(V) + \myD_0(U),
         % \quad
         y \defeq \frac{1}{2}\max_{i \neq j} (|u_i^TEE^Tu_j| + |v_i^TE^TEv_j|).
     \end{equation*}
     % $\myD_1 = \myD_1(U) + \myD_0(V)$, $\ \myD_2 \defeq \myD_1(V) + \myD_0(U)$, and $y \defeq \frac{1}{2}\max_{i \neq j} (|u_i^TEE^Tu_j| + |v_i^TE^TEv_j|)$.
    When $S = [s]$ for some $s\in [r]$, we also have
    \begin{equation}  \label{eq:s-rank-approx-entry}
        \|\tilde{A}_s - A_s\|_\infty
        \le C
        \myD_1\myD_2
        \sigma_s r\left(
            \frac{\|E\|}{\sigma_s}
            + \frac{2r\|U^TEV\|_\infty}{\delta_s}
            + \frac{2ry}{\delta_s\sigma_s}
        \right),
        % + \frac{1}{m + n}.
    \end{equation}
\end{theorem}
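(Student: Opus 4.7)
The plan is to follow the contour-integral-plus-combinatorial-expansion strategy of P.~Tran and Vu \cite{phuctranVu2023}, but push it through in the infinity (and $2$-to-$\infty$) norm rather than the spectral norm. The starting point is the standard representation of the spectral projectors via resolvents: for a contour $\Gamma$ in $\C$ that encloses exactly $\{\sigma_i^2 : i \in S\}$ among the squared singular values of $A$ (and, under \eqref{eq:main-deterministic-R123}, also exactly the corresponding squared singular values of $\tilde A$), one has
\begin{equation*}
    V_S V_S^T \;=\; \frac{1}{2\pi i}\oint_{\Gamma} (zI - A^T A)^{-1}\,dz,
    \qquad
    \tilde V_S \tilde V_S^T \;=\; \frac{1}{2\pi i}\oint_{\Gamma}(zI - \tilde A^T \tilde A)^{-1}\,dz,
\end{equation*}
and similarly for the ``$U$-side'' projectors. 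Writing $R(z)=(zI-A^TA)^{-1}$ and $X = \tilde A^T\tilde A - A^T A = A^T E + E^T A + E^T E$, the resolvent identity iterated $k$ times gives $R_{\tilde A}(z) - R(z) = \sum_{k\ge 1}\bigl(R(z)X\bigr)^k R(z)$. The smallness hypothesis \eqref{eq:main-deterministic-R123} will be used to truncate this Neumann series at $k = O(\log N)$, with the tail absorbed into a negligible error.

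Next, I would expand each product $\bigl(R(z)X\bigr)^k R(z)$ using $X = A^TE+E^TA+E^TE$, getting a sum indexed by ``words'' in three letters; each resulting term is a product of resolvents $R(z)$ interleaved with $A^TE$, $E^TA$, or $E^TE$ factors. Since $R(z)$ is diagonal in the basis $\{v_i\}\cup\{v_i^\perp\text{-complement}\}$, inserting the spectral decomposition $R(z) = \sum_i (z-\sigma_i^2)^{-1} v_i v_i^T + z^{-1}P_{V^\perp}$ turns each word into a sum of scalar contour integrals multiplied by products of the form
\begin{equation*}
    v_{i_0} \cdot (v_{i_0}^T X v_{i_1})\cdot (v_{i_1}^T X v_{i_2}) \cdots (v_{i_{k-1}}^T X v_{i_k}) \cdot v_{i_k}^T,
\end{equation*}
where each inner factor evaluates, after using $A = U\Sigma V^T$, to one of $\sigma_{i_\ell}(u_{i_\ell}^T E v_{i_{\ell+1}})$, $\sigma_{i_{\ell+1}}(v_{i_\ell}^T E^T u_{i_{\ell+1}})$, or $v_{i_\ell}^T E^T E v_{i_{\ell+1}}$; the $P_{V^\perp}$ pieces produce factors of $(E^TE)^a E^T U$ or similar at the endpoints. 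The contour integral in $z$ over the scalar part is evaluated by residues exactly as in \cite{phuctranVu2023}. The off-diagonal indices ($i_\ell\ne i_{\ell+1}$) are controlled by $\|U^TEV\|_\infty$ and $y$, while the diagonal indices are absorbed by the gap $\Delta_S$ arising from the residue calculus.

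To pass from the spectral-norm estimate of \cite{phuctranVu2023} to the infinity norm, I would take $(i,j)$-entries by sandwiching the expression between $e_i^T$ and $e_j$. The key observation is that the only places where standard basis vectors $e_i$ meet the product are at the \emph{endpoints} of each word; those endpoints are always of the form $e_i^T (E^TE)^a V$, $e_i^T (E^TE)^a E^T U$, and their transposes for the $U$-side. By definition these are precisely the quantities controlled by $\myD_1, \myD_2$ (divided by appropriate powers of $\|E\|$), so each row-norm factor is at most $\sqrt{r}\myD_j\|E\|^{a}$ or $\sqrt{r}\myD_j\|E\|^{a+1}$. The interior of the word is controlled exactly as in the spectral-norm proof, producing a product that, after combining with the gap factors from the residues, telescopes to give each of the three geometric combinations $\|E\|/\sigma_S$, $2r\|U^TEV\|_\infty/\Delta_S$, and $2ry/(\Delta_S \sigma_S)$. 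One endpoint bound gives the $2$-to-$\infty$ estimate \eqref{eq:s-sing-vec-row} with a single factor of $\myD_1$; sandwiching on both sides gives the entrywise bound \eqref{eq:s-sing-vec-entry} with $\myD_1^2$; for \eqref{eq:s-rank-approx-entry} one represents $\tilde A_s - A_s$ by the contour integral of $\sqrt{z}\,R_{(\cdot)}(z)$ (or equivalently treats the $U$ and $V$ sides separately and reassembles via $\tilde A_s = \tilde U_s \tilde\Sigma_s \tilde V_s^T$), yielding the mixed factor $\myD_1 \myD_2$.

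The main obstacle is the bookkeeping: controlling how $\myD_1,\myD_2$ appear \emph{only once} (respectively twice) from the endpoints while the $O(\log N)$ interior factors each contribute only spectral-type quantities. This requires carefully isolating the endpoint $(E^TE)^a V$ or $(EE^T)^a U$ subwords from the interior $v_i^T E^T E v_j$ off-diagonal factors, and checking that the decomposition $X = A^TE + E^TA + E^TE$ never forces an $e_i^T$ into the middle of a word. Once this is set up correctly, the combinatorial count is essentially the same as in \cite{phuctranVu2023}: the number of words of length $k$ is $3^k$, which is absorbed by the geometric smallness of each term under \eqref{eq:main-deterministic-R123}, and the truncation at $k = 10\log(m+n)$ leaves a tail of size $O(N^{-c})$ that fits into the constant $C$.
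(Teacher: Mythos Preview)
Your high-level framework—contour integral for the projector, Neumann expansion of the resolvent difference, isolating endpoint factors to produce the $\myD_j$'s while the interior contributes only spectral-type quantities—is exactly the skeleton of the paper's proof. But you are missing one concrete technical device that the paper relies on, and its absence creates a real gap in your treatment of \eqref{eq:s-rank-approx-entry}.

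The paper does not work with $(zI-A^TA)^{-1}$. Instead it symmetrizes: set $\sym{A}=\begin{pmatrix}0&A\\A^T&0\end{pmatrix}$, so that the eigenvalues of $\sym{A}$ are $\pm\sigma_i$ with eigenvectors $w_i=\tfrac{1}{\sqrt2}\bigl(u_i^T,\pm v_i^T\bigr)^T$, and the perturbation is the single matrix $\sym{E}$. After the rewrite $\sum_i\frac{P_i}{z-\lambda_i}+\frac{Q}{z}=\sum_i\frac{\lambda_iP_i}{z(z-\lambda_i)}+\frac{I}{z}$, each ``identity block'' in the expansion is a pure power $\sym{E}^{\alpha}$, whose block structure is exactly $(EE^T)^a$, $(E^TE)^a$, $(EE^T)^aE$, $(E^TE)^aE^T$—precisely the objects appearing in the definitions of $\myD_1,\myD_2$. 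In your formulation the identity blocks are powers of $X=A^TE+E^TA+E^TE$; expanding $X^{\alpha}$ over three-letter words does not collapse to the $\myD_j$ quantities without a further (nontrivial) reorganization, and the factor-of-$3^k$ combinatorics you allude to is not the same count the paper actually runs.

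More seriously, your representation of $\tilde A_s-A_s$ via ``$\sqrt{z}\,R_{(\cdot)}(z)$'' does not work: $\sqrt{z}$ has a branch cut, and in any case $\tfrac{1}{2\pi i}\oint\sqrt{z}(zI-A^TA)^{-1}dz$ produces $V_s\Sigma_sV_s^T$, not $A_s$. The alternative you mention (reassemble from $\tilde U_s\tilde\Sigma_s\tilde V_s^T$) would require controlling each factor in the right norm and then recombining, which is substantially more than what you have sketched. The paper avoids all of this: because $\sym{A}$ has eigenvalues $\pm\sigma_i$, one gets $(\sym{A})_S$ directly as $\tfrac{1}{2\pi i}\oint_{\Gamma_S} z\,(zI-\sym{A})^{-1}dz$ with $\Gamma_S$ enclosing $\{\pm\sigma_i:i\in S\}$, and the off-diagonal block of $(\sym{\tilde A})_s-(\sym{A})_s$ is exactly $\tilde A_s-A_s$. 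This is why all three bounds \eqref{eq:s-sing-vec-entry}--\eqref{eq:s-rank-approx-entry} come from a single lemma (the paper's Lemma~\ref{lem:T-bound}) applied with $\nu=0$ or $\nu=1$ and different choices of the sandwiching vectors $M,M'$. Adding the symmetrization step to your outline would close the gap and bring your argument in line with the paper's.
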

}
Analogous bounds
    % to Eqs. \eqref{eq:s-sing-vec-entry-random} and \eqref{eq:s-sing-vec-row-random}
for $U$ and $\tilde{U}$ hold, with $U$ and $V$ swapped.

It is worth noting that the term
\begin{equation*}
    r\left(
        \frac{\|E\|}{\sigma_s}
        + \frac{2r\|U^TEV\|_\infty}{\delta_s}
        + \frac{2ry}{\delta_s\sigma_s}
    \right)
\end{equation*}
is identical to the bound on $\bigl\|\tilde{V}_S\tilde{V}_S^T - V_SV_S^T\bigr\|_{\textup{op}}$ in \cite{phuctranVu2023}, which is tight when $r = O(1)$.
An routine corollary of their main result also implies that
\begin{equation*}
    r\sigma_s\left(
        \frac{\|E\|}{\sigma_s}
        + \frac{2r\|U^TEV\|_\infty}{\delta_s}
        + \frac{2ry}{\delta_s\sigma_s}
    \right)
\end{equation*}
is a tight upper bound for $\|\tilde{A}_s - A_s\|$ when $r = O(1)$.

% The theorem above is deterministic and works for all perturbation matrices $E$.
% When $E$ is random, we can plug in high-probability estimates for the terms therein to get a ``random'' version.

% Let us interpret the meanings of the terms to get a sense of which expressions to plug in.
% Suppose the entries of $E$ are independent with mean $0$, variance at most $\stdE^2$ and almost sure upper bound $\upperE$.
% The term $\uppnormE$ plays the role of $\|E\|$, but defining it as an upper bound offers more flexibility for Eq. \eqref{eq:main-determistic-E-power}.
% Various works on random matrix norms \cite{bandeira2014,vu2005} give the estimate $\uppnormE = O(\stdE\sqrt{m + n})$.

\bigskip

{\bf \noindent Sharpness of the results.}
The terms
$\myD_1$ and $\myD_2$
% $\myD_i(U)$ and $\myD_i(V)$
play the roles of the coherence parameters in the matrix completion setting.
Practically, one replaces them with upper bounds when applying Theorem \ref{thm:main-deterministic}, as the theorem still works after such substitutions.
A trivial choice is $1/\sqrt{r}$ for both, since we have
\begin{equation*}
    \|(EE^T)^aU\|_{2, \infty} \le \|E\|^{2a},
    \quad \|(E^TE)^aEU\|_{2, \infty} \le \|E\|^{2a + 1},
\end{equation*}
and analogously for $V$.
This is the best we can have in worst case analysis.

However, if $E$ and $A$ interact favorably, then we can get much better estimates. Let us first consider a bound from below. 
Setting $a = 0$, %and ignoring the second terms in the second maximum,
we get from Eq. \eqref{eq:main-determistic-myD12} the lower bounds
\begin{equation*}
    \myD_1 \ge \frac{1}{\sqrt{r}}\|U\|_{2, \infty} = \sqrt{\frac{\mu(U)}{m}},
    \quad
    \myD_2 \ge \frac{1}{\sqrt{r}}\|V\|_{2, \infty} = \sqrt{\frac{\mu(V)}{n}},
\end{equation*}
where $\mu(U)$ and $\mu(V)$ are the individual incoherence parameters from Eq. \eqref{eq:candesRecht2009-A0}.

If these lower bounds are the truth,  then one gets, in philosophy, the following bounds from Theorem \ref{thm:main-deterministic} (when $r = O(1)$):
\begin{equation}  \label{eq:D\upperE-philosophy}
\begin{aligned}
    \left\|
        \tilde{V}_S\tilde{V}_S^T - V_SV_S^T
    \right\|_\infty
    & \le C\frac{\mu(V)}{n} \left\|
        \tilde{V}_S\tilde{V}_S^T - V_SV_S^T
    \right\|,
    \\[3pt]
    \left\|
        \tilde{V}_S\tilde{V}_S^T - V_SV_S^T
    \right\|_{2, \infty}
    & \le C\sqrt{\frac{\mu(V)}{n}} \left\|
        \tilde{V}_S\tilde{V}_S^T - V_SV_S^T
    \right\|,
    \\[3pt]
    \left\|
        \tilde{A}_s - A_s
    \right\|_\infty
    & \le C\sqrt{\frac{\mu(U)\mu(V)}{mn}} \left\|
        \tilde{A}_s - A_s
    \right\|.
\end{aligned}
\end{equation}
These are the best possible bounds one can hope to produce with Theorem \ref{thm:main-deterministic}.
\emph{But how good are they ?} To answer this question, let us consider a simple case where $r = O(1)$, $\mu(V)= O(1)$, and
$m = \Theta(n)$.
Assume the best possible case for the parameters $\tau_2$, which is that $\tau_2 = \sqrt{\mu(V)/n} = O(n^{-1/2})$.
In this case, Eq. \eqref{eq:s-sing-vec-entry} asserts that 
\begin{equation*}
    \left\| \tilde{V}_S \tilde{V}_S^T - V_SV_S^T \right\|_{\infty}
    = O \left(
        \frac{1}{n} \left\| \tilde{V}_S \tilde{V}_S^T - V_SV_S^T \right\|
    \right).
\end{equation*}
On the other hand, we have
% Since $s= O(1)$, the RHS has rank $O(1)$, so the spectral norm and Frobenius norm are the same, up to a constant term, and thus we have 
\begin{equation*}
    \left\| \tilde{V}_S \tilde{V}_S^T - V_SV_S^T \right\|_{\infty}
    = \Omega \left(
        \frac{1}{n} \left\| \tilde{V}_S \tilde{V}_S^T - V_SV_S^T \right\|_F
    \right)
    = \Omega \left(
        \frac{1}{n} \left\| \tilde{V}_S \tilde{V}_S^T - V_SV_S^T \right\|.
    \right)
\end{equation*}
Therefore, our bound says that in the best case scenario, the largest entry of the matrix is of the same magnitude as the average one, making Eq. \eqref{eq:s-sing-vec-entry} sharp.
The sharpness (in the best case) of Eq. \eqref{eq:s-sing-vec-row} and Eq. \eqref{eq:s-rank-approx-entry} can be argued similarly.

\vskip2mm 

Now, the question is: \emph{How close can we come to the best-case scenarios for $\myD_1$ and $\myD_2$?}
It turns out that when $E$ is random and the entries of $U$ (and respectively $V$) are sufficiently spread out, then, given the above lowr bounds, we can achieve very close upper bounds for $\myD_1$ (and respectively $\myD_2$), which are only a polylogartihmic term appart (see the discussion on the random setting below)

\vskip2mm

{\bf \noindent The general incoherence assumption.} Let us point out that in Theorem \ref{thm:main-deterministic}, the incoherence assumption 
(hidden in the definition of $\tau_1, \tau2$ is quite different from 
the incoherence assumption discussed in the introduction, concerning the matrix completion problem. The original incoherence assumption starts to appear when we consider $E$ to be a random matrix. Assume, for simplicity, that $E$ is $n$ by $n$ with iid  entries $N(0,1)$, then  it is easy to show that the matrix $EE^T/ \| E \|^2$ is (with high probability) is close to 
the indentity matrix $I_n$.  Thus, the quantity 
$\frac{ \| (EE^T) U \|_{2,\infty}} { \| E \|^2 } $ is small if the rows of 
$U$ are short.

\bigskip

{\bf \noindent The random setting.}
Now we derive a corrollary which addresses the case when $E$ is random with independent entries.
Recall that $N = m + n$.
In this case, the term $\|U^TEV\|_\infty$ is the maximum among sums of independent random variables, which is $O\bigl(\stdE(\sqrt{\log N} + \upperE\|U\|_\infty\|V\|_\infty\log N)\bigr)$ by the Bernstein bound \cite{hoeffding1963,chernoff1952}.The term $y$ has been analyzed in \cite{phuctranVu2023} and mentioned above.
We use the trivial upper bound $y \le \|E\|^2 = O(\stdE^2N)$, which is enough to prove the next theorem.

Regarding $\myD_1$ and $\myD_2$,
our analysis later will give the estimates
\begin{equation*}
    \myD_1 = O\left(
        \log N\sqrt{\frac{\mu(U)}{m}}
        + \frac{\log^{3/2}N}{\sqrt{N}}
        + \frac{\upperE \log^3N}{\sqrt{N}}\cdot \sqrt{\frac{\mu(V)}{n}}
    \right),
\end{equation*}
and symmetrically for $\myD_2$ by swapping $U$ and $V$.
In the simple case where $\mu(U), \mu(V) = O(1)$, $m = \Theta(n)$, this estimate reduces to
\begin{equation*}
    \myD_1 = O\left(\log N\sqrt{\frac{\mu(U)}{m}}\right)
\end{equation*}
whenever $M = O(N^{1/2}\log^{-2}N)$, which is off by a factor $\log N$ from the ideal case.

Combining the observations above, we get the following ``random'' version of Theorem \ref{thm:main-deterministic}:

\begin{theorem}  \label{thm:main-random}
    Consider the objects in Setting \ref{set:main}.
    % Suppose $E$ is a random matrix following Setting \eqref{eq:noise-model} with parameters $\upperE$ and $\stdE$ such that
    % $\upperE \le (m + n)^{1/2}\log^{-5-\eps}(m + n)$ for some constant $\eps > 0$.
    Let $\eps \in (0, 1)$ be arbitrary
    Suppose $E$ is a random $m\times n$ matrix with independent entries satisfying:
    \begin{equation}  \label{eq:noise-model}
        \E{E_{ij}} = 0,
        \quad \E{|E_{ij}|^2} \le \stdE^2,
        \quad \E{|E_{ij}|^l} \le \upperE^{l - 2}\stdE^l \quad \text{ for all } \ p\in \N_{\ge 2},
    \end{equation}
    where $\upperE$ and $\stdE$ are parameters.
    Let $N = m + n$.
    Define
    \begin{equation}  \label{eq:main-random-myD12}
        \myD_1
        \defeq \frac{\|U\|_{2, \infty}\log N}{\sqrt{r}}
        + \frac{\upperE\|V\|_{2, \infty}\log^3N}{\sqrt{rN}}
        + \frac{\log^{3/2}N}{\sqrt{N}},
    \end{equation}
    and define $\myD_2$ symmetrically by swapping $U$ and $V$.
    For an arbitrary subset $S\subset [r]$, suppose
    % \begin{equation}  \label{eq:main-random-S-cond}
    % \begin{aligned}
    %     \frac{\stdE\sqrt{m + n}}{\sigma_S}
    %     \ \vee \
    %     \frac{r\stdE(\log^{1/2}\frac{1}{\eps} + \upperE\|U\|_\infty\|V\|_\infty\log\frac{1}{\eps})}{\Delta_S}
    %     \ \vee \
    %     \frac{\stdE\sqrt{r(m + n)}}{\sqrt{\Delta_S\sigma_S}}
    %     \le \frac{1}{16}.
    % \end{aligned}
    % \end{equation}
    % There are universal constants $c$ and $C$ such that:
    % If $\upperE \le c(m + n)^{1/2}\log^{-5}(m + n)$
    % % and $S$ satisfies $R_1 \vee R_2 \le 1/8$
    % ,
    % then with probability at least $1 - \eps - O((m + n)^{-1})$,
    \begin{equation}  \label{eq:main-random-S-cond}
    \begin{aligned}
        \frac{\stdE\sqrt{N}}{\sigma_S}
        \ \vee \
        \frac{
            r\stdE(\sqrt{\log N}
            + \upperE\|U\|_\infty \|V\|_\infty \log N)
        }{\Delta_S}
        \ \vee \
        \frac{\stdE\sqrt{rN}}{\sqrt{\Delta_S\sigma_S}}
        \le \frac{1}{16}.
    \end{aligned}
    \end{equation}
    Let
    \begin{equation*}
        R_S \defeq \frac{\stdE\sqrt{N}}{\sigma_S}
        + \frac{
            r\stdE(\sqrt{\log N}
            + \upperE\|U\|_\infty \|V\|_\infty \log N)
        }{\Delta_S}
        + \frac{2r\stdE^2 N}{\Delta_S\sigma_S}.
    \end{equation*}
    There are universal constants $c$ and $C$ such that:
    If $\upperE \le cN^{1/2}\log^{-5}N$
    % and $S$ satisfies $R_1 \vee R_2 \le 1/8$
    ,
    then with probability at least $1 - O(N^{-1})$,
    \begin{align}
        \label{eq:s-sing-vec-entry-random}
        \left\|\tilde{V}_S\tilde{V}_S^T - V_SV_S^T\right\|_\infty
        & \le C \myD_2^2
        r R_S.
        % \left(
        %     \frac{\stdE\sqrt{m + n}}{\sigma_S}
        %     + \frac{r\stdE\sqrt{\log (m + n)}}{\Delta_S}
        %     + \frac{2r\stdE^2(m + n)}{\Delta_S\sigma_S}
        % \right)
        % + \frac{1}{N},
        \\
        \label{eq:s-sing-vec-row-random}
        \left\|\tilde{V}_S\tilde{V}_S^T - V_SV_S^T\right\|_{2, \infty}
        & \le C \myD_2
        r R_S.
        % \left(
        %     \frac{\stdE\sqrt{m + n}}{\sigma_S}
        %     + \frac{r\stdE\sqrt{\log (m + n)}}{\Delta_S}
        %     + \frac{2r\stdE^2(m + n)}{\Delta_S\sigma_S}
        % \right)
        % + \frac{1}{N}.
    \end{align}
    Analogous bounds
    % to Eqs. \eqref{eq:s-sing-vec-entry-random} and \eqref{eq:s-sing-vec-row-random}
    for $U$ and $\tilde{U}$ hold, with $\myD_2$ replacing $\myD_1$.
    
    When $S = [s]$ for some $s\in [r]$, we slightly abuse the notation to let
    \begin{equation*}
        R_s \defeq R_{[s]}
        = \frac{\stdE\sqrt{N}}{\sigma_s}
        + \frac{r\stdE(\sqrt{\log N} + \upperE\|U\|_\infty \|V\|_\infty\log N)}{\delta_s}
        + \frac{2r\stdE^2 N}{\delta_s\sigma_s}.
    \end{equation*}
    Then with probability $1 - O(N^{-1})$,
    \begin{equation}  \label{eq:s-rank-approx-entry-random}
        \|\tilde{A}_s - A_s\|_\infty
        \le C \myD_1 \myD_2
        r\sigma_s R_s.
        % \left(
        %     \frac{\stdE\sqrt{m + n}}{\sigma_s}
        %     + \frac{r\stdE\sqrt{\log (m + n)}}{\delta_s}
        %     + \frac{2r\stdE^2(m + n)}{\delta_s\sigma_s}
        % \right)
        % + \frac{1}{N}.
    \end{equation}
    % where $T(E, U, V) \defeq \inf \big\{t: \Pr(\|V^TE^TEV + U^TEE^TU\|_\infty \ge 2t) \le (m + n)^{-1}\big\}$.
    Furthermore, for each $\eps > 0$, if the term $\frac{2r\stdE^2 N}{\Delta_S\sigma_S}$ in $R_S$ is replaced with
    \begin{equation*}
        \frac{r}{\Delta_S\sigma_S} \inf \left\{
            t: \Pr\left(
                \max_{i\neq j} (|v_iE^TEv_j| + |u_iEE^Tu_j|) \le 2t
            \right) \ge 1 - \eps
        \right\},
    \end{equation*}
    then all three bounds above hold with probability at least $1 - \eps - O(N^{-1})$.
\end{theorem}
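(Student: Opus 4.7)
}
The strategy is to apply the deterministic Theorem \ref{thm:main-deterministic} on the event that all the relevant random quantities, namely $\|E\|$, $\|U^T E V\|_\infty$, $y$, $\myD_1$ and $\myD_2$, are small. The bounds stated in the theorem are exactly what one obtains from Eqs.\ \eqref{eq:s-sing-vec-entry}, \eqref{eq:s-sing-vec-row} and \eqref{eq:s-rank-approx-entry} after substituting these high-probability estimates. The gap condition \eqref{eq:main-random-S-cond} with constant $1/16$ is picked so that, after replacing each term by its high-probability upper bound (which will cost at most a factor of $2$), the deterministic gap condition \eqref{eq:main-deterministic-R123} with constant $1/8$ is satisfied. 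Thus the crux of the proof is to establish the concentration estimates and then simply plug them in.

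For the ``easy'' quantities, one uses well-known tools. The operator norm obeys $\|E\| \le 2\stdE\sqrt{N}$ with probability $1-O(N^{-1})$ by the random matrix norm bounds of \cite{bandeira2014,vu2005} under the moment assumption \eqref{eq:noise-model}, provided $\upperE$ is not too large (this is why we assume $\upperE \le cN^{1/2}\log^{-5}N$). The infinity norm $\|U^T E V\|_\infty$ is the maximum of $r^2$ sums of independent mean-zero random variables $\sum_{k,l} U_{ki} E_{kl} V_{lj}$ whose variance is at most $\stdE^2$ and whose entries are bounded by $\upperE \|U\|_\infty \|V\|_\infty$; Bernstein's inequality combined with a union bound over the $r^2 \le N^2$ entries yields $\|U^T E V\|_\infty = O(\stdE(\sqrt{\log N} + \upperE \|U\|_\infty \|V\|_\infty \log N))$ with probability $1-O(N^{-1})$. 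For the main statement we use the crude bound $y \le \|E\|^2 = O(\stdE^2 N)$; for the refined statement at the end of the theorem one simply plugs the hypothesis on $y$ directly into Eq. \eqref{eq:main-deterministic-R123} and \eqref{eq:s-rank-approx-entry}, which is why the success probability becomes $1-\eps-O(N^{-1})$.

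The main obstacle, and the reason Section 5 is devoted entirely to this theorem, is controlling $\myD_1$ and $\myD_2$. We need to show that for every integer $a$ with $0 \le a \le 10\log N$ simultaneously,
\begin{equation*}
    \frac{\|(EE^T)^a U\|_{2,\infty}}{\|E\|^{2a}}
    \ \vee \ \frac{\|(EE^T)^a E V\|_{2,\infty}}{\|E\|^{2a+1}}
    \ \le \ \sqrt{r}\, \myD_1
\end{equation*}
with probability $1-O(N^{-1})$, and symmetrically for $\myD_2$. The plan is a moment method: fix a row index $i$ and a large even integer $L=\Theta(\log N)$, then estimate $\E{\|e_i^T (EE^T)^a U\|_2^{L}}$ by expanding it as a multiple sum over $L(2a+1)$ indices and a product of entries of $E$. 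Nonzero contributions correspond to combinatorial configurations in which every distinct pair $(k,l)$ appears at least twice, so the dominant terms can be enumerated along diagrammatic patterns (``paths'' and ``trees'') of length $2a$. Each such configuration produces a product involving $\stdE$, $\upperE$, and at most one row or entry norm of $U$ or $V$, which accounts for the three terms in the definition \eqref{eq:main-random-myD12} of $\myD_1$: the principal term $\|U\|_{2,\infty}\log N/\sqrt{r}$ coming from leaves of $U$, the intermediate term $\log^{3/2}N/\sqrt{N}$ coming from configurations that contract completely in $EE^T$, and the term involving $\upperE \|V\|_{2,\infty}$ coming from paths in which $V$ appears at a leaf via the $E V$ factor. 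A final Markov inequality with a union bound over $i \le N$ and $a \le 10\log N$ upgrades this to the required high-probability bound.

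Granted these concentration statements, the conclusion is mechanical. Under \eqref{eq:main-random-S-cond}, multiplying each term of the deterministic gap condition by at most $2$ keeps it under $1/8$, so Theorem \ref{thm:main-deterministic} applies. Substituting $\|E\|\le 2\stdE\sqrt{N}$, the above Bernstein bound for $\|U^T E V\|_\infty$, the bound $y = O(\stdE^2 N)$, and the estimates on $\myD_1,\myD_2$ into \eqref{eq:s-sing-vec-entry}, \eqref{eq:s-sing-vec-row}, \eqref{eq:s-rank-approx-entry} produces exactly the right-hand sides of \eqref{eq:s-sing-vec-entry-random}, \eqref{eq:s-sing-vec-row-random}, \eqref{eq:s-rank-approx-entry-random} with $R_S$ (respectively $R_s$) as defined. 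The refinement in which the $y$-term is replaced by an arbitrary quantile bound follows by the same substitution, costing an extra $\eps$ in the failure probability.
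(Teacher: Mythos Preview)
Your proposal is essentially correct and mirrors the paper's approach: bound $\|E\|$, $\|U^TEV\|_\infty$ and $y$ by standard tools (Lemma~\ref{lem:E-bound-basic}), bound the semi-isotropic quantities by a moment/walk-counting argument (Lemma~\ref{lem:E-power-union-bound}), then feed everything into Theorem~\ref{thm:main-deterministic}.

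There is one technical point you slide past. You phrase the semi-isotropic task as bounding
\[
    \frac{\|(EE^T)^a U\|_{2,\infty}}{\|E\|^{2a}}
\]
with high probability, and your moment computation controls the \emph{numerator} by a multiple of $(\stdE\sqrt{N})^{2a}$. That does not automatically control the ratio, because $\|E\|$ sits in the denominator and is random; you would need a matching high-probability \emph{lower} bound on $\|E\|$, which you never invoke. The paper avoids this entirely: it observes (see the discussion around Eq.~\eqref{eq:main-determistic-E-power}) that the proof of Theorem~\ref{thm:main-deterministic} goes through verbatim if in the definition of $\myD_1,\myD_2$ one replaces $\|E\|$ by any deterministic $\uppnormE\ge\|E\|$, and then takes $\uppnormE=2\stdE\sqrt{N}$. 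With that adjustment your moment bound on the numerator plugs in directly. This is a small but necessary detour; without it the argument as you wrote it does not close.
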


Going back to the matrix completion setting, we can use this theorem to prove Theorem \ref{thm:dk-matcom}.

\begin{proof}[Proof of Theorem \ref{thm:dk-matcom}]
    Let $\stdE = \mcK/\sqrt{p}$ and $\upperE = 1/\sqrt{p}$.
    Then for $C$ sufficiently large, $p \ge C(m^{-1} + n^{-1})\log^{10} N$ implies $\upperE \le c\sqrt{N}\log^{-5}N$, meaning we can apply Theorem \ref{thm:main-random}, specifically Eq. \eqref{eq:s-rank-approx-entry-random} for this choice of $\stdE$ and $\upperE$ if the condition \eqref{eq:main-random-S-cond} holds.
    We check it for $S = [s]$.
    Given that $\sigma_s \ge \delta_s \ge 40\mcK\sqrt{rN/p}$, we have
    \begin{equation*}
        \frac{\stdE\sqrt{N}}{\sigma_S}
        = \frac{\mcK}{\sigma_S} \sqrt{\frac{rN}{p}}
        \le \frac{1}{40\sqrt{r}} < \frac{1}{16},
        \quad \
        \frac{\stdE\sqrt{rN}}{\sqrt{\delta_s\sigma_s}}
        \le \frac{\mcK\sqrt{rN}}{\sqrt{p}\cdot 40r\mcK\sqrt{rN/p}}
        \le \frac{1}{40} < \frac{1}{16},
    \end{equation*}
    and, using the fact $\mu_0 \le N$ and the assumption $r \le \log^2 N$,
    \begin{equation*}
    \begin{aligned}
        & \frac{r\stdE(\sqrt{\log N} + \upperE\|U\|_\infty\|V\|_\infty \log N}{\delta_S}
        \le \frac{r\mcK\sqrt{\log N}}{\delta_s\sqrt{p}}
        + \frac{r^2\mcK\mu_0\log N}{\delta_s p\sqrt{mn}}
        \\
        & \le \frac{\sqrt{r}\log N}{40\sqrt{N}}
        + \frac{r^{3/2}\mu_0\log N}{\sqrt{pmnN}}
        \le \frac{\sqrt{r}\log N}{40\sqrt{N}}
        + \frac{r^{3/2}\mu_0\log N}{\sqrt{C}N\log^5 N}
        \le \frac{1}{\log N} < \frac{1}{16}.
    \end{aligned}
    \end{equation*}
    It remains to transform the right-hand side of Eq. \eqref{eq:s-rank-approx-entry-random} to the right-hand side of Eq. \eqref{eq:dk-matcom}.
    We have
    \begin{equation*}
        \myD_1
        \le \frac{\sqrt{\mu_0}\log^3N}{\sqrt{pmN}}
        + \frac{\log^{3/2}N}{\sqrt{N}} + \frac{\sqrt{\mu_0}\log N}{\sqrt{n}}
        \le \frac{\log^{3/2}N}{\sqrt{N}}
        + \frac{\sqrt{2\mu_0}\log N}{\sqrt{n}}.
        % \le \frac{\upperE \sqrt{\mu_0}\log^3 N}{\sqrt{mN}}
        % + \frac{\sqrt{\mu_0}\log N}{\sqrt{n}}
        % + \frac{\log^{3/2}N}{\sqrt{N}}
        % \le \sqrt{\mu_0} \left(
        %     \frac{1}{\sqrt{n}} + \frac{\log^{-2}N}{\sqrt{m}}
        % \right) + \frac{\log^{3/2}N}{\sqrt{N}}.
    \end{equation*}
    Combining with the symmetric bound for $\myD_2$, we get
    \begin{equation*}
        \myD_1\myD_2
        \le \frac{\log^3N}{N}
        + \frac{\sqrt{\mu_0}\log^{5/2}N}{\sqrt{N}}
            \cdot \frac{\sqrt{2m} + \sqrt{2n}}{\sqrt{mn}}
        + \frac{2\mu_0\log^2N}{\sqrt{mn}}
        % \\
        % &
        % \le \log^2N
        % \frac{\log N + 4\sqrt{\mu_0}\sqrt{\log N} + 4\mu_0}{2\sqrt{mn}}
        \le 4\log^2N
        \frac{\log N + \mu_0}{\sqrt{mn}},
    \end{equation*}
    which is the first factor on the right-hand side of Eq. \eqref{eq:dk-matcom}.
    
    Consider the term $R_s$.
    From the above, we have
    \begin{equation*}
        R_s \le \frac{\mcK}{\sigma_S} \sqrt{\frac{rN}{p}}
        + \frac{r\mcK\sqrt{\log N}}{\delta_s\sqrt{p}}
        + \frac{r^2\mcK\mu_0\log N}{\delta_s p\sqrt{mn}}
        + \frac{\mcK^2 rN}{p\delta_s\sigma_s}.
    \end{equation*}
    Since $\delta_s \ge 40K\sqrt{rN/p}$, the fourth term is at most $1/40$ of the first term.
    Removing it recovers exactly the second factor on the right-hand side of Eq. \eqref{eq:dk-matcom}.
    The proof is complete.
\end{proof}

In the next two section, we will prove the main theorems.
The technical bounds whose proof do not fit in the main body will be in Section \ref{sec:technical-lemmas}.

\section{Proof of main results}

{\oldcomment{}
As mentioned, Theorem \ref{thm:main-random} is a corollary of \ref{thm:main-deterministic} when the noise matrix is random.
In actuality, Theorem \ref{thm:main-deterministic} is a slightly simplified version of the full argument for the deterministic case and does not directly lead to the random case.
However, the reader can be assured that the changes needed to make Theorem \ref{thm:main-deterministic} imply Theorem \ref{thm:main-random} are trivial, and will be discussed when we prove the latter.

\bigskip

\textbf{Proof structure.} First, we will assume Theorem \ref{thm:main-deterministic} and use it to prove Theorem \ref{thm:main-random}, which directly implies Theorem \ref{thm:dk-matcom}.
% It is a consequence of the determistic version, Theorem \ref{thm:main-deterministic}.
The proof contains a novel high-probability \emph{semi-isotropic} bound for powers of a random matrix, which can be of further independent interest.

We will then discard the random noise context and prove Theorem \ref{thm:main-deterministic}.
The proof adapts the contour integral technique in \cite{phuctranVu2023}, but with highly non-trivial adjustments to handle the inifnity norm, ínstead of spectral norm as in \cite{phuctranVu2023}. The proof roughly has two steps:
\begin{enumerate}
    \item Rewrite the quantities on the left-hand sides of the bounds in Theorem \ref{thm:main-deterministic} as a power series in terms of $E$, similar to a Taylor expansion.

    \item Devise a bound that decays exponentially for each power term, and sum them up as a geometric series to obtain a bound on the quantities of interest.
    The final bound, Lemma \ref{lem:T-bound}, will be general enough to imply all three of bounds of Theorem \ref{thm:main-deterministic}.
\end{enumerate}

The structure for this section will be:
\begin{center}
    Theorem \ref{thm:main-random}
    \ $\xleftarrow[]{\quad \text{implied by} \quad}$ \
    Theorem \ref{thm:main-deterministic}
    \ $\xleftarrow[]{\quad \text{implied by} \quad}$ \
    Lemma \ref{lem:T-bound}.
\end{center}
}

\subsection{The random version: Proof of Theorem \ref{thm:main-random}} \label{sec:main-proof-random}

In this section, we prove Theorem \ref{thm:main-random}, assuming Theorem \ref{thm:main-deterministic}.
First, consider the term
\begin{equation*}
    \frac{\|E\|}{\sigma_S} \vee \frac{2r\|U^TEV\|_\infty}{\Delta_S}
    \vee \frac{\sqrt{2r}\|E\|}{\sqrt{\sigma_S\Delta_S}}
\end{equation*}
from the condition \eqref{eq:main-deterministic-R123}.
Let us replace the terms related to $E$ in the above with their respective high-probability bounds.
\begin{itemize}
    \item $\|E\|$.
    There are tight bounds in the literature.
    For $E$ following the Model \eqref{eq:noise-model}, with the assumption $\upperE \le (m + n)^{1/2}\log^{-5}(m + n)$, the moment argument in \cite{vu2005} can be used.

    \item $\|U^TEV\|_\infty = \max_{i, j} |u_i^TEv_j|$.
    These terms can be bounded with a simple Bernstein bound.
    
    \item $y = \frac{1}{2}\max_{i\neq j}(|u_iEE^Tu_j| + |v_iE^TEv_j|)$.
    The terms inside the maximum function can be bounded with the moment method.
    The most saving occurs when $E$ is a stochastic matrix, meaning its row norms and column norms have the same second moment.
    For the purpose of proving Theorem \ref{thm:main-random}, the naive bound $\|E\|^2$ suffices.
    
    % \item $\myD_1$, $\myD_2$, $\uppnormE$ from Eq. \eqref{eq:main-determistic-E-power}.
    % Choosing them is equivalent to bounding terms of the form
    % \begin{equation*}
    %     \|e_k^T(EE^T)^aU\|,
    %     \quad \|e_k^T(EE^T)^aEV\|,
    %     \quad \|e_l^T(E^TE)^aV\|,
    %     \quad \|e_l^T(E^TE)^aE^TU\|,
    % \end{equation*}
    % uniformly over all choices for $k\in [m]$, $l\in [n]$ and $0 \le a \le 10\log(m + n)$.
    % We will use the moment method, with walk-counting, to bound these terms.
    
\end{itemize}

{\oldcomment{}
Upper-bounding these three is routine, which we summarize in the lemma below.
}

\begin{lemma}  \label{lem:E-bound-basic}
    Consider the objects in Setting \ref{set:main}.
    Let $E\in \R^{m\times n}$ be a random matrix satisfying Model \eqref{eq:noise-model} with parameters $\upperE$ and $\stdE$.
    Suppose $\upperE \le (m + n)^{1/2}\log^{-3}(m + n)$.
    Then with probability $1 - O((m + n)^{-2})$, all of the following hold:
    \begin{align}  \label{eq:E-norm-bound}
        & \|E\| \le 1.9\stdE\sqrt{m + n} \le 2\stdE\sqrt{m + n},
        \\
        \label{eq:E^2-isotropic-bound}
        & \max_{i\neq j}(|u_iEE^Tu_j| + |v_iE^TEv_j|)
        \le 2\|E\|^2 \le 8\stdE^2(m + n).
        \\
        \label{eq:E-isotropic-bound}
        & \max_{i, j} |u_i^TEv_j|
        \le 2\stdE(\sqrt{\log (m + n)} + \upperE\|U\|_\infty\|V\|_\infty\log (m + n)).
    \end{align}
\end{lemma}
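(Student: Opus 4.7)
The plan is to establish each of the three bounds separately; they are essentially independent facts, and the first bound feeds into the second one.

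First I would treat the spectral norm bound \eqref{eq:E-norm-bound}. Under the moment model \eqref{eq:noise-model} with $\upperE \le (m+n)^{1/2}\log^{-3}(m+n)$, this is precisely the regime covered by the moment method bound in \cite{vu2005} (and the refined versions in \cite{bandeira2014}), which gives $\|E\| \le (2 + o(1))\stdE\sqrt{m + n}$ with probability $1 - O((m+n)^{-2})$. To hit the explicit constant $1.9$ (with room to absorb the $o(1)$), I would invoke the high-moment version: bounding $\E{\|E\|^{2k}} \le \E{\Tr(EE^T)^k}$ and expanding the trace as a sum over closed walks, then using the sub-Gaussian-type tail implied by the moment assumption. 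The condition on $\upperE$ ensures that the contribution of walks with large-multiplicity edges is negligible.

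Second, I would handle \eqref{eq:E^2-isotropic-bound} as an immediate corollary of \eqref{eq:E-norm-bound}. For any unit vectors $x, y \in \R^m$, one has $|x^T (EE^T) y| \le \|EE^T\| = \|E\|^2$ simply by the operator norm definition; in particular this holds for $x = u_i, y = u_j$, and symmetrically for $v_i, v_j$ with $E^T E$. Summing the two terms gives the factor $2\|E\|^2$, and plugging in $\|E\| \le 1.9\stdE\sqrt{m+n}$ yields $2\|E\|^2 \le 2(1.9)^2\stdE^2(m+n) < 8\stdE^2(m+n)$.

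Third, for \eqref{eq:E-isotropic-bound} I would apply Bernstein's inequality to the bilinear form $u_i^T E v_j = \sum_{k, l} (u_i)_k (v_j)_l E_{kl}$, which is a sum of independent, mean-zero random variables. The variance satisfies
\begin{equation*}
    \sum_{k, l} (u_i)_k^2 (v_j)_l^2 \E{E_{kl}^2} \le \stdE^2 \sum_{k} (u_i)_k^2 \sum_{l} (v_j)_l^2 = \stdE^2,
\end{equation*}
while each summand is bounded in higher moments by $\stdE \upperE \|U\|_\infty \|V\|_\infty$ in absolute value (in the sub-exponential sense implied by the moment bound in \eqref{eq:noise-model}). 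Bernstein's inequality then yields, for some constant $c$,
\begin{equation*}
    \Pr\bigl(|u_i^T E v_j| \ge t\bigr) \le 2\exp\!\left(-c \min\!\left\{\frac{t^2}{\stdE^2}, \frac{t}{\stdE \upperE\|U\|_\infty \|V\|_\infty}\right\}\right).
\end{equation*}
Choosing $t = 2\stdE(\sqrt{\log(m+n)} + \upperE\|U\|_\infty\|V\|_\infty\log(m+n))$ makes the right side $O((m+n)^{-4})$, and a union bound over the at most $r^2 \le (m+n)^2$ pairs $(i, j)$ completes the estimate with the desired exceptional probability.

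The main obstacle is really just bookkeeping in the Bernstein step (tracking the correct variance proxy and bounded-moment proxy of the weighted sum, rather than the raw entries) and matching the explicit constants; the spectral norm bound is cited from the literature, and \eqref{eq:E^2-isotropic-bound} is a one-line consequence of \eqref{eq:E-norm-bound}.
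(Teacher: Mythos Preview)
Your proposal is correct and matches the paper's proof essentially step for step: the spectral norm bound is cited from \cite{vu2005}, the second bound is the trivial operator-norm consequence you describe, and the third is Bernstein applied to the bilinear form with the same variance computation, the same choice of $t$, and a union bound over the $r^2$ pairs.
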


\begin{proof}
    Eq. \eqref{eq:E-norm-bound} follows from the moment argument in \cite{vu2005}.
    Eq. \eqref{eq:E^2-isotropic-bound} follows from Eq. \eqref{eq:E-norm-bound}.
    It remains to check
    Eq. \eqref{eq:E-isotropic-bound}.
    Fix $i, j\in [r]$.
    Write
    \begin{equation*}
        u_i^TEv_j = \sum_{k\in [m], h\in [n]} u_{ik}v_{jh}E_{kh}
        = \sum_{(k, h)\in [m]\times [n]} Y_{kh},
    \end{equation*}
    where we temporarily let $Y_{kh} \defeq u_{ik}v_{jh}E_{kh}$ for convenience.
    We have $|Y_{kh}| \le \|U\|_\infty\|V\|_\infty |E_{kh}|$.
    Let $X_{kh} \defeq Y_{kh} / (\stdE \|U\|_\infty\|V\|_\infty)$, then $\{X_{kh}: (k, h)\in [m]\times [n]\}$ are independent random variables and for each $(k, h)\in [m]\times [n]$,
    \begin{equation*}
        \E{X_{kh}} = 0,
        \ \quad
        \E{|X_{kh}|^2} \le 1,
        \ \quad
        \E{|X_{kh}|^l} \le \upperE^{l - 2}
        \ \text{ for all } l\in \N.
    \end{equation*}
    We also have
    \begin{equation*}
        \sum_{k, h}\E{|X_{kh}|^2}
        = \frac{
            \sum_{k, h}u_{ik}^2v_{jh}^2\E{|E_{kh}|^2}
        }{
            \stdE^2\|U\|_\infty^2\|V\|_\infty^2
        }
        \le \frac{
            \stdE^2 \sum_{k, h}u_{ik}^2v_{jh}^2
        }{\|U\|_\infty^2\|V\|_\infty^2}
        = \frac{1}{\|U\|_\infty^2\|V\|_\infty^2}
    \end{equation*}
    By Bernstein's inequality \cite{chernoff1952},
    we have for all $t > 0$
    \begin{equation*}
        \Pr\left(
            \Bigl| \sum_{k, h} X_{kh} \Bigr| \ge t
        \right)
        \le \exp \left(
            \frac{-t^2}{\sum_{k, h}\E{|X_{kh}|^2} + \frac{2}{3}\upperE t}
        \right)
        \le \exp \left(
            \frac{-t^2}{\|U\|_\infty^{-2}\|V\|_\infty^{-2} + \frac{2}{3}\upperE t}
        \right).
    \end{equation*}
    We rescale $Y_{kh} = \stdE\|U\|_\infty\|V\|_\infty X_{kh}$ and replace $t$ with $t/(\stdE\|U\|_\infty\|V\|_\infty)$, the above becomes
    \begin{equation*}
        \Pr\left(
            \Bigl| \sum_{k, h} Y_{kh} \Bigr| \ge t
        \right)
        \le \exp \left(
            \frac{-t^2}{\stdE^2 + \frac{2}{3}\upperE\|U\|_\infty\|V\|_\infty t}
        \right).
    \end{equation*}
    Let $N = m + n$ and $t = 2\stdE(\sqrt{\log N} + \upperE\|U\|_\infty\|V\|_\infty\log N)$, we have
    \begin{equation*}
        t^2 \ge 4\stdE^2\log N,
        \ \quad
        t^2 \ge 2\upperE\|U\|_\infty\|V\|_\infty t\log N,
    \end{equation*}
    thus
    \begin{equation*}
        t^2 \ge \frac{12}{7}\Bigl(
            \stdE^2 + \frac{2}{3}\upperE\|U\|_\infty\|V\|_\infty t
        \Bigr) \log N.
    \end{equation*}
    Combining everything above, we get
    \begin{equation*}
        \Pr\left(
            |u_i^TEv_j|
            \ge 2\stdE(\sqrt{\log N} + \upperE\|U\|_\infty\|V\|_\infty\log N)
        \right)
        \le N^{-12/7}.
    \end{equation*}
    By a union bound over $(i, j)\in [r]\times [r]$,
    the proof of Eq. \eqref{eq:E-isotropic-bound} and the lemma is complete.
\end{proof}

% {\bf which one is the last bound. here you arrive at the heart of the proof and is very vague about it. ? address more, why is it hard ? {\oldcomment{} Please check the paragraphs below.}}

{\oldcomment{}
Now all that remains is computing $\myD_1$ and $\myD_2$.
More precisely, since both are random, we compute a good choice of high-probability upper bounds for them.
This, however, is likely intractable since the appearance of powers of $\|E\|$ in the denominator makes it hard to analyze the right-hand sides of Eq. \eqref{eq:main-determistic-myD12}.
To overcome this, notice that the argument in Theorem \ref{thm:main-deterministic} works in the same way if, instead of being rigidly refined by Eq. \eqref{eq:main-determistic-myD12}, $\myD_1$ and $\myD_2$ are any real numbers satisfying
\begin{equation}  \label{eq:main-determistic-E-power}
\begin{aligned}
    & \myD_1 \ge \max_{a\in [[0, 10\log(m + n)]]} \
    \frac{1}{\sqrt{r}} \max\left\{
        \frac{\left\|(EE^T)^aU\right\|_{2, \infty}}{\uppnormE^{2a}}, \
        \frac{\left\|(EE^T)^aEV\right\|_{2, \infty}}{\uppnormE^{2a + 1}}
    \right\},
    \\
    & \myD_2 \ge \max_{a\in [[0, 10\log(m + n)]]} \
    \frac{1}{\sqrt{r}} \max\left\{
        \frac{\left\|(E^TE)^aV\right\|_{2, \infty}}{\uppnormE^{2a}}, \
        \frac{\left\|(E^TE)^aE^TU\right\|_{2, \infty}}{\uppnormE^{2a + 1}}
    \right\},
\end{aligned}
\end{equation}
for some upper bound $\uppnormE \ge \|E\|$.

From this point, we will discard Eq. \eqref{eq:main-determistic-myD12} and treat $(\myD_1, \myD_2, \uppnormE)$ as any tuple that satisfies Eq. \eqref{eq:main-determistic-E-power}.
Specifically, we will choose $\myD_0(U)$, $\myD_1(U)$, $\myD_0(V)$, $\myD_1(V)$ such that
\begin{equation*}
    \forall a\in [[0, 10\log(m + n)]]: \
    \myD_0(U) \ge
    \frac{1}{\sqrt{r}} \frac{\left\|(EE^T)^aU\right\|_{2, \infty}}{\uppnormE^{2a}},
    \quad
    \myD_1(U) \ge
    \frac{1}{\sqrt{r}} \frac{\left\|(E^TE)^aE^TU\right\|_{2, \infty}}{\uppnormE^{2a + 1}}
\end{equation*}
and symmetrically for $\myD_0(V)$ and $\myD_1(V)$, with $E$ and $E^T$ swapped.
We can then simply let $\myD_1 = \myD_0(U) + \myD_1(V)$ and $\myD_2 = \myD_1(U) + \myD_0(V)$.

This is equivalent to bounding terms of the form
\begin{equation*}
    \|e_{m, k}^T(EE^T)^aU\|,
    \quad \|e_{m, k}^T(EE^T)^aEV\|,
    \quad \|e_{n, l}^T(E^TE)^aV\|,
    \quad \|e_{n, l}^T(E^TE)^aE^TU\|,
\end{equation*}
uniformly over all choices for $k\in [m]$, $l\in [n]$ and $0 \le a \le 10\log(m + n)$.
We call them \textbf{semi-isotropic bounds} of powers of $E$, due to one side of them involving generic unit vectors (isotropic part) and the other side involving standard basis vectors.

To the best of our knowledge, there has been no well-known semi-isotropic bounds, entry bounds, or isotropic bounds of powers of a random matrix in the literature.
The lemma below, which establishes such semi-isotropic bounds and gives values to $\myD_0(U)$, $\myD_1(U)$, $\myD_0(V)$, $\myD_1(V)$ and $\uppnormE$, is thus another noteworthy contribution of this paper and has potentials for further applications.

\begin{lemma}  \label{lem:E-power-union-bound}
    Let $\upperE$ and $\stdE$ be positive real numbers and $E$ be a $m\times n$ random matrix with independent entries following Model \eqref{eq:noise-model} with parameters $\upperE$ and $\stdE$.
    Let
    \begin{equation*}
        \uppnormE \defeq 1.9\stdE\sqrt{m + n}.
    \end{equation*}
    For each $p > 0$, define
    \begin{equation}  \label{eq:myD(U,p)-defn}
        \myD_0(U, p)
        \defeq \frac{p\|U\|_{2, \infty}}{\sqrt{r}},
        \quad
        \myD_1(U, p)
        \defeq \frac{\upperE p^3\|U\|_{2, \infty}}{\sqrt{r(m + n)}}
            + \frac{p^{3/2}}{\sqrt{m + n}}.
    \end{equation}
    There are universal constants $C$ and $c$ such that, for any $t > 0$, if $\upperE \le \frac{c\sqrt{m + n}}{t^2\log^2 (m + n)}$, then for each fixed $k\in [m]$, with probability $1 - O(\log^{-C}(m + n))$,
    \begin{equation}  \label{eq:E-power-bound-myD0(U)-1}
    \begin{aligned}
        \max_{0\le \alpha \le t\log (m + n)}
        \frac{\|e_{m, k}^T (EE^T)^a U\|}{\uppnormE^{2a}\sqrt{r}}
        \le \myD_0(U, \log\log (m + n))
    \end{aligned}
    \end{equation}
    for each fixed $k\in [n]$, with probability $1 - O(\log^{-C}(m + n))$,
    \begin{equation}  \label{eq:E-power-bound-myD1(U)-1}
    \begin{aligned}
        \max_{0\le \alpha \le t\log (m + n)}
        \frac{\|e_{n, k}^T (E^TE)^aE^T U\|}{\uppnormE^{2a + 1}\sqrt{r}}
        \le \myD_1(U, \log\log (m + n))
    \end{aligned}
    \end{equation}
    If the stronger bound $\upperE\le \frac{c\sqrt{m + n}}{t^2\log^5(m + n)}$ holds, then with probability $1 - O((m + n)^{-2})$,
    \begin{align}
        \label{eq:E-power-bound-myD0(U)-2}
        \max_{0\le \alpha \le t\log (m + n)} \max_{k\in [m]}
        \frac{\|e_{m, k}^T (EE^T)^a U\|}{\uppnormE^{2a}\sqrt{r}}
        & \le \myD_0(U, \log(m + n)),
        \\
        \label{eq:E-power-bound-myD1(U)-2}
        \max_{0\le \alpha \le t\log (m + n)} \max_{k\in [n]}
        \frac{\|e_{n, k}^T (E^TE)^aE^T U\|}{\uppnormE^{2a + 1}\sqrt{r}}
        & \le \myD_1(U, \log(m + n))
    \end{align}
    Analogous bounds hold for $V$, with $E$ and $E^T$ swapped.
\end{lemma}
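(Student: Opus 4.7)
The proof proceeds by the moment method combined with Markov's inequality and a union bound. I will focus on \eqref{eq:E-power-bound-myD0(U)-1} and \eqref{eq:E-power-bound-myD0(U)-2}; the bounds involving $\myD_1(U,p)$ and those with $V$ replacing $U$ are analogous with straightforward modifications.

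Fix $k\in [m]$ and $a\ge 0$. The starting point is the identity
\[
    \|e_{m,k}^T(EE^T)^a U\|_2^2
    = \bigl[(EE^T)^a UU^T (EE^T)^a\bigr]_{kk},
\]
so we are bounding a diagonal entry of a random positive semidefinite matrix. My plan is to estimate the $p$-th moment $\E{\bigl[(EE^T)^a UU^T (EE^T)^a\bigr]_{kk}^p}$ via a trace-type expansion: it equals a sum over $2p$-tuples of walks of length $2a$ in the bipartite graph on $[m]\cup [n]$, each starting and ending at vertex $k$, with a $(UU^T)_{jj'}$ insertion at the midpoint-vertex-pair of each walk. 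After taking expectation and using independence of the entries of $E$, only walk tuples in which every $E$-edge is used an even number of times survive.

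The crucial combinatorial observation is that each $UU^T$ insertion contributes $(UU^T)_{jj'} = \sum_i u_{i,j} u_{i,j'}$ at its turning point, which for the tree-type pair-matched walks (where $j=j'$) collapses to $\|U_{j,\cdot}\|^2 \le \|U\|_{2,\infty}^2$. Coupling this with the classical pair-matched tree enumeration, which supplies the base factor $\uppnormE^{4ap}$, a $C^{2p}p^{2p}$ combinatorial count, and a per-cycle cost controlled by $\upperE$, one obtains the moment bound
\[
    \E{\|e_{m,k}^T(EE^T)^a U\|_2^{2p}}
    \le (Cp)^{2p}\, \|U\|_{2,\infty}^{2p}\, \uppnormE^{4ap},
\]
and Markov's inequality yields
\[
    \Pr\!\left(\|e_{m,k}^T(EE^T)^a U\|_2 \ge eCp\,\|U\|_{2,\infty}\,\uppnormE^{2a}\right) \le e^{-2p}.
\]
For \eqref{eq:E-power-bound-myD0(U)-1} I take $p = \log\log(m+n)$ and union bound over $a \in [[0, t\log(m+n)]]$, producing failure probability $O(\log^{-C}(m+n))$. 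For \eqref{eq:E-power-bound-myD0(U)-2} I take $p = 3\log(m+n)$ and union bound additionally over $k\in[m]$; the stronger hypothesis $\upperE \le c\sqrt{m+n}/(t^2\log^5(m+n))$ supplies the extra slack required by the larger $p$ and the additional $(m+n)$-fold union bound.

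The bounds \eqref{eq:E-power-bound-myD1(U)-1} and \eqref{eq:E-power-bound-myD1(U)-2} are obtained by the same scheme applied to $[(EE^T)^a E\,UU^T E^T(EE^T)^a]_{kk}$ with $k\in [n]$: the first and last edges of each walk are now incident to the fixed vertex $k$ on the $[n]$ side rather than a generic $u_i$-weighted vertex on the $[m]$ side. This splits the enumeration into two regimes --- walks whose initial or final edge pairs with an interior edge (producing the $\|U\|_{2,\infty}$ contribution with an extra $\upperE/\sqrt{m+n}$ penalty for the additional pairing constraint), and walks whose initial or final edge creates a fresh leaf (producing the $\|U\|_{2,\infty}$-free term) --- which matches precisely the two summands of $\myD_1(U,p)$. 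The main obstacle is the combinatorial bookkeeping in the moment enumeration: tracking the $UU^T$ insertions so as to extract $\|U\|_{2,\infty}$ factors rather than generic $\ell^2$ norms, and accounting for non-tree walks with enough care that the hypothesis on $\upperE$ is precisely what is needed to absorb them. This is a delicate refinement of the trace method used in \cite{vu2005}, now in the semi-isotropic setting with a fixed-row endpoint, and to my knowledge has not previously appeared in the literature in this form.
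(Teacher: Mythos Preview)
Your plan is correct and coincides with the paper's approach: moment method on $\|e_k^T(E^TE)^aE^TU\|^{2p}$ and $\|e_k^T(EE^T)^aU\|^{2p}$ expanded as sums over $2p$-tuples of bipartite walks starting at the fixed vertex, with the coding scheme of \cite{vu2005} to control non-tree shapes, then Markov and a union bound with $p=\log\log(m+n)$ for \eqref{eq:E-power-bound-myD0(U)-1}--\eqref{eq:E-power-bound-myD1(U)-1} and $p=\log(m+n)$ for \eqref{eq:E-power-bound-myD0(U)-2}--\eqref{eq:E-power-bound-myD1(U)-2}. One clarification: in the odd-power case the two summands of $\myD_1(U,p)$ arise in the paper not from how the terminal \emph{edge} pairs but from the multiplicity of the terminal \emph{vertex} across the $2p$ walks---vertices appearing once contribute a factor $\|u\|_1\le\sqrt m$ (absorbed into $(m+n)^a$, leaving the $\|U\|_{2,\infty}$-free term), while vertices appearing $\ge 2$ times contribute $\|u\|_\infty^{l-2}$ factors that combine with the excess-edge penalties $\upperE^{\,\cdot}$ to give the first summand; your heuristic is adjacent but not quite the same bookkeeping, and getting the exponents $p^3$ versus $p^{3/2}$ right requires tracking this distinction carefully.
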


We only use Eq. \eqref{eq:E-power-bound-myD0(U)-2} and Eq. \eqref{eq:E-power-bound-myD1(U)-2} to prove Theorem \ref{thm:main-random}, but for completeness, we still include Eqs. \eqref{eq:E-power-bound-myD0(U)-1} and \eqref{eq:E-power-bound-myD1(U)-1}, which have a better bound at the cost of being non-uniform in $k$.
They may have potential for other applications.

To prove this lemma, we will use the moment method, with a walk-counting argument inspired by the coding scheme in \cite{vu2005}, to bound these terms.
We put the full proof in Section \ref{sec:E-power-bound-proof}.
}

Let us prove Theorem \ref{thm:main-random} using these lemmas.

{\oldcomment{}
\begin{proof}[Proof of Theorem \ref{thm:main-random}]
    Consider the objects from Setting \ref{set:main}.
    We aim to apply Theorem \ref{thm:main-deterministic}.
    % Let us prove Eq. \eqref{eq:s-sing-vec-entry-random}.

    \noindent By Lemma \ref{lem:E-bound-basic}, with probability $1 - O((m + n)^{-1})$, we can replace condition \eqref{eq:main-deterministic-R123} in Theorem \ref{thm:main-deterministic}
    \begin{equation*}
        \frac{\|E\|}{\sigma_S} \vee \frac{2r\|U^TEV\|_\infty}{\Delta_S}
        \vee \frac{\sqrt{2r}\|E\|}{\sqrt{\sigma_S\Delta_S}}
        \le \frac{1}{8}
    \end{equation*}
    with condition \eqref{eq:main-random-S-cond} in Theorem \ref{thm:main-random}
    \begin{equation*}
        \frac{\stdE\sqrt{N}}{\sigma_S}
        \ \vee \
        \frac{
            r\stdE(\sqrt{\log N}
            + \upperE\|U\|_\infty \|V\|_\infty \log N)
        }{\Delta_S}
        \ \vee \
        \frac{\stdE\sqrt{rN}}{\sqrt{\Delta_S\sigma_S}}
        \le \frac{1}{16}.
    \end{equation*}
    Assume \eqref{eq:main-random-S-cond} holds, then \eqref{eq:main-deterministic-R123} also hold and we can now apply Theorem \ref{thm:main-deterministic}.
    % The term
    % \begin{equation*}
    %     \frac{\|E\|}{\sigma_S} \vee \frac{2r\|U^TEV\|_\infty}{\Delta_S}
    %     \vee \frac{\sqrt{2r}\|E\|}{\sqrt{\sigma_S\Delta_S}}
    % \end{equation*}
    Define
    \begin{equation*}
        \myD_1 = \myD_0(U, \log(m + n)) + \myD_1(V, \log(m + n)),
        \quad
        \myD_2 = \myD_0(V, \log(m + n)) + \myD_1(U, \log(m + n)),
    \end{equation*}
    where $\myD_0(U, \cdot), \myD_1(U, \cdot)$ and $\myD_0(V, \cdot), \myD_1(V, \cdot)$ are from Lemma \ref{lem:E-power-union-bound}.
    These terms match exactly with $\myD_1$ and $\myD_2$ from the statement of Theorem \ref{thm:main-random}.
    If they also matched $\myD_1$ and $\myD_2$ in Theorem \ref{thm:main-deterministic}, the proof would be complete.
    However, they do not.
    
    Let $\uppnormE \defeq 2\stdE\sqrt{m + n}$, then $\uppnormE \ge \|E\|$ by Lemma \ref{lem:E-bound-basic}.
    Per the discussion around the condition \eqref{eq:main-determistic-E-power} above, if we can show that $\myD_1$, $\myD_2$ and $\uppnormE$ satisfy this condition, then the argument in Theorem \ref{thm:main-deterministic} still works.
    By Lemma \ref{lem:E-power-union-bound} for $t = 10$, \eqref{eq:main-determistic-E-power} holds with probability $1 - O((m + n)^{-2})$, so the proof is complete.
\end{proof}
}

In the next section, we prove Theorem \ref{thm:main-deterministic}.
The proof is an adaptation of the main argument in \cite{phuctranVu2023} for the SVD.
While this adaptation is easy, it has several important adjustments, sufficient to make Theorem \ref{thm:main-deterministic} independent result rather than a simple corollary.
For instance, the adjustment to adapt the argument for the infinity and $2$-to-infinity norms necessitates the semi-isotropic bounds, a feature not required in the original results for the operator norm.
For this reason, we present the proof in its entirety.

% \vskip2mm 

% We present the  rest of the proof {\bf the proof of what ? of theorem 3.2 or of something more general involving $M$ and $M'$; if it is the general case, you have to state what you want to prove} in the next two sections.

\subsection{The deterministic version: Proof of Theorem \ref{thm:main-deterministic}}  \label{sec:main-proof-deterministic}

In this section, we provide the proof of Theorem \ref{thm:main-deterministic}.

Recall that we are  interested in the differences $\tilde{V}_S\tilde{V}_S^T - V_SV_S^T$ and $\tilde{A}_s - A_s$.
As we shall see in the next part, both can be expressed as almost identical power series of the noise matrix $E$, provided the conditions of the theorem hold.
We will establish a common procedure to bound both, consisting of three steps:
\begin{enumerate}
    \item Expand the matrices above as instances of the same generic power series.

    \item Bound each individual term in the generic power series with terms that shrink geometrically with each power of $E$.

    \item Sum all bounding terms as a geometric series to obtain the final bound.
\end{enumerate}

% \subsection{Step 1: A Taylor-like expansion using contour integration}  \label{sec:main-proof-deterministic-expansion}

{\oldcomment{}
\begin{setting}  \label{set:svd-to-eigdecomp}
    Let us introduce some new objects related to the objects in Theorem \ref{thm:main-deterministic}.
    \begin{itemize}
        \item $\sym{B} \defeq \begin{pmatrix} 0 & B \\ B^T & 0\end{pmatrix}$.
        This definition is for all matrices $B$.
    
        \item For each $i\in [r]$, let $\lambda_i = \sigma_i$ and $w_i = \frac{1}{\sqrt{2}} \begin{bmatrix} u_i^T, & v_i^T \end{bmatrix}^T$.
        For each $i\in [[r + 1, 2r]]$, let $\lambda_i = -\sigma_{i - r}$ and $w_i = \frac{1}{\sqrt{2}} \begin{bmatrix} u_{i - r}^T, & v_{i - r}^T \end{bmatrix}^T$.
        Define $\boldsymbol{\Lambda} \defeq \{\lambda_i\}_{i\in [2r]}$ and $W \defeq \begin{bmatrix} w_i \end{bmatrix}_{i = 1}^{2r}$.
    
        \item Let $\{w_i: i\in [[2r + 1, m + n]]\}$ be any orthonormal basis for the column space of $I - WW^T$.
    
        \item Define $\tilde{w}_i$ similarly, with $\rank \tilde{A}$ instead of $r$.

        \item For future use, let $e_{N, k}$ be the $k^{th}$ vector of the standard basis in $\R^N$.
    \end{itemize}
\end{setting}
}

We first introduce the symmetrization trick, which translates from the SVD to an eigendecomposition.
% For any $m\times n$ matrix $A$ of rank $r$, let
% \begin{equation}  \label{eq:symmetrize}
%     \sym{A} \defeq
%     \begin{pmatrix}
%         0 & A \\ A^T & 0
%     \end{pmatrix}.
% \end{equation}
If $A$ has the SVD:
$
A = \sum_{i\in [r]} \sigma_iu_i^Tv_i^T,
$
then $\sym{A}$ has the eigendecomposition:
\begin{equation*}
    \sym{A}
    = W\Lambda W^T = \sum_{i = 1}^{m + n} \lambda_i w_i w_i^T.
\end{equation*}
% \begin{itemize}
%     \item If $1\le i \le r$: $\lambda_i = \sigma_i$ and $w_i = \frac{1}{\sqrt{2}}[u_i, \ v_i]^T$.
%     \item If $r + 1\le i \le 2r$: $\lambda_i = -\sigma_{i - r}$ and $w_i = \frac{1}{\sqrt{2}}[u_{i - r}, \ -v_{i - r}]^T$.
%     \item The remaining eigenvalues are all $0$ and the corresponding eigenvectors are an arbitrary basis for the complement space of the span of the nonzero eigenspace.
% \end{itemize}
Note that the singular values of $\sym{A}$ are again $\sigma_1, \ldots, \sigma_r$, but each with multiplicity $2$, thus the matrices
\begin{equation*}
    W_s \defeq \begin{bmatrix}
        w_1, & w_2, & \ldots & w_s, & w_{r + 1}, & \ldots & w_{r + s}
    \end{bmatrix},
    \quad
    (\sym{A})_s \defeq \sum_{i = 1}^s \lambda_i w_iw_i^T + \sum_{i = r + 1}^{r + s} \lambda_i w_iw_i^T
\end{equation*}
are respectively, the singular basis of the most significant $2s$ vectors and the best rank-$2s$ approximation of $\sym{A}$.
However, we still use the subscript $s$ instead of $2s$ to emphasize their relation to the quantities $U_s$, $V_s$ and $A_s$.
For an arbitrary subset $S\subset [r]$, we analogously denote
\begin{equation*}
    W_S \defeq \begin{bmatrix}
        w_i, \ w_{i + r}
    \end{bmatrix}_{i\in S},
    \ \text{ and } \
    (\sym{A})_S \defeq \sum_{i \in S} \lambda_i w_iw_i^T + \sum_{i - r \in S} \lambda_i w_iw_i^T.
\end{equation*}
We define $\tilde{\lambda}_i$, $\tilde{w}_i$ and $\tilde{W}_s$, $\tilde{W}_S$, $\tilde{A}_s$, $\tilde{A}_S$ similarly for $\tilde{A} = A + E$.
% \begin{equation*}
%     \lambda_i = \begin{cases}
%         \sigma_i & \text{ if } \quad i \le \rank M \\
%         -\sigma_{i - \rank M} & \text{ if } \quad \rank M + 1 \le i \le 2\rank M \\
%         0 & \text{ if } \quad 2\rank M + 1 \le i \le m + n,
%     \end{cases}
% \end{equation*}
% and
% \begin{equation*}
%     w_i = \begin{cases}
%     \frac{1}{\sqrt{2}}[u_i, \quad \ v_i]^T & \text{ if } \quad i \le \rank M \\
%     \frac{1}{\sqrt{2}}[u_i, \quad -v_i]^T & \text{ if } \quad \rank M + 1 \le i \le 2\rank M \\
%      & \text{ if } \quad 2\rank M + 1 \le i \le m + n,
%     \end{cases}
% \end{equation*}
To ease the notation,
% recall that $r \defeq \rank A$ and
we denote
\begin{equation*}
    P_i \defeq w_iw_i^T,
    \quad \text{ for } i = 1, 2, \ldots, 2r,
    \quad \text{ and }
    \quad
    Q \defeq I - \sum_{i\le 2r} P_i = I - WW^T.
\end{equation*}
The resolvent of $\sym{A}$, which is a function of a complex variable $z$, can now be written:
\begin{equation*}
    (zI - \sym{A})^{-1}
    = \sum_{i = 1}^{m + n} \frac{P_i}{z - \lambda_i}
    = \sum_{i = 1}^{2r} \frac{P_i}{z - \lambda_i}
    + \frac{Q}{z}.
\end{equation*}
The strategy for proving Theorem \ref{thm:main-deterministic} is a Taylor-like expansion of the difference of resolvents
\begin{equation*}
    (zI - \sym{\tilde{A}})^{-1}
    - (zI - \sym{A})^{-1}
    = \sum_{\gamma = 1}^\infty
    \left[
        (zI - \sym{A})^{-1}\sym{E}
    \right]^\gamma
    (zI - \sym{A})^{-1}.
\end{equation*}
 It is easy to show that this 
 identity  hold whenever the right-hand side converges. 
{%\oldcomment{}
Conveniently, the convergence is also guaranteed by the condition \eqref{eq:main-deterministic-R123} of Theorem \ref{thm:main-deterministic}, as we will see later.
}

Assuming this is true, we can then extract out the differences of the singular vector projections.
The above can be rewritten as
\begin{equation}  \label{eq:resolvent-diff-1}
    \sum_{i = 1}^{m + n}\frac{\tilde{w}_i\tilde{w}_i^T}{z - \tilde{\lambda}_i}
    - \sum_{i = 1}^{m + n}\frac{w_iw_i^T}{z - \lambda_i}
    = \sum_{\gamma = 1}^\infty
    \biggl[
        \Bigl( \sum_{\lambda_i \neq 0}\frac{P_i}{z - \lambda_i} + \frac{Q}{z} \Bigr)
        \sym{E}
    \biggr]^\gamma
    \Bigl( \sum_{\lambda_i \neq 0}\frac{P_i}{z - \lambda_i} + \frac{Q}{z} \Bigr).
\end{equation}
Let $\Gamma_S$ denote an arbitrary contour in $\C$ that encircles $\{\pm\sigma_i, \pm\tilde{\sigma}_i\}_{i \in S}$ and none of the other eigenvalues of $\tilde{W}$ and $W$.
Integrating over $\Gamma_S$ of both sides and dividing by $2\pi i$, we have
\begin{equation}  \label{eq:proj-s-diff-taylor}
\begin{aligned}
    & \begin{bmatrix}
        \tilde{U}_S\tilde{U}_S^T - U_SU_S^T & 0 \\
        0 & \tilde{V}_S\tilde{V}_S^T - V_SV_S^T
    \end{bmatrix}
    = \tilde{W}_S\tilde{W}_S
    - W_SW_S^T
    \\
    & = \sum_{\gamma = 1}^\infty
    \oint_{\Gamma_S} \frac{\td z}{2\pi i}
    \biggl[
        \Bigl( \sum_{\lambda_i \neq 0}\frac{P_i}{z - \lambda_i} + \frac{Q}{z} \Bigr)
        \sym{E}
    \biggr]^\gamma
    \Bigl( \sum_{\lambda_i \neq 0}\frac{P_i}{z - \lambda_i} + \frac{Q}{z} \Bigr).
\end{aligned}
\end{equation}
We quickly note that the following identity can be obtained by multiplying both sides of Eq. \eqref{eq:resolvent-diff-1} with $z$, dividing by $2\pi i$ and integrating over $\Gamma_S$:
\begin{equation}  \label{eq:rank-s-approx-diff-taylor}
\begin{aligned}
    & \sym{(\tilde{A}_S - A_S)}
    = (\sym{\tilde{A}})_S - (\sym{A})_S
    % = \tilde{W}_S\tilde{\Lambda}_S\tilde{W}_S
    % - W_S\Lambda_SW_S^T
    = \sum_{i\in S \vee i - r\in S}
    \left(
        \frac{z\tilde{w}_i\tilde{w}_i^T}{z - \tilde{\lambda}_i}
        - \frac{zw_iw_i^T}{z - \lambda_i}
    \right)
    \\
    & = \sum_{\gamma = 1}^\infty
    \oint_{\Gamma_S} \frac{z\td z}{2\pi i}
    \biggl[
        \Bigl( \sum_{\lambda_i \neq 0}\frac{P_i}{z - \lambda_i} + \frac{Q}{z} \Bigr)
        \sym{E}
    \biggr]^\gamma
    \Bigl( \sum_{\lambda_i \neq 0}\frac{P_i}{z - \lambda_i} + \frac{Q}{z} \Bigr).
\end{aligned}
\end{equation}
% The former will be used to prove Eqs. \eqref{eq:s-sing-vec-row} and \eqref{eq:s-sing-vec-entry} while the latter will be for Eq. \eqref{eq:s-rank-approx-entry}.

It is thus beneficial to find a \emph{common strategy} that can bound both these expressions at the same time.
We consider the following general expression for $\nu\in \N$:
\begin{equation}  \label{eq:generic-diff-taylor}
    \myT_\nu
    \defeq \oint_{\Gamma_S} \frac{z^\nu\td z}{2\pi i}
    \biggl[
        \Bigl( \sum_{\lambda_i \neq 0}\frac{P_i}{z - \lambda_i} + \frac{Q}{z} \Bigr)
        \sym{E}
    \biggr]^\gamma
    \Bigl( \sum_{\lambda_i \neq 0}\frac{P_i}{z - \lambda_i} + \frac{Q}{z} \Bigr).
\end{equation}
Our common strategy will establish a bound on this generic form, which implies Eqs. \eqref{eq:s-sing-vec-entry} and \eqref{eq:s-sing-vec-row} for $\nu = 0$ and Eq. \eqref{eq:s-rank-approx-entry} for $\nu = 1$. To be more precise, we are going to focus on one entry of 
\eqref{eq:generic-diff-taylor}.
However, what we follow, we first describe the critical step of  breaking up the RHS of \eqref{eq:generic-diff-taylor}
into many more treatable terms.
This steps requries a significant amount of foresight and planning. 
The focus on one single entry follows next.

% We will first expand Eq. \eqref{eq:proj-s-diff-taylor}, the expansion for Eq. \eqref{eq:rank-s-approx-diff-taylor} will be almost analogous.

We are going to use  the following (easy to verify) identity:
\begin{equation*}  \label{eq:main-proof-temp1}
    \sum_{\lambda_i \neq 0} \frac{P_i}{z - \lambda_i} + \frac{Q}{z}
    = \sum_{\lambda_i \neq 0} \frac{\lambda_i P_i}{z(z - \lambda_i)} + \frac{I}{z}
\end{equation*}
Plugging this identity into Eq. \eqref{eq:generic-diff-taylor}, we get
\begin{equation}  \label{eq:main-proof-terms1}
    % \tilde{W}_S\tilde{W}_S^T - W_SW_S^T
    \myT_\nu
    = \sum_{\gamma = 1}^\infty
    % \oint_{\Gamma_S} \frac{\td z}{2\pi i}
    \oint_{\Gamma_S} \frac{z^\nu \td z}{2\pi i}
    \biggl[
        \Bigl( \sum_{\lambda_i \neq 0} \frac{\lambda_i P_i}{z(z - \lambda_i)} + \frac{I}{z} \Bigr)
        \sym{E}
    \biggr]^\gamma
    \Bigl( \sum_{\lambda_i \neq 0} \frac{\lambda_i P_i}{z(z - \lambda_i)} + \frac{I}{z} \Bigr).
\end{equation}
Fix $\gamma \in \N$, $\gamma \ge 1$ and consider the $\gamma$-power term in the series.
Expanding the power yields a sum of terms of the form
\begin{equation*}
\begin{aligned}
    % \oint_{\Gamma_S} \frac{\td z}{2\pi i}
    \oint_{\Gamma_S} \frac{z^\nu \td z}{2\pi i}
    \left(\frac{I}{z}\sym{E}\right)^{\alpha_0}
    \frac{\lambda_{i_{11}}P_{i_{11}}}{z(z - \lambda_{i_{11}})}
    \sym{E}
    \frac{\lambda_{i_{12}}P_{i_{12}}}{z(z - \lambda_{i_{12}})}
    \ldots
    \sym{E}
    \frac{\lambda_{i_{1\beta_1}}P_{i_{1\beta_1}}}{z(z - \lambda_{i_{1\beta_1}})}
    \sym{E}
    \left(\frac{I}{z}\sym{E}\right)^{\alpha_1}
    \\
    \frac{\lambda_{i_{21}}P_{i_{21}}}{z(z - \lambda_{i_{21}})}
    \sym{E}
    % \frac{\lambda_{i_{22}}P_{i_{22}}}{z(z - \lambda_{i_{22}})}
    \ldots
    \sym{E}
    \frac{\lambda_{i_{2\beta_2}}P_{i_{2\beta_2}}}{z(z - \lambda_{i_{2\beta_2}})}
    \sym{E}
    \left(\frac{I}{z}\sym{E}\right)^{\alpha_2}
    \ldots
    \sym{E}
    \frac{\lambda_{i_{h\beta_h}}P_{i_{h\beta_h}}}{z(z - \lambda_{i_{h\beta_h}})}
    \left(\sym{E}\frac{I}{z}\right)^{\alpha_h},
\end{aligned}
\end{equation*}
which can be rewritten as
\begin{equation}  \label{eq:main-proof-terms}
    % C_0(\mathbf{I})
    \myC_\nu(\mathbf{I})
    \sym{E}^{\alpha_0}
    \left[
        \prod_{k = 1}^{h - 1}
        \myM\left(\mathbf{i}_k\right)
        \sym{E}^{\alpha_k + 1}
    \right]
    \myM\left(\mathbf{i}_h\right)
    \sym{E}^{\alpha_h},
\end{equation}
where we denote
\begin{equation*}
    \mathbf{I} \defeq [\mathbf{i}_1, \mathbf{i}_2, \ldots, \mathbf{i}_h],
    \quad \quad
    \mathbf{i}_k \defeq [i_{k1}, i_{k2}, \ldots, i_{k\beta_k}],
\end{equation*}
and for a non-empty sequence $\mathbf{i} = [i_1, i_2, \ldots, i_\beta]$ we denote the \emph{monomial matrix}
\begin{equation}  \label{eq:monomial-matrix}
    \myM\left(\mathbf{i}\right)
    \defeq
    P_{i_1}\prod_{j = 2}^{\beta}\sym{E}P_{i_j},
\end{equation}
and the scalar \emph{integral coefficient} for the non-empty sequence $\mathbf{I} = [i_{11}, i_{12}, \ldots, i_{h\beta_h}]$
\begin{equation}  \label{eq:integral-coef}
    % C_0(\mathbf{I})
    \myC_\nu(\mathbf{I})
    \defeq
    \oint_{\Gamma_S} \frac{{\oldcomment{}z^{\nu}} \td z}{2\pi i}
    \frac{1}{z^{\gamma + 1}}
    \prod_{k = 1}^h \prod_{j = 1}^{\beta_k}
    \frac{\lambda_{i_{kj}}}{z - \lambda_{i_{kj}}}.
\end{equation}
Let $\Pi_h(\gamma)$ be the set of all tuples of $\boldsymbol{\alpha} = [\alpha_k]_{k = 0}^h$ and $\boldsymbol{\beta} = [\beta_k]_{k = 1}^h$ such that:
\begin{equation}  \label{eq:alpha-beta-defn}
\begin{aligned}
    \bullet\quad &
    \alpha_0, \alpha_h \ge 0,
    \quad \text{ and }
    \alpha_k \ge 1 \text{ for } 1\le k\le h - 1,
    \\
    \bullet\quad &
    \beta_k \ge 1 \text{ for } 1\le k\le h,
    \\
    \bullet\quad &
    \alpha + \beta = \gamma + 1,
    \quad \text{ where } \alpha \defeq \textstyle\sum_{k = 0}^h \alpha_k,
    \quad \text{ and } \beta \defeq \sum_{k = 1}^h \beta_k.
\end{aligned}
\end{equation}
Note that the conditions above imply $2h - 1 \le \gamma + 1$, so the maximum value for $h$ is $\lfloor \gamma/2 \rfloor + 1$.

Combining Eqs. \eqref{eq:main-proof-terms1}, \eqref{eq:integral-coef}, and \eqref{eq:monomial-matrix}, 
we get the expansion
\begin{equation}  \label{eq:generic-T-defn}
    % \tilde{W}_S\tilde{W}_S^T - W_SW_S^T
    % = \sum_{\gamma = 1}^\infty \sum_{h = 0}^{\lfloor \gamma/2 \rfloor + 1}
    % \sum_{(\boldsymbol{\alpha}, \boldsymbol{\beta})\in \Pi_h(\gamma)}
    % T(\boldsymbol{\alpha}, \boldsymbol{\beta}),
    \myT_\nu
    = \sum_{\gamma = 1}^\infty \myT_\nu^{(\gamma)},
    \quad \text{ where }
    \myT_\nu^{(\gamma)} = \sum_{h = 0}^{\lfloor \gamma/2 \rfloor + 1}
    \myT_\nu^{(\gamma, h)},
    \quad \text{ where }
    \myT_\nu^{(\gamma, h)} = \sum_{(\boldsymbol{\alpha}, \boldsymbol{\beta}) \in \Pi_h(\gamma)}
    \myT_\nu(\boldsymbol{\alpha}, \boldsymbol{\beta}).
\end{equation}
where
\begin{equation}  \label{eq:T(alpha,beta)-defn}
    % T(\boldsymbol{\alpha}, \boldsymbol{\beta})
    \myT_\nu(\boldsymbol{\alpha}, \boldsymbol{\beta})
    \defeq
    \sum_{\mathbf{I} \in [2r]^{\beta_1 + \ldots + \beta_h}}
    \myC_\nu(\mathbf{I})
    \sym{E}^{\alpha_0}
    \left[
        \prod_{k = 1}^{h - 1}
        \myM\left(\mathbf{i}_k\right)
        \sym{E}^{\alpha_k + 1}
    \right]
    \myM\left(\mathbf{i}_h\right)
    \sym{E}^{\alpha_h}.
\end{equation}

The sum looks complicated, but the main advantage is that each term on the RHS of \eqref{eq:T(alpha,beta)-defn}
is easier to bound.

Now, we come back to address the main issue of bound the infinity norm. 
Consider Eqs. \eqref{eq:s-sing-vec-entry} and \eqref{eq:s-rank-approx-entry}.
The strategy here is natural. We are going to consider an arbitrary entry of  
$ \tilde{V}_S\tilde{V}_S^T - V_SV_S^T $ (and $\tilde{A}_s - A_s$), and then bounding it with overwhelming probablity, and use the union bound to finish the task.

For Eq. \eqref{eq:s-sing-vec-row}, since the $2$-to-$\infty$ norm of a matrix is simply the norm of the largest row, we can use the aforementioned strategy by replacing the fixed entry with a fixed row in the first step.

% To bound the infinity norms of the matrices on the left-hand sides, we will fix an arbitrary entry, show that it is smaller than the respective right-hand side, except with some small probability.
% We then apply this argument to all other entries using a union bound over all index pairs.
% We guarantee that the exceptional probability bound is at most $o((m + n)^{-2})$, so that it is still useful after the union bounds.
% For Eq. \eqref{eq:s-sing-vec-row}, since the $2$-to-$\infty$ norm of a matrix is simply the norm of the largest row, we can use the aforementioned strategy with a fixed arbitrary row index.
% The exceptional probability needs to be smaller than $1/n$ in this case.

Now we turn to the details. Consider an arbitrary $jl$-entry of $\tilde{V}_S\tilde{V}_S^T - V_SV_S^T$.
It corresponds to the $(m + j)(m + l)$-entry of $\tilde{W}_S\tilde{W}_S^T - W_SW_S^T$.
Using Eqs. \eqref{eq:generic-T-defn} and \eqref{eq:T(alpha,beta)-defn}, 
we can write this entry as
\begin{equation}   \label{eq:main-proof-terms-entry}
    % T_{(m + 1)(m + 1)}(\boldsymbol{\alpha}, \boldsymbol{\beta})
    % = \sum_{\mathbf{I} \in [2r]^\beta}
    % \!\! \!\!
    % \myC_\nu(\mathbf{I}) e_{m + n, m + 1}^T\sym{E}^{\alpha_0}
    % \left[
    %     \prod_{k = 1}^{h - 1}
    %     \myM\left(\mathbf{i}_k\right)
    %     \sym{E}^{\alpha_k + 1}
    % \right]
    % \myM\left(\mathbf{i}_h\right)
    % \sym{E}^{\alpha_h}e_{m + n, m + 1}.
    e_{m + n, m + j}^T \myT_0 e_{m + n, m + k}
    = \sum_{\gamma = 1}^\infty \sum_{h = 0}^{\lfloor \gamma/2 \rfloor + 1}
    \sum_{(\boldsymbol{\alpha}, \boldsymbol{\beta})\in \Pi_h(\gamma)}
    e_{m + n, m + j}^T\myT_0(\boldsymbol{\alpha}, \boldsymbol{\beta})e_{m + n, m + k}.
\end{equation}

This is the key quantity that we need to bound. 
Similarly, the expansions for a single row of $\tilde{V}_S\tilde{V}_S^T - V_SV_S^T$ or a single entry of $\tilde{A}_s - A_s$ also have the form $M^T \myT_\nu M'$, where $M$ and $M'$ is either a standard basis vector $e_{m + n, j}$ or $I_{m + n}$.

% Similarly, one can obtain the terms in the expansion of a single entry (or row) of the left-hand side of Eq. \eqref{eq:s-rank-approx-entry} (or Eq. \eqref{eq:s-sing-vec-row}) by replacing the formula for the integral coefficient $\myC_\nu(\mathbf{I})$ with the right-hand side of Eq. \eqref{eq:integral-coef-1} (or replacing one of the two $e^{(m + n)}_{m + 1}$ with $I_{m + n}$).

% It is thus beneficial to work with arbitrary choices of the leftmost and rightmost matrices, and the integral coefficients to obtain a generic bound, then plug in the suitable choices for each target bound.
% Below we summarize the elements needed to describe the common strategy.
% We also provide their meanings in the context of the main theorems, but they are not needed for the computations.

It is thus beneficial to establish a general bound for 
arbitrary choices of $M$ and $M'$ in the operator norm, which covers both the Euclidean norm of a vector and the absolute value of a number.
In fact, our proof works for any sub-multiplicative norm, which we use the generic $\|\cdot\|$ to denote from this point to the end of this section.

\bigskip

\noindent{\bf The general task.}
We aim to upper bound $\|M^T \myT_\nu M'\|$ for a sub-multiplicative norm $\|\cdot\|$.
To prove Theorem \ref{thm:main-deterministic}, we only apply this general bound for the operator norm.
Each specific bound in the theorem will be a consequence of this general bound with appropriate choices of $M$ and $M'$.
% The claims we need to 
% the matrix completion problem are special subcases of this general bound.
% The general bound is the following.

\bigskip

{\oldcomment{}
Let us introduce this general bound and finish the proof of Theorem \ref{thm:main-deterministic} here.
The proof of the bound will be in the next section, where we only need to care about the context in Setting \ref{set:svd-to-eigdecomp}.
}

\begin{lemma}  \label{lem:T-bound}
     Consider the objects in Setting \ref{set:svd-to-eigdecomp} and define the following terms
     \begin{equation}  \label{eq:T-bound-R123-defn}
     \begin{aligned}
         R_1 \defeq \frac{\|E\|}{\lambda_S} \vee \frac{2r\|W^T\sym{E}W\|_\infty}{\Delta_S},
         \quad
         R_2 \defeq \frac{\sqrt{2r}\|E\|}{\sqrt{\lambda_S\Delta_S}},
         \quad
         R_3 \defeq \frac{2r}{\lambda_S\Delta_S} \max_{|i - j|\notin\{0, r\}} |w_i\sym{E}^2w_j|.
         % \\
         % \myD \defeq \max_{\alpha \in [\lceil 10\log(m + n) \rceil]}
         % \frac{1}{2r} \sum_{i = 1}^{2r} \frac{\|w_i^T\sym{E}^\alpha M\|}{\|E\|^\alpha},
         % \ \quad
         % \myD' \defeq \max_{\alpha \in [\lceil 10\log(m + n) \rceil]}
         % \frac{1}{2r} \sum_{i = 1}^{2r} \frac{\|w_i^T\sym{E}^\alpha M'\|}{\|E\|^\alpha}
     \end{aligned}
     \end{equation}
     Additionally, define $\myL_0 = 2$ and $\myL_1 = \lambda_S$ and
     \begin{equation}  \label{eq:T-bound-myD}
         \myD \defeq \max_{\alpha \in [\lceil 10\log(m + n) \rceil]}
         \frac{1}{2r} \sum_{i = 1}^{2r} \frac{\|w_i^T\sym{E}^\alpha M\|}{\|E\|^\alpha},
     \end{equation}
     and analogously for $\myD'$ and $M'$.
     Suppose $R_1 \vee R_2 \le 1/4$.
     Then the $\myT_\nu$ defined in Eq. \eqref{eq:generic-T-defn} converges in the metric $\|\cdot\|$ and satisfies, for a universal constant $C$,
     \begin{equation*}
         \left\| {\oldcomment{}M^T\myT_\nu M'} \right\| \le
         Cr\myL_\nu(R_1 + R_3)\Bigl[
            \myD\myD'  + \|M\|\|M'\|(m + n)^{-2.5}
         \Bigr].
     \end{equation*}
\end{lemma}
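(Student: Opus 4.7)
}
The plan is to bound each summand $\myT_\nu(\boldsymbol{\alpha},\boldsymbol{\beta})$ in the expansion \eqref{eq:T(alpha,beta)-defn} uniformly in $(\boldsymbol{\alpha},\boldsymbol{\beta})\in\Pi_h(\gamma)$, with a bound that decays geometrically in $\gamma$, and then sum over $\gamma$ and $h$ as a geometric series that converges under the hypothesis $R_1\vee R_2\le 1/4$. The starting observation is that each projector $P_i = w_i w_i^T$ has rank one, so each monomial matrix factors as
\[
    \myM(\mathbf{i}_k) = w_{i_{k1}}\Bigl[\textstyle\prod_{j=1}^{\beta_k - 1} w_{i_{kj}}^T\sym{E}w_{i_{k,j+1}}\Bigr] w_{i_{k\beta_k}}^T .
\]
Consequently, $M^T\myT_\nu(\boldsymbol{\alpha},\boldsymbol{\beta})M'$ is a scalar sum
\[
    \sum_{\mathbf{I}} \myC_\nu(\mathbf{I})\,\bigl(M^T\sym{E}^{\alpha_0}w_{i_{11}}\bigr)\Bigl[\textstyle\prod_{\text{middle}}w_{a}^T\sym{E}^{b}w_{c}\Bigr]\bigl(w_{i_{h\beta_h}}^T\sym{E}^{\alpha_h}M'\bigr),
\]
where the middle scalars have exponents $b=1$ (within a block) or $b=\alpha_k+1\ge 2$ (between blocks). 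This reduces the task to three subproblems: bounding the integral coefficient, bounding the middle scalars, and bounding the two endpoint factors.

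For the coefficient $\myC_\nu(\mathbf{I})$, I would deform $\Gamma_S$ to a union of small circles around the poles $\{\pm\sigma_i:i\in S\}$ (plus a contribution at $z=0$ when $\nu=0$) and invoke the residue theorem. Each factor $\lambda_{i_{kj}}/(z-\lambda_{i_{kj}})$ contributes a residue of order $\lambda_S/\Delta_S$ when $i_{kj}\in S$ (or $i_{kj}-r\in S$) and is $O(1)$ otherwise; together with the normalization $z^{\nu}/z^{\gamma+1}$, this produces a bound $|\myC_\nu(\mathbf{I})|\lesssim \myL_\nu/(\lambda_S\Delta_S)^{\beta/2}$ after careful accounting, with extra powers of $\lambda_S$ absorbed into the $R$-ratios below. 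The middle scalars are controlled by casework on the pair $(a,c)$: off-diagonal pairs with $b=1$ are bounded by $\|W^T\sym{E}W\|_\infty\le R_1\Delta_S/(2r)$, off-diagonal pairs with $b=2$ are exactly what $R_3$ controls via Eq.~\eqref{eq:T-bound-R123-defn}, higher off-diagonal powers use $|w_a^T\sym{E}^b w_c|\le\|E\|^{b-2}\cdot R_3\lambda_S\Delta_S/(2r)$, and diagonal cases ($|a-c|\in\{0,r\}$) take the crude $\|E\|^b$. Finally, the two endpoint factors are bounded, after averaging over the free indices $i_{11},i_{h\beta_h}\in[2r]$, by $(2r)^2\myD\myD'\|E\|^{\alpha_0+\alpha_h}$, with $\myD$ absorbing precisely the semi-isotropic sum appearing in \eqref{eq:T-bound-myD}.

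Multiplying the four estimates and summing over $\mathbf{I}\in[2r]^{\beta}$ yields, after bookkeeping, a bound of the shape
\[
    \bigl\|M^T\myT_\nu(\boldsymbol{\alpha},\boldsymbol{\beta})M'\bigr\|\;\le\; C\,r\,\myL_\nu\,\myD\myD'\,(R_1+R_3)\,(R_1\vee R_2)^{\gamma-2h+O(1)},
\]
where the $(2r)^\beta$ overcounting is absorbed by the $1/\Delta_S$ and $1/\lambda_S$ factors living inside $R_1,R_2,R_3$, and the exponent in $\gamma$ arises because every unit increment of $\gamma$ introduces one extra $\sym{E}$ either as a middle scalar (contributing $R_1$ or $R_3$) or as a stretch of a within-block scalar (contributing $R_2^2\sim R_1$). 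Summing first over $(\boldsymbol{\alpha},\boldsymbol{\beta})\in\Pi_h(\gamma)$ (whose cardinality is polynomial in $\gamma$), then over $h\le\gamma/2+1$, then over $\gamma\ge 1$, produces a convergent geometric series whose total is the bound claimed in the lemma. The tail $\gamma>10\log(m+n)$ is handled separately with the trivial endpoint bounds $\|M\|,\|M'\|$ (in place of $\myD,\myD'$), yielding the $(m+n)^{-2.5}\|M\|\|M'\|$ correction.

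The main obstacle I expect is the scalar casework in the middle factors: I need to match every off-diagonal pair to either an $R_1$-gain (when $b=1$) or an $R_3$-gain (when $b\ge 2$), and avoid the costly diagonal situation $a=c$ which only offers $\|E\|^b$, equivalent to $R_2^2$ per unit of $b$. The saving grace is that a diagonal coincidence $i_{kj}=i_{k,j+1}$ collapses two consecutive $w$ indices into one, effectively reducing the exponent in the $(2r)^\beta$ combinatorial sum; tracking this cancellation uniformly across all $(\boldsymbol{\alpha},\boldsymbol{\beta})$ --- so that the final exponent in $\gamma$ is always at least $\gamma-O(h)$ --- is the delicate combinatorial step, and is where the sharp factor $r$ in the bound (rather than a power of $r$) comes from.
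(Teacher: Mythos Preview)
Your overall architecture matches the paper's: factor each $\myM(\mathbf{i}_k)$ through its rank-one structure, bound the integral coefficient, the middle scalars, and the two endpoint factors separately, then sum geometrically; the split $\gamma\le 10\log(m+n)$ versus the tail (giving the $\|M\|\|M'\|(m+n)^{-2.5}$ correction) is also exactly right.

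However, there is a real gap in your handling of the extremal case $h=\gamma/2+1$ (so $\alpha_0=\alpha_h=0$, all $\alpha_k=\beta_k=1$, and every middle scalar is $w_{i_k}^T\sym{E}^2 w_{i_{k+1}}$). You propose to treat the ``diagonal'' subcase $|i_k-i_{k+1}|\in\{0,r\}$ by the crude bound $\|E\|^2$ and recoup the loss via the reduced combinatorics of $\mathbf{I}$. That reduction is real---there are only $O(r\cdot 2^h)$ uniform tuples rather than $(2r)^h$---but it is not enough. Combining it with your generic integral bound $|\myC_\nu(\mathbf{I})|\lesssim \myL_\nu/(\lambda_S^{\gamma+1-\beta}\Delta_S^{\beta-1})$ still leaves a leading contribution of order $\myL_\nu\myD\myD'\,R_2^2$ at $\gamma=2$, and $R_2^2=2r\|E\|^2/(\lambda_S\Delta_S)$ is \emph{not} dominated by $r(R_1+R_3)$ under the stated hypotheses (take $\Delta_S\ll\lambda_S$ and $E$ regular so that $R_3\ll R_2^2$). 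The paper's actual mechanism is different: when $\mathbf{I}$ is uniform, all $|\lambda_{i_k}|$ equal a single $\lambda_i$, so the integral $\myC_\nu(\mathbf{I})$ has $\lambda_S(\mathbf{I})=\Delta_S(\mathbf{I})=\lambda_i$ and the refined bound (Eq.~\eqref{eq:integral-coef-bound-1}) gives $|\myC_\nu(\mathbf{I})|\le \myL_\nu\binom{\gamma+h-2}{h-1}\lambda_i^{-\gamma}$ with \emph{no} $\Delta_S$ in the denominator. This converts the uniform-case contribution into $(\sqrt{2}\,R_1)^\gamma$ rather than $R_2^\gamma$, which is what makes the final bound $R_1+R_3$ rather than $R_2^2$.

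A secondary issue: your residue sketch for $\myC_\nu(\mathbf{I})$ gives the wrong shape (you write $\myL_\nu/(\lambda_S\Delta_S)^{\beta/2}$, but the actual dependence is $\myL_\nu\binom{\gamma+\beta-2}{\beta-1}\lambda_S^{-(\gamma+1-\beta)}\Delta_S^{-(\beta-1)}$), and you omit the binomial factor, which is what controls the sum over $(\boldsymbol{\alpha},\boldsymbol{\beta})\in\Pi_h(\gamma)$. The paper proves this by a triple induction rather than a direct residue evaluation, because the poles can have high multiplicity. Your exponent ``$(R_1\vee R_2)^{\gamma-2h+O(1)}$'' is also off: the correct split is $R_1^{\gamma-2h+2}R_2^{2h-2}$, total degree $\gamma$, and it is the sum over $h$ (not a single term) that yields $(6R)^{\gamma-1}$ with $R=R_1\vee R_2$.
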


Let us remark on the meanings of the new terms, which are simply translation of terms from Theorem \ref{thm:main-deterministic} into the language of Setting \ref{set:svd-to-eigdecomp}.
\begin{itemize}
    \item The term $\|M\|\|M'\|(m + n)^{-2.5}$ is small, and will be absorbed into the term $\myD\myD'$ for our applications.
    % and corresponds to the term $1/(m + n)$ on the right-hand sides of the bounds \eqref{eq:s-sing-vec-entry}, \eqref{eq:s-sing-vec-row} and \eqref{eq:s-rank-approx-entry} in Theorem \ref{thm:main-deterministic}.

    \item Recall the definitions of $\lambda_i$ and $W$ from Setting \ref{set:svd-to-eigdecomp}.
    Plugging them into Eq. \eqref{eq:T-bound-R123-defn} recovers
    \begin{equation*}
        R_1 \vee R_2
        = \frac{\|E\|}{\sigma_S} \vee \frac{2r\|U^TEV\|_\infty}{\Delta_S}
        \vee \frac{\sqrt{2r}\|E\|}{\sqrt{\sigma_S\Delta_S}},
    \end{equation*}
    and
    \begin{equation*}
        R_1 + R_3 \le 2\left(
            \frac{\|E\|}{\sigma_S} + \frac{r\|U^TEV\|_\infty}{\Delta_S}
            + \frac{ry}{\Delta_S\sigma_S}
        \right).
    \end{equation*}

    \item Similarly, recall the definitions of $\myD_1$ and $\myD_2$ in Eq. \eqref{eq:main-determistic-myD12}.
    As a function of $M$, $\myD$ satisfies
    \begin{equation*}
        \myD(e_{m + n, k}) = \myD_1 \text{ for } k\le m,
        \quad \myD(e_{m + n, k}) = \myD_2 \text{ for } m + 1\le k\le m + n,
        \quad \text{ and } \ \myD(I) \le 1,
    \end{equation*}

    \item The term $\myL_\nu$ plays the role of the factor $\sigma_s$ on the right-hand side of Eq. \eqref{eq:s-rank-approx-entry} when we prove it by applying this lemma for $\nu = 1$.
    We set $\nu = 0$ when proving Eqs. \eqref{eq:s-sing-vec-entry} and \eqref{eq:s-sing-vec-row}, in which cases $\myL_\nu$ has no effects.
\end{itemize}

\begin{proof}[Proof of Theorem \ref{thm:main-deterministic}]
    Consider the objects defined in Theorem \ref{thm:main-deterministic} and the additional objects in Setting \ref{set:svd-to-eigdecomp}.
    Note that $\lambda_i = \sigma_i$ for $i\in [r]$ and $-\sigma_{i - r}$ for $i\in [[r + 1, 2r]]$, and $\lambda_S = \sigma_S$ for every subset $S$ in this context.
    We choose $\|\cdot\|$ to be the operator norm.
    % The hypothesis and the terms $R_1$, $R_2$ and $R_3$ in Theorem \ref{thm:main-deterministic} coincide with their counterparts in Assumption \ref{asmpt:generic-series}, so we can assume the latter.
    By the remark above, the condition \eqref{eq:main-deterministic-R123} in Theorem \ref{thm:main-deterministic} is equivalent to $R_1\vee R_2 \le 1/4$, so we can apply Lemma \ref{lem:T-bound}.
    
    Let us prove Eq. \eqref{eq:s-sing-vec-entry}.
    Consider arbitrary $j, k\in [n]$.
    Combining Eq. \eqref{eq:proj-s-diff-taylor} and the equations leading up to Eq. \eqref{eq:generic-T-defn}, we have
    \begin{equation*}
        \left(
            \tilde{V}_S\tilde{V}_S^T - V_SV_S^T
        \right)_{jk}
        = \left(
            \tilde{W}_S\tilde{W}_S^T - W_SW_S^T
        \right)_{(j + m)(k + m)}
        {\oldcomment{}
        = e_{m + n, j + m}^T\myT_0 e_{m + n, m + k}.
        % = M^T\myT_\nu M',
        }
    \end{equation*}
    We apply Lemma \ref{lem:T-bound} for the choices $M = e_{m + n, j + m}$, $M' = e_{m + n, k + m}$ and $\nu = 0$.
    % Let us choose the parameters to satisfy Assumption \ref{asmpt:generic-series}.
    % For $\nu = 0$, Lemma \ref{lem:integral-coef-bound} guarantees $\myL_\nu = 2$ satisfies Eq. \eqref{eq:asmpt-integral-coef-bound}.
    % For the above choices of $M$ and $M'$, Lemma \ref{lem:symE-power-bound-1} guarantees that the choices $\myD = \myD' = \frac{1}{\sqrt{2}}\myD_1$ and $\uppnormE_0 = \uppnormE_0' = \uppnormE$ satisfy Eq. \eqref{eq:asmpt-E-power-bound}.

    % For convenience, define the following shorthands:
    % \begin{equation}  \label{eq:main-deterministic-R123-shorthands}
    %     R_1 \defeq \frac{\|E\|}{\sigma_S}
    %     \ \vee \ \frac{2r\|U^TEV\|_\infty}{\Delta_S},
    %     \quad \
    %     R_2 \defeq \frac{\sqrt{2r}\|E\|}{\sqrt{\sigma_S\Delta_S}},
    %     \quad \
    %     R_3 \defeq \frac{r\max_{i\neq j} (|u_iEE^Tu_j| + |v_iE^TEv_j|)}{\Delta_S\sigma_S}.
    % \end{equation}
    % Then the assumption \eqref{eq:main-deterministic-R123} is just $R_1 \vee R_2 \le 1/8$, and the term $R_S$ is simply $R_1 + R_3$.
    
    % Next, we show that the terms $R_1$, $R_2$ and $R_3$ satisfy Eqs. \eqref{eq:asmpt-R12-bound} and \eqref{eq:asmpt-R3-bound}.
    Note that both $M$ and $M'$ are basis vectors in $\{e_{m + n, i}: i\in [[m + 1, m + n]]\}$, so by the remark, we get $\myD = \myD' = \myD_2$ (defined in Eq. \eqref{eq:main-determistic-myD12}).
    Lastly, $\myL_0 = 2$.
    We have, by Lemma \ref{lem:T-bound},
    % If this holds then we can apply Lemma \ref{lem:T-bound}.
    % It suffices to check the conditions on $R_1$ and $R_3$.
    % The former follows from the fact $\|W^T\sym{E}W\|_\infty \le \|U^TEV\|_\infty$, which follows from $w_i\sym{E}w_j = (u_i^TEv_j + u_j^TEv_i)/2$.
    % For the latter, fix arbitrary $i, j\in [m + n]$ such that $0 < i - j$.
    % There are 3 cases:
    % \begin{itemize}
    %     \item $j\in [r]$ and $i\in [[r + 1, 2r]] \setminus \{j + r\}$.
    %     Then $w_i^T\sym{E}^2w_j = \frac{1}{2}(u_{i - r}^TEE^Tu_j - v_{i - r}^TE^TEv_j)$.
    %
    %     \item $j\in [r]$ and $i\in [r] \setminus \{j\}$.
    %     Then $w_i^T\sym{E}^2w_j = \frac{1}{2}(u_i^TEE^Tu_j + v_i^TE^TEv_j)$.
    %
    %     \item $j\in [[r + 1, 2r]]$ and $i\in [[r + 1, 2r]] \setminus \{j\}$.
    %     Then $w_i^T\sym{E}^2w_j = \frac{1}{2}(u_{i - r}^TEE^Tu_{j - r} + v_{i - r}^TE^TEv_{j - r})$.
    % \end{itemize}
    % From the above, it follows that
    % \begin{equation*}
    %     \max_{|i - j|\neq 0, r} |w_i^T\sym{E}^2w_j|
    %     \le \frac{1}{2} \max_{i\neq j} (|u_iEE^Tu_j| + |v_iE^TEv_j|),
    % \end{equation*}
    % proving Eq. \eqref{eq:asmpt-R3-bound} for this choice of $R_3$.
    % % The guarantee on $R_2$ follows from the fact $\|\sym{E}\| = \|E\|$.
    % Plugging in these values into the bound in Lemma \ref{lem:T-bound} gives
    \begin{equation*}
        \left|
            \left(
                \tilde{V}_S\tilde{V}_S^T - V_SV_S^T
            \right)_{jk}
        \right|
        \le Cr\myL_0(R_1 + R_3)
        \left(
            \myD_2^2 + \frac{\|M\|\|M'\|}{(m + n)^{2.5}} 
        \right)
        \le 3Cr\myD_2^2(R_1 + R_3),
        % \le 36r\myL_0\myD_2^2(2R_1 + 3R_3) + \frac{r\myL_0\|M\|\|M'\|}{(m + n)^{2.5}}
        % \le Cr\myD_2^2(R_1 + R_3) + \frac{1}{m + n},
        % \quad
        % \text{ for a constant } C_0.
    \end{equation*}
    where the last inequality is due to the facts $\|M\| = \|M'\| = 1$ and $\myD_1, \myD_2 \ge (m + n)^{-1/2}$.
    This holds over all $j, k\in [n]$, so it extends to the infinity norm, proving Eq. \eqref{eq:s-sing-vec-entry}.

    \bigskip

    Let us prove Eq. \eqref{eq:s-sing-vec-row}.
    Consider an arbitrary $j\in [n]$.
    % We choose $\nu = 0$, $M = e_{m + n, j + m}$ and $M' = I_{m + n}$.
    % Combining Eq. \eqref{eq:proj-s-diff-taylor} and the equations leading up to Eq. \eqref{eq:generic-T-defn}, we have
    The same argument in the previous step gives
    \begin{equation*}
        \left(
            \tilde{V}_S\tilde{V}_S^T - V_SV_S^T
        \right)_{j, \cdot}
        = \left(
            \tilde{W}_S\tilde{W}_S^T - W_SW_S^T
        \right)_{(j + m), \cdot}
        {\oldcomment{}
        = e_{m + n, m + j}^T\myT_0 = M^T\myT_\nu M'},
    \end{equation*}
    for the choices $\nu = 0$, $M = e_{m + n, j + m}$ and $M' = I_{m + n}$.
    We repeat the previous argument, but this time $\myD = \myD_2$ while $\myD' = 1$, so the final bound has only one instance of $\myD_2$, namely
    % From the previous argument, we can choose $\myL_\nu$, $\myD$ and $\uppnormE_0$ the same way, and choose $\myD' = 1$ and $\uppnormE_0' = \|E\|$, which trivially satisfy the condition on $M'$.
    % The same choice of $R_1$, $R_2$ and $R_3$ also satisfy Eqs. \eqref{eq:asmpt-R12-bound} and \eqref{eq:asmpt-R3-bound}.
    % Therefore
    \begin{equation*}
        \left\|
            \left(
                \tilde{V}_S\tilde{V}_S^T - V_SV_S^T
            \right)_{j, \cdot}
        \right\|
        \le 3Cr\myD_2(R_1 + R_3),
    \end{equation*}
    which holds uniformly over $j\in [n]$, proving Eq. \eqref{eq:s-sing-vec-row}.

    \bigskip
    
    Let us prove Eq. \eqref{eq:s-rank-approx-entry}.
    Consider arbitrary $j\in [m]$ and $k\in [n]$.
    Combining Eq. \eqref{eq:rank-s-approx-diff-taylor} and the equations leading up to Eq. \eqref{eq:generic-T-defn}, we have
    \begin{equation*}
        \bigl(
            \tilde{A}_s - A_s
        \bigr)_{jk}
        = \left(
            \tilde{W}_S\tilde{W}_S^T - W_SW_S^T
        \right)_{j(k + m)}
        {\oldcomment{}
        = e_{m + n, j}^T\myT_1e_{m + n, m + k} = M^T\myT_\nu M'
        }
    \end{equation*}
    for the choices $M = e_{m + n, j}$, $M' = e_{m + n, k + m}$ and $\nu = 1$.
    % We choose $\myL_\nu = \lambda_s = \sigma_s$, $\myD = \frac{1}{\sqrt{2}}\myD_2$, $\myD' = \frac{1}{\sqrt{2}}\myD_1$, $\uppnormE_0 = \uppnormE$.
    % The choices of $R_1$, $R_2$ and $R_3$ are the same as those in the theorem statement.
    % Similarly to the previous two parts, these choices satisfy all requirements of Assumption \ref{asmpt:generic-series} and Lemma \ref{lem:T-bound}, so we have
    This time $\myL_\nu = \lambda_s = \sigma_s$, and $\myD = \myD_1$ while $\myD' = \myD_2$.
    Adapting them into the previous arguments give
    \begin{equation*}
        \left|
            \left(
                \tilde{A}_s\tilde{A}_s^T - A_sA_s^T
            \right)_{jk}
        \right|
        % \le 2r\sigma_s\myD_1\myD_2(18R_1 + 27R_3)
        \le 3Cr\myD_1\myD_2\sigma_s(R_1 + R_3)
        % + \frac{\sigma_s}{m + n}
        ,
    \end{equation*}
    for a constant $C_0$.
    This bound holds uniformly over $j\in [m]$ and $k\in [n]$, so it holds in the infinity norm.
    The proof is complete.
\end{proof}

\subsection{Proof of the bound on generic series}
\label{sec:proof-T-bound}

In this section, we prove Lemma \ref{lem:T-bound}, which is the backbone of Theorem \ref{thm:main-deterministic}'s proof.
We summarize the objects involved in the bound below.
Note that at this point we do not need to care about $A$.

\begin{setting}  \label{set:generic-series}
Let the following objects and properties be given:
\begin{itemize}
    \item $\boldsymbol{\Lambda} = \{\lambda_i\}_{i\in [2r]}$: a set of real numbers such that $\delta_i \defeq \lambda_i - \lambda_{i + 1} > 0$ for each $i \le r - 1$, $\delta_r = \lambda_r > 0$, and $\lambda_i = -\lambda_{i - r}$ for $i \ge r + 1$.

    \item $W = \{w_i\}_{i\in [2r]}$ for $i\in [2r]$: a set of orthonormal vectors in $\R^{m + n}$.
    We slightly abuse the notation here and use $W$ as an orthogonal matrix when necessary.

    \item $S$: an arbitrary subset of $\{\lambda_i\}_{i \in [r]}$.

    % \item $\Gamma_S$: a contour encircling all in $\{\pm \lambda_i\}_{i\in S}$ and none in $\{0\} \cup \{\pm \lambda_i\}_{i\in S^c}$.

    \item $\gamma$ and $h$: positive integers such that $2h - 1 \le \gamma + 1$.

    \item $\Pi_h(\gamma)$: the set of pairs of index sequences $(\boldsymbol{\alpha}, \boldsymbol{\beta})$ satisfying Eq. \eqref{eq:alpha-beta-defn}.
    
    % \item and $\mathbf{\alpha}, \mathbf{\beta} \in \Pi_h(\gamma)$ be arbitrary with respective sums $\alpha$ and $\beta$.

    \item $\myC_\nu = \myC_{\nu, \boldsymbol{\Lambda}, S}: \bigcup_{\beta = 0}^{\gamma + 1} [2r]^\beta \to \R$: the mapping from an index sequence $\mathbf{I}$ to its integral coefficient defined in Eq. \eqref{eq:integral-coef}.
    % We only consider the cases $\nu = 0, 1$.
    We hide the dependencies on $\boldsymbol{\Lambda}$ and $S$ because they are fixed throughout the proof.

    \item $E$: an arbitrary matrix in $\R^{m\times n}$, and let $\sym{E}$ be its symmetrization.

    \item $\myM = \myM_{W, E}: \bigcup_{\beta = 0}^{\gamma + 1} [2r]^\beta \to \R^{(m + n)\times (m + n)}$: the mapping from an index sequence $\mathbf{i}$ to its monomial matrix defined in Eq. \eqref{eq:monomial-matrix}.

    \item $M$ and $M'$: arbitrary matrices with $m + n$ rows.

    \item $\myT_\nu$: the target sum defined in Eq. \eqref{eq:generic-T-defn}.
    The sub-terms $\myT_\nu^{(\gamma)}$, $\myT_\nu^{(\gamma, h)}$ are defined in the same equation, while $\myT_\nu(\boldsymbol{\alpha}, \boldsymbol{\beta})$ is defined in Eq. \eqref{eq:T(alpha,beta)-defn}.
    % \item $R_1 \defeq \frac{\|E\|}{\lambda_S} \vee \frac{2r\|W^T\sym{E}W\|_\infty}{\Delta_S}$.

    % \item $R_2 \defeq \frac{\sqrt{2r}\|E\|}{\sqrt{\lambda_S\Delta_S}}$.

    % \item $R_3 \defeq \frac{2r}{\lambda_S\Delta_S} \max_{|i - j|\notin\{0, r\}} |w_i\sym{E}^2w_j|$.

    % \item 
\end{itemize}
% For each pair $(\boldsymbol{\alpha}, \boldsymbol{\beta}) \in \Pi_h(\gamma)$, consider the following terms:
% \begin{equation}  \label{eq:T(alpha,beta)-defn}
%     \myT_\nu(\boldsymbol{\alpha}, \boldsymbol{\beta})
%     = \sum_{\mathbf{I} \in [2r]^\beta}
%     \myC_\nu(\mathbf{I}) M^T \sym{E}^{\alpha_0}
%     \left[
%         \prod_{k = 1}^{h - 1}
%         \myM\left(\mathbf{i}_k\right)
%         \sym{E}^{\alpha_k + 1}
%     \right]
%     \myM\left(\mathbf{i}_h\right)
%     \sym{E}^{\alpha_h} M',
% \end{equation}
% Consider the following series:
% \begin{equation} 
%     \myT
%     \defeq \sum_{\gamma = 1}^\infty \sum_{h = 0}^{\lfloor \gamma/2 \rfloor + 1}
%     \sum_{(\boldsymbol{\alpha}, \boldsymbol{\beta}) \in \Pi_h(\gamma)}
%     \myT_\nu(\boldsymbol{\alpha}, \boldsymbol{\beta}),
% \end{equation}
\end{setting}

The quantities we need to bound is $\|M^T\myT_\nu M'\|$ where $\|\cdot\|$ is a sub-multiplicative matrix norm, $M$ and $M'$ are arbitrary matrices (with dimensions compatible to $\myT_\nu$'s), and $\myT_\nu$ is defined in Eq. \eqref{eq:generic-T-defn}.
We reproduce the bound below, for the reader's convenience.
\begin{equation}  \label{eq:generic-T-defn-1}
    % \tilde{W}_S\tilde{W}_S^T - W_SW_S^T
    % = \sum_{\gamma = 1}^\infty \sum_{h = 0}^{\lfloor \gamma/2 \rfloor + 1}
    % \sum_{(\boldsymbol{\alpha}, \boldsymbol{\beta})\in \Pi_h(\gamma)}
    % T(\boldsymbol{\alpha}, \boldsymbol{\beta}),
    \myT_\nu
    = \sum_{\gamma = 1}^\infty \myT_\nu^{(\gamma)},
    \quad \text{ where }
    \myT_\nu^{(\gamma)} = \sum_{h = 0}^{\lfloor \gamma/2 \rfloor + 1}
    \myT_\nu^{(\gamma, h)},
    \quad \text{ where }
    \myT_\nu^{(\gamma, h)} = \sum_{(\boldsymbol{\alpha}, \boldsymbol{\beta}) \in \Pi_h(\gamma)}
    \myT_\nu(\boldsymbol{\alpha}, \boldsymbol{\beta}).
\end{equation}
For the definitions of $\myT_\nu(\boldsymbol{\alpha}, \boldsymbol{\beta})$ and $\Pi_h(\gamma)$, we refer to Section \ref{sec:main-proof-deterministic}.

\bigskip

\noindent\textbf{Bounding strategy.}
Our strategy to bound $\|M^T \myT_\nu M'\|$ is straightforward, with three steps:
\begin{enumerate}
    \item Bounding $\|M^T\myT_\nu(\boldsymbol{\alpha}, \boldsymbol{\beta})M'\|$ for each pair $(\boldsymbol{\alpha}, \boldsymbol{\beta}) \in \Pi_h(\gamma)$ with an exponentially small bound.

    \item Summing up the bounds over $\Pi_h(\gamma)$ and over all $0 \le h \le \gamma/2 + 1$ to get a bound for $\|M^T\myT_\nu^{(\gamma)}M'\|$.
    The bounds in the previous steps should be good enough to still make this bound exponentially small.

    \item Summing up the geometric series of bounds
    % for $\|M^T\myT_\nu^{(\gamma)}M'\|$
    over all $\gamma \ge 1$ to get a bound for $\|M^T \myT_\nu M'\|$.
\end{enumerate}

We introduce the following two lemmas corresponding to the first two steps (the third will result in Lemma \ref{lem:T-bound}).

\begin{lemma}  \label{lem:T(alpha,beta)-bound-1}
    Consider objects in Setting \ref{set:generic-series} and Lemma \ref{lem:T-bound}.
    % Let $\beta_S(\mathbf{I})$ and $\beta_{S^c}(\mathbf{I})$ be defined in Lemma \ref{lem:integral-coef-bound}.
    % Assume that either $\nu = 0$ or $\nu = 1$ and $S = [s]$ for some $s\in [r]$.
    For each $\gamma \ge 1$, $0\le h\le \gamma/2 + 1$, and $\boldsymbol{\alpha}, \boldsymbol{\beta} \in \Pi_h(\gamma)$,
    % and for every sub-multiplicative matrix norm $\|\cdot\|$,
    we have
    \begin{equation}  \label{eq:T(alpha,beta)-bound-1}
    \begin{aligned}
        \|{\oldcomment{}M^T\myT_\nu(\boldsymbol{\alpha}, \boldsymbol{\beta})M'}\|
        \le
        & \ \myL_\nu
        \binom{\gamma + \beta - 2}{\beta - 1}
        \|W^T\sym{E}W\|_\infty^{\beta - h}
        \|E\|^{\alpha - \alpha_0 - \alpha_h + h - 1}
        \frac{
            \left(
                2|S| + 2\rho|S^c|
            \right)^{\beta - 2}
        }{\lambda_S^{\gamma + 1 - \beta}\Delta_S^{\beta - 1}}
        \\
        & \quad \cdot \sum_{i = 1}^{2r} \rho^{\one\{i_k\in S^c\}}
        \left\| w_i^T \sym{E}^{\alpha_0} M\right\| 
        \cdot \sum_{i = 1}^{2r} \rho^{\one\{i_k\in S^c\}}
        \left\| w_i^T \sym{E}^{\alpha_h} M'\right\|,
    \end{aligned}
    \end{equation}
    where $\rho \defeq (\lambda_S + \Delta_S) / (2\lambda_S)$.
    Consequently,
    \begin{equation}  \label{eq:T(alpha,beta)-bound-2}
    \begin{aligned}
        \|{\oldcomment{}M^T\myT_\nu(\boldsymbol{\alpha}, \boldsymbol{\beta})M'}\|
        \le
        & \ \myL_\nu
        \binom{\gamma + \beta - 2}{\beta - 1}
        \myD \myD'
        \frac{
            (2r)^\beta
            \|W^T\sym{E}W\|_\infty^{\beta - h}
            \|E\|^{\alpha + h - 1}
        }{\lambda_S^{\gamma + 1 - \beta}\Delta_S^{\beta - 1}}.
    \end{aligned}
    \end{equation}
\end{lemma}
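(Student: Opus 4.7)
The plan is to expand $M^T \myT_\nu(\boldsymbol{\alpha}, \boldsymbol{\beta}) M'$ as a sum over index sequences $\mathbf{I}$ of a scalar contour coefficient times a matrix product, and then bound each component separately. First I would exploit the rank-one structure $P_{i_j} = w_{i_j} w_{i_j}^T$ to telescope the product inside $\myT_\nu(\boldsymbol{\alpha}, \boldsymbol{\beta})$: for each fixed $\mathbf{I}$, the matrix factors as $\sym{E}^{\alpha_0} w_{i_{1,1}}$ (a column) times a product of scalars times $w_{i_{h,\beta_h}}^T \sym{E}^{\alpha_h}$ (a row), so that after sandwiching by $M^T$ and $M'$ only the two outermost factors are matrix-valued. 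Sub-multiplicativity then yields
\[
\|M^T \myT_\nu(\boldsymbol{\alpha}, \boldsymbol{\beta}) M'\| \le \sum_{\mathbf{I}} |\myC_\nu(\mathbf{I})| \cdot \|M^T \sym{E}^{\alpha_0} w_{i_{1,1}}\| \cdot S(\mathbf{I}) \cdot \|w_{i_{h,\beta_h}}^T \sym{E}^{\alpha_h} M'\|,
\]
where $S(\mathbf{I})$ is the product of $\beta - 1$ interior scalars.

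The interior scalars split into two groups. Within each monomial $\myM(\mathbf{i}_k)$, the $\beta_k - 1$ scalars $w_{i_{k,j-1}}^T \sym{E} w_{i_{k,j}}$ are entries of the $2r \times 2r$ compressed matrix $W^T \sym{E} W$, each bounded by $\|W^T \sym{E} W\|_\infty$; summing across $k$ yields $\beta - h$ such factors. The $h - 1$ junction scalars $w_{i_{k,\beta_k}}^T \sym{E}^{\alpha_k + 1} w_{i_{k+1,1}}$ are each bounded by $\|\sym{E}\|^{\alpha_k + 1} = \|E\|^{\alpha_k + 1}$ since the $w_i$ are unit vectors. Multiplying these gives $\|E\|^{\sum_{k=1}^{h-1}(\alpha_k + 1)} = \|E\|^{\alpha - \alpha_0 - \alpha_h + h - 1}$, matching the $\|E\|$-exponent in the stated bound.

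The main obstacle is bounding $|\myC_\nu(\mathbf{I})|$ with the correct $\lambda_S$ and $\Delta_S$ powers together with the proper $\rho$-penalty for indices outside the encircled set $S' \defeq S \cup (S + r)$. Since $\Gamma_S$ encloses only the poles $\lambda_i$ with $i \in S'$ (and not $z=0$ nor the poles $\lambda_j$ with $j \notin S'$), I would evaluate via residues: $\myC_\nu(\mathbf{I}) = \sum_{i \in S'} \text{Res}_{z = \lambda_i}$ of the integrand $z^{\nu - \gamma - 1} \prod_{k,j} \lambda_{i_{kj}}/(z - \lambda_{i_{kj}})$. When $i$ appears in $\mathbf{I}$ with multiplicity $m_i$, computing the residue requires $m_i - 1$ derivatives; the combined Leibniz expansion over all poles produces the combinatorial factor $\binom{\gamma + \beta - 2}{\beta - 1}$ via a stars-and-bars count for distributing derivatives among the $\gamma + 1 - \nu$ copies of $1/z$ and the $\beta$ linear factors $1/(z - \lambda_{i_{kj}})$. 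Bounding each surviving cross-pole factor $\lambda_{i_{kj}}/(\lambda_i - \lambda_{i_{kj}})$ with $|\lambda_i| \ge \lambda_S$, $|\lambda_i - \lambda_{i_{kj}}| \ge \Delta_S$ for $i_{kj} \in S^c$, and combining with $|\lambda_i|^{\nu - \gamma - 1}$ from the numerator, yields $\myL_\nu \lambda_S^{\beta - \gamma - 1} \Delta_S^{-(\beta - 1)}$; a refined accounting of the numerator $|\lambda_{i_{kj}}|$ when $i_{kj} \in S^c$ contributes exactly one factor of $\rho = (\lambda_S + \Delta_S)/(2\lambda_S)$ per such index.

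With the per-$\mathbf{I}$ bound in place, the sum over $\mathbf{I} \in [2r]^{\beta_1 + \cdots + \beta_h}$ factors into $\beta$ independent index sums. The $\beta - 2$ interior sums each contribute $|S'| + \rho|(S')^c| = 2|S| + 2\rho|S^c|$, while the two boundary sums retain the weighted norms $\sum_i \rho^{\one\{i \notin S'\}} \|w_i^T \sym{E}^{\alpha_0} M\|$ and its symmetric partner, delivering \eqref{eq:T(alpha,beta)-bound-1} verbatim. For \eqref{eq:T(alpha,beta)-bound-2}, one bounds each boundary sum by $2r \myD \|E\|^{\alpha_0}$ (resp.\ $2r \myD' \|E\|^{\alpha_h}$) via the definition of $\myD$, absorbing the bounded $\rho$-weights into a constant, and each interior sum by $2r$ up to the same constant, giving the $(2r)^\beta$ prefactor. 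The technical crux is therefore the residue analysis in paragraph three, specifically controlling the multi-pole residues when indices coalesce to extract the correct binomial coefficient and $\Delta_S$-dependence; the other steps are careful bookkeeping of exponents.
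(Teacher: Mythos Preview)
Your overall architecture is exactly the paper's: decompose $M^T\myT_\nu(\boldsymbol{\alpha},\boldsymbol{\beta})M'$ as $\sum_{\mathbf I}\myC_\nu(\mathbf I)\cdot(\text{matrix product})$, use the rank-one structure of the $P_i$ to collapse the matrix product into two boundary factors and $\beta-1$ interior scalars (this is the paper's Lemma~4.8, with the same split into $\beta-h$ entries of $W^T\sym{E}W$ and $h-1$ junction factors of total $\|E\|$-degree $\alpha-\alpha_0-\alpha_h+h-1$), bound $|\myC_\nu(\mathbf I)|$ separately, then sum over~$\mathbf I$.

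The only genuine divergence is in how you propose to bound $|\myC_\nu(\mathbf I)|$. The paper does \emph{not} compute residues directly. Instead it proves (Lemma~4.7, via Lemmas~B.1 and~B.2) a bound by a triple induction built on the telescoping identity
\[
\frac{1}{z^{n_0}(z-a)} \;=\; \frac{1}{a}\Bigl(\frac{1}{z^{n_0-1}(z-a)}-\frac{1}{z^{n_0}}\Bigr),
\]
peeling off one factor of $1/a_i$ at a time; the binomial coefficient arises from the Pascal-type recursion of these splittings rather than from a Leibniz/stars-and-bars count. The paper actually obtains the sharper $\binom{\gamma+\beta_S(\mathbf I)-2}{\beta_S(\mathbf I)-1}$ first and then relaxes it, and the factor $\rho^{\beta_{S^c}(\mathbf I)}$ does not come from ``the numerator $|\lambda_{i_{kj}}|$'' as you suggest but from a separate binomial expansion of $\prod_j b_j^{n_j'}/(z-b_j)^{n_j}$ (Lemma~B.2) combined with the passage from $\binom{\gamma+\beta_S-2}{\beta_S-1}$ to $\binom{\gamma+\beta-2}{\beta-1}$.

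Your direct residue route is a legitimate alternative and would ultimately recover the same bound, but the step you flag as the crux---controlling the multi-order residues when indices in $\mathbf I$ coalesce, and extracting the correct $\Delta_S$-dependence and binomial coefficient---is exactly where the difficulty lives, and your sketch does not yet pin it down. The paper's inductive packaging is longer to write out but makes each inequality elementary.
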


Since the bound in Eq. \eqref{eq:T(alpha,beta)-bound-1} is rather cumbersome, we will mostly use Eq. \eqref{eq:T(alpha,beta)-bound-2}.
Eq. \eqref{eq:T(alpha,beta)-bound-1} is still useful for future potential developments in this direction.

\begin{lemma}  \label{lem:T-gamma-bound}
    Consider the objects in Setting \ref{set:generic-series}, the settings of Lemma \ref{lem:T-bound}, and $\myT_\nu^{(\gamma)}$ defined in Eq. \eqref{eq:generic-T-defn-1}.
    For each $1 \le \gamma \le 10\log (m + n)$, we have
    \begin{equation*}
    \begin{aligned}
        \left\| {\oldcomment{}M^T\myT_\nu^{(\gamma)}M'}
        \right\|
        \le
        r\myL_\nu 
            \myD\myD'
        \left[
            9 R_1 (6R)^{\gamma - 1}
            + \one\{\gamma \text{ even}\} \Bigl(
                4\left(
                    \tfrac{3\sqrt{3}}{\sqrt{2}}R_1
                \right)^\gamma
                + 27R_3\left(
                    \tfrac{3\sqrt{3}}{2}R_2
                \right)^{\gamma - 2}
            \Bigr)
        \right].
    \end{aligned}
    \end{equation*}
    For each $\gamma > 10\log (m + n)$, we have
    \begin{equation*}
        \left\| {\oldcomment{}M^T\myT_\nu^{(\gamma)}M'}
        \right\|
        \le
        r\myL_\nu \|M\|\|M'\|
        \left[
            9 R_1 (6R)^{\gamma - 1}
            + \one\{\gamma \text{ even}\} \Bigl(
                4\left(
                    \tfrac{3\sqrt{3}}{\sqrt{2}}R_1
                \right)^\gamma
                + 27R_3\left(
                    \tfrac{3\sqrt{3}}{2}R_2
                \right)^{\gamma - 2}
            \Bigr)
        \right].
    \end{equation*}
\end{lemma}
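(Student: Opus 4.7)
The plan is to apply Lemma \ref{lem:T(alpha,beta)-bound-1} to each term $\|M^T\myT_\nu(\boldsymbol{\alpha},\boldsymbol{\beta})M'\|$, sum over $(\boldsymbol{\alpha},\boldsymbol{\beta})\in\Pi_h(\gamma)$ for each $h$, and then sum over $h\in[[0,\lfloor\gamma/2\rfloor+1]]$. The key observation is that Eq. \eqref{eq:T(alpha,beta)-bound-2} depends on $(\boldsymbol{\alpha},\boldsymbol{\beta})$ only through $\beta \defeq \sum_k\beta_k$, since $\alpha=\gamma+1-\beta$. For fixed $(\gamma,h,\beta)$, a stars-and-bars count (after subtracting the mandatory $1$'s from the middle $\alpha_k$'s) gives $\binom{\gamma+2-\beta}{h}$ valid choices for $\boldsymbol{\alpha}$ and $\binom{\beta-1}{h-1}$ for $\boldsymbol{\beta}$.

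Setting $x\defeq\|E\|/\lambda_S$ and $y\defeq 2r\|W^T\sym{E}W\|_\infty/\Delta_S$, so that $\max(x,y)\le R_1$, and writing $a\defeq\gamma+h-\beta$, $b\defeq\beta-h$ with $a+b=\gamma$, the level-$h$ contribution rearranges to
\begin{equation*}
    \myL_\nu\,\myD\,\myD'\,(2r)^h\left(\frac{\lambda_S}{\Delta_S}\right)^{h-1}\sum_{b}\binom{\gamma-h-b+2}{h}\binom{h+b-1}{h-1}\binom{\gamma+h+b-2}{h+b-1}\,x^{\gamma-b}y^b.
\end{equation*}
Bounding each binomial coefficient by a suitable power of $2$ and summing the geometric series in $b$, then summing the result over $h\ge 1$, produces the main exponential piece $9R_1(6R_1)^{\gamma-1}$ of the stated bound.

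The refinements that appear only for even $\gamma$ come from examining the extremal regime $\beta=2h$ with $h=\gamma/2$, where $a=b=\gamma/2$ and the prefactor $(2r)^h(\lambda_S/\Delta_S)^{h-1}x^ay^b$ regroups exactly into a pure power of $R_2=\sqrt{2r}\|E\|/\sqrt{\lambda_S\Delta_S}$; this produces the $4(\tfrac{3\sqrt{3}}{\sqrt{2}}R_1)^\gamma$ summand with its sharp constants. For the $R_3$ piece, I would invoke the finer form \eqref{eq:T(alpha,beta)-bound-1} and exploit the fact that any consecutive pair of projections $P_{i_{kj}}\sym{E}P_{i_{k,j+1}}$ in a monomial matrix with $|i_{kj}-i_{k,j+1}|\notin\{0,r\}$ can be bounded using the off-diagonal quantity $\max_{|i-j|\notin\{0,r\}}|w_i\sym{E}^2 w_j|$ instead of the naive $\|W^T\sym{E}W\|_\infty^2$; this swaps a single factor of $y^2$ for $R_3$, yielding the $27R_3(\tfrac{3\sqrt{3}}{2}R_2)^{\gamma-2}$ summand.

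For the regime $\gamma>10\log(m+n)$, the definition \eqref{eq:T-bound-myD} of $\myD$ no longer covers the relevant exponent $\alpha_0$ or $\alpha_h$ appearing in $\sum_i\|w_i^T\sym{E}^{\alpha_0}M\|$; in this case I would instead use the trivial submultiplicativity bound $\|w_i^T\sym{E}^{\alpha_0}M\|\le\|E\|^{\alpha_0}\|M\|$, which replaces $\myD\myD'$ by $\|M\|\|M'\|$ throughout while leaving the combinatorial summation otherwise unchanged. The principal technical obstacle is the bookkeeping: extracting the explicit constants $6,9,\tfrac{3\sqrt{3}}{\sqrt{2}},\tfrac{3\sqrt{3}}{2},27$ from the nested triple binomial sum, and cleanly separating the $R_3$-refined contribution (available only when $\gamma$ is even and the monomial-matrix indices can be paired non-trivially) from the generic $R_1$ and $R_2$ pieces.
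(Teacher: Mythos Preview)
Your overall plan---apply Lemma \ref{lem:T(alpha,beta)-bound-1}, count $(\boldsymbol{\alpha},\boldsymbol{\beta})$ via stars-and-bars, sum over $h$, and switch to the trivial bound $\|M\|\|M'\|$ for $\gamma>10\log(m+n)$---matches the paper. Your level-$h$ display with the factor $(2r)^h(\lambda_S/\Delta_S)^{h-1}x^ay^b$ is also correct. However, your treatment of the even-$\gamma$ refinement contains a genuine misidentification that would prevent the argument from closing.

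The extremal regime is not $h=\gamma/2$, $\beta=2h$; it is $h=\gamma/2+1$ (the maximal value of $h$), which forces $\beta=h$, $\alpha_0=\alpha_h=0$, and $\alpha_1=\cdots=\alpha_{h-1}=\beta_1=\cdots=\beta_h=1$. In your notation this is $b=0$, $a=\gamma$, and it is in that configuration that $(2r)^h(\lambda_S/\Delta_S)^{h-1}x^\gamma$ regroups into $2r\cdot R_2^\gamma$. (Your proposed case $\beta=2h=\gamma$ is not even feasible for $\gamma>4$, since $\beta\le\gamma+2-h$.) More importantly, at this extremal $h$ every monomial matrix $\myM(\mathbf{i}_k)$ is a single projector $P_{i_k}$, and between consecutive blocks one has $\sym{E}^{\alpha_k+1}=\sym{E}^2$. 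The relevant scalars are therefore $w_{i_k}^T\sym{E}^2 w_{i_{k+1}}$, not the $w_i^T\sym{E}w_j$ factors inside a monomial matrix; this is exactly why $R_3$ involves $\sym{E}^2$.

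The paper does not extract the $R_3$ piece by refining Eq.~\eqref{eq:T(alpha,beta)-bound-1}. Instead it returns to the definition of $\myT_\nu(\boldsymbol{\alpha}^*,\boldsymbol{\beta}^*)$ at this unique tuple and splits the sum over $\mathbf{I}=(i_1,\ldots,i_h)$ according to whether $|\lambda_{i_k}|$ is constant along $\mathbf{I}$. For non-uniform $\mathbf{I}$, one factor $|w_{i_k}^T\sym{E}^2 w_{i_{k+1}}|$ with $|i_k-i_{k+1}|\notin\{0,r\}$ is bounded by the off-diagonal max, yielding $R_3R_2^{\gamma-2}$. For uniform $\mathbf{I}$ (all $i_k\in\{i,i+r\}$ for some $i\in S$), the finer integral bound Eq.~\eqref{eq:integral-coef-bound-1} with $\lambda_S(\mathbf{I})=\Delta_S(\mathbf{I})=\lambda_i$ is needed to get $(\|E\|/\lambda_S)^\gamma$ rather than the worse $(\|E\|/\sqrt{\lambda_S\Delta_S})^\gamma$; this produces the $(\tfrac{3\sqrt3}{\sqrt2}R_1)^\gamma$ term. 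Without this uniform/non-uniform split and the direct return to the definition, the even-$\gamma$ terms with the stated constants do not emerge. Finally, a small slip: the main piece is $9R_1(6R)^{\gamma-1}$ with $R=R_1\vee R_2$, not $9R_1(6R_1)^{\gamma-1}$.
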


The structure is simple:
Lemma \ref{lem:T-bound} $\xleftarrow{\text{\ implied by \ }}$
Lemma \ref{lem:T-gamma-bound}  $\xleftarrow{\text{\ implied by \ }}$
Lemma \ref{lem:T(alpha,beta)-bound-1}.
Assuming the two lemmas above, we can prove Lemma \ref{lem:T-bound}, finishing the third step in the strategy.

\begin{proof}[Proof of Lemma \ref{lem:T-bound}]
    For convenience, let $k = \lfloor 10\log (m + n)\rfloor$.
    Applying Lemma \ref{lem:T-gamma-bound}, we have
    \begin{equation*}
    \begin{aligned}
        \sum_{\gamma = 1}^k
        \left\| {\oldcomment{}M^T\myT_\nu^{(\gamma)}M'}
        \right\|
        & \le r\myL_\nu\myD\myD'
        \left[
            9 \sum_{\gamma = 1}^\infty
            R_1 (6R)^{\gamma - 1}
            + \sum_{\gamma = 1}^\infty
            \Bigl(
                4\left(
                    \tfrac{3\sqrt{3}}{\sqrt{2}}R_1
                \right)^{2\gamma}
                + 27R_3\left(
                    \tfrac{3\sqrt{3}}{2}R_2
                \right)^{2\gamma - 2}
            \Bigr)
        \right]
        \\
        & \le r\myL_\nu\myD\myD' \left[ \frac{9R_1}{1 - 6R}
        + \frac{108R_1^2}{2 - 27R_1^2}
        + \frac{108R_3}{4 - 27R_2^2} \right]
        \le 4r\myL_\nu\myD\myD'(18 R_1 + 27 R_3),
    \end{aligned}
    \end{equation*}
    and
        \begin{equation*}
    \begin{aligned}
        & \sum_{\gamma = {k + 1}}^\infty
        \left\| {\oldcomment{}M^T\myT_\nu^{(\gamma)}M'}
        \right\|
        \\
        & \le r\myL_\nu\|M\|\|M'\|
        \left[
            9 \sum_{\gamma = k}^\infty
            R_1 (6R)^{\gamma - 1}
            + \sum_{\gamma = \lceil k/2 \rceil}^\infty
            \left(
                4\left(
                    \tfrac{3\sqrt{3}}{\sqrt{2}}R_1
                \right)^{2\gamma}
                + 27R_3\left(
                    \tfrac{3\sqrt{3}}{2}R_2
                \right)^{2\gamma - 2}
            \right)
        \right]
        \\
        & \le r\myL_\nu\|M\|\|M'\| \left[ \frac{9R_1(6R)^{k - 1}}{1 - 6R}
        + \frac{4(27R_1^2/2)^{\lceil k/2 \rceil}}{1 - 27R_1^2/2}
        + \frac{27R_3(27R_2^2/4)^{\lceil k/2 \rceil - 1}}{1 - 27R_2^2/4} \right]
        \\
        & \le
        \frac{r\myL_\nu\|M\|\|M'\|}{1 - 6R}
        \left[
            9R_1(6R)^k + 4(4R_1)^k + 27R_3(3R)^{k - 2}
        \right]
        \le \frac{9r\myL_\nu\|M\|\|M'\|(R_1 + 3R_3)}{(m + n)^{2.5}}.
    \end{aligned}
    \end{equation*}
    The convergence is guaranteed by the geometrically vanishing bounds on the $\|\cdot\|$-norms of the terms.
    Summing up the two parts, we obtain, by the triangle inequality
    \begin{equation*}
        \left\| M^T\myT_\nu M'\right\|
        \le 36r\myL_\nu (2R_1 + 3R_3)
        \left(
            \myD\myD' + \frac{\|M\|\|M'\|}{4(m + n)^{2.5}}
        \right).
    \end{equation*}
    The proof is complete.
    % {\oldcomment{}
    % Summing up the two parts, and recalling that $M^T\myT_\nu M' = \sum_{\nu = 1}^\infty M^T\myT_\nu^{(\gamma)}M'$, we complete the proof by a triangle inequality.
    % }
\end{proof}

We can now proceed with the first two steps, namely proving Lemmas \ref{lem:T(alpha,beta)-bound-1} and \ref{lem:T-gamma-bound}.

\subsection{Bounding each term in the generic series}
% \subsection{Step 1: Bounding \texorpdfstring{$M^T\myT_\nu(\boldsymbol{\alpha}, \boldsymbol{\beta})M'$}{} for each $\boldsymbol{\alpha}$ and $\boldsymbol{\beta}$}
\label{sec:main-proof-deterministic-bound-series}

Let us prove Lemma \ref{lem:T(alpha,beta)-bound-1}.
We aim to bound $\|M^T\myT_\nu(\boldsymbol{\alpha}, \boldsymbol{\beta})M'\|$.
Eq. \eqref{eq:T(alpha,beta)-defn} gives
\begin{equation}  \label{eq:T(alpha,beta)-defn-1}
    % T(\boldsymbol{\alpha}, \boldsymbol{\beta})
    M^T \myT_\nu(\boldsymbol{\alpha}, \boldsymbol{\beta}) M'
    \defeq
    \sum_{\mathbf{I} \in [2r]^{\beta_1 + \ldots + \beta_h}}
    \myC_\nu(\mathbf{I})
    \Bigl(
        M^T D(\boldsymbol{\alpha}, \boldsymbol{\beta}, \mathbf{I}) M'
    \Bigr)
    ,
\end{equation}
where
\begin{equation}  \label{eq:D(alpha,beta,I)-defn}
    D(\boldsymbol{\alpha}, \boldsymbol{\beta}, \mathbf{I})
    \defeq \sym{E}^{\alpha_0}
    \left[
        \prod_{k = 1}^{h - 1}
        \myM\left(\mathbf{i}_k\right)
        \sym{E}^{\alpha_k + 1}
    \right]
    \myM\left(\mathbf{i}_h\right)
    \sym{E}^{\alpha_h}.
\end{equation}
The real number $\myC_\nu(\mathbf{I})$ is the integral coefficient defined in Eq. \eqref{eq:integral-coef} and each $\myM(\mathbf{i}_k)$ is a monomial matrix defined in Eq. \eqref{eq:monomial-matrix}.
Lemma \ref{lem:T(alpha,beta)-bound-1} follows from the two bounds below.
We emphasize that we are only interested in the two cases: (1) $\nu = 0$, and (2) $\nu = 1$ and $S = [s]$ for some $s\in [r]$.

\begin{lemma}  \label{lem:integral-coef-bound}
    Consider the objects defined in Setting \ref{set:generic-series} and an arbitrary tuple $\mathbf{I} \defeq \{i_k\}_{k\in [\beta]} \in [2r]^\beta$
    and denote the following:
    \begin{equation*}
    \begin{aligned}
        % \Delta_S & \defeq \min\{|\lambda_i - \lambda_j| : i\in S, j\in [2r]\setminus S\},
        % \\
        % \lambda_S & \defeq \min\{|\lambda_i| : i\in S\},
        % \\
        \beta_S(\mathbf{I}) & \defeq \left|\{1\le k\le \beta: \ i_k \in S\}\right|,
        \\
        % \beta_{S^c}(\mathbf{I}) & \defeq \left|\{1\le k\le \beta: \ i_k \in [2r] \setminus S\}\right|.
        \lambda_S(\mathbf{I}) & \defeq \min\{ |\lambda_{i_k}| : k\in S \},
        \\
        \Delta_S(\mathbf{I}) & \defeq \min\{ |\lambda_{i_k} - \lambda_{i_l}| : i_k\in S, i_l\notin S \}.
    \end{aligned}
    \end{equation*}
    Let $\myL_0(\mathbf{I}) = 2$ and $\myL_1(\mathbf{I}) = \lambda_S(\mathbf{I})$.
    We have, for $\nu\in \{0, 1\}$,
    % for $\myC$ taking the formula in Eq. \eqref{eq:integral-coef},
    \begin{equation}  \label{eq:integral-coef-bound-1}
        |\myC_\nu(\mathbf{I})|
        \le \myL_\nu(\mathbf{I}) \left(
            1 + \frac{\Delta_S(\mathbf{I})}{\lambda_S(\mathbf{I})}
        \right)^{\beta_{S^c}(\mathbf{I})}
        \binom{
            \gamma + \beta_S(\mathbf{I}) - 2
        }{
            \beta_S(\mathbf{I}) - 1
        }
        \frac{1}{
            \lambda_S(\mathbf{I})^{\gamma + 1 - \beta}
            \Delta_S(\mathbf{I})^{\beta - 1}
        }.
    \end{equation}
    Consequently, in either of the two cases: (1) $\nu = 0$, and (2) $\nu = 1$ and $S = [s]$ for some $s\in [r]$, we obtain the following version where $\myL_\nu$ is free from $\mathbf{I}$:
    \begin{equation}  \label{eq:integral-coef-bound-2}
        |\myC_\nu(\mathbf{I})|
        \le \myL_\nu
        \left(
            1 + \frac{\Delta_S}{\lambda_S}
        \right)^{\beta_{S^c}(\mathbf{I})}
        \binom{\gamma + \beta_S(\mathbf{I}) - 2}{\beta_S(\mathbf{I}) - 1}
        \frac{1}{\lambda_S^{\gamma + 1 - \beta}\Delta_S^{\beta - 1}},
    \end{equation}
    where $\myL_0 = 2$ and $\myL_1 = \lambda_s$.
\end{lemma}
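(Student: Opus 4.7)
The proof is a direct application of the residue theorem. The integrand
\[
  f(z) \defeq \frac{z^\nu}{z^{\gamma+1}}\prod_{k=1}^{\beta}\frac{\lambda_{i_k}}{z-\lambda_{i_k}}
\]
has poles only at $z=0$ and at $z=\lambda_{i_k}$ for $k=1,\ldots,\beta$. By the construction of $\Gamma_S$, the contour encloses precisely those poles $\lambda_{i_k}$ with $i_k\in S$, and excludes the origin together with every $\lambda_{i_k}$ for $i_k \in S^c$. I group the repeated values among $\{\lambda_{i_k}:i_k\in S\}$ into distinct points $\mu_1,\ldots,\mu_p$ with multiplicities $m_1,\ldots,m_p$ (so $\sum_j m_j = \beta_S(\mathbf{I})$ and $p\le \beta_S(\mathbf{I})$); the residue theorem then gives
\[
  \myC_\nu(\mathbf{I}) \;=\; \sum_{j=1}^{p}\frac{1}{(m_j-1)!}\,\partial_z^{m_j-1}\bigl[(z-\mu_j)^{m_j}f(z)\bigr]\Big|_{z=\mu_j}.
\]

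To bound each residue I expand the $(m_j-1)$-th derivative via the Leibniz rule over the analytic factors of $(z-\mu_j)^{m_j}f(z)$: the power factor $z^{\nu-\gamma-1}$, the other inside-pole factors $\lambda_{i_\ell}/(z-\lambda_{i_\ell})$ with $\lambda_{i_\ell}\neq\mu_j$, and the outside-pole factors. Standard derivative formulas bound each class: the $k$-th derivative of $z^{\nu-\gamma-1}$ at $\mu_j$ has modulus at most $\binom{\gamma-\nu+k}{k}\,k!\,|\mu_j|^{\nu-\gamma-1-k}$, and the $k$-th derivative of $\lambda/(z-\lambda)$ at $\mu_j$ has modulus at most $k!\,|\lambda|/|\mu_j-\lambda|^{k+1}$. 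For each outside index $i_\ell$ I rewrite $\lambda_{i_\ell}/(\mu_j-\lambda_{i_\ell}) = -1 + \mu_j/(\mu_j-\lambda_{i_\ell})$, obtaining $|\lambda_{i_\ell}|/|\mu_j-\lambda_{i_\ell}|\le 1+|\mu_j|/|\mu_j-\lambda_{i_\ell}|$; careful bookkeeping (the numerator $|\mu_j|$ effectively cancels against one factor of $|\mu_j|$ from the power-factor derivatives) then yields the clean multiplicative factor $1+\Delta_S(\mathbf{I})/\lambda_S(\mathbf{I})$ per outside index. Summing the Leibniz compositions—partitions of $m_j-1$ derivatives across the $\beta-1$ differentiable factors—is a standard convolution that collapses by repeated application of the Chu--Vandermonde identity into the single binomial coefficient $\binom{\gamma+\beta_S(\mathbf{I})-2}{\beta_S(\mathbf{I})-1}$ (which, incidentally, is the number of compositions of $\gamma-1$ into $\beta_S(\mathbf{I})$ non-negative parts). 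Summing over the $p\le \beta_S(\mathbf{I})$ poles is absorbed into this constant.

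The main obstacle is the combinatorial bookkeeping: one must track simultaneously the multinomial coefficients from Leibniz, the individual $k!$ factors, and the distribution of derivatives across the $\beta-1$ differentiable pole factors, so that the sum telescopes cleanly into a single binomial. The specific form $1+\Delta_S/\lambda_S$—rather than the naive $1+\lambda_S/\Delta_S$—is the most delicate point and depends crucially on the $\lambda_{i_\ell}/(\mu_j-\lambda_{i_\ell})=-1+\mu_j/(\mu_j-\lambda_{i_\ell})$ identity together with the $|\mu_j|$-power cancellation against the power-factor derivatives. Once Eq.~\eqref{eq:integral-coef-bound-1} is established, Eq.~\eqref{eq:integral-coef-bound-2} follows by monotonicity: since $\lambda_S(\mathbf{I})\ge \lambda_S$ and $\Delta_S(\mathbf{I})\ge \Delta_S$, the denominator $\lambda_S(\mathbf{I})^{\gamma+1-\beta}\Delta_S(\mathbf{I})^{\beta-1}$ dominates $\lambda_S^{\gamma+1-\beta}\Delta_S^{\beta-1}$. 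For the $\myL_\nu$ factor, case $\nu=0$ is immediate since $\myL_0=\myL_0(\mathbf{I})=2$; for $\nu=1$ with $S=[s]$ the ratio $\myL_1(\mathbf{I})/\myL_1=\lambda_S(\mathbf{I})/\lambda_S\ge 1$ is absorbed by consuming one power of $1/\lambda_S(\mathbf{I})\le 1/\lambda_S$ from the denominator, which shifts the exponent from $\gamma-\beta$ to $\gamma+1-\beta$.
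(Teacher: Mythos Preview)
Your residue-theorem approach has a genuine gap that the paper's inductive argument is specifically designed to avoid. When you compute the residue at an inside pole $\mu_j$ and apply Leibniz, the ``other inside-pole factors'' $\lambda_{i_\ell}/(z-\lambda_{i_\ell})$ with $i_\ell\in S$, $\lambda_{i_\ell}\neq\mu_j$ contribute derivative bounds of size $k!\,|\lambda_{i_\ell}|/|\mu_j-\lambda_{i_\ell}|^{k+1}$. But $|\mu_j-\lambda_{i_\ell}|$ is the gap between two \emph{inside} eigenvalues, and nothing in the statement---neither $\lambda_S(\mathbf I)$ nor $\Delta_S(\mathbf I)$---controls it. Concretely, take $\nu=0$, $\gamma=1$, $\beta=2$ with both indices in $S$ and $\lambda_{i_1}\neq\lambda_{i_2}$: each individual residue is of order $1/|\lambda_{i_1}-\lambda_{i_2}|$ and blows up as the two inside eigenvalues approach each other, while the true integral equals $-(\lambda_{i_1}+\lambda_{i_2})/(\lambda_{i_1}\lambda_{i_2})$, which is bounded. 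Your sentence ``Summing over the $p\le\beta_S(\mathbf I)$ poles is absorbed into this constant'' is exactly where the argument breaks: the bound you would get on each residue diverges, and the required cancellation among residues is never exhibited. The paper sidesteps this entirely by never isolating a single residue; instead it uses the partial-fraction identity $\tfrac{1}{z^{n_0}(z-a_1)^{m_1}}=\tfrac{1}{a_1}\bigl[\tfrac{1}{z^{n_0-1}(z-a_1)^{m_1}}-\tfrac{1}{z^{n_0}(z-a_1)^{m_1-1}}\bigr]$ to set up a recursion (Lemmas~\ref{lem:integral-bound} and~\ref{lem:integral-bound-1}) whose inductive step only ever divides by $|a_1|\ge\lambda_S(\mathbf I)$, with the binomial coefficient emerging from Pascal's rule rather than from Vandermonde.

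A second, smaller gap: your monotonicity passage from Eq.~\eqref{eq:integral-coef-bound-1} to Eq.~\eqref{eq:integral-coef-bound-2} is incomplete. The factor $\bigl(1+\Delta_S(\mathbf I)/\lambda_S(\mathbf I)\bigr)^{\beta_{S^c}}$ is not monotone in $(\lambda_S(\mathbf I),\Delta_S(\mathbf I))$ by itself; the paper absorbs $\Delta_S(\mathbf I)^{\beta_{S^c}}$ into the denominator first, rewriting the whole expression as $(1/\Delta_S(\mathbf I)+1/\lambda_S(\mathbf I))^{\beta_{S^c}}\big/\bigl(\lambda_S(\mathbf I)^{\gamma+1-\beta}\Delta_S(\mathbf I)^{\beta_S-1}\bigr)$, which \emph{is} monotone. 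And for $\nu=1$, your ``consume one power'' trick fails in the edge case $\beta=\gamma+1$: there $\lambda_S(\mathbf I)$ appears with exponent $+1$ after absorbing $\myL_1(\mathbf I)$, so $\lambda_S(\mathbf I)\ge\lambda_S$ points the wrong way. The paper handles this case separately by proving $\lambda_S(\mathbf I)/\Delta_S(\mathbf I)\le\lambda_s/\delta_s$, which genuinely uses the hypothesis $S=[s]$.
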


Note that in order for Eq. \eqref{eq:integral-coef-bound-2} to hold for $\nu = 1$, $S$ needs to contain exactly the first $s$ indices for some $s$.
In the steps that follow, we will mainly use Eq. \eqref{eq:integral-coef-bound-2}, with one exception where the more precise Eq. \eqref{eq:integral-coef-bound-1} is needed.
It thus makes sense to keep both.

\begin{lemma}  \label{lem:monomial-matrix-bound}
    Consider the objects defined in Setting \ref{set:generic-series} and an arbitrary tuple $\mathbf{I} \defeq (\mathbf{i}_k)_{k\in [h]}$, where $\mathbf{i}_k = (i_{kl})_{l\in [\beta_k]} \in [2r]^{\beta_k}$.
    Let $D(\boldsymbol{\alpha}, \boldsymbol{\beta}, \mathbf{I})$ be defined in Eq. \eqref{eq:D(alpha,beta,I)-defn}.
    We have
    \begin{equation}  \label{eq:monomial-matrix-bound}
    \begin{aligned}
        & \left\|
            M^T D(\boldsymbol{\alpha}, \boldsymbol{\beta}, \mathbf{I}) M'
            % \bigl(M^T\sym{E}^{\alpha_0}\bigr)
            % \Bigl[
            %     \prod_{k = 1}^{h - 1}
            %     \myM\left(\mathbf{i}_k\right)
            %     \sym{E}^{\alpha_k + 1}
            % \Bigr]
            % \myM\left(\mathbf{i}_h\right)
            % \bigl(\sym{E}^{\alpha_h}M'\bigr)
        \right\|
        \le \|W^T\sym{E}W\|_\infty^{\beta - h} \|E\|^{\alpha - \alpha_0 - \alpha_h + h - 1}
        \left\| M^T \sym{E}^{\alpha_0}w_{i_1}\right\|
        \left\| w_{i_{h\beta_h}}^T \sym{E}^{\alpha_h} M'\right\|.
    \end{aligned}
    \end{equation}
\end{lemma}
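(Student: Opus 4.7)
The plan is to exploit the rank-one structure of each projection $P_i = w_i w_i^T$ to collapse the matrix product $D(\boldsymbol{\alpha}, \boldsymbol{\beta}, \mathbf{I})$ into a single scalar times an outer product, and then apply sub-multiplicativity of $\|\cdot\|$.

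First, I would substitute $P_{i_{kj}} = w_{i_{kj}} w_{i_{kj}}^T$ into the definition \eqref{eq:monomial-matrix} of each monomial matrix, obtaining
\[
\myM(\mathbf{i}_k)
= w_{i_{k1}} \left( \prod_{j=1}^{\beta_k - 1} w_{i_{kj}}^T \sym{E} w_{i_{k,j+1}} \right) w_{i_{k\beta_k}}^T,
\]
since each pair $w_{i_{kj}} w_{i_{kj}}^T \sym{E} w_{i_{k,j+1}} w_{i_{k,j+1}}^T$ contracts into a scalar inner form multiplying the outer product $w_{i_{kj}} w_{i_{k,j+1}}^T$. Each such monomial is thus rank one, with left and right outer vectors $w_{i_{k1}}$ and $w_{i_{k\beta_k}}$, carrying $\beta_k - 1$ scalar factors of the form $w_{i_{kj}}^T \sym{E} w_{i_{k,j+1}}$. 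Summing over $k = 1, \dots, h$ yields $\sum_k (\beta_k - 1) = \beta - h$ such intra-monomial scalars.

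Next, I would insert these collapsed monomials into the expression \eqref{eq:D(alpha,beta,I)-defn} for $D(\boldsymbol{\alpha}, \boldsymbol{\beta}, \mathbf{I})$ and observe that consecutive monomials are separated by blocks $\sym{E}^{\alpha_k + 1}$ for $k = 1, \dots, h - 1$. At each such junction, the product $w_{i_{k\beta_k}}^T \sym{E}^{\alpha_k + 1} w_{i_{k+1,1}}$ is a scalar. Pulling every scalar out, the entire expression collapses to
\[
D(\boldsymbol{\alpha}, \boldsymbol{\beta}, \mathbf{I})
= \kappa \cdot \sym{E}^{\alpha_0} \, w_{i_{11}} \, w_{i_{h\beta_h}}^T \, \sym{E}^{\alpha_h},
\]
where $\kappa$ is the product of all intra- and inter-monomial scalars. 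Consequently
\[
M^T D(\boldsymbol{\alpha}, \boldsymbol{\beta}, \mathbf{I}) M'
= \kappa \, \bigl(M^T \sym{E}^{\alpha_0} w_{i_{11}}\bigr) \bigl(w_{i_{h\beta_h}}^T \sym{E}^{\alpha_h} M'\bigr).
\]

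Finally, I would bound each intra-monomial scalar $|w_{i_{kj}}^T \sym{E} w_{i_{k,j+1}}|$ by $\|W^T \sym{E} W\|_\infty$, producing the factor $\|W^T \sym{E} W\|_\infty^{\beta - h}$, and each inter-monomial scalar $|w_{i_{k\beta_k}}^T \sym{E}^{\alpha_k + 1} w_{i_{k+1,1}}|$ by $\|\sym{E}\|^{\alpha_k + 1} = \|E\|^{\alpha_k + 1}$ (using that $w_i$ are unit vectors and that $\sym{E}$ shares its singular values with $E$). The exponents telescope to $\sum_{k=1}^{h-1}(\alpha_k + 1) = \alpha - \alpha_0 - \alpha_h + h - 1$. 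Sub-multiplicativity of $\|\cdot\|$ applied to the remaining rank-one product then yields \eqref{eq:monomial-matrix-bound}. The argument is pure bookkeeping; the only mild obstacle is keeping the exponent accounting straight, particularly checking that the two endpoint powers $\alpha_0$ and $\alpha_h$ stay attached to $M$ and $M'$ respectively, while exactly the $h - 1$ interior junction blocks contribute to the $\|E\|$ factor.
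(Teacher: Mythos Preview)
Your proposal is correct and follows essentially the same approach as the paper: collapse each monomial $\myM(\mathbf{i}_k)$ into a scalar times a rank-one matrix, extract the $\beta - h$ intra-monomial scalars bounded by $\|W^T\sym{E}W\|_\infty$ and the $h-1$ inter-monomial scalars bounded by $\|E\|^{\alpha_k+1}$, and then apply sub-multiplicativity to the remaining rank-one piece. The exponent bookkeeping you describe matches the paper's exactly.
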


The proof of Lemma \ref{lem:integral-coef-bound} is a triple inductive argument, which is not complicated but technically lengthy, and thus will be put in Appendix \ref{sec:technical-lemmas}.
The proof of Lemma \ref{lem:monomial-matrix-bound} is a simple rewriting of the product on the left-hand side of Eq. \eqref{eq:monomial-matrix-bound}, coupled with the sub-multiplicativity of the norm $\|\cdot\|$.
We will first finish the proof of Lemma \ref{lem:T(alpha,beta)-bound-1} assuming the two lemmas above, then prove Lemma \ref{lem:monomial-matrix-bound}.

\begin{proof}[Proof of Lemma \ref{lem:T(alpha,beta)-bound-1}]
    Temporarily let $X \defeq W^T\sym{E}W$.
    Applying Lemma \ref{lem:monomial-matrix-bound} to Eq. \eqref{eq:T(alpha,beta)-defn-1} gives
    \begin{equation*}
        \| {\oldcomment{}M^T\myT_\nu(\boldsymbol{\alpha}, \boldsymbol{\beta})M'} \|
        \le \|X\|_\infty^{\beta - h} \|E\|^{\alpha - \alpha_0 - \alpha_h + h - 1}
        \sum_{\mathbf{I}\in [2r]^\beta}
        | \myC_\nu(\mathbf{I}) |
        \left\| M^T \sym{E}^{\alpha_0}w_{i_1}\right\|
        \left\| w_{i_{h\beta_h}}^T \sym{E}^{\alpha_h} M'\right\|.
    \end{equation*}
    % Note that the previous two steps use the sub-multiplicativity of $\|\cdot\|$.
    Temporarily let $T$ be the sum on the right-hand side.
    Applying Eq. \eqref{eq:integral-coef-bound-2} from Lemma \ref{lem:integral-coef-bound},
    we have
    \begin{equation*}
    \begin{aligned}
        | \myC_\nu(\mathbf{I}) |
        & \le \myL_\nu
        \left(1 + \frac{\Delta_S}{\lambda_S}\right)^{\beta_{S^c}(\mathbf{I})}
            \binom{\gamma + \beta_S(\mathbf{I}) - 2}{\beta_S(\mathbf{I}) - 1}
        \frac{1}{\lambda_S^{\gamma + 1 - \beta}\Delta_S^{\beta - 1}}
        \\
        & \le \myL_\nu
        \binom{\gamma + \beta - 2}{\beta - 1}
        \left(\frac{\lambda_S + \Delta_S}{2\lambda_S}\right)^{\beta_{S^c}(\mathbf{I})}
        \frac{1}{\lambda_S^{\gamma + 1 - \beta}\Delta_S^{\beta - 1}}.
    \end{aligned}
    \end{equation*}
    % where $\myL_\nu = 2$ if $\myC$ is from Eq. \eqref{eq:integral-coef} and $\lambda_S$ if $\myC$ is from Eq. \eqref{eq:integral-coef-1}.
    Let us first prove Eq. \eqref{eq:T(alpha,beta)-bound-1}.
    For convenience, let $\rho \defeq (\lambda_S + \Delta_S) / (2\lambda_S)$.
    Note that $\rho \le 1$ since $\Delta_S \le \lambda_S$.
    Plugging this bound into $T$, we get
    \begin{equation*}
    \begin{aligned}
        T
        & \le
        \myL_\nu
        \binom{\gamma + \beta - 2}{\beta - 1}
        \frac{1}{\lambda_S^{\gamma + 1 - \beta}\Delta_S^{\beta - 1}}
        \sum_{\mathbf{I}\in [2r]^\beta}
        \left\| M^T \sym{E}^{\alpha_0}w_{i_1}\right\|
        \left\| w_{i_{h\beta_h}}^T \sym{E}^{\alpha_h} M'\right\|
        \prod_{k = 1}^\beta \rho^{\one\{i_k\in S^c\}}
        \\
        & = \myL_\nu
        \binom{\gamma + \beta - 2}{\beta - 1}
        \frac{
            \left(
                2|S| + 2\rho|S^c|
            \right)^{\beta - 2}
        }{\lambda_S^{\gamma + 1 - \beta}\Delta_S^{\beta - 1}}
        \sum_{i = 1}^{2r} \rho^{\one\{i_k\in S^c\}}
        \left\| w_i^T \sym{E}^{\alpha_0} M\right\| 
        \sum_{i = 1}^{2r} \rho^{\one\{i_k\in S^c\}}
        \left\| w_i^T \sym{E}^{\alpha_h} M'\right\|.
    \end{aligned}
    \end{equation*}
    The proof of Eq. \eqref{eq:T(alpha,beta)-bound-1} is complete.
    Eq. \eqref{eq:T(alpha,beta)-bound-2} follows from the fact that $\rho \le 1$, the definitions of $\myD$ and $\myD'$ from Eq. \eqref{eq:T-bound-myD} in Lemma \ref{lem:T-bound}.
    The proof of Lemma \ref{lem:T(alpha,beta)-bound-1} is complete.
\end{proof}

\begin{proof}[Proof of Lemma \ref{lem:monomial-matrix-bound}]
    Consider a monomial matrix $\myM(\mathbf{i}_k)$ for $k\in [h]$.
    It is simply a rank one matrix, and
    we can simplify it by extracting many scalars as below:
    \begin{equation*}
        \myM\left(\mathbf{i}_k\right)
        = w_{i_{k1}}w_{i_{k1}}^T\prod_{j = 2}^{\beta_k}\sym{E}w_{i_{kj}}w_{i_{kj}}^T
        = w_{i_{k1}}
        \Bigl(
            \prod_{j = 2}^{\beta_k}w_{i_{k(j - 1)}}^T\sym{E}w_{i_{kj}}
        \Bigr)
        w_{i_{k\beta_k}}^T
        = \Bigl(
            \prod_{j = 2}^{\beta_k} X_{i_{k(j - 1)}i_{kj}}
        \Bigr)w_{i_{ki}}w_{i_{k\beta_k}}^T,
    \end{equation*}
    where we temporarily denote $X \defeq W^T\sym{E}W$.
    From Eq. \eqref{eq:T(alpha,beta)-defn}, we thus have
    % \begin{equation}  \label{eq:main-proof-T(alpha,beta)-defn-1}
    %     M^T D(\boldsymbol{\alpha}, \boldsymbol{\beta}, \mathbf{I}) M'
    %     = D(\boldsymbol{\alpha}, \boldsymbol{\beta}, \mathbf{I})
    %     \left(M^T \sym{E}^{\alpha_0}w_{i_1}\right)
    %     \left(w_{i_{h\beta_h}}^T \sym{E}^{\alpha_h} M'\right),
    % \end{equation}
    % where
    \begin{equation}  \label{eq:main-proof-D(alpha,beta,I)-defn}
        M^T D(\boldsymbol{\alpha}, \boldsymbol{\beta}, \mathbf{I}) M'
        =
        \biggl[
            \prod_{k = 1}^h \prod_{j = 2}^{\beta_k} X_{i_{k(j - 1)}i_{kj}}
        \biggr]
        M^T \sym{E}^{\alpha_0}w_{i_1}
        \biggl[
            \prod_{k = 1}^{h - 1} w_{i_{k\beta_k}}^T \sym{E}^{\alpha_k + 1} w_{i_{(k + 1)1}}
        \biggr]
        w_{i_{h\beta_h}}^T \sym{E}^{\alpha_h} M'
        .
    \end{equation}
    % Let us bound this scalar first.
    % Note that
    % \begin{equation*}
    %     X = \frac{1}{2} \begin{bmatrix}
    %         U^TEV + V^TE^TU & - U^TEV + V^TE^TU \\ U^TEV - V^TE^TU & - U^TEV - V^TE^TU
    %     \end{bmatrix},
    % \end{equation*}
    % so $\|X\|_\infty \le \|U^TEV\|_\infty$.
    For each $k\in [h - 1]$, we have $\bigl\|w_{i_{k\beta_k}}^T \sym{E}^{\alpha_k + 1} w_{i_{(k + 1)1}}\bigr\| \le \|E\|^{\alpha_k + 1}$.
    For each $k\in [h]$ and $j\in [\beta_k]$, $\bigl|X_{i_{k(j - 1)}i_{kj}}\bigr| \le \|X\|_\infty$.
    Therefore the above leads to
    \begin{equation*}
    \begin{aligned}
        \left\|
            M^T D(\boldsymbol{\alpha}, \boldsymbol{\beta}, \mathbf{I}) M'
        \right\|
        & \le \|X\|_\infty^{(\beta_1 - 1) + \ldots + (\beta_h - 1)}
        \|E\|^{(\alpha_1 + 1) + \ldots (\alpha_{h - 1} + 1)}
        \left\| M^T \sym{E}^{\alpha_0}w_{i_1}\right\|
        \left\| w_{i_{h\beta_h}}^T \sym{E}^{\alpha_h} M'\right\|
        \\
        & = \|X\|_\infty^{\beta - h}
        \|E\|^{\alpha - \alpha_0 - \alpha_h + h - 1}
        \left\| M^T \sym{E}^{\alpha_0}w_{i_1}\right\|
        \left\| w_{i_{h\beta_h}}^T \sym{E}^{\alpha_h} M'\right\|.
    \end{aligned}
    \end{equation*}
    % We can obtain an upper bound for $|M^Tv(\boldsymbol{\alpha}, \boldsymbol{\beta}, \mathbf{I}) M'|$ by replacing every term in the first product of Eq. \eqref{eq:main-proof-D(alpha,beta,I)-defn} with $\|X\|_\infty$, and replacing every instance of $E$ with $\|E\|$ in the second product.
    % Therefore
    % \begin{equation}
    %     |D(\boldsymbol{\alpha}, \boldsymbol{\beta}, \mathbf{I})|
    %     \le \|X\|_\infty^{\beta - h} \|E\|^{\alpha - \alpha_0 - \alpha_h + h - 1}.
    % \end{equation}
    % This bound does not depend on the choice of $\mathbf{I}$, so we have
    % \begin{equation*}
    %     \| {\oldcomment{}M^T\myT_\nu(\boldsymbol{\alpha}, \boldsymbol{\beta})M'} \|
    %     \le \|X\|_\infty^{\beta - h} \|E\|^{\alpha - \alpha_0 - \alpha_h + h - 1}
    %     \sum_{\mathbf{I}\in [2r]^\beta}
    %     | \myC_\nu(\mathbf{I}) |
    %     \left\| M^T \sym{E}^{\alpha_0}w_{i_1}\right\|
    %     \left\| w_{i_{h\beta_h}}^T \sym{E}^{\alpha_h} M'\right\|.
    % \end{equation*}
    The proof is complete.
\end{proof}

This concludes the first step.
Next, we will sum up the bounds in Eq. \eqref{eq:T(alpha,beta)-bound-2} over all choices of $(\boldsymbol{\alpha}, \boldsymbol{\beta}) \in \Pi_h(\gamma)$, over all $h\ge \gamma/2 + 1$ to get a bound for $M^T\myT_\nu^{(\gamma)}M'$, proving Lemma \ref{lem:T-gamma-bound}.

\begin{proof}[Proof of Lemma \ref{lem:T-gamma-bound}]
    Let us consider the case $\gamma \le 10\log (m + n)$ first.
    Then we have $\alpha_0 \vee \alpha_h \le 10\log (m + n)$, so
    Lemma \ref{lem:T(alpha,beta)-bound-1} gives
    \begin{equation*}
    \begin{aligned}
        \|{\oldcomment{}M^T\myT_\nu(\boldsymbol{\alpha}, \boldsymbol{\beta})M'}\|
        \le \myL_\nu 
            \binom{\gamma + \beta - 2}{\beta - 1}
        \myD\myD' \frac{
            (2r)^{\beta}
            \|W^T\sym{E}W\|_\infty^{\beta - h}
            \|E\|^{\alpha + h - 1}
        }
        {\lambda_S^{\gamma + 1 - \beta}\Delta_S^{\beta - 1}}
        % \quad \times 
        .
    \end{aligned}
    \end{equation*}
    Again, we temporarily define $X \defeq W^T\sym{E}W$ for convenience.
    Rearranging the terms, we have
    \begin{equation}  \label{eq:main-proof-temp3}
    \begin{aligned}
        \|{\oldcomment{}M^T\myT_\nu(\boldsymbol{\alpha}, \boldsymbol{\beta})M'}\|
        \le 2r\myL_\nu\myD\myD'
        \binom{\gamma + \beta - 2}{\beta - 1}
        \left[ \frac{\|E\|}{\lambda_S} \right]^{\gamma_1}
        \left[ \frac{2r\|X\|_\infty}{\Delta_S} \right]^{\gamma_2}
        \left[ \frac{\sqrt{2r}\|E\|}{\sqrt{\Delta_S\lambda_S}} \right]^{\gamma_3}
        % F(\beta)
        ,
    \end{aligned}
    \end{equation}
    where we temporarily define the following:
    \begin{equation*}
    \begin{aligned}
        & \gamma_1 = \gamma_1(\boldsymbol{\alpha}) \defeq \alpha_1 + \ldots + \alpha_{h - 1} - (h - 1),
        \\
        & \gamma_2 = \gamma_2(\boldsymbol{\alpha}) \defeq \alpha_0 + \alpha_h,
        \quad \gamma_2 = \gamma_2(\boldsymbol{\beta}) \defeq \beta - h,
        \quad \gamma_3 = \gamma_3(h) \defeq 2(h - 1)
        % ,
        % \\
        % & F(\beta)
        % \defeq \sum_{\mathbf{I}\in [2r]^\beta} \left(1 + \frac{\Delta_S}{\lambda_S}\right)^{\beta_{S^c}(\mathbf{I})}
        % \binom{\gamma + \beta_S(\mathbf{I}) - 2}{\beta_S(\mathbf{I}) - 1}
        .
    \end{aligned}
    \end{equation*}
    Since $R_1$ upper bounds the former four powers and $R_2$ upper bounds the latter,
    we get
    \begin{equation*} 
    \begin{aligned}
        \|{\oldcomment{}M^T\myT_\nu(\boldsymbol{\alpha}, \boldsymbol{\beta})M'}\|
        & \le 2r\myL_\nu\myD\myD' \binom{\gamma + \beta - 2}{\beta - 1}
        R_1^{\gamma_1(\boldsymbol{\alpha}) + \alpha_0 + \alpha_h + \gamma_2(\boldsymbol{\beta})} R_2^{\gamma_3(h)}
        \\
        & = 2r\myL_\nu\myD\myD' \binom{\gamma + \beta - 2}{\beta - 1}
        R_1^{\gamma - 2h + 2} R_2^{2h - 2}.
    \end{aligned}
    \end{equation*}
    Plugging this bound into Eq. \eqref{eq:generic-T-defn-1}, we get
    \begin{equation}  \label{eq:main-proof-temp6}
    \begin{aligned}
        \left\| {\oldcomment{}M^T\myT_\nu^{(\gamma)}M'} \right\|
        & \le 2r\myL_\nu\myD\myD'
        \sum_{h = 1}^{\lfloor \gamma /2 \rfloor + 1}
        \sum_{(\boldsymbol{\alpha}, \boldsymbol{\beta}) \in \Pi_h(\gamma)}
        \binom{\gamma + \beta - 2}{\beta - 1}
        R_1^{\gamma - 2h + 2} R_2^{2h - 2}
        \\
        & \le 2r\myL_\nu\myD\myD'
        \sum_{h = 1}^{\lfloor \gamma /2 \rfloor + 1}
        \sum_{\beta = h}^{\gamma + 2 - h}
        \binom{\gamma + \beta - 2}{\beta - 1}
        R_1^{\gamma - 2h + 2} R_2^{2h - 2}
        \left| \Pi_h(\gamma, \beta) \right|,
    \end{aligned}
    \end{equation}
    where
    \begin{equation*}
        \Pi_h(\gamma, \beta) \defeq \{(\boldsymbol{\alpha}, \boldsymbol{\beta}) \in \Pi_h(\gamma): \beta_1 + \ldots + \beta_h = \beta \}
    \end{equation*}
    An element of this set is just a tuple $(\alpha_0, \ldots, \alpha_h, \beta_1, \ldots, \beta_h)$ such that
    \begin{equation*}
        \beta_1, \ldots, \beta_h \ge 1,
        \quad \sum_{i = 1}^h \beta_i = \beta,
        \ \text{ and } \
        \alpha_0, \alpha_h \ge 0,
        \quad
        \alpha_1, \ldots, \alpha_{h - 1},
        \quad
        \sum_{i = 0}^h \alpha_i = \gamma + 1 - \beta.
    \end{equation*}
    The number of ways to choose such a tuple is
    \begin{equation*}
        \left| \{(\boldsymbol{\alpha}, \boldsymbol{\beta}) \in \Pi_h(\gamma): \beta_1 + \ldots + \beta_h = \beta \} \right|
        = \binom{\beta - 1}{h - 1} \binom{\gamma + 2 - \beta}{h}.
    \end{equation*}
    Plugging into Eq. \eqref{eq:main-proof-temp6}, we obtain
    \begin{equation}  \label{eq:main-proof-temp7}
    \begin{aligned}
        \left\| {\oldcomment{}M^T\myT_\nu^{(\gamma)}M'}
        \right\|
        & \le 2r\myL_\nu\myD\myD'
        \sum_{h = 1}^{\lfloor \gamma /2 \rfloor + 1}
        \sum_{\beta = h}^{\gamma + 2 - h}
        \binom{\gamma + \beta - 2}{\beta - 1} \binom{\beta - 1}{h - 1} \binom{\gamma + 2 - \beta}{h}
        R_1^{\gamma - 2h + 2}
        R_2^{2h - 2}
        \\
        & = 2r\myL_\nu\myD\myD'
        \sum_{\beta = 1}^{\gamma + 1}
        \binom{\gamma + \beta - 2}{\beta - 1}
        \sum_{h = 1}^{\beta \wedge (\gamma + 2 - \beta)}
        \binom{\beta - 1}{h - 1} \binom{\gamma + 2 - \beta}{h}
        R_1^{\gamma - 2h + 2}
        R_2^{2h - 2}.
        % \\
        % & \le \myD_B\myD_{B'}\sigma_* s_1 t^{\gamma}
        % \sum_{\beta = 1}^{\gamma + 1}
        % \binom{\gamma + \beta - 1}{\beta - 1}
        % \binom{\gamma + 1}{\beta}
        % R_1^{\gamma - 2h + 2}
        % R_2^{2h - 2}
    \end{aligned}
    \end{equation}
    % Note that in the above, we have replaced $\gamma_1 + \gamma_2 + \gamma_2$ with $\gamma + 2 - 2h$ and $\gamma_3$ with $2h - 2$.
    Consider two cases for $h$ and $\gamma$:
    \begin{enumerate}
        \item $\gamma \ge 2h - 1$.
        Let $R \defeq R_1 \vee R_2$.
        The contribution is at most:
        \begin{equation*}
        \begin{aligned}
            & 2r\myL_\nu\myD\myD'
            \sum_{\beta = 1}^{\gamma + 1}
            \binom{\gamma + \beta - 2}{\beta - 1}
            \sum_{h = 1}^{\beta \wedge (\gamma + 2 - \beta)}
            \binom{\beta - 1}{h - 1} \binom{\gamma + 2 - \beta}{h}
            R_1R^{\gamma - 1}
            \\
            & \le 2r\myL_\nu\myD\myD' R_1R^{\gamma - 1}
            \sum_{\beta = 1}^{\gamma + 1}
            \binom{\gamma + \beta - 2}{\beta - 1}
            \binom{\gamma + 1}{\beta}
            \le 2r\myL_\nu\myD\myD' R_1R^{\gamma - 1}
            \sum_{\beta = 1}^{\gamma + 1}
            \binom{\gamma + 1}{\beta} 2^{\gamma + \beta - 2}
            \\
            & \le 2r\myL_\nu\myD\myD' R_1R^{\gamma - 1} 2^{\gamma - 2} 3^{\gamma + 1}
            = 9r\myL_\nu\myD\myD' R_1 (6R)^{\gamma - 1}.
        \end{aligned}
        \end{equation*}

        \item $\gamma = 2h - 2$.
        This can only happens if $\gamma$ is even.
        Then $\alpha_0 = \alpha_h = 0$ and $\alpha_1 = \ldots = \alpha_{h - 1} = \beta_1 = \ldots = \beta_h = 1$.
        Let $(\boldsymbol{\alpha}^*, \boldsymbol{\beta}^*)$ denote the corresponding tuple.
        The previous computations are bad for this case, so we will go back to the definition to bound $\myT_\nu(\boldsymbol{\alpha}^*, \boldsymbol{\beta}^*)$.
        Plugging into Eq. \eqref{eq:T(alpha,beta)-defn} and simplifying, we have
        \begin{equation*}
            \myT_\nu(\boldsymbol{\alpha}^*, \boldsymbol{\beta}^*)
            = \sum_{\mathbf{I} \in [2r]^h} 
            \myC_\nu(\mathbf{I})
            \left(M^Tw_{i_1}\right)
            \left(w_{i_{h\beta_h}}^TM'\right)
            % \left[
                \prod_{k = 1}^{h - 1} w_{i_k}^T \sym{E}^2 w_{i_{k + 1}}
            % \right]
            ,
        \end{equation*}
        where each block of consecutive indices in $\mathbf{I}$ now consists of only one index, so we can just denote $\mathbf{I} = (i_1, i_2, \ldots, i_h)$.
        Temporarily let $Y\in \R^{(m + n)\times (m + n)}$ be a matrix such that $Y_{ij} = w_i^T\sym{E}^2w_j$ for $|i - j| \in \{0, r\}$ and $0$ otherwise, and let $\lambda_+(\mathbf{I}) \defeq (|\lambda_{i_k}|)_k$ for $\mathbf{I} = (i_k)_k$.
        We further consider two subcases for $\mathbf{I}$:
        \begin{enumerate}[label*=\arabic*.]
            \item $\lambda_+(\mathbf{I})$ is non-uniform, i.e. there is $k$ so that $|\lambda_{i_k}| \neq |\lambda_{i_{k + 1}}|$, meaning $|i_k - i_{k + 1}| \notin \{0, r\}$.
            Then $\bigl|w_{i_k}^T \sym{E}^2 w_{i_{k + 1}} \bigr| \le \|Y\|_\infty$.
            The rest of the product at the end can be bounded by $\|E\|^2$.
            The total contribution of this subcase is at most
            \begin{equation*}
                2r\myL_\nu\myD\myD' \binom{\gamma + \beta - 2}{\beta - 1} R_2^{2h - 4} \frac{2r\|Y\|_\infty}{\lambda_S\Delta_S}
                \le 2r\myL_\nu\myD\myD'
                % \frac{2r\|Y\|_\infty}{\lambda_S\Delta_S}
                \binom{3\gamma/2}{\gamma/2} 
                R_3R_2^{\gamma - 2},
            \end{equation*}
            where we use the same computations leading up to Eq. \eqref{eq:main-proof-temp7}, noting that $h = \beta = \gamma/2 + 1$.

            \item $\lambda_+(\mathbf{I})$ is uniform, i.e. $\mathbf{I} = (i_k)_{k = 1}^h$ where each $i_k\in \{i, i + r\}$ for some $i\in [r]$.
            If $i\notin S$, then $\myC_\nu(\mathbf{I}) = 0$.
            Suppose $i\in S$, we can apply Eq. \eqref{eq:integral-coef-bound-1} in Lemma \ref{lem:integral-coef-bound}, noting that $\beta_S(\mathbf{I}) = \beta$, $\beta_{S^c}(\mathbf{I}) = 0$, and $\lambda_S(\mathbf{I}) = \Delta_S(\mathbf{I}) = \lambda_i$ in this case, to get
            \begin{equation*}
                |\myC_\nu(\mathbf{I})|
                \le \myL_\nu \binom{\gamma + \beta - 2}{\beta - 1}
                \frac{1}{
                    \lambda_S(\mathbf{I})^{\gamma + 1 - \beta}
                    \Delta_S(\mathbf{I})^{\beta - 1}
                }
                = \myL_\nu \binom{\gamma + h - 2}{h - 1} \frac{1}{\lambda_i^\gamma}.
            \end{equation*}
            % where $\omega = 0$ for $\myC$ from Eq. \eqref{eq:integral-coef} and $1$ for $\myC$ from Eq. \eqref{eq:integral-coef-1}.
            Therefore the total contribution of this subcase is at most
            \begin{equation*}
            \begin{aligned}
                & \myD\myD' \binom{\gamma + h - 2}{h - 1} \sum_{i\in S}
                \sum_{\mathbf{I}\in \{i, i + 1\}^h}
                \frac{|w_i^T\sym{E}^2w_i|^{h - 1}}{\lambda_i^\gamma}
                \le s\myD\myD' \binom{3\gamma/2}{\gamma/2}
                \frac{2^h\|E\|^{2(h - 1)}}{\lambda_S^\gamma}
                \\
                & \le 2r\myL_\nu\myD\myD' \binom{3\gamma/2}{\gamma/2}
                \frac{2^{\gamma/2}\|E\|^\gamma}{\lambda_S^\gamma}
                \le 2r\myL_\nu\myD\myD' \binom{3\gamma/2}{\gamma/2}
                (R_1\sqrt{2})^\gamma.
            \end{aligned}
            \end{equation*}
            \end{enumerate}
            Therefore, the contribution of the case $h = \gamma/2 + 1$ is at most
            \begin{equation*}
                2r\myL_\nu\myD\myD' \binom{3\gamma/2}{\gamma/2}
                \left[ R_3R_2^{\gamma - 2} + (R_1\sqrt{2})^\gamma \right]
                \le 4r\myL_\nu\myD\myD'
                \left[
                    \frac{27}{4} R_3\left(
                        \tfrac{3\sqrt{3}}{2}R_2
                    \right)^{\gamma - 2}
                    + \left(
                        \tfrac{3\sqrt{3}}{\sqrt{2}}R_1
                    \right)^\gamma
                \right],
            \end{equation*}
            since $R_3 \ge 2r\|Y\| / (\lambda_S\Delta_S)$.
    \end{enumerate}
    Summing up the contributions from both cases, we obtain
    \begin{equation}  \label{eq:T-gamma-bound-temp1}
        \left\| {\oldcomment{}M^T\myT_\nu^{(\gamma)}M'}
        \right\| \le
        r\myL_\nu\myD\myD' \left[
            9 R_1 (6R)^{\gamma - 1}
            + \one\{\gamma \text{ even}\}
            \left(
                4\left(
                        \tfrac{3\sqrt{3}}{\sqrt{2}}R_1
                \right)^\gamma
                + 27R_3\left(
                        \tfrac{3\sqrt{3}}{2}R_2
                    \right)^{\gamma - 2}
            \right)
        \right].
    \end{equation}

    Now let us consider the case $\gamma > 10\log(m + n)$.
    Instead of using $\myD$ and $\myD'$, we can simply use the naive bounds
    \begin{equation*}
        \sum_{i = 1}^{2r} \|w_i^T\sym{E}^\alpha M\| \le 2r\|E\|^\alpha\|M\|
        \ \text{ and } \
        \sum_{i = 1}^{2r} \|w_i^T\sym{E}^\alpha M'\| \le 2r\|E\|^\alpha\|M'\|.
    \end{equation*}
    Plugging in these bounds into the previous computations has the same effect as using $\myD = \|M\|$, $\myD' = \|M'\|$, and $\uppnormE = \uppnormE' = \|E\|$.
    The conditions on $R_1$, $R_2$ and $R_3$ remain the same so they still hold.
    Thus we still have Eq. \eqref{eq:T-gamma-bound-temp1}, with the substitutions above.
    The proof is complete.
\end{proof}

% {\bf are we done here, we are we done with assuming certain lemmas which we will prove later? {\oldcomment{} For the deterministic theorem, yes. For the random theorem, we need a lemma bounding $\|e_i\sym{E}^aW\|$ for generic $a$, where $W$ is a delocalized orthogonal matrix, that beats the trivial bound $\|\sym{E}\|^a = \|E\|^a$.}}

% \subsection{Conclusion: Proof of Theorem \ref{thm:main-deterministic}}  \label{sec:main-proof-deterministic-combine}

% We are now ready to  prove Theorem \ref{thm:main-deterministic}.

\appendix

\section{Proof of the full matrix completion theorem}  \label{sec:matrix-completion-full-proof}

In this section, we prove Theorem \ref{thm:matrix-completion}, with the sampling density condition \eqref{eq:matcom-sample-density-cond-full} replacing \eqref{eq:matcom-sample-density-cond}.

\begin{proof}[Proof of the full Theorem \ref{thm:matrix-completion}]
    % As discussed, the scaled version of $A_{\Omega, \noise}$ is a mean-zero perturbation of $A$.
    % Let $\tilde{A} \defeq p^{-1}A_{\Omega, \noise}$ and $E \defeq \tilde{A} - A$.
    % Then $E$ is a random matrix with independent entries where:
    % \begin{equation*}
    %     E_{ij} = \begin{cases}
    %         A_{ij}\left(1/p - 1 \right) + \noise_{ij}/p & \text{ with prob. } p
    %         \\ -A_{ij} & \text{ with prob. } 1 - p.
    %     \end{cases}
    % \end{equation*}
    % For each $i, j$, we have $\E{E_{ij}} = 0$ and for each $l\ge 2$,
    % \begin{equation*}
    % \begin{aligned}
    %     \E{|E_{ij}|^l}
    %     & = \frac{\E{|A_{ij}(1 - p) + \noise_{ij}|^l}}{p^{l - 1}} + (1 - p)|A_{ij}|^l
    %     \le \frac{(K_A(1 - p) + K_\noise)^l}{p^{l - 1}} + (1 - p)K_A^l
    %     \\
    %     & \le \frac{1}{p^{l - 1}} \left(
    %         \sum_{k = 0}^{l - 1} \binom{l}{k} K_A^k(1 - p)^k K_\noise^{l - k}
    %         + K_A^l(1 - p)^l + p^{l - 1}(1 - p)K_A^l
    %     \right)
    %     \\
    %     & \le \frac{1}{p^{l - 1}} \left(
    %         \sum_{k = 0}^{l - 1} \binom{l}{k} K_A^k K_\noise^{l - k}
    %         + K_A^l
    %     \right)
    %     = \frac{(K_A + K_\noise)^l}{p^{l - 1}}
    %     = \frac{K_{A,\noise}^l}{p^{l - 1}}.
    % \end{aligned}
    % \end{equation*}
    % Therefore $E$ fits the noise model \eqref{eq:noise-model} with the parameters $K = p^{-1/2}$ and $\kappa = K_{A,\noise}K = p^{-1/2}K_{A,\noise}$.
    % % Let us translate from $K_A, K_\noise$ and $p$ in the assumptions into $K$ and $\kappa$.
    Let $C_2 = 1/c$ for the constant $c$ in Theorem \ref{thm:main-random}.
    We rewrite the assumptions below:
    \begin{enumerate}
        \item \emph{Signal-to-noise:} $\sigma_1 \ge 100r\kappa\sqrt{r_{\max}N}$.

        \item \emph{Sampling density:} this is equivalent to the conjunction of three conditions:
        \begin{align}
            \label{eq:matcom-sample-density-cond-1}
            p & \ge \frac{Cr^4r_{\max}\mu_0^2K_{A,\noise}^2}{\eps^2}
                \left(\frac{1}{m} + \frac{1}{n}\right),
            \\
            \label{eq:matcom-full-sample-density-cond-2}
            p & \ge C\left(\frac{1}{m} + \frac{1}{n}\right)\log^{10}N,
            \\
            \label{eq:matcom-full-sample-density-cond-3}
            p & \ge \frac{Cr^3 K_{A,\noise}^2}{\eps^2}
            \left(1 + \frac{\mu_0^2}{\log^2 N}\right)
            \left(1 + \frac{r^3\log N}{N}\right)
            \left(\frac{1}{m} + \frac{1}{n}\right)\log^6N.
        \end{align}
    \end{enumerate}
    Let $\rho \defeq \hat{p}/p$.
    From the sampling density assumption, a standard application of concentration bounds \cite{hoeffding1963,chernoff1952} guarantees that, with probability $1 - O(N^{-2})$.
    \begin{equation}  \label{eq:matcom-full-proof-fact1}
        0.9 \le 1 - \frac{1}{\sqrt{N}}
        \le 1 - \frac{\log N}{\sqrt{pmn}}
        \le \rho
        \le 1 + \frac{\log N}{\sqrt{pmn}}
        \le 1 + \frac{1}{\sqrt{N}}
        \le 1.1
        % \ \text{ with prob. } 1 - O(N^{-2})
        .
    \end{equation}
    Furthermore, an application of well-established bounds on random matrix norms gives
    \begin{equation}  \label{eq:matcom-full-proof-fact2}
        \|E\|
        \le 2\kappa\sqrt{N}
        ,
        % \frac{2K_{A,\noise}\sqrt{N}}{\sqrt{p}}
        % \ \text{ with prob. } 1 - O(N^{-1}).
    \end{equation}
    with probability $1 - O(N^{-1})$.
    See \cite{bandeira2014,vu2005}, \cite[Lemma A.7]{tranVu2022} or \cite{bandeira2014} for detailed proofs.
    Therefore we can assume both Eqs. \eqref{eq:matcom-full-proof-fact1} and \eqref{eq:matcom-full-proof-fact2} at the cost of an $O(N^{-1})$ exceptional probability.

    Let $C_0 \defeq 40$.
    The index $s$ chosen in the SVD step of \nameref{algo:matrix-completion} is the largest such that
    \begin{equation*}
        \hat{\delta}_s \ge C_0K_{A,\noise}\sqrt{r_{\max}N/\hat{p}}
        = C_0\rho^{-1/2}\kappa\sqrt{r_{\max}N}.
    \end{equation*}

    Firstly, we show that SVD step is guaranteed to choose a valid $s\in [r]$.
    Choose an index $l\in [r]$ such that $
        \delta_l \ge \sigma_1/r \ge 100\kappa\sqrt{r_{\max}N}
    $, we have
    \begin{equation*}
    \begin{aligned}
        \hat{\delta}_l \ge \rho^{-1/2}\tilde{\delta_l}
        \ge \rho^{-1/2}(\delta_l - 2\|E\|)
        \ge (100r_{\max}^{1/2} - 4)\rho^{-1/2}\kappa\sqrt{N}
        \ge 2C_0\rho^{-1/2}\kappa\sqrt{r_{\max}N},
    \end{aligned}
    \end{equation*}
    so the cutoff point $s$ is guaranteed to exist.
    To see why $s\in [r]$, note that
    \begin{equation*}
        \hat{\delta}_{r + 1} \le \rho^{-1/2}\tilde{\sigma}_{r + 1}
        \le \rho^{-1/2}\|E\| \le 2\rho^{-1/2}\kappa\sqrt{r_{\max}N}
        < C_0\rho^{-1/2}\kappa\sqrt{r_{\max}N}.
    \end{equation*}
    
    We want to show that the first three steps of \nameref{algo:matrix-completion} recover $A$ up to an absolute error $\eps$, namely $\|\hat{A}_s - A\|_\infty \le \eps$, we will first show that $\|\tilde{A}_s - A\|_\infty \le \eps/2$ (with probability $1 - O(N^{-1})$).
    We proceed in two steps:

    \begin{enumerate}
        \item
        We will show that $\|A_s - A\|_\infty \le \eps/4$ when $C$ is large enough.
        To this end, we establish:
        \begin{equation}  \label{eq:matcom-proof-fact5}
            \sigma_{s + 1} \le r\delta_{s + 1}
            \le r(\tilde{\delta}_{s + 1} + 2\|E\|)
            \le r(C_0\rho^{-1/2}\sqrt{r_{\max}} + 4)\kappa\sqrt{N}
            \le 2rC_0K_{A,\noise}\sqrt{r_{\max}N/p}.
            % \le \frac{C_0r^2K_{A,\noise}\sqrt{\hat{p}N}}{p}
            % \le \frac{2C_0r^2K_{A,\noise}\sqrt{N}}{\sqrt{p}},
        \end{equation}
        % where the last inequality follows from Fact \eqref{eq:matcom-full-proof-fact1}.
        For each fixed indices $j, k$, we have
        \begin{equation*}
        \begin{aligned}
            |(A_s - A)_{jk}|
            & = \left| U_{j, \cdot}^T\Sigma_{[s + 1, r]}V_{k, \cdot} \right|
            \le \sigma_{s + 1} \|U\|_{2, \infty} \|V\|_{2, \infty}
            \le 2rC_0K_{A,\noise}\sqrt{\frac{r_{\max}N}{p}} \frac{r\mu_0}{\sqrt{mn}}
            \\
            &
            = \sqrt{
                \frac{4C_0^2r^4r_{\max}\mu_0^2K_{A,\noise}^2}{p}
                \left(\frac{1}{m} + \frac{1}{n}\right)
            }
            \le \eps/4.
            % = 2C_0r^2\mu_0^2K_{A,\noise}\sqrt{\frac{N}{pmn}}
            % \le 2C_2^{-1/2}C_0r^2\mu_0 K_{A,\noise}\log^{-5}N < .1.
            % \\
            % & \le \frac{2C_0CD^2r^3 K_{A,\noise}\sqrt{N}}{\sqrt{pmn}}
            % \le \frac{2C_0CD^2r^3 K_{A,\noise}}{\log^3N} < 0.1,
        \end{aligned}
        \end{equation*}
        where the last inequality comes from the assumption \eqref{eq:matcom-sample-density-cond-1} if $C$ is large enough.
        Since this holds for all pairs $(j, k)$, we have
        $\|A_s - A\|_\infty \le \eps/4$.
        % \begin{equation}  \label{eq:matcom-proof-bound3}
        %     \|A_s - A\|_\infty \le 0.1.
        % \end{equation}

        \item Secondly, we will show that $\|\tilde{A}_s - A_s\|_\infty \le \eps/4$ with probability $1 - O(N^{-1})$.
        We aim to use Theorem \ref{thm:main-random}, so let us translate its terms into the current context.
        By the sampling density condition,
        we have the following lower bounds for $\delta_s$ and $\sigma_s$:
        \begin{equation}  \label{eq:delta-sigma-s-lower-bound}
        \begin{aligned}
            \sigma_s \ge \delta_s
            & \ge \tilde{\delta}_s - 2\|E\|
            \ge C_0\rho^{-1/2}\kappa\sqrt{r_{\max}N} - 2\|E\|
            \ge .9C_0\kappa\sqrt{r_{\max}N}.
        \end{aligned}
        \end{equation}
        
        Consider the condition \eqref{eq:main-random-S-cond}.
        If it holds, then we can apply Theorem \ref{thm:main-random}.
        We want
        \begin{equation*}
            \frac{\kappa\sqrt{N}}{\sigma_s}
            \vee \frac{r\kappa(\sqrt{\log N} + K\|U\|_\infty \|V\|_\infty\log N)}{\delta_s}
            \vee \frac{\kappa\sqrt{rN}}{\sqrt{\delta_s\sigma_s}}
            \le \frac{1}{16}
        \end{equation*}
        By Eq. \eqref{eq:delta-sigma-s-lower-bound}, we can replace all three denominators above with $.9C_0\kappa\sqrt{r_{\max}N}$.
        Additionally, $\|U\|_\infty \le \|U\|_{2,\infty} \le \sqrt{\frac{r\mu_0}{m}}$ and $\|V\|_\infty \le \|V\|_{2,\infty} \le \sqrt{\frac{r\mu_0}{n}}$, so we can replace them with these upper bounds.
        We also replace $K$ with $p^{-1/2}$ (its definition).
        We want
        \begin{equation*}
            \frac{
                \kappa\sqrt{N}
                \ \vee \ \kappa\sqrt{rN}
                \ \vee \
                r\kappa(\sqrt{\log N} + \frac{r\mu_0}{\sqrt{pmn}}\log N)
            }{.9C_0\kappa\sqrt{r_{\max}N}}
            \le \frac{1}{16},
        \end{equation*}
        which is equivalent to
        \begin{equation*}
            \frac{
                1 \ \vee \ \sqrt{r}
                \ \vee \
                r(\sqrt{\frac{\log N}{N}} + \frac{r\mu_0}{\sqrt{pmnN}}\log N)
            }{.9C_0\sqrt{r_{\max}}}
            \le \frac{1}{16}
        \end{equation*}
        which easily holds.
        % \begin{equation*}
        % \begin{aligned}
        %     & R_s
        %     \le
        %     \frac{
        %         \kappa\sqrt{N}
        %         \vee \kappa\sqrt{rN}
        %         \vee r\kappa(\sqrt{\log N} + K\log N)
        %     }{.9C_0\kappa\sqrt{r_{\max}N}}
        %     \\
        %     & \le
        %     % & \frac{4}{C_0r_{\max}^{1/2}}
        %     % \vee \frac{4r\log^{1/2}N}{C_0\sqrt{r_{\max}N}}
        %     % \vee \frac{4\sqrt{r}}{C_0\sqrt{r_{\max}}}
        %     \frac{1 \vee \sqrt{r}}{.9C_0\sqrt{r_{\max}}}
        %     \vee \frac{r(\sqrt{\log N} + K\log N)}{.9C_0\sqrt{r_{\max}N}}
        %     \le
        %     \frac{1}{.9C_0}
        %     \vee \frac{r\sqrt{\log N}}{.9C_0\sqrt{r_{\max}N}}
        %     \vee \frac{r\log N}{.9C_0\sqrt{r_{\max}pN}}
        %     < \frac{1}{16},
        % \end{aligned}
        % \end{equation*}
        % % and
        % % \begin{equation*}
        % %     R_2 = \frac{2\kappa\sqrt{rN}}{\sqrt{\delta_s\sigma_s}}
        % %     \le \frac{4\sqrt{r}}{C_0\sqrt{r_{\max}}}
        % %     < 1/8,
        % % \end{equation*}
        Therefore we can apply Theorem \ref{thm:main-random}.
        We get, for a constant $C_1$,
        \begin{equation*}
             \| \tilde{A}_s - A_s \|_\infty
            \le C_1 \myD_{UV} \myD_{VU} \cdot
            r\sigma_s R_s
            + \frac{1}{N}.
        \end{equation*}

        Let us simplify the first term in the product, $\myD_{UV}\myD_{VU}$.
        \begin{equation*}
        \begin{aligned}
            & \myD_{UV}
            = \frac{K\|U\|_{2, \infty}\log^3N}{\sqrt{rN}}
            + \frac{\log^{3/2}N}{\sqrt{N}} + \frac{\|V\|_{2, \infty}\log N}{\sqrt{r}}
            \\
            & \le \frac{\sqrt{\mu_0}\log^3N}{\sqrt{pmN}}
            + \frac{\log^{3/2}N}{\sqrt{N}} + \frac{\sqrt{\mu_0}\log N}{\sqrt{n}}
            \le \frac{\log^{3/2}N}{\sqrt{N}}
            + \frac{\sqrt{2\mu_0}\log N}{\sqrt{n}}
            ,
            % \\
            % & \le \log^3N \left(\frac{D}{\sqrt{pmN}} \vee \frac{D}{\sqrt{n}} \right) = \frac{D\log^3N}{\sqrt{n}},
        \end{aligned}
        \end{equation*}
        where the first inequality comes from \eqref{eq:matcom-full-sample-density-cond-2} if $C$ is large enough.
        Similarly,
        \begin{equation*}
            \myD_{VU}
            \le N^{-1/2}\log^{3/2}N
            + m^{-1/2}\sqrt{2\mu_0}\log N.
        \end{equation*}
        % Also,
        % \begin{equation*}
        % \begin{aligned}
        %     R_1 & = \frac{ K_{A,\noise}\sqrt{N}}{\sigma_s\sqrt{p}} \vee \frac{r_{\max} K_{A,\noise}\log^{1/2}N}{\delta_s\sqrt{p}},
        %     \\
        %     R_2 & = \frac{ K_{A,\noise} \sqrt{r_{\max}N}}{\sqrt{\delta_s \sigma_s p}}.
        %     % \quad \ R_3 = \frac{r_{\max} K_{A,\noise}^2 N}{\delta_s \sigma_s p}.
        % \end{aligned}
        % \end{equation*}
        Therefore,
        \begin{equation*}
        \begin{aligned}
            & \myD_{UV}\myD_{VU}
            \le \frac{\log^3N}{N}
            + \frac{\sqrt{\mu_0}\log^{5/2}N}{\sqrt{N}}
                \cdot \frac{\sqrt{2m} + \sqrt{2n}}{\sqrt{mn}}
            + \frac{2\mu_0\log^2N}{\sqrt{mn}}
            \\
            & \le \log^2N
            \frac{\log N + 4\sqrt{\mu_0}\sqrt{\log N} + 4\mu_0}{2\sqrt{mn}}
            \le \log^2N
            \frac{\log N + 4\mu_0}{\sqrt{mn}}.
        \end{aligned}
        \end{equation*}

        For the second term, we have the following upper bound:
        \begin{equation*}
        \begin{aligned}
            r\sigma_s R_s
            & \le r\sigma_s \left(
                \frac{\kappa\sqrt{N}}{\sigma_s}
                + \frac{r\kappa(\sqrt{\log N} + \frac{r\mu_0}{\sqrt{mn}}K\log N}{\delta_s}
                + \frac{r\kappa^2N}{\delta_s\sigma_s}
            \right)
            \\
            & = r\left(
                \kappa\sqrt{N}
                + \frac{r\kappa\sigma_s}{\delta_s}
                \left(
                    \sqrt{\log N} + \frac{r\mu_0\log N}{\sqrt{pmn}}
                \right)
                + \frac{r\kappa^2 N}{\delta_s}
            \right)
            \\
            & \le r\left(
                \kappa\sqrt{N}
                + r^2\kappa
                \left(
                    \sqrt{\log N} + \frac{r\mu_0\log N}{\sqrt{pmn}}
                \right)
                + \frac{r\kappa^2 N}{.9C_0\kappa\sqrt{rN}}
            \right)
            \\
            & \le r^{3/2}\kappa \left(
                \sqrt{2N} + r^{3/2}
                \left(
                    \sqrt{\log N} + \frac{r\mu_0\log N}{\sqrt{pmn}}
                \right)
            \right).
            % \\
            % & = r\left(
            %     K_{A, \noise}\frac{\sqrt{N}}{\sqrt{p}}
            %     + \frac{rK_{A, \noise}\sigma_s}{\delta_s\sqrt{p}}
            %     \left(
            %         \sqrt{\log N} + \frac{r\mu_0\log N}{\sqrt{pmn}}
            %     \right)
            %     + \frac{rK_{A, \noise}^2 N}{\delta_s p}
            % \right)
            % \\
            % & \le r\left(
            %     K_{A, \noise}\frac{\sqrt{N}}{\sqrt{p}}
            %     + \frac{r^2K_{A, \noise}}{\sqrt{p}}
            %     \left(
            %         \sqrt{\log N} + \frac{r\mu_0\log N}{\sqrt{pmn}}
            %     \right)
            %     + \frac{rK_{A, \noise}^2 N}{C_0 K_{A, \noise}\sqrt{r_{\max}pN}}
            % \right)
            % \\
            % & \le \frac{r^{3/2} K_{A, \noise}}{\sqrt{p}}
            % \left(
            %     \sqrt{N} + r^{3/2}
            % \right)
        \end{aligned}
        \end{equation*}
        Under the condition \eqref{eq:matcom-full-sample-density-cond-3}, we have
        \begin{equation*}
            pmn \ge Cr^3\mu_0^2\log^4 N
            \implies
            \frac{r\mu_0\log N}{\sqrt{pmn}} < .1\sqrt{\log N},
        \end{equation*}
        so the above is simply upper bounded by
        \begin{equation*}
            \frac{\sqrt{2}r^{3/2}K_{A,\noise}}{\sqrt{p}}
            \left(
                % \Bigl(1 + \frac{r}{C_0r_{\max}^{1/2}}\Bigr)
                \sqrt{N}
                + r^{3/2}\sqrt{\log N}
            \right).
        \end{equation*}

        % \bigskip
        
        % \begin{equation*}
        % \begin{aligned}
        %     & r\sigma_s\left(
        %         \frac{\kappa\sqrt{N}}{\sigma_s}
        %         + \frac{r\kappa\sqrt{\log N}}{\delta_s}
        %         + \frac{r\kappa^2N}{\delta_s\sigma_s}
        %     \right)
        %     = r\left(
        %         \kappa\sqrt{N}
        %         + r\kappa\sqrt{\log N}\frac{\sigma_s}{\delta_s}
        %         + \frac{r\kappa^2 N}{\delta_s}
        %     \right)
        %     \\
        %     & \le r\left(
        %         \kappa\sqrt{N}
        %         + r^2\kappa\sqrt{\log N}
        %         + \frac{r\kappa\sqrt{N}}{C_0\sqrt{r_{\max}}}
        %     \right)
        %     \le
        %     \frac{\sqrt{2}r^{3/2}K_{A,\noise}}{\sqrt{p}}
        %     \left(
        %         % \Bigl(1 + \frac{r}{C_0r_{\max}^{1/2}}\Bigr)
        %         \sqrt{N}
        %         + r^{3/2}\sqrt{\log N}
        %     \right).
        % \end{aligned}
        % \end{equation*}
        
        Multiplying the two terms, we have by Theorem \ref{thm:main-random},
        \begin{equation}  \label{eq:matcom-proof-bound1}
        \begin{aligned}
            & \| \tilde{A}_s - A_s \|_\infty
            % & \le C\left(
            %     \frac{\log^3N}{N}
            %     + \frac{\mu_0\log^{5/2}N}{\sqrt{mn}}
            %     + \frac{\mu_0^2\log^2N}{\sqrt{mn}}
            % \right)
            \le \log^2N
            \cdot\frac{\log N + 4\mu_0}{\sqrt{mn}}
            \cdot \frac{\sqrt{2}r^{3/2}K_{A,\noise}}{\sqrt{p}}
            \left(
                % \Bigl(1 + \frac{r}{C_0r_{\max}^{1/2}}\Bigr)
                \sqrt{N}
                + r^{3/2}\sqrt{\log N}
            \right)
            \\
            & \ \le \sqrt{
                \frac{2r^3 K_{A,\noise}^2 \log^6N}{p}
                \left(1 + \frac{4\mu_0^2}{\log^2 N}\right)
                \left(1 + \frac{r^3\log N}{N}\right)
                \left(\frac{1}{m} + \frac{1}{n}\right)
            } \le \eps/4.
        \end{aligned}
        \end{equation}
    \end{enumerate}
    where the last inequality comes from the condition \eqref{eq:matcom-full-sample-density-cond-3} if $C$ is large enough.

    After the two steps above, we obtain $\|\tilde{A}_s - A\|_\infty \le \eps/2$ with probablity $1 - O(N^{-1})$.
    Finally, we get, using Fact \eqref{eq:matcom-full-proof-fact1} and the triangle inequality,
    \begin{equation*}
        \|\hat{A}_s - A\|_\infty = \left\| \rho^{-1}\tilde{A}_s - A \right\|_\infty
        \le \frac{1}{\rho}\|\tilde{A}_s - A\|_\infty
        + \left| \frac{1}{\rho} - 1 \right| \|A\|_\infty
        \le \frac{\eps/2}{.9} + \frac{K_A}{.9\sqrt{N}}
        < \eps.
    \end{equation*}
    % The first part is proven, which immediately implies the rounding step recovers $A$ exactly with $\eps = 0.4$.
    This is the desired bound.
    The total exceptional probability is $O(N^{-1})$.
    The proof is complete.
\end{proof}

\section{Proofs of technical lemmas}  \label{sec:technical-lemmas}

\subsection{Proof of bound for contour integrals of polynomial reciprocals}  \label{sec:integral-bound}

In this section, we prove Lemma \ref{lem:integral-coef-bound}, which provides the necessary bounds on the integral coefficients to advance the second step of the main proof (Section \ref{sec:main-proof-deterministic-bound-series}).
Recall that the integrals we are interested in have the form
\begin{equation}  \label{eq:integral-coef-generic}
    \myC_\nu(\mathbf{I})
    \defeq \oint_{\Gamma_S} \frac{z^\nu \td z}{2\pi i}
    \frac{1}{z^{\gamma + 1}} \prod_{k = 1}^{\beta}
    \frac{\lambda_{i_k}}{z - \lambda_{i_k}},
    \quad \text{ where } \nu \in \{0, 1\}
    \ \text{ and } \beta \le \gamma + 1.
\end{equation}
% We can regroup the terms in the product in the integrand by common values of $\lambda_k$.
Let the multiset $\{\lambda_{i_k}\}_{k\in [\beta]} = A \cup B$, where $A \defeq \{a_i\}_{i\in [l]}$ and $B \defeq \{b_j\}_{j\in [k]}$, where each $a_i\in S$ and each $b_j\notin S$, having multiplicities $m_i$ and $n_j$ respectively.
We can rewrite the above into
\begin{equation}  \label{eq:integral-coef-regroup-1}
    \myC_\nu(\mathbf{I})
    = \prod_{i = 1}^l a_i^{m_i} \prod_{j = 1}^k b_j^{n_j}
     C(n_0; A, \mathbf{m}; B, \mathbf{n}),
\end{equation}
where
\begin{equation}  \label{eq:integral-coef-regroup-2}
    C(n_0; A, \mathbf{m}; B, \mathbf{n})
    \defeq \oint_{\Gamma_A} \frac{\td z}{2\pi i}
    \frac{1}{z^{n_0}} \prod_{j = 1}^k \frac{1}{(z - b_j)^{n_j}}
    \prod_{i = 1}^l \frac{1}{(z - a_i)^{m_i}},
\end{equation}
where $n_0 = \gamma + 1 - \nu$.
The $m_i$'s and $n_j$'s satisfy $\sum_i m_i + \sum_j n_j \le \gamma + 1$.
We can remove the set $S$ and simply denote the contour by $\Gamma_A$ without affecting its meaning.
The next three results will build up the argument to bound these sums and ultimately prove the target lemmas.

\begin{lemma}  \label{lem:integral-bound}
 Let $A = \{a_i\}_{i\in [l]}$ and $B = \{b_j\}_{j\in [k]}$ be disjoint set of complex non-zero numbers and $\mathbf{m} = \{m_i\}_{i\in [l]}$ and $n_0$ and $\mathbf{n} = \{n_j\}_{j\in [k]}$ be nonnegative integers such that $m + n + n_0 \ge 2$, where $m = \sum_i m_i$ and $n \defeq \sum_{i\ge 1} n_i$.
Let $\Gamma_A$ be a contour encircling all numbers in $A$ and none in $B \cup \{0\}$.
Let $a, d > 0$ be arbitrary such that:
\begin{equation}  \label{eq:integral-bound-a-d-cond}
    d \le a,
    \quad \quad a \le \min_i |a_i|,
    \quad \quad d \le \min_{i, j} |a_i - b_j|.
\end{equation}
Suppose that $0\le m'_i \le m_i$ for each $i\in [l]$ and that $m'\defeq \sum_{i = 1}^k m'_i \le n_0$.
Then for $C(n_0; A, \mathbf{m}; B, \mathbf{n})$ defined Eq. \eqref{eq:integral-coef-regroup-2}, we have
\begin{equation}  \label{eq:integral-bound}
    \left|
        C(n_0; A, \mathbf{m}; B, \mathbf{n})
    \right|
    \le  \binom{m + n + n_0 - 2}{m - 1}
    \frac{1}{a^{n_0 - m'}d^{m + n - 1}}
    \prod_{i = 1}^l \frac{1}{|a_i|^{m'_i}}
\end{equation}
\end{lemma}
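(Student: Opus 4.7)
The plan is to prove Lemma \ref{lem:integral-bound} by induction on the total denominator degree $N = n_0 + m + n$, using two elementary partial-fraction identities to reduce $N$ at each step. The two identities are
\[
\frac{1}{(z-a_i)(z-b_j)} = \frac{1}{a_i - b_j}\left[\frac{1}{z-a_i} - \frac{1}{z-b_j}\right],
\qquad
\frac{1}{z(z-a_i)} = \frac{1}{a_i}\left[\frac{1}{z-a_i} - \frac{1}{z}\right].
\]
Multiplying either identity by the remaining factors of the integrand rewrites it as a difference of two integrands of total denominator degree $N-1$; integrating around $\Gamma_A$ and using contour linearity gives $C = c^{-1}(C_1 - C_2)$ where $c\in\{a_i-b_j,\,a_i\}$ and $|c|^{-1}$ is bounded by $d^{-1}$, $a^{-1}$, or $|a_i|^{-1}$ respectively. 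The combinatorial factor $\binom{N-2}{m-1}$ in the target arises from Pascal's identity $\binom{N-3}{m-1}+\binom{N-3}{m-2} = \binom{N-2}{m-1}$. The case $m=0$ and the base $N=2$ reduce to a short list of elementary integrals.

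For the inductive step I distinguish three cases. In Case I ($n_0 \ge 1$) I apply the second identity at some $i$ with $m_i\ge 1$, producing $C_1$ with parameters $(n_0-1,\mathbf{m},\mathbf{n})$ and $C_2$ with parameters $(n_0,\mathbf{m}-\mathbf{e}_i,\mathbf{n})$. If $m'\ge 1$ I choose $i$ with $m'_i\ge 1$ and set $m'_{i,C_k}$ equal to $m'_i$ when allowed (i.e.\ $m'_i\le m_{i,C_k}$) and to $m'_i-1$ otherwise, which keeps $m'_{C_k}\le n_{0,C_k}$ in each child; the inductive bounds, after multiplication by $|a_i|^{-1}\le a^{-1}$ and a single trade of one $a$-factor for one $d$-factor via the hypothesis $d\le a$, combine through Pascal to the claim. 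If $m'=0$, I pick any $i$ with $m_i\ge 1$, use $m'_{i,C_k}=0$, and the same $a\leftrightarrow d$ trade closes the argument. In Case II ($n_0=0$, $n\ge 1$), necessarily $m'=0$; applying the first identity at any pair $(i,j)$ with $m_i,n_j\ge 1$ gives prefactor $|a_i-b_j|^{-1}\le d^{-1}$, and the two inductive bounds merge via Pascal directly. In Case III ($n_0=n=0$), the integrand reduces to $\prod_i(z-a_i)^{-m_i}$, whose integral equals minus the residue at infinity and so vanishes whenever $m\ge 2$, matching $\binom{m-2}{m-1}=0$.

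The main obstacle will be the careful bookkeeping in Case I: tracking how each assignment of $m'_{i,C_1}$ and $m'_{i,C_2}$ in the children interacts with the constraints $0\le m'_{i,C_k}\le m_{i,C_k}$ and $m'_{C_k}\le n_{0,C_k}$, and verifying that the two inductive bounds (each with its own shifts in the exponents of $a$, $d$, and $|a_i|$) line up with the target after Pascal's identity. The key technical lever throughout is the hypothesis $d\le a$, which lets us convert one factor of $a^{-1}$ into one of $d^{-1}$ whenever a mismatched exponent appears between the two children. No analytic tool beyond the two partial-fraction identities is needed; the whole argument is the delicate triple induction on $(n_0,m,n)$ to which the paper alludes.
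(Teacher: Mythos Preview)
Your approach is essentially the paper's: induction on $N=n_0+m+n$ via the identity $\frac{1}{z(z-a_i)}=\frac{1}{a_i}\bigl[\frac{1}{z-a_i}-\frac1z\bigr]$, combining the two children's bounds through Pascal and a single $d\le a$ trade. The paper merely organises this as three layered inductions (the ``weak'' all-$d$ bound, then the bound with $a^{n_0}$, then the full bound with the $|a_i|^{-m'_i}$ factors), each using the previous to handle an edge case; your single induction with Cases~II and~III for $n_0=0$ is equivalent and is in fact what the paper's omitted weak-bound proof must be.

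One bookkeeping slip in Case~I with $m'\ge 1$: your rule ``set $m'_{i,C_k}=m'_i$ when $m'_i\le m_{i,C_k}$, else $m'_i-1$'' does \emph{not} keep $m'_{C_1}\le n_{0,C_1}$ for the child $C_1=(n_0-1,\mathbf m,\mathbf n)$ when $m'=n_0$: there $m_{i,C_1}=m_i\ge m'_i$, so your rule gives $m'_{C_1}=m'=n_0>n_0-1=n_{0,C_1}$, and the inductive hypothesis does not apply. The fix, which is exactly what the paper does, is to set $m'_{i,C_k}=m'_i-1$ in \emph{both} children; then $m'_{C_1}=m'-1\le n_0-1$ and $m'_{i,C_2}=m'_i-1\le m_i-1=m_{i,C_2}$ are both satisfied, the $C_1$ bound picks up $a^{-(n_0-m')}|a_i|^{-(m'_i-1)}$, the $C_2$ bound picks up $a^{-(n_0-m'+1)}d^{-(m+n-2)}|a_i|^{-(m'_i-1)}$, and after multiplying by $|a_i|^{-1}$ and trading one $a$ for one $d$ in the $C_2$ term the two line up with Pascal. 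With that correction the argument goes through.
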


\begin{proof}
    Firstly, given the sets $A$ and $B$ and the notations and conditions in Lemma \ref{lem:integral-bound}, the weak bound below holds
    \begin{equation}  \label{eq:integral-bound-weakest}
        \left|
            C(n_0; A, \mathbf{m}; B, \mathbf{n})
            % \oint_{\Gamma_A} \frac{\td z}{2\pi i}
            % \frac{1}{z^{n_0}}
            % \prod_{j = 1}^k \frac{1}{(z - b_j)^{n_j}}
            % \prod_{i = 1}^l \frac{1}{(z - a_i)^{m_i}}
        \right|
        \le \binom{m + n + n_0 - 2}{m - 1}
        \frac{1}{d^{m + n + n_0 - 1}}.
    \end{equation}
    We omit the details of the proof, which is a simple induction argument.
    We now use Eq. \eqref{eq:integral-bound-weakest} to prove the following:
    \begin{equation}  \label{eq:integral-bound-weak}
        \left|
            C(n_0; A, \mathbf{m}; B, \mathbf{n})
            % \oint_{\Gamma_A} \frac{\td z}{2\pi i}
            % \frac{1}{z^{n_0}}
            % \prod_{j = 1}^k \frac{1}{(z - b_j)^{n_j}}
            % \prod_{i = 1}^l \frac{1}{(z - a_i)^{m_i}}
        \right|
        \le \binom{m + n + n_0 - 2}{m - 1}
        \frac{1}{a^{n_0}d^{m + n - 1}}.
    \end{equation}
    We proceed with induction.
    Let $P_1(N)$ be the following statement:
    ``For any sets $A$ and $B$, and the notations and conditions described in Lemma \ref{lem:integral-bound}, such that $m + n + n_0 = N$, Eq. \eqref{eq:integral-bound-weak} holds.''

    Since $m + n + n_0 \ge 2$, consider $N = 2$ for the base case.
    The only case where the integral is non-zero is when $m = 1$ and $n + n_0 = 1$, meaning $A = \{a_1\}$, $m_1 = 1$ and either $B = \varnothing$ and $n_0 = 1$, or $B = \{b_1\}$ and $n_1 = 1$, $n_0 = 0$.
    The integral yields $a_1^{-1}$ in the former case and $(a_1 - b_1)^{-1}$ in the latter, confirming the inequality in both.

    Consider $n \ge 3$ and assume $P_1(n - 1)$.
    If $m = 0$, the integral is again $0$.
    If $n_0 = 0$, Eq. \eqref{eq:integral-bound-weak} automatically holds by being the same as Eq. \eqref{eq:integral-bound-weakest}.
    Assume $m, n_0 \ge 1$. There must then be some $i\in [l]$ such that $m_i \ge 1$, without loss of generality let $1$ be that $i$.
    We have
    \begin{equation}  \label{eq:integral-bound-split}
    \begin{aligned}
        C(n_0; A, \mathbf{m}; B, \mathbf{n})
        = \frac{1}{a_1} \Bigl[
            C(n_0 - 1; A, \mathbf{m}; B, \mathbf{n})
            - C(n_0; A, \mathbf{m}^{(1)}; B, \mathbf{n})
        \Bigr]
        % \oint_{\Gamma_A} \frac{\td z}{2\pi i}
        % \frac{1}{z^{n_0}}
        % \prod_{j = 1}^k \frac{1}{(z - b_j)^{n_j}}
        % \prod_{i = 1}^l \frac{1}{(z - a_i)^{m_i}}
        % = \frac{1}{a_1} \left[
        %     \oint_{\Gamma_A} \frac{\td z}{2\pi i}
        %     \frac{1}{z^{n_0 - 1}}
        %     \prod_{j = 1}^k \frac{1}{(z - b_j)^{n_j}}
        %     \prod_{i = 1}^l \frac{1}{(z - a_i)^{m_i}}
        % \right.
        % \\
        % \left. \quad - \oint_{\Gamma_A} \frac{\td z}{2\pi i}
        %     \frac{1}{z^{n_0}(z - a_1)^{m_1 - 1}}
        %     \prod_{j = 1}^k \frac{1}{(z - b_j)^{n_j}}
        %     \prod_{i = 2}^l \frac{1}{(z - a_i)^{m_i}}
        % \right
    \end{aligned}
    \end{equation}
    where $\mathbf{m}^{(i)}$ is the same as $\mathbf{m}$ except that the $i$-entry is $m_i - 1$.
    
    Consider the first integral on the right-hand side.
    % If $n_0 \ge 2$, we can apply $P_1(n - 1)$ with the same sets $S$ and $T$, with the powers $n_0 - 1, n_1, \ldots, n_k$, to get
    Applying $P_1(N - 1)$, we get
    \begin{equation}  \label{eq:integral-bound-1st}
        \left|
            C(n_0 - 1; A, \mathbf{m}; B, \mathbf{n})
            % \oint_{\Gamma_A} \frac{\td z}{2\pi i}
            % \frac{1}{z^{n_0 - 1}}
            % \prod_{j = 1}^k \frac{1}{(z - b_j)^{n_j}}
            % \prod_{i = 1}^l \frac{1}{(z - a_i)^{m_i}}
        \right|
        \le \binom{m + n + n_0 - 3}{m - 1} \frac{1}{a^{n_0 - 1}d^{m + n - 1}}.
    \end{equation}
    % If $n_0 = 1$, Eq. \eqref{eq:integral-bound-1st} still holds.
    % To see why, consider 2 cases:
    % \begin{enumerate}
    %     \item If $S = \{a_0\}$, the remaining integral consists of at least $2$ factors of the form $z - a$ for $a\in T$, so it is $0$, confirming the inequality.
    %
    %     \item If $a_1\in S$, let $d_1 = \min\{|a - a'|: a\in S\setminus\{a_0\}, a'\in T\}$, noting that $d_1\ge d$.
    %     Applying Eq. \eqref{eq:integral-bound-weakest} yields the inequality:
    %     \begin{equation*}
    %         \left|
    %             \oint_{\Gamma_T} \frac{\td z}{2\pi i}
    %             \frac{1}{(z - a_0)^{n_0 - 1} \prod_{i = 1}^k (z - a_i)^{n_i}}
    %         \right|
    %         \le \binom{n_S + n_T - 3}{n_T - 1} \frac{1}{d_1^{n_S + n_T - 2}}
    %         \le \binom{n_S + n_T - 3}{n_T - 1} \frac{1}{d^{n_S + n_T - 2}},
    %     \end{equation*}
    %     which is the same as the right-hand side of Eq. \eqref{eq:integral-bound-1st}.
    % \end{enumerate}
    %
    Analogously, we have the following bound for the second integral:
    \begin{equation}  \label{eq:integral-bound-2nd}
    \begin{aligned}
        & \left|
            C(n_0; A, \mathbf{m}^{(1)}; B, \mathbf{n})
            % \oint_{\Gamma_A} \frac{\td z}{2\pi i}
            % \frac{1}{z^{n_0}(z - a_1)^{m_1 - 1}}
            % \prod_{j = 1}^k \frac{1}{(z - b_j)^{n_j}}
            % \prod_{i = 2}^l \frac{1}{(z - a_i)^{m_i}}
        \right|
        \le \binom{m + n + n_0 - 3}{m - 2}\frac{1}{a^{n_0}d^{m + n - 2}}
        \le \binom{m + n + n_0 - 3}{m - 2}\frac{1}{a^{n_0 - 1}d^{m + n - 1}}.
    \end{aligned}
    \end{equation}
    Notice that the binomial coefficients in Eqs. \eqref{eq:integral-bound-1st} and \eqref{eq:integral-bound-2nd} sum to the binomial coefficient in Eq. \eqref{eq:integral-bound-weak}, we get $P_1(N)$,
    % \begin{equation*}
    % \begin{aligned}
    %     & \left|
    %         \oint_{\Gamma_T} \frac{\td z}{2\pi i}
    %         \prod_{i = 0}^k \frac{1}{(z - a_i)^{n_i}}
    %     \right|
    %     \le \frac{1}{|a_k - a_0|a^{n_0 - 1}d^{n_S + n_T - n_0 - 1}}
    %     \left[
    %         \binom{n_S + n_T - 3}{n_T - 1} + \binom{n_S + n_T - 3}{n_T - 2}
    %     \right]
    %     \\
    %     & \quad = \binom{n_S + n_T - 2}{n_T - 1}
    %     \frac{1}{|a_k - a_0|a^{n_0 - 1}d^{n_S + n_T - n_0 - 1}}
    %     \le \binom{n_S + n_T - 2}{n_T - 1}
    %     \frac{1}{a^{n_0}d^{n_S + n_T - n_0 - 1}},
    % \end{aligned}
    % \end{equation*}
    which proves Eq. \eqref{eq:integral-bound-weak} by induction.
    
    Now we can prove Eq. \eqref{eq:integral-bound}.
    The logic is almost identical, with Eq. \eqref{eq:integral-bound-weak} playing the role of Eq. \eqref{eq:integral-bound-weakest} in its own proof, handling an edge case in the inductive step.
    Let $P_2(n)$ be the statement:
    ``For any sets $A$ and $B$, and the notations and conditions described in Lemma \ref{lem:integral-bound}, such that $m + n + n_0 = N$, Eq. \eqref{eq:integral-bound} holds.''

    The cases $N = 1$ and $N = 2$ are again trivially true.
    Consider $N \ge 3$ and assume $P_2(N - 1)$.
    Fix any sequence $m'_1, m'_2, \ldots, m'_l$ satisfying $0\le m'_i \le m_i$ for each $i\in [k]$ and $n_0 \ge m'_1 + \ldots + m'_k$.
    If $m'_1 = m'_2 = \ldots = m'_k = 0$, we are done by Eq. \eqref{eq:integral-bound-weak}.
    By symmetry among the indices, assume $m'_1 \ge 1$.
    This also means $n_0 \ge 1$.
    Consider Eq. \eqref{eq:integral-bound-split} again.
    % We have the following split:
    % \begin{equation}  \label{eq:integral-bound-split-strong}
    % \begin{aligned}
    %     \oint_{\Gamma_T} \frac{\td z}{2\pi i}
    %     \frac{1}{\prod_{i = 0}^k (z - a_i)^{n_i}}
    %     &
    %     = \frac{1}{a_j} \left[
    %         \oint_{\Gamma_T} \frac{\td z}{2\pi i}
    %         \frac{1}{(z - a_0)^{n_0 - 1} \prod_{i = 1}^k (z - a_i)^{n_i}}
    %     \right.
    %     \\
    %     & \left. \quad - \oint_{\Gamma_T} \frac{\td z}{2\pi i}
    %         \frac{1}{(z - a_j)^{n_j - 1} \prod_{i\neq j} (z - a_i)^{n_i}}
    %     \right].
    % \end{aligned}
    % \end{equation}
    % Consider the first integral on the right-hand side of Eq. \eqref{eq:integral-bound-split-strong}.
    For the first integral on the right-hand side, applying $P_2(N - 1)$ for the parameters $n_0 - 1, n_1, \ldots, n_k$, $m_1, \ldots, m_l$ and $m'_1 - 1, m'_2, \ldots, m'_k$ yields the bound
    \begin{equation}  \label{eq:integral-bound-1st-strong}
        \left|
            C(n_0 - 1; A, \mathbf{m}; B, \mathbf{n})
            % \oint_{\Gamma_A} \frac{\td z}{2\pi i}
            % \frac{1}{z^{n_0 - 1}}
            % \prod_{j = 1}^k \frac{1}{(z - b_j)^{n_j}}
            % \prod_{i = 1}^l \frac{1}{(z - a_i)^{m_i}}
        \right|
        \le \binom{m + n + n_0 - 3}{m - 1}
        \frac{1}{a^{n_0 - m'}d^{m + n - 1}}
        \frac{1}{|a_1|^{m_1' - 1}}
        \prod_{i = 2}^l \frac{1}{|a_i|^{m'_i}}.
    \end{equation}
    Applying $P_2(N - 1)$ for the parameters $n_0, n_1, \ldots, n_k$, $m_1 - 1, \ldots, m_l$ and $m'_1 - 1, m'_2, \ldots, m'_k$,  we get the following bound for the second integral on the right-hand side of Eq. \eqref{eq:integral-bound-split}:
    \begin{equation*}
    \begin{aligned}
        \left|
            C(n_0; A, \mathbf{m}^{(1)}; B, \mathbf{n})
            % \oint_{\Gamma_A} \frac{\td z}{2\pi i}
            % \frac{1}{z^{n_0}(z - a_1)^{m_1 - 1}}
            % \prod_{j = 1}^k \frac{1}{(z - b_j)^{n_j}}
            % \prod_{i = 2}^l \frac{1}{(z - a_i)^{m_i}}
        \right|
        & \le \binom{m + n + n_0 - 3}{m - 2}\frac{1}{a^{n_0 - m' + 1}d^{m + n - 2}} \frac{1}{|a_1|^{m'_1 - 1}} \prod_{i = 2}^l \frac{1}{|a_i|^{m'_i}}
        \\
        & \le \binom{m + n + n_0 - 3}{m - 2}\frac{1}{a^{n_0 - m'}d^{m + n - 1}} \frac{1}{|a_1|^{m'_1 - 1}} \prod_{i = 2}^l \frac{1}{|a_i|^{m'_i}}.
    \end{aligned}
    \end{equation*}
    Summing up the bounds by summing the binomial coefficients, we get exactly $P_2(N)$, so Eq. \eqref{eq:integral-bound} is proven by induction.
\end{proof}

\begin{lemma}  \label{lem:integral-bound-1}
Let $A$, $B$, $\mathbf{m}$, $\mathbf{n}$, $n_0$, $\Gamma_A$ and $a$, $d$ be the same, with the same conditions as in Lemma \ref{lem:integral-bound}.
% Let $A = \{a_1, \ldots, a_l\}$ and $B = \{b_1, \ldots, b_k\}$ be disjoint set of complex non-zero numbers and $m_1, \ldots, m_l$ and $n_0, n_1 \ldots, n_k$ be positive integers.
% Let $\Gamma_A$ be a contour encompassing all numbers in $A$ and none in $B \cup \{0\}$.
% Let $a, d > 0$ be arbitrary such that:
% \begin{equation}
%     d \le a,
%     \quad \quad a \le \min_i |a_i|,
%     \quad \quad d \le \min_{i, j} |a_i - b_j|.
% \end{equation}
Suppose that 
% Let $m'_1, \ldots, m'_l, n'_1, \ldots, n'_k$ be arbitrary integers such that
$0 \le m'_i \le m_i$ and $0 \le n'_j \le n_j$ for each $i, j \ge 1$ and
\begin{equation*}
    m' + n' \le n_0
    \ \text{ for } \ m' \defeq \sum_i m'_i, \quad n' \defeq \sum_j n'_j.
\end{equation*}
Then for $C(n_0; A, \mathbf{m}; B, \mathbf{n})$ defined in Eq. \eqref{eq:integral-coef-regroup-2}, we have
\begin{equation}  \label{eq:integral-bound-1}
    \left|
        C(n_0; A, \mathbf{m}; B, \mathbf{n})
        % \oint_{\Gamma_A} \frac{\td z}{2\pi i}
        % \frac{1}{z^{n_0}}
        % \prod_{j = 1}^k \frac{b_j^{n'_j}}{(z - b_j)^{n_j}}
        % \prod_{i = 1}^l \frac{a_i^{m'_i}}{(z - a_i)^{m_i}}
    \right|
    \le \binom{n + n_0 - n' + m - 2}{m - 1}
    \frac{\left(1 + d/a\right)^{n'}}{a^{n_0 - m' - n'}d^{m + n - 1}}
    \prod_{i = 1}^l \frac{1}{|a_i|^{m'_i}}
    \prod_{j = 1}^k \frac{1}{|b_j|^{n'_j}}.
\end{equation}
\end{lemma}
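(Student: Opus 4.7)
The plan is to induct on $n' \defeq \sum_{j = 1}^k n'_j$, with Lemma \ref{lem:integral-bound} supplying the base case and a single partial-fraction identity driving the inductive step. For $n' = 0$, every $n'_j$ vanishes, so the factors $\prod_j |b_j|^{-n'_j}$ and $(1 + d/a)^{n'}$ both equal $1$, and Eq. \eqref{eq:integral-bound-1} reduces verbatim to Eq. \eqref{eq:integral-bound}.

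For the inductive step $n' \ge 1$, I would assume without loss of generality that $n'_1 \ge 1$ and exploit the identity
\begin{equation*}
    \frac{1}{z - b_1} = -\frac{1}{b_1} + \frac{z}{b_1 (z - b_1)},
\end{equation*}
applied to one of the $(z - b_1)^{-1}$ factors in the integrand. Pulling this through the contour integral yields the recursion
\begin{equation*}
    C(n_0; A, \mathbf{m}; B, \mathbf{n})
    = -\frac{1}{b_1}\, C(n_0; A, \mathbf{m}; B, \mathbf{n}^{(1)})
    + \frac{1}{b_1}\, C(n_0 - 1; A, \mathbf{m}; B, \mathbf{n}),
\end{equation*}
where $\mathbf{n}^{(1)}$ denotes $\mathbf{n}$ with $n_1$ replaced by $n_1 - 1$. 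I would then apply the inductive hypothesis to each of the two pieces on the right with $n'_1$ lowered to $n'_1 - 1$ and all other $n'_j$ unchanged.

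The arithmetic to verify is that the two resulting bounds combine cleanly. In the first subproblem both $n$ and $n'$ drop by $1$, so the binomial coefficient $\binom{n + n_0 - n' + m - 2}{m - 1}$ is unchanged while the denominator becomes $a^{n_0 - m' - n' + 1}\, d^{m + n - 2}$, i.e. the target denominator $a^{n_0 - m' - n'} d^{m + n - 1}$ times an extra factor $d/a$. In the second subproblem $n_0$ and $n'$ each drop by $1$, so the binomial and the exponents of $a$ and $d$ all reproduce the target exactly. The $1/|b_1|$ prefactors convert the $|b_1|^{-(n'_1 - 1)}$ from the inductive hypothesis into $|b_1|^{-n'_1}$, and adding the two bounds via the triangle inequality produces a combined factor $(d/a + 1)\,(1 + d/a)^{n' - 1} = (1 + d/a)^{n'}$, exactly matching Eq. \eqref{eq:integral-bound-1}.

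I do not expect a serious obstacle; the only bookkeeping is checking that the preconditions of Lemma \ref{lem:integral-bound-1} propagate through the recursion, namely $n'_1 - 1 \le n_1 - 1$ and $n'_1 - 1 \le n_1$ in the two subproblems (immediate from $n'_1 \le n_1$), and $m' + (n' - 1) \le n_0$ respectively $\le n_0 - 1$ (both immediate from the hypothesis $m' + n' \le n_0$). Once these are verified, the geometric combination described above closes the induction.
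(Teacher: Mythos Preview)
Your proof is correct. Both your argument and the paper's rest on the same partial-fraction identity $\frac{b_j}{z(z-b_j)} = \frac{1}{z-b_j} - \frac{1}{z}$ (your recursion is this identity rearranged), and both reduce the bound to Lemma~\ref{lem:integral-bound}; the difference is purely organizational. The paper applies the identity $n'_j$ times to each $b_j$ simultaneously, producing a binomial expansion indexed by $(r_1,\ldots,r_k)$, applies Lemma~\ref{lem:integral-bound} to every term, and then collapses the sum $\sum_{r_j} \binom{n'_j}{r_j}(a/d)^{r_j} = (1+a/d)^{n'_j}$ to recover the factor $(1+d/a)^{n'}$. You instead peel off one $b$-factor at a time and induct on $n'$, which avoids the multi-index sum and makes the bookkeeping lighter; the factor $(1+d/a)^{n'}$ then emerges one $(1+d/a)$ at a time. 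Either route works, and your inductive version is arguably the cleaner presentation.
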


\begin{proof}
    We have the expansion
    \begin{equation*} 
    \begin{aligned}
        & \frac{1}{z^{n_0}} \prod_{j = 1}^k \frac{b_j^{n'_j}}{(z - b_j)^{n_j}} \prod_{i = 1}^l \frac{1}{(z - a_i)^{m_i}}
        % = \frac{1}{z^{n_0 - n'}} \prod_{j = 1}^k \frac{b_j^{n'_j}}{(z(z - b_j))^{n'_j}} \frac{1}{(z - b_j)^{n_j - n'_j}}
        = \frac{1}{z^{n_0 - n'}} \prod_{j = 1}^k \frac{1}{(z - b_j)^{n_j - n'_j}} \prod_{j = 1}^k \left(\frac{1}{z} - \frac{1}{z - b_j} \right)^{n'_j} \prod_{i = 1}^l \frac{1}{(z - a_i)^{m_i}}
        \\
        & = \frac{1}{z^{n_0 - n'}} \prod_{j = 1}^k \frac{1}{(z - b_j)^{n_j - n'_j}}
        \sum_{0\le r_j \le n'_j \forall j}
        \frac{(-1)^{r_1 + \ldots + r_k}}{z^{n' - r_1 - \ldots - r_k}} \prod_{j = 1}^k \binom{n'_j}{r_j}\frac{1}{(z - b_j)^{r_j}} \prod_{i = 1}^l \frac{1}{(z - a_i)^{m_i}}
        \\
        & = \sum_{0\le r_j \le n'_j \forall j} \frac{(-1)^{r_1 + \ldots + r_k}}{z^{n_0 - r_1 - \ldots - r_k}}
        \prod_{j = 1}^k \binom{n'_j}{r_j} \frac{1}{(z - b_j)^{r_j + n_j - n'_j}}
        \prod_{i = 1}^l \frac{1}{(z - a_i)^{m_i}}.
    \end{aligned}
    \end{equation*}
    Integrating both sides over $\Gamma_A$, we have
    \begin{equation*}  \label{eq:integral-bound-1-expansion}
        C(n_0; A, \mathbf{m}; B, \mathbf{n}) \prod_{j = 1}^k b_j^{n'_j}
        = \sum_{0\le r_j \le n'_j \forall j}
        (-1)^{\sum_j r_j} \binom{n'_j}{r_j}
        C\left(\textstyle n_0 - \sum_j r_j; A, \mathbf{m}; B, \mathbf{r} + \mathbf{n} - \mathbf{n}'\right),
    \end{equation*}
    where the $j$-entry of $\mathbf{r} + \mathbf{n} - \mathbf{n}'$ is simply $r_j + n_j - n'_j$.
    Applying Lemma \ref{lem:integral-bound} for each summand on the right-hand side and rearranging the powers, we get
    \begin{equation*}
        \Bigl|
            C\left(\textstyle n_0 - \sum_j r_j; A, \mathbf{m}; B, \mathbf{r} + \mathbf{n} - \mathbf{n}'\right)
        \Bigr|
        \le \binom{m + n + n_0 - n' - 2}{m - 1}
        \frac{(a/d)^{\sum_j r_j}}{a^{n_0 - m'}d^{n - n' + m - 1}}
        \prod_{i = 1}^l \frac{1}{|a_i|^{m'_i}}.
    \end{equation*}
    % Temporarily let $C(z, r_1, \ldots, r_k)$ be the term corresponding to the sequence $r_1, \ldots, r_k$.
    % Applying Lemma \ref{lem:integral-bound} for the sets $A$, $B$, with $n_0 - \sum_j r_j$ and $r_j + n_j - n'_j$ respectively playing the roles of $n_0$ and $n_j$, we have
    % \begin{equation*}
    %     \left|
    %         \oint_{\Gamma_A} \frac{\td z}{2\pi i} C(\Vec{\textbf{r}}, z)
    %     \right|
    %     \le
    %     \prod_{j = 1}^k \binom{n'_j}{r_j}
    %     \binom{m + n + n_0 - n' - 2}{m - 1}
    %     \frac{1}{a^{n_0 - \sum_j r_j - m'}d^{\sum_j r_j + n - n' + m - 1}}.
    % \end{equation*}
    Summing up the bounds, we get
    \begin{equation*}
    \begin{aligned}
        \left|
            C(n_0; A, \mathbf{m}; B, \mathbf{n}) \prod_{j = 1}^k b_j^{n'_j}
            % \oint_{\Gamma_A} \frac{\td z}{2\pi i} \frac{1}{z^{n_0}} \prod_{j = 1}^k \frac{b_j^{n'_j}}{(z - b_j)^{n_j}} \prod_{i = 1}^l \frac{a_i^{m'_i}}{(z - a_i)^{m_i}}
        \right|
        & \le \binom{m + n + n_0 - n' - 2}{m - 1}
        \frac{\textstyle \prod_{i = 1}^l |a_i|^{-m'_i}}{a^{n_0 - m'}d^{n - n' + m - 1}}
        \sum_{0\le r_j\le n'_j \forall j}
        \prod_{j = 1}^k \binom{n'_j}{r_j}\frac{a^{r_j}}{d^{r_j}}
        \\
        & = \binom{m + n + n_0 - n' - 2}{m - 1}
        \frac{\textstyle \prod_{i = 1}^l |a_i|^{-m'_i}}{a^{n_0 - m'}d^{n - n' + m - 1}}
        \left(\frac{a}{d} + 1\right)^{n'}.
    \end{aligned}
    \end{equation*}
    Rearranging the term, we get precisely the desired inequality.
\end{proof}

The lemma above is the main ingredient in the proof of Lemma \ref{lem:integral-coef-bound}.

\begin{proof}[Proof of Eq. \eqref{eq:integral-coef-bound-1} of Lemma \ref{lem:integral-coef-bound}]
    First rewrite the integral into the forms of \eqref{eq:integral-coef-generic}, then \eqref{eq:integral-coef-regroup-1} and \eqref{eq:integral-coef-regroup-2}.
    Let us consider two cases for $\myC$:
    \begin{enumerate}
        \item $\nu = 0$, so $n_0 = \gamma + 1$.
        Let $a = \lambda_S(\mathbf{I})$, $d = \delta_S(\mathbf{I})$, $m = \beta_S(\mathbf{I})$, $n = n' = \beta_{S^c}(\mathbf{I})$, $m'_i = m_i$ and $n'_j = n_j$ for all $i, j$,
        then $m' + n' = \beta \le \gamma + 1 = n_0$, so we can apply Lemma \ref{lem:integral-bound-1} to get
        \begin{equation*}
            \left|
            C(n_0; A, \mathbf{m}; B, \mathbf{n})
            \right|
            \le \binom{n_0 + m - 2}{m - 1}
            \frac{(1 + d/a)^{n'}}{a^{n_0 - m - n}d^{m + n - 1}}
            \prod_{i = 1}^l \frac{1}{|a_i|^{m_i}}
            \prod_{j = 1}^k \frac{1}{|b_j|^{n_j}},
        \end{equation*}
        or equivalently,
        \begin{equation*}
            |\myC_0(\mathbf{I})|
            \le \left(
                1 + \frac{\Delta_S(\mathbf{I})}{\lambda_S(\mathbf{I})}
            \right)^{\beta_{S^c}(\mathbf{I})}
            \binom{
                \gamma + \beta_S(\mathbf{I}) - 1
            }{
                \beta_S(\mathbf{I}) - 1
            }
            \frac{1}{
                \lambda_S(\mathbf{I})^{\gamma + 1 - \beta}
                \Delta_S(\mathbf{I})^{\beta - 1}
            }.
        \end{equation*}
        Note that since $\beta_S(\mathbf{I}) \le \beta \le \gamma + 1$ and $\myL_0 = 2$,
        \begin{equation*}
            \binom{
                \gamma + \beta_S(\mathbf{I}) - 1
            }{
                \beta_S(\mathbf{I}) - 1
            }
            = \frac{
                \gamma + \beta_S(\mathbf{I}) - 1
            }{\gamma} \binom{
                \gamma + \beta_S(\mathbf{I}) - 2
            }{
                \beta_S(\mathbf{I}) - 1
            }
            \le \myL_0\binom{
                \gamma + \beta_S(\mathbf{I}) - 2
            }{
                \beta_S(\mathbf{I}) - 1
            },
        \end{equation*}
        so we can replace the former with the latter to the product on the right-hand side to get an upper bound, which is also the desired bound.

        \item $\nu = 1$, so $n_0 = \gamma$.
        Without loss of generality, assume $|a_1| = \lambda_S(\mathbf{I})$, then we are guaranteed $m_1 \ge 1$.
        Applying Lemma \ref{lem:integral-bound-1} for the same parameters as in the previous case, except that $m'_1 = m_1 - 1$, we get
        \begin{equation*}
            \left|
            C(n_0; A, \mathbf{m}; B, \mathbf{n})
            \right|
            \le |a_1|\binom{n_0 + m - 2}{m - 1}
            \frac{(1 + d/a)^{n'}}{a^{n_0 - m + 1 - n}d^{m + n - 1}}
            \prod_{i = 1}^l \frac{1}{|a_i|^{m_i}}
            \prod_{j = 1}^k \frac{1}{|b_j|^{n_j}},
        \end{equation*}
        which translates to
        \begin{equation*}
            \left|
                \myC_1(\mathbf{I})
            \right|
            \le \lambda_S(\mathbf{I})
            \binom{\gamma + \beta_S(\mathbf{I}) - 2}{\beta_S(\mathbf{I}) - 1}
            \left(
                1 + \frac{\Delta_S(\mathbf{I})}{\lambda_S(\mathbf{I})}
            \right)^{\beta_{S^c}(\mathbf{I})}
            \frac{1}{
                \lambda_S(\mathbf{I})^{\gamma + 1 - \beta}
                \Delta_S(\mathbf{I})^{\beta - 1}
            },
        \end{equation*}
        which is the desired bound since $\myL_1 = \lambda_S(\mathbf{I})$.
    \end{enumerate}

    Let us now prove Eq. \eqref{eq:integral-coef-bound-2}.
    We can assume $\beta_S(\mathbf{I}) \ge 1$, since the integral is $0$ otherwise, making the inequality trivial.
    It suffices to show that we can substitute $\lambda_S(\mathbf{I})$ with $\lambda_S$ and $\Delta_S(\mathbf{I})$ with $\Delta_S$ in Eq. \eqref{eq:integral-coef-bound-1} to make the right-hand side larger.
    Let us again split into the cases as above.

    \begin{enumerate}
        \item $\nu = 0$.
        We have
        \begin{equation*}
            \frac{
                \left(
                    1 + \frac{\Delta_S(\mathbf{I})}{\lambda_S(\mathbf{I})}
                \right)^{\beta_{S^c}(\mathbf{I})}
            }{
                \lambda_S(\mathbf{I})^{\gamma + 1 - \beta}
                \Delta_S(\mathbf{I})^{\beta - 1}
            }
            = 
            \frac{
                \left(
                   \frac{1}{\Delta_S(\mathbf{I})}  + \frac{1}{\lambda_S(\mathbf{I})}
                \right)^{\beta_{S^c}(\mathbf{I})}
            }{
                \lambda_S(\mathbf{I})^{\gamma + 1 - \beta}
                \Delta_S(\mathbf{I})^{\beta_S(\mathbf{I}) - 1}
            }.
        \end{equation*}
        From this new form, it is evident that the right-hand side will increase if we make the aforementioned substitutions, since $\lambda_S \le \lambda_S(\mathbf{I})$ and $\Delta_S \le \Delta_S(\mathbf{I})$.

        \item $\nu = 1$ and additionally, $S = [s]$ for some $s\in [r]$.
        If $\beta \le \gamma$, the rewriting in the previous case works in the same way.
        Suppose $\beta = \gamma + 1$, we now write
        % Following the idea of the previous case, with a small tweak, we rewrite
        \begin{equation*}
            \frac{
                \lambda_S(\mathbf{I})
                \left(
                    1 + \frac{\Delta_S(\mathbf{I})}{\lambda_S(\mathbf{I})}
                \right)^{\beta_{S^c}(\mathbf{I})}
            }{
                \lambda_S(\mathbf{I})^{\gamma + 1 - \beta}
                \Delta_S(\mathbf{I})^{\beta - 1}
            }
            = \frac{1}{\lambda_S(\mathbf{I})^{\gamma - 1}}
            \left(
                \frac{\lambda_S(\mathbf{I})}{\Delta_S(\mathbf{I})}
            \right)^{\beta_S(\mathbf{I}) - 1}
            \left(
                1 + \frac{\lambda_S(\mathbf{I})}{\Delta_S(\mathbf{I})}
            \right)^{\beta_{S^c}(\mathbf{I})}
            .
        \end{equation*}
        Since $\lambda_S(\mathbf{I}) \le \lambda_S = \lambda_s$, it suffices to show $\lambda_S(\mathbf{I}) / \Delta_S(\mathbf{I}) \le \lambda_s / \delta_s$ to make the substitution work as in the previous case.
        Choose $t\in [s]$ where $\lambda_t = \lambda_S(\mathbf{I})$,
        then $\Delta_S(\mathbf{I}) \ge \lambda_t - \lambda_{s + 1}$, thus
        \begin{equation*}
            \frac{\lambda_S(\mathbf{I})}{\Delta_S(\mathbf{I})}
            \le \frac{\lambda_t}{\lambda_t - \lambda_{s + 1}}
            \le \frac{\lambda_s}{\lambda_s - \lambda_{s + 1}}
            = \frac{\lambda_s}{\delta_s}.
        \end{equation*}
        % Thus Eq. \eqref{eq:integral-coef-bound-2} holds.
    \end{enumerate}
    Thus both Eqs. \eqref{eq:integral-coef-bound-1} and \eqref{eq:integral-coef-bound-2} hold.
    The proof is complete.
\end{proof}

\subsection{Proof of semi-isotropic bounds for powers of random matrices}  \label{sec:E-power-bound-proof}

In this section, we prove Lemma \ref{lem:E-power-union-bound}, which gives semi-itrosopic bounds for powers of $\sym{E}$ in the second step of the main proof strategy.

The form of the bounds naturally implies that we should handle the even and odd powers separately.
We split the two cases into the following lemmas.

\begin{lemma}  \label{lem:E-power-bound-odd}
    Let $m, r\in \N$ and $U\in \R^{m\times r}$ be a matrix whose columns $u_1, u_2, \ldots, u_r$ are unit vectors.
    Let $E$ be a $m\times n$ random matrix following Model \eqref{eq:noise-model} with parameters $\upperE$ and $\stdE = 1$, meaning $E$ has independent entries and
    \begin{equation*}
        \E{E_{ij}} = 0,
        \quad \E{\|E\|_{ij}^2} \le 1,
        \quad \E{\|E\|_{ij}^p} \le \upperE^{p - 2} \quad \text{ for all } \ p.
    \end{equation*}
    For any $a\in \N$, $k \in [n]$, for any $D > 0$, for any $p \in \N$ such that
    \begin{equation*}
        m + n \ge 2^8\upperE^2p^6(2a + 1)^4
        % \ \text{ and } \
        % p(a^{5/2}p^{1/2} + 1)\upperE\|u\|_\infty \ge 4
        ,    
    \end{equation*}
    we have, with probability at least $1 - (2^5/D)^{2p}$,
    \begin{equation*}
        \left\|e_{n, k}^T(E^TE)^aE^TU\right\|
        \le
        Dr^{1/2}p^{3/2}\sqrt{2a + 1}
        \left(
            16p^{3/2}(2a + 1)^{3/2}\upperE\frac{\|U\|_{2, \infty}}{\sqrt{r}} + 1
        \right)
        [2(m + n)]^a.
    \end{equation*}
\end{lemma}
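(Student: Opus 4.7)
The plan is to bound the left-hand side by the high-moment method: compute (an upper bound on) the $2p$-th moment of the quantity of interest, then apply Markov's inequality. The first move is a reduction: since $\|e_{n,k}^T(E^TE)^aE^TU\|^2=\sum_{l=1}^r (e_{n,k}^T(E^TE)^aE^Tu_l)^2$, convexity of $x\mapsto x^p$ gives
\[
    \|e_{n,k}^T(E^TE)^aE^TU\|^{2p}\ \le\ r^{p-1}\sum_{l=1}^r \bigl(e_{n,k}^T(E^TE)^aE^Tu_l\bigr)^{2p}.
\]
Thus it suffices to bound $\E{X_l^{2p}}$ uniformly in $l$ for the scalar $X_l\defeq e_{n,k}^T(E^TE)^aE^Tu_l$. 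Expanding the matrix product, $X_l=\sum u_{l,j}\,E_{j_1,k}E_{j_1,i_1}E_{j_2,i_1}\cdots E_{j,i_a}$, where the sum runs over $(j_1,\dots,j_a,j)\in[m]^{a+1}$ and $(i_1,\dots,i_a)\in[n]^a$. Each summand is parametrized by a walk of length $2a+1$ on the complete bipartite graph $K_{m,n}$ that starts at the \emph{anchored} vertex $k\in [n]$ and ends at a vertex $j\in[m]$ carrying the weight $u_{l,j}$.

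Next I would expand $\E{X_l^{2p}}$ as a sum over $2p$-tuples of such walks. By independence and the zero-mean assumption, only those tuples in which every edge of the combined multigraph is traversed at least twice contribute, so the contributing ensembles are closed walks of total length $2p(2a+1)$ with each edge of multiplicity $\ge 2$. I would group such ensembles by their combinatorial \emph{type}, that is, the isomorphism class of the underlying vertex-labelled multigraph together with the traversal pattern. The number of ensembles of a given type is estimated by an encoding scheme in the style of Füredi--Komlós and Vu \cite{vu2005}: each ensemble is encoded by a word of length $O(p(2a+1))$ over an alphabet of size $O(m+n)$, and the total contribution of type $T$ factors as (number of realizations of $T$) $\times$ (product of entry-moment bounds) $\times$ (product of $u_l$-weights at the endpoint vertices). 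The entry moments are controlled by $\E{|E_{ij}|^l}\le\upperE^{l-2}$.

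The combinatorial accounting will split by whether the endpoint vertex $j$ of each walk in the ensemble is \emph{free} (appears only at the final position of one walk) or is \emph{pinned} (coincides with a vertex visited elsewhere). For a free endpoint, the sum $\sum_j u_{l,j}^2\le 1$ collapses the $u_l$-factors, which will produce the additive constant $1$ inside the parenthesis of the target bound. For a pinned endpoint, the weight factor $u_{l,j}$ has a fixed label and contributes $\|U\|_\infty\le\|U\|_{2,\infty}$; simultaneously each pinning event costs a factor of $\upperE$ (since the relevant edge must then be traversed at least three times, converting a variance into a third moment) and shrinks the vertex-freedom by one, costing a factor of $(m+n)^{-1}$. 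Tracking the polynomial overhead in $p$, $(2a+1)$ across the encoding yields the factor $16 p^{3/2}(2a+1)^{3/2}\upperE\|U\|_{2,\infty}/\sqrt{r}$ in the second term. Summing over all types and all matchings gives an inequality of the shape
\[
    \E{X_l^{2p}}\ \le\ \bigl[\,C_0 p^{3/2}\sqrt{2a+1}\,\bigl(16p^{3/2}(2a+1)^{3/2}\upperE\|U\|_{2,\infty}/\sqrt{r}+1\bigr)[2(m+n)]^a\,\bigr]^{2p},
\]
with $C_0$ absorbable into $32$. The hypothesis $m+n\ge 2^8\upperE^2 p^6(2a+1)^4$ is exactly what is needed to ensure the leading-order term of the classification dominates the remainder and that the absorbing constant $C_0$ stays bounded.

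Finally, Markov's inequality gives $\Pr(|X_l|\ge D\cdot(\text{main term})/32)\le (32/D)^{2p}=(2^5/D)^{2p}$. Combining this with the convexity bound from the opening paragraph, the extra $r^{1/2}$ in the target becomes $r^{(p-1)/(2p)}\cdot r^{1/(2p)}=r^{1/2}$, producing the stated norm bound with the claimed probability. The principal technical obstacle will be step three, specifically the uniform control of the walk encoding when the anchored start $k$ interacts with the pinned/free dichotomy at the endpoint: one must check that the anchoring does not destroy the edge-multiplicity structure used in the encoding and that the endpoint weights $u_{l,j}$ are correctly apportioned between the ``free'' contribution $1$ and the ``pinned'' contribution $\upperE\|U\|_{2,\infty}/\sqrt{r}$. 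A careful bookkeeping of the Euler-tour / spanning-tree decomposition of each type should make this manageable.
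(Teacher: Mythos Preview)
Your overall strategy—high-moment method, expansion into bipartite walks with anchored start, the edge-multiplicity constraint from the zero mean, an encoding scheme in the style of \cite{vu2005}, then Markov—is exactly the paper's approach. But there is a genuine gap at the very first step.

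The convexity reduction $\|{\cdot}\|^{2p}\le r^{p-1}\sum_l X_l^{2p}$ forces you to work column by column, so the endpoint weights in the walk expansion of $\E{X_l^{2p}}$ are all entries of the single column $u_l$. The best bound for a ``pinned'' endpoint is then $\|u_l\|_\infty\le\|U\|_\infty$, and there is no mechanism to recover $\|U\|_{2,\infty}/\sqrt r$ from column data alone (take $U$ to be the first $r$ columns of $I_m$: $\|U\|_\infty=1$ but $\|U\|_{2,\infty}/\sqrt r=1/\sqrt r$). The paper instead expands $\bigl(\sum_l X_l^2\bigr)^p$ multinomially and sums over the column indices $l_1,\dots,l_p$ \emph{before} grouping the walks; this converts each pair of endpoint weights $u_{l_h,i}\,u_{l_h,j}$ into the row inner product $U_{i,\cdot}^{\,T}U_{j,\cdot}$, bounded by $\|U_{i,\cdot}\|\,\|U_{j,\cdot}\|$. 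Defining the auxiliary unit vector $u$ by $u_i\defeq r^{-1/2}\|U_{i,\cdot}\|$ then gives $\|u\|_\infty=\|U\|_{2,\infty}/\sqrt r$, which is precisely the quantity in the lemma. Your route, as written, proves a weaker statement with $\|U\|_\infty$ in place of $\|U\|_{2,\infty}/\sqrt r$; the $/\sqrt r$ you insert at the end is not accounted for.

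A secondary issue: your free/pinned accounting is inverted. A boundary vertex appearing \emph{once} contributes $\sum_j|u_j|=\|u\|_1\le\sqrt m$ by Cauchy--Schwarz, not $\sum_j u_j^2$; it is these $\sqrt m$ factors from singleton boundary vertices that the paper must absorb via a separate vertex-count inequality ($z-x/2-y+t-1\le 2ap$, where $x$ is the number of singletons). The additive ``$+1$'' in the target arises from the extremal shape where all $2p$ endpoints are distinct, so the $\|u\|_\infty$ power is zero. The $\upperE$-factor does not come from a forced triple edge at a pinned endpoint; it enters globally through $|\E{E_P}|\le \upperE^{2p(2a+1)-2|E(P)|}$, and the combinatorial trade-off is among $\upperE^{-2}$ per edge, a factor of $(m+n)$ per free vertex, and $\|u\|_\infty$ per unit of excess boundary multiplicity.
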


\begin{lemma}  \label{lem:E-power-bound-even}
    Let $E$ be a $m\times n$ random matrix following the model in Lemma \ref{lem:E-power-bound-odd}.
    % with independent entries satisfying
    % \begin{equation*}
    %     \E{E_{ij}} = 0,
    %     \quad \E{\|E\|_{ij}^2} \le 1,
    %     \quad \E{\|E\|_{ij}^p} \le \upperE^{p - 2} \quad \text{ for all } \ p.
    % \end{equation*}
    For any matrix $V\in \R^{m\times l}$ with unit columns $v_1, v_2, \ldots, v_l$, any $a\in \N$, $k \in [n]$, any $D > 0$, and any $p \in \N$ such that
    \begin{equation*}
        m + n \ge 2^8\upperE^2p^6(2a)^4,    
    \end{equation*}
    we have, with probability at least $1 - (2^4/D)^{2p}$,
    \begin{equation*}
        % \sum_{l = 1}^r |e_{n, 1}^T(E^TE)^av_l|
        % \le Dp r^{1/2}\|V\|_{2, \infty} [2(m + n)]^a.
        \left\|e_{n, k}^T(E^TE)^a V \right\|
        \le Dp \|V\|_{2, \infty} [2(m + n)]^a.
    \end{equation*}
\end{lemma}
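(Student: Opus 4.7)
The plan is to use the moment method: bound $\E{X^p}$, where $X := \|e_{n,k}^T (E^TE)^a V\|^2$, and then apply Markov's inequality. The threshold will be chosen as $t = (Dp\|V\|_{2,\infty})^2 [2(m+n)]^{2a}$, so that the desired event $\{\sqrt{X}\ge Dp\|V\|_{2,\infty}[2(m+n)]^a\}$ coincides with $\{X\ge t\}$, and the failure probability $\E{X^p}/t^p$ will collapse to $(16/D)^{2p}$ once the moment bound is established.

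Write $X = e_{n,k}^T (E^TE)^a V V^T (E^TE)^a e_{n,k}$ and expand each $(E^TE)^a$ entry as a sum over length-$2a$ walks in the bipartite multigraph on $[m]\cup [n]$ with edge weights $E_{ij}$. Then $X^p$ becomes a sum over configurations $\mathcal{W}$ of $2p$ such walks, all starting at $k\in [n]$, grouped into $p$ pairs whose endpoints contribute a factor $(VV^T)_{j_t,j_t'}$. By Cauchy--Schwarz, $|(VV^T)_{j,j'}|\le \|V\|_{2,\infty}^2$, yielding an overall prefactor of $\|V\|_{2,\infty}^{2p}$. Taking expectations, independence of the $E_{ij}$ gives a product $\prod \E{E_{ij}^{n_{ij}(\mathcal{W})}}$ which vanishes unless each used edge has multiplicity at least two, and is otherwise bounded by $\upperE^{4ap-2N_e(\mathcal{W})}$, where $N_e(\mathcal{W})\le 2ap$ is the number of distinct edges in $\mathcal W$.

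Next, count configurations with a prescribed value of $N_e$ using a coding scheme in the spirit of \cite{vu2005}: label each of the $4ap$ walk-steps as \emph{fresh} (first occurrence of its edge) or \emph{stale}. Fresh steps have at most $m+n$ choices (the new vertex at the other end), while the Catalan-like count of walk shapes governing the stale-step bookkeeping once the underlying edge set is fixed is bounded by $4^{2ap}$. Combining with the $\binom{4ap}{N_e}$ choices for positions of fresh steps and the relation $N_v\le N_e+1$ yields, for each $N_e$, an upper bound of the form $(Cp)^{O(p)}(m+n)^{N_e}\upperE^{4ap-2N_e}$. Under the hypothesis $m+n\ge 2^8 \upperE^2 p^6 (2a)^4$, the resulting sum over $N_e$ forms a geometric series dominated by the term $N_e = 2ap$, producing
$$\E{X^p}\le (16p)^{2p}\,\|V\|_{2,\infty}^{2p}\, [2(m+n)]^{2ap}.$$
Markov's inequality with the threshold $t$ above then gives the claimed tail bound.

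The main technical obstacle lies in the combinatorial bookkeeping at the coding step: the encoding must be tight enough to incur only a $(Cp)^{O(p)}$ overhead, rather than the naive $(4ap)^{2ap}$ that a crude "point-to-earlier-occurrence" stale-step encoding produces, and it must handle the \emph{paired} structure of the $2p$ walks imposed by $VV^T$ without inflating the count. The quantitative assumption on $m+n$ is engineered precisely so that the $N_e=2ap$ tree-like term dominates the sum; weaker hypotheses leak sub-leading contributions into the final bound. Once these two points are settled, the remaining steps are standard geometric-series estimates followed by Markov. The odd-power analogue (Lemma \ref{lem:E-power-bound-odd}) follows the same template, with one extra trailing fresh step accounting for the extra $E^T$ factor, which is the origin of the additional $\sqrt{2a+1}$-factor and the slightly different constants in that lemma's conclusion.
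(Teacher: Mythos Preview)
Your proposal is correct and takes essentially the same approach as the paper: a $2p$-th moment expansion into $2p$-tuples of length-$2a$ walks starting at $k$, the uniform bound $|(VV^T)_{j,j'}|\le\|V\|_{2,\infty}^2$, the \cite{vu2005} plus/minus/neutral coding to count shapes, the geometric-series domination by the tree-like term $N_e=2ap$ under the hypothesis on $m+n$, and Markov. The only slip is in your constant bookkeeping: your heuristics (a Catalan $4^{2ap}$ together with $\binom{4ap}{N_e}$) do not by themselves collapse to a $(Cp)^{O(p)}$ overhead, and the factor $2^{2ap}$ that must sit next to $(m+n)^{2ap}$ to give $[2(m+n)]^{2ap}$ is absent from your stated per-$N_e$ bound; the paper parameterizes shapes by the vertex counts $(z,t)$ in each bipartite side and uses the explicit $F_l$ formula to extract precisely that factor.
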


Let us prove the main objective of this section, Lemma \ref{lem:E-power-union-bound}, before delving into the proof of the technical lemmas.

\begin{proof}[Proof of Lemma \ref{lem:E-power-union-bound}]
    We follow two steps:
    \begin{enumerate}
        \item \textit{Assuming $M$}
    \end{enumerate}
    Consider the analogue of Eq. \eqref{eq:E-power-bound-myD0(U)-1} for $V$ (we wrote the proof for $V$ before the final edit, and wanted to save the energy of changing to $U$) and Eq. \eqref{eq:E-power-bound-myD1(U)-1}, and assume $\upperE \le \log^{-2-\eps}(m + n)\sqrt{m + n}$.
    Fix $k\in [n]$.
    It suffices to prove the following two bounds uniformly over all $a\in [\lfloor t\log (m + n)\rfloor]$:
    \begin{align}
        \label{eq:E-power-bound-proof-odd}
        \left\|e_{n, k}^T(E^TE)^aE^T U\right\|
        & \le C\myD_1(U, \log\log(m + n)) (1.9\stdE\sqrt{m + n})^{2a + 1}\sqrt{r}
        \\
        \label{eq:E-power-bound-proof-even}
        \left\|e_{n, k}^T(E^TE)^a V\right\|
        & \le C\myD_0(V, \log\log(m + n)) (1.9\stdE\sqrt{m + n})^{2a}\sqrt{r}.
    \end{align}

    Fix $a\in [\lfloor t\log (m + n)\rfloor]$.
    Let $p = \log\log (m + n)$.
    We can assume $p$ is an integer for simplicity without any loss.
    This choice ensures
    \begin{equation*}
        \upperE^2p^6(2a)^4 < \upperE^2p^6(2a + 1)^4 \le \frac{(m + n)t^4\log^4(m + n)\log^6\log (m + n)}{\log^{4 + 2\eps}(m + n)} = o(m + n),
    \end{equation*}
    so we can apply both Lemmas \ref{lem:E-power-bound-odd} and \ref{lem:E-power-bound-even}.
    
    Let us prove Eq. \eqref{eq:E-power-bound-proof-odd} for $a$.
    Applying Lemma \ref{lem:E-power-bound-odd} for the random matrix $E/\stdE$ and $D = 2^{13}$ gives, with probability $1 - \log^{-4.04}(m + n)$,
    \begin{equation*}
    \begin{aligned}
        & \frac{\|e_{n, k}^T(E^TE)^aE^TU\|}{(1.9\stdE\sqrt{m + n})^{2a + 1}}
        \le
        \frac{Dr^{1/2}p^{3/2}\sqrt{2a + 1}}{1.9\sqrt{m + n}}
        \left(
            16p^{3/2}(2a + 1)^{3/2}\upperE\frac{\|U\|_{2, \infty}}{\sqrt{r}} + 1
        \right) \left(
            \frac{2}{3.61}
        \right)^a
        \\
        & \quad \quad
        \le
        \frac{Dr^{1/2}p^{3/2}}{\sqrt{m + n}}
        \left(
            16p^{3/2}\upperE\frac{\|U\|_{2, \infty}}{\sqrt{r}} + 1
        \right)
        \le 2^{17}\sqrt{r} \left(
            \frac{\upperE p^3\|U\|_{2, \infty}}{\sqrt{r(m + n)}}
            + \frac{p^{3/2}}{\sqrt{m + n}}
        \right),
    \end{aligned}
    \end{equation*}
    where the second inequality is due to $\alpha \le (\sqrt{2}/1.9)^\alpha$.
    A union bound over all $a \in [\lfloor t\log(m + n)\rfloor]$ makes the bound uniform, with probability at least $1 - \log^{-3}(m + n)$.
    The term inside parentheses in the last expression is less than $D_{U, V, \log\log (m + n)}$, so Eq. \eqref{eq:E-power-bound-proof-odd}, and thus Eq. \eqref{eq:E-power-bound-myD1(U)-1} follows.
    
    Let us prove Eq. \eqref{eq:E-power-bound-proof-even}.
    Applying Lemma \ref{lem:E-power-bound-odd} for the random matrix $E/\stdE$ and $D = 2^{10}$ gives, with probability $1 - \log^{-8}(m + n)$,
    \begin{equation*}
        \frac{\|e_{n, k}^T(E^TE)^aV\|}{(1.9\stdE\sqrt{m + n})^{2a + 1}}
        \le
        Dp\|V\|_{2, \infty}
        \left(
            \frac{2}{3.61}
        \right)^a
        \le 2^{10}p\|V\|_{2, \infty}
        \le 2^{10}\sqrt{r}D_{U, V, p},
    \end{equation*}
    proving Eq. \eqref{eq:E-power-bound-proof-even} and thus Eq. \eqref{eq:E-power-bound-myD0(U)-1} after a union bound, similar to the previous case.
    % Combining the two cases, Eq. \eqref{eq:E-power-bound-union1} is proven.

    Let us now prove Eqs \eqref{eq:E-power-bound-myD0(U)-2} and \eqref{eq:E-power-bound-myD1(U)-2}, focusing on the former first.
    Since the 2-to-$\infty$ norm is the the largest norm among the rows, it suffices to prove Eq. \eqref{eq:E-power-bound-myD0(U)-1} holds uniformly over all $k\in [n]$ for $p = \log(m + n)$.
    Substituting this new choice of $p$ into the previous argument, for a fixed $k$, we have Eq. \eqref{eq:E-power-bound-myD0(U)-1}, but with probability at least $1 - (m + n)^{-4.04}$.
    Applying another union bound over $k\le [n]$ gives Eq. \eqref{eq:E-power-bound-myD0(U)-2} with probability at least $1 - (m + n)^{-3}$.
    The proof of \eqref{eq:E-power-bound-myD1(U)-2} is analogous.
    The proof of Lemma \ref{lem:E-power-union-bound} is complete.
\end{proof}

Now let us handle the technical lemmas \ref{lem:E-power-bound-odd} and \ref{lem:E-power-bound-even}.
The odd case (Lemma \ref{lem:E-power-bound-odd}) is more difficult, so we will handle it first to demonstrate our technique.
The argument for the even case (Lemma \ref{lem:E-power-bound-even}) is just a simpler version of the same technique.

\subsubsection{Case 1: odd powers}

\begin{proof}
    Without loss of generality, let $k = 1$.
    Let us fix $p\in \N$ and bound the $(2p)^{th}$ moment of the expression of concern.
    We have
    \begin{equation}
    \begin{aligned}
        \E{
            \bigl\| e_{n, 1}^T(E^TE)^aE^TU \bigr\|^{2p}
        }
        & = \E{
            \Bigl( \sum_{l = 1}^r (e_{n, 1}^T(E^TE)^aE^Tu_l)^2 \Bigr)^p
        }
        \\
        & = \sum_{l_1, \ldots, l_p \in [r]}
        \E{
            \prod_{h = 1}^p (e_{n, 1}^T(E^TE)^aE^Tu_{l_h})^2
        }.
    \end{aligned}
    \end{equation}
    % We write
    % \begin{equation*}
    % \begin{aligned}
    %     & \E{
    %         \Bigl( \sum_{l = 1}^r (e_{n, 1}^T(E^TE)^aE^Tu_l)^2 \Bigr)^p
    %     }
    %     = \sum_{l_1, l_2, \ldots, l_p \in [r]}
    %     \E{
    %         \prod_{h = 1}^p (e_{n, 1}^T(E^TE)^aE^Tu_{l_h})^2
    %     }.
    % \end{aligned}
    % \end{equation*}
    Temporarily let $\mathcal{W}$ be the set of walks $W = (j_0i_0j_1i_1\ldots i_a)$ of length $2a + 1$ on the complete bipartite graph $\upperE_{m, n}$ such that $j_0 = 1$.
    Here the two parts of $\upperE$ are $I = \{1', 2', \ldots, m'\}$ and $J = \{1, 2, \ldots, n\}$, where the prime symbol serves to distinguish two vertices on different parts with the same number.
    Let $E_W = E_{i_0j_0}E_{i_0j_1}\ldots E_{i_{a - 1}j_a}E_{i_aj_a}$.
    We can rewrite the final expression in the above as
    \begin{equation*}
        \sum_{l_1, l_2, \ldots, l_p \in [r]} \
        \sum_{W_{11}, W_{12}, W_{21}, \ldots, W_{p2} \in \mathcal{W}}
        \E{
            \prod_{h = 1}^p E_{W_{h1}}E_{W_{h2}} u_{l_hi_{(h1)a}}u_{l_hi_{(h2)a}}
        },
    \end{equation*}
    where we denote $W_{hd} = (j_{(hd)0}, i_{(hd)0}, \ldots, i_{(hd)a})$.
    We can swap the two summation in the above to get
    \begin{equation*}
        \sum_{W_{11}, W_{12}, W_{21}, \ldots, W_{p2} \in \mathcal{W}}
        \E{
            \prod_{h = 1}^p E_{W_{h1}}E_{W_{h2}}
        }
        \sum_{l_1, l_2, \ldots, l_p \in [r]}
        \prod_{h = 1}^p u_{l_hi_{(h1)a}}u_{l_hi_{(h2)a}}.
    \end{equation*}
    The second sum can be recollected in the form of a product, so we can rewrite the above as
    \begin{equation*}
        \sum_{W_{11}, W_{12}, W_{21}, \ldots, W_{p2} \in \mathcal{W}}
        \E{
            \prod_{h = 1}^p E_{W_{h1}}E_{W_{h2}}
        }
        \prod_{h = 1}^p U_{\cdot, i_{(h1)a}}^T U_{\cdot, i_{(h2)a}}
    \end{equation*}
    Define the following notation:
    \begin{enumerate}
        \item $\mathcal{P}$ is the set of all \emph{star}, i.e. tuples of walks $P = (P_1, \ldots, P_{2p})$ on the complete bipartite graph $\upperE_{m, n}$, such that each walk $P_r \in \mathcal{W}$ and each edge appears at least twice.
        
        Rename each tuple $(W_{h1}, W_{h2})_{h = 1}^p$ as a star $P$ with $W_{hd} = P_{2h - 2 + d}$.
        
        For each $P$, let $V(P)$ and $E(P)$ respectively be the set of vertices and edges involved in $P$.
        
        Define the partition $V(P) = V_I(P) \cup V_J(P)$, where $V_I(P) \defeq V(P)\cap I$ and $V_J(P) \defeq V(P)\cap J$.

        \item $E_P \defeq E_{P_1}E_{P_2}\ldots E_{P_{2p}}$.

        \item $\bound{P} \defeq (i_{1a}, i_{2a}, \ldots, i_{(2p)a})$, which we call the \emph{boundary} of $P$.
        Then $u_Q \defeq \prod_{r = 1}^{2p} u_{q_r}$ for any tuple $Q = (q_1, \ldots, q_r)$.

        \item $\mathcal{S}$ is the subset of ``shapes'' in $\mathcal{P}$.
        A shape is a tuple of walks $S = (S_1, \ldots, S_{2p})$ such that all $S_r$ start with $1$ and for all $r\in [2p]$ and $s\in [0, a]$, if $i_{rs}$ appears for the first time in $\{i_{r's'}: r'\le r, s'\le s\}$, then it is stricly larger than all indices before it, and similarly for $j_{rs}$.
        We say a star $P \in \mathcal{P}$ has shape $S\in \mathcal{S}$ if there is a bijection from $V(P)$ to $[|V(P)|]$ that transforms $P$ into $S$.
        The notations $V(S)$, $V_I(S)$, $V_J(S)$, $E(S)$ are defined analogously.
        Observe that the shape of $P$ is unique, and $\mathcal{S}$ forms a set of equivalent classes on $\mathcal{P}$.

        \item Denote by $\mathcal{P}(S)$ the class associated with the shape $S$, namely the set of all stars $P$ having shape $S$.
    \end{enumerate}

    We can rewrite the previous sum as:
    \begin{equation*}
        \sum_{P\in \mathcal{P}}
        \E{E_P} \prod_{h = 1}^p U_{\cdot, i_{(2h - 1)a}}^T U_{\cdot, i_{(2h)a}}
        % = \sum_{S\in \mathcal{S}} \sum_{P\in \mathcal{P}(S)} \E{E_P}u_{\bound{P}},
    \end{equation*}
    Using triangle inequality and the sub-multiplicity of the operator norm, we get the following upper bound for the above:
    \begin{equation}  \label{eq:E-power-bound-odd-temp1}
        \sum_{P\in \mathcal{P}}
        |\E{E_P}| \prod_{h = 1}^p \|U_{\cdot, i_{(2h - 1)a}}\| \|U_{\cdot, i_{(2h)a}}\|
        = r^p \sum_{P\in \mathcal{P}}
        u_{\bound{P}} |\E{E_P}|
        = r^p \sum_{S\in \mathcal{S}}
        \sum_{P\in \mathcal{P}(S)} u_{\bound{P}} |\E{E_P}|,
    \end{equation}
    where the vector $u$ is given by $u_i = r^{-1/2}\|U_{\cdot, i}\|$ for $i\in [m]$.
    Observe that
    % $\|u\| = 1$ and $\|u\|_\infty = \|U\|_{2, \infty}$.
    \begin{equation*}
        \|u\| = 1 \ \text{ and } \ \|u\|_\infty = r^{-1/2}\|U\|_{2, \infty}.
    \end{equation*}
    % Importantly, $u$ is unit, since
    % \begin{equation*}
    %     \|u\|^2 = r^{-1}\sum_{i = 1}^m \|U_{\cdot, i}\|^2
    %     = r^{-1}\sum_{l = 1}^r \|u_l\|^2 = 1.
    % \end{equation*}
    Fix $P\in \mathcal{P}$.
    Let us bound $\E{E_P}$.
    For each $(i, j)\in E(P)$, let $\mu_P(i, j)$ be the number of times $(i, j)$ is traversed in $P$.
    We have
    \begin{equation*}
        \left|\E{E_P}\right| = \prod_{(i, j)\in E(P)} \E{\left|E_{ij}\right|^{\mu_P(i, j)}}
        \le \prod_{(i, j)\in E(P)} \upperE^{\mu_P(i, j) - 2}
        = \upperE^{2p(2a + 1) - 2|E(P)|}.
    \end{equation*}
    Since the entries $u_i$ are related by the fact their squares sum to $1$, it will be better to bound their symmetric sums rather than just a product $u_{\bound{P}}$.
    Fix a shape $S$,
    we have
    \begin{equation*}
    \begin{aligned}
        \sum_{P\in \mathcal{P}(S)}\left|u_{\bound{P}}\right|
        & = \sum_{f: V(S)\hookrightarrow [m]} \prod_{k = 1}^{|V(\bound{S})|} |u_{f(k)}|^{\mu_{\bound{S}}(k)}
        \le m^{|V_I(S)| - |V(\bound{S})|} n^{|V_J(S)| - 1} \prod_{k = 1}^{|V(\bound{S})|} \sum_{i = 1}^m |u_i|^{\mu_{\bound{S}}(k)}
        \\
        & = m^{|V_I(S)| - |V(\bound{S})|} n^{|V_J(S)| - 1} \prod_{k = 1}^{|V(\bound{S})|} \|u\|_{\mu_{\bound{S}}(k)}^{\mu_{\bound{S}}(k)},
    \end{aligned}
    \end{equation*}
    where we slightly abuse notation by letting $\mu_Q(k)$ be the number of time $k$ appears in $Q$.
    
    Consider $\|u\|_l^l$ for an arbitrary $l\in \N$.
    When $l = 1$, $\|u\|_l^l \le \sqrt{m}$ by Cauchy-Schwarz.
    When $l \ge 2$, we have $\|u\|_l^l \le \|u\|_\infty^{l - 2}\|u\|_2^2 = \|u\|_\infty^{l - 2}$.
    Thus
    \begin{equation*}
        \sum_{P\in \mathcal{P}(S)}\left|u_{\bound{P}}\right|
        \le \prod_{k = 1}^{|V(S)|} \|u\|_{\mu_{\bound{S}}(k)}^{\mu_{\bound{S}}(k)}
        \le \prod_{k\in V_2(S)}\|u\|_\infty^{\mu_{\bound{S}}(k) - 2} (\sqrt{m})^{|V_1(\bound{S})|}
        = \|u\|_\infty^{2p - \nu(S)} m^{|V_1(\bound{S})|/2},
    \end{equation*}
    where, we define $V_1(Q)$ as the set of vertices appearing in $Q$ exactly once and $V_2(Q)$ as the set of vertices appearing at least twice, and to shorten the notation, we let $\nu(S) \defeq |V_1(\bound{S})| + 2|V_2(\bound{S})|$.
    Combining the bounds, we get the upper bound below for \eqref{eq:E-power-bound-odd-temp1}:
    \begin{equation*}
    \begin{aligned}
        % \sum_{S\in \mathcal{S}} \sum_{P\in \mathcal{P}(S)} u_{\bound{P}} |\E{E_P}|
        % & \le
        & \upperE^{2p(2a + 1)} \sum_{S\in \mathcal{S}}
        \upperE^{-2|E(S)|}
        m^{|V_I(S)| - |V(\bound{S})|} n^{|V_J(S)| - 1}
        \|u\|_\infty^{2p - \nu(S)} m^{|V_1(\bound{S})|/2}
        \\
        & = \upperE^{2p(2a + 1) + 2} \sum_{S\in \mathcal{S}} \upperE^{-2|V(S)|} m^{|V_I(S)| - \nu(S)/2} n^{|V_J(S)| - 1} \|u\|_\infty^{2p - \nu(S)}.
    \end{aligned}
    \end{equation*}
    Suppose we fix $|V_1(\bound{S})| = x$, $|V_2(\bound{S})| = y$, $|V_I(S)| = z$, $|V_J(S)| = t$.
    Let $\mathcal{S}(x, y, z, t)$ be the subset of shapes having these quantities.
    To further shorten the notation, let $\upperE_1 \defeq \upperE^{2p(2a + 1)}\|u\|_\infty^{2p}$.
    Then we can rewrite the above as:
    \begin{equation}  \label{eq:main-proof-temp4}
        % \sum_{S\in \mathcal{S}} \sum_{P\in \mathcal{P}(S)} u_{\bound{P}} |\E{E_P}|
        % \le
        \upperE_1 \sum_{x, y, z, t\in \mathcal{A}} \upperE^{-2(z+t)}m^{z - x/2 - y}n^{t - 1}\|u\|_\infty^{-x - 2y} |\mathcal{S}(x, y, z, t)|,
    \end{equation}
    where $\mathcal{A}$ is defined, somewhat abstractly, as the set of all tuples $(x, y, z, t)$ such that $\mathcal{S}(x, y, z, t) \neq \varnothing$.
    We first derive some basic conditions for such tuples.
    Trivially, one has the following initial bounds:
    \begin{equation*}
        0\le x, y,
        \quad \quad 1\le x + y \le z,
        \quad \quad x + 2y \le 2p,
        \quad \quad 0\le z, t,
        \quad \quad z + t\le p(2a + 1) + 1,
    \end{equation*}
    where the last bound is due to $z + t = |V(S)| \le |E(S)| + 1 \le p(2a + 1) + 1$, since each edge is repeated at least twice.
    However, it is not strong enough, since we want the highest power of $m$ and $n$ combined to be at most $2ap$, so we need to eliminate a quantity of $p$.
    
    \begin{claim}  \label{claim:shape-vertices-bound}
        When each edge is repeated at least twice, we have $z - x/2 - y + t - 1 \le 2ap$.
    \end{claim}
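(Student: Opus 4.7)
The plan is to translate the claim into an inequality about edge multiplicities and then prove that inequality by a charging argument that links excess boundary occurrences to excess edge traversals. Write $k_v := |\{r : i_{ra} = v\}|$ for the boundary multiplicity of $v \in V_I(\bound{S})$, and $\mu(e)$ for the total number of times an edge $e$ is traversed across the $2p$ walks. Set
\[
w \;:=\; \sum_{e \in E(S)} (\mu(e) - 2) \;\ge\; 0,
\]
which records the total ``edge excess'' beyond the minimum allowed multiplicity. Note that $\sum_v k_v = 2p$ and $x + 2y = \sum_v \min(k_v,2)$, so $2p - x - 2y = \sum_{v:\,k_v\ge 3}(k_v-2) \ge 0$.

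Two elementary bounds form the starting point. First, because every walk starts at $v^* = 1 \in J$, the subgraph $(V(S), E(S))$ of the complete bipartite graph is connected, hence $z + t = |V(S)| \le |E(S)| + 1$. Second, because the total number of edge traversals is $2p(2a+1)$ and each distinct edge is traversed at least twice, $|E(S)| = p(2a+1) - w/2$. Combining these gives $z + t \le 2ap + p + 1 - w/2$, so the claim $z - x/2 - y + t - 1 \le 2ap$ reduces to the single inequality
\[
w \;\ge\; 2p - x - 2y \;=\; \sum_{v \in V_I(\bound{S}):\,k_v \ge 3} (k_v - 2). \qquad (\ast)
\]

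The main obstacle is the proof of $(\ast)$. The intuition is local: at a boundary vertex $v$ with $k_v \ge 3$, the $k_v$ walks ending at $v$ supply $k_v$ ``end-edge'' uses of edges incident to $v$, and combined with the requirement that every edge appears at least twice, this forces the edges at $v$ to collectively carry at least $k_v - 2$ units of excess. The plan is to make this precise by a charging scheme on the multigraph $\tilde G$ in which each edge $e$ is replicated $\mu(e)$ times, viewing the $2p$ walks as an edge-disjoint decomposition of $\tilde G$ into open trails all rooted at $v^*$. For each $v$ with $k_v \ge 3$, I would distinguish two cases based on the set of end-edges at $v$. In the easy case where all $k_v$ last edges coincide with a single edge $e^\star = (u^\star, v)$, that edge alone satisfies $\mu(e^\star) \ge k_v$ and therefore contributes $k_v - 2$ units to $w$, all directly attributable to $v$. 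In the general case where $d \ge 2$ distinct end-edges appear at $v$ with end-edge multiplicities $m_1, \ldots, m_d$ summing to $k_v$, the missing excess must instead be extracted from the parity/degree constraint $\deg_{\tilde G}(v) = 2V_v - k_v \ge 2|E_v|$ (where $V_v$ is the total number of visits to $v$ and $E_v$ the set of distinct edges at $v$), which forces additional internal visits to $v$ and hence additional traversals of edges at $v$.

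The hard part—and where I expect most of the technical work to lie—is to turn this local picture into a globally consistent charging that avoids double-counting when an edge excess is simultaneously ``claimed'' by its two endpoints, or when an end-edge at one boundary vertex is also an end-edge at the vertex on its other side. I would handle this by processing boundary vertices in order of decreasing $k_v$ and assigning each unit of debt $k_v - 2$ to a specific unused copy of an edge-excess at an edge incident to $v$, using a greedy matching on the multigraph that remains after previously assigned copies are removed. Provided the local bookkeeping gives at least $k_v - 2$ fresh copies at $v$ at each step (which should follow from the degree identity $\deg_{\tilde G}(v) = 2V_v - k_v$ together with $V_v \ge |E_v| + k_v/2$), the inequality $(\ast)$ follows, and the claim is proved.
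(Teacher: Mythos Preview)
Your reduction to $(\ast)$ is correct as a sufficient condition, but $(\ast)$ itself is \emph{false}, so the approach cannot be completed. Take $a=1$, $p=2$, and the four walks
\[
W_1 = W_2 = 1 \to a \to 2 \to v,\qquad W_3 = W_4 = 1 \to b \to 3 \to v,
\]
with $1,2,3\in J$ and $a,b,v\in I$. Every one of the six distinct edges is traversed exactly twice, so $w=0$. The only boundary vertex is $v$ with $k_v=4$, hence $x=0$, $y=1$, and $2p-x-2y=2>0=w$. Yet the original claim holds: $z=3$, $t=3$, and $z-x/2-y+t-1=4=2ap$. The slack that rescues the claim but kills $(\ast)$ is precisely the cycle $1\text{--}a\text{--}2\text{--}v\text{--}3\text{--}b\text{--}1$: your use of $|V(S)|\le |E(S)|+1$ throws away one unit here, and that lost unit is exactly what you then try (and fail) to recover via $(\ast)$. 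Boundary excess and edge excess are not the same currency once the underlying graph has cycles, so no local charging at boundary vertices will fix this.

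The paper's proof bypasses edges entirely and counts \emph{vertex occurrences in positions} directly. It looks at the multiset of interior positions $\{i_{rs}:s\le a-1\}\cup\{j_{rs}:s\ge 1\}$ together with $V_1(\bound{S})$; these occupy $4ap+x$ slots and cover all of $V(S)$ except possibly $1$ and $V_2(\bound{S})$. If every vertex in this set occurred at least twice, one would immediately get $z+t-1-y\le (4ap+x)/2$, which is the claim. Since some vertices may occur once, the paper runs a simple pruning: any vertex appearing exactly once must be an interior $i_{rs}$ with $j_{rs}=j_{r(s+1)}$ (else its incident edge would have multiplicity one), and deleting it removes one vertex and two position-slots while preserving the ``every edge at least twice'' property, so the inequality is maintained inductively. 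This vertex-level bookkeeping is what captures the cycle information that your edge-based reduction discards.
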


    \begin{proof}[Proof of Claim \ref{claim:shape-vertices-bound}]
        Let $S = (S_1, \ldots, S_{2p})$, where $S_r = j_{r0}i_{r0}j_{r1}i_{r1}\ldots j_{ra}i_{ra}$.
        We have $j_{r0} = 1$ for all $r$.
        It is tempting to think (falsely) that when each edge is repeated at least twice, each vertex appears at least twice too.
        If this were to be the case, then each vertex in the set
        \begin{equation*}
            A(S) \defeq \{i_{rs}: 1\le r\le 2p, 0\le s\le a - 1\}
            \cup \{j_{rs}: 1\le r\le 2p, 1\le s\le a\}
            \cup V_1(\bound{S})
        \end{equation*}
        appears at least twice.
        The sum of their repetitions is $4ap + x$, so the size of this set is at most $2ap + x/2$.
        Since this set covers every vertex, with the possible exceptions of $1\in I$ and $V_2(\bound{S})$, its size is at least $z - y + t - 1$, proving the claim.
        In general, there will be vertices appearing only once in $S$.
        However, we can still use the simple idea above.
        Temporarily let $A_1(S)$ be the set of vertices appearing once in $S$ and $f(S)$ be the sum of all edges' repetitions in $S$.
        Let $S^{(0)} \defeq S$.
        Suppose for $k \ge 0$, $S^{(k)}$ is known and satisfies $|A(S^{(k)})| = |A(S)| - k$, $f(S^{(k)}) = 4pa + x - 2k$ and each edge appears at least twice in $S^{(k)}$.
        If $A_1(S^{(k)}) = \varnothing$, then by the previous argument, we have
        \begin{equation*}
            2(z - y + t - 1 - k) \le 4pa + x - 2k \implies z - x/2 - y + t - 1 \le 2pa,
        \end{equation*}
        proving the claim.
        If there is some vertex in $A_1(S^{(k)})$, assume it is some $i_{rs}$, then we must have $s\le a - 1$ and $j_{rs} = j_{r(s + 1)}$, otherwise the edge $j_{rs}i_{rs}$ appears only once.
        Create $S^{(k + 1)}$ from $S^{(k)}$ by removing $i_{rs}$ and identifying $j_{rs}$ and $j_{r(s + 1)}$, we have
        $|A(S^{(k + 1)})| = |A(S)| - (k + 1)$ and $|f(S^{(k)}) = 4pa + x - 2(k + 1)$.
        Further, since $i_{rs}$ is unique, $j_{rs}i_{rs} \equiv i_{rs}j_{r(s + 1)}$ are the only 2 occurences of this edge in $S^{(k)}$, thus the edges remaining in $S^{(k + 1)}$ also appears at least twice.
        Now we only have $|A_1(S^{(k + 1)})| \le |A_1(S^{(k)})|$, with possible equality, since $j_{rs}$ can be come unique after the removal, but since there is only a finite number of edges to remove, eventually we have $A_1(S^{(k)}) = \varnothing$, completing the proof of the claim.
    \end{proof}

    Claim \ref{claim:shape-vertices-bound} shows that we can define the set $\mathcal{A}$ of \emph{eligible sizes} as follows:
    \begin{equation}  \label{eq:eligible-sizes-defn}
        \mathcal{A} = \left\{
            (x, y, z, t)\in \N_{\ge 0}^4: \
            \quad 1\le t;
            \quad 1\le x + y \le z;
            \quad x + 2y \le 2p;
            \quad z - x/2 - y + t - 1\le 2ap
        \right\}.
    \end{equation}
    Now it remains to bound $|\mathcal{S}(x, y, z, t)|$.

    \begin{claim}  \label{claim:num-shapes-bound}
        Given a tuple $(x, y, z, t) \in \mathcal{A}$, where $\mathcal{A}$ is defined in Eq. \eqref{eq:eligible-sizes-defn}, we have
        \begin{equation*}
            |\mathcal{S}(x, y, z, t)| \le
            \frac{2^{l + 1}(2p(a + 1))!(2pa)!(l + 1)^{2p(2a + 1) - 2l}}{(2p(2a + 1) - 2l)!l!z!(t - 1)!}
            (16p(a + 1) - 8l - 2)^{4p(a + 1) - 2l - 1}.
        \end{equation*}
    \end{claim}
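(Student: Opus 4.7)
The argument I have in mind proceeds by a canonical encoding of shapes, in the tradition of the F\"uredi--Koml\'os walk-counting technique and its refinement in \cite{vu2005}, which was designed precisely to obtain sharp moment bounds for random matrices with sparse support. Each shape $S = (S_1, \ldots, S_{2p}) \in \mathcal{S}(x, y, z, t)$ consists of $2p$ walks of length $2a+1$ sharing the common starting vertex $1 \in I$, with each distinct edge appearing at least twice among the $2p(2a+1)$ total edge traversals. The parameter $l$ in the statement will be introduced as (a quantity closely related to) the number of distinct edges that are traversed exactly twice, i.e., the ``tree edges'' of the effective walk structure; the remaining edges are traversed more often.

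The plan is to encode each shape as follows. Traverse the $2p$ walks in order, and at each of the $2p(2a+1)$ steps classify the move as one of: (1) a \emph{tree-up} step, traversing a brand-new edge into a brand-new vertex, which by the canonical relabeling convention requires no auxiliary data; (2) a \emph{tree-down} step, the second traversal of a tree edge; (3) a \emph{cycle-opening} step, traversing a new edge but into a previously seen vertex; and (4) a \emph{cycle-continuation} step, retraversing an already-seen edge. The positions of the tree-up and tree-down steps among all moves will contribute the multinomial factor that becomes $\frac{(2p(a+1))!(2pa)!}{(2p(2a+1) - 2l)!\, l!}$ after a parity-based split between even- and odd-indexed positions in each walk. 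The reciprocal factors $\frac{1}{z!(t-1)!}$ reflect that the canonical labeling already eliminates permutations of the $z$ side-$I$ vertices and the $t$ side-$J$ vertices (the ``$-1$'' accounts for the fact that the first side-$J$ vertex is forced by the initial step of $S_1$, while $1 \in I$ is the fixed starting vertex).

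The factor $(l+1)^{2p(2a+1) - 2l}$ will bound the choices at each non-tree step, since at most $l+1$ distinct vertices of the relevant side have been seen so far, and the factor $(16p(a+1) - 8l - 2)^{4p(a+1) - 2l - 1}$ absorbs the residual freedom from the boundary structure $(x, y)$ together with the coupling among the $2p$ walks through their shared endpoints $\bound{S} = (i_{ra})_{r=1}^{2p}$. The extra factor $2^{l+1}$ will come from binary bookkeeping marking, at each tree-up position, which of two natural branches (e.g., fresh-on-$I$ vs.\ fresh-on-$J$, or a similar bipartition-aware choice) is being taken.

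The main obstacle will be proving injectivity of the encoding in the presence of the star structure and the bipartition. Unlike the classical single-walk setting, the common starting vertex and the boundary coupling mean that the ``tree vs.\ cycle'' classification of an edge must be consistent across walks, and the constraints $|V_1(\bound{S})| = x$ and $|V_2(\bound{S})| = y$ introduce genuine dependencies that must be bookkept carefully. A secondary challenge is confirming that the base $16p(a+1) - 8l - 2$ is large enough to absorb all sources of branching at every step; this requires a case analysis on parities within each walk and on the side ($I$ or $J$) of the active vertex. Once injectivity and this branching estimate are established, multiplying the individual counts produces the stated bound, completing the proof of Claim \ref{claim:num-shapes-bound}.
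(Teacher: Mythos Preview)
Your general framework---encode each shape by walking the $2p$ branches in order and classifying each step as ``plus'' (new vertex), ``minus'' (reverse a tree edge), or ``neutral'' (anything else)---matches the paper's approach exactly, and your attribution of the combinatorial factors $\frac{(2p(a+1))!(2pa)!}{z!(t-1)!}$ and $(l+1)^{2p(2a+1)-2l}$ is correct (the first counts placements of plus edges among the positions of each bipartite parity; the second bounds the choices at neutral steps). There are, however, two genuine gaps in your plan.

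First, your interpretation of the factor $(16p(a+1)-8l-2)^{4p(a+1)-2l-1}$ is wrong. It has nothing to do with ``residual freedom from the boundary structure $(x,y)$''---indeed the bound does not even depend on $x$ or $y$. The preliminary codeword of pluses, minuses, and neutral labels is \emph{not} injective: when decoding a block of consecutive minus signs you know you are walking down the tree, but the tree can branch, so you do not know which branch to take. The paper fixes this using the scheme of \cite{vu2005}: in every maximal minus-block of the condensed codeword one marks the last minus (an ``important edge'') either with a direction (if the path is forced) or with the target vertex. Vu's analysis shows the total cost of these markings is at most a multiplicative factor of the form $2(4N+c)^N$, where $N$ is the number of neutral steps. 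This is where the large power comes from, and it is exactly the injectivity repair you flagged as the main obstacle.

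Second, you are missing the trick that produces the exponent $4p(a+1)-2l-1$ rather than the naive neutral-step count $2p(2a+1)-2l$. The \cite{vu2005} bound is formulated for a single walk, not a star. The paper handles this by concatenating the $2p$ branches into one long walk, inserting $2p-1$ artificial neutral edges (each labeled by the root vertex $1$) between consecutive branches. This adds $2p-1$ to the neutral count, giving $N = 2p(2a+1) - 2l + (2p-1) = 4p(a+1) - 2l - 1$, which explains the exponent. Without this concatenation step your bookkeeping of the ``coupling among the $2p$ walks'' will not close.

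As a minor point: the factor $2^{l+1}$ does not arise from a binary choice at each tree-up step. The $2^l$ comes from a binomial-sum estimate $\sum_i \binom{l}{N-i} \le 2^l$ inside the count of preliminary codewords (distributing minus edges between the two sides of the bipartition), and the extra factor $2$ is the leading constant in the important-edge cost $2(4N+c)^N$.
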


    \begin{proof}
        We use the following coding scheme for each shape $S\in \mathcal{S}(x, y, z, t)$:
        Given such an $S$, we can progressively build a codeword $W(S)$ and an associated tree $T(S)$ accoding to the following scheme:
        \begin{enumerate}
            \item Start with $V_J = \{1\}$ and $V_I = \varnothing$, $W = []$ and $T$ being the tree with one vertex, $1$.
            \item For $r = 1, 2, \ldots, 2p$:
            \begin{enumerate}
                \item Relabel $S_r$ as $1k_1k_2\ldots k_{2a}$.
                \item For $s = 1, 2, \ldots, 2a$:
                \begin{itemize}
                    \item If $k_s \notin V(T)$ then add $k_s$ to $T$ and draw an edge connecting $k_{s - 1}$ and $k_s$,
                    then mark that edge with a $(+)$ in $T$, and append $(+)$ to $W$.
                    We call its instance in $S_r$ a \emph{plus edge}.
                    \item If $k_s \in V(T)$ and the edge $k_{s - 1}k_s \in E(T)$ and is marked with $(+)$: unmark it in $T$, and append $(-)$ to $W$.
                    We call its instance in $S_r$ a \emph{minus edge}.
                    \item If $k_s \in V(T)$ but either $k_{s - 1}k_s \notin E(T)$ or is unmarked, we call its instance in $S_r$ a \emph{neutral edge}, and append the symbol $k_s$ to $W$.
                \end{itemize}
            \end{enumerate}
        \end{enumerate}
        
        This scheme only creates a \emph{preliminary codeword} $W$, which does not yet uniquely determine the original $S$.
        To be able to trace back $S$, we need the scheme in \cite{vu2005} to add more details to the preliminary codewords.
        For completeness, we will describe this scheme later, but let us first bound the number of preliminary codewords.

        \begin{claim}  \label{claim:num-prelim-codewords-bound}
            Let $\mathcal{PC}(x, y, z, t)$ denote the set of preliminary codewords generable from shapes in $\mathcal{S}(x, y, z, t)$.
            Then for $l \defeq z + t - 1$ we have
            \begin{equation*}
            |\mathcal{PC}(x, y, z, t)|
            \le \frac{2^{l}(2p(a + 1))!(2pa)!(l + 1)^{2p(2a + 1) - 2l}}{(2p(2a + 1) - 2l)!l!z!(t - 1)!}.
            \end{equation*}
        \end{claim}
        Note that the bound above does not depend on $x$ and $y$.
        In fact, for fixed $z$ and $t$, the right-hand side is actually an upper bound for the sum of $|\mathcal{S}(x, y, z, t)|$ over all pairs $(x, y)$ such that $(x, y, z, t)$ is eligible.
        We believe there is plenty of room to improve this bound in the future.

        \begin{proof}
            To begin, note that there are precisely $z$ and $t - 1$ plus edges whose right endpoint is respectively in $I$ and $J$.
            Suppose we know $u$ and $v$, the number of minus edges whose right endpoint is in $I$ and $J$, respectively.
            Then
            \begin{itemize}
                \item The number of ways to place plus edges is at most $\binom{2p(a + 1)}{z}\binom{2pa}{t - 1}$.
                \item The number of ways to place minus edges, given the position of plus edges, is at most $\binom{2p(a + 1)  - z}{u}\binom{2pa - t + 1}{v}$.
                \item The number of ways to choose the endpoint for each neutral edge is at most $z^{2p(a + 1) - z - u}t^{2pa - t + 1 - v}$.
            \end{itemize}
            Combining the bounds above, we have
            \begin{equation}  \label{eq:num-shapes-bound-expansion1}
                |\mathcal{S}(x, y, z, t)|
                \le \binom{2p(a + 1)}{z}\binom{2pa}{t - 1}
                \sum_{u + v = z + t - 1} \binom{2p(a + 1)  - z}{u}\binom{2pa - t + 1}{v} z^{f(z, u)} t^{g(t, v)},
            \end{equation}
            % where $f(z, u) = 2(2p(a + 1) - z - u) \wedge (2p(a + 1) - u)$ and $g(u, v) = 2(2pa - t + 1 - v) \wedge (2pa - t + 1)$.
            where $f(z, u) = 2p(a + 1) - z - u$ and $g(u, v) = 2pa - t + 1 - v$.
            Let us simplify this bound.
            The sum on the right-hand side has the form
            \begin{equation*}
                \sum_{i + j = k} \binom{N}{i}\binom{M}{j}z^i t^j,
            \end{equation*}
            where $k = 2(p(2a + 1) - (z + t - 1))$, $N = 2p(a + 1) - z$, $M = 2pa - t + 1$.
            We have
            \begin{equation*}
            \begin{aligned}
                & \sum_{i + j = k} \binom{N}{i} \binom{M}{j} z^i t^j
                = \sum_{i + j = k} \frac{N!M!}{k!(N - i)!(M - j)!} \binom{k}{i} z^i t^j
                \le \sum_{i + j = k} \frac{N!M!}{k!} \frac{(z + t)^k}{(N - i)(M - j)!}
                \\
                & \le \frac{N!M!(z + t)^k}{k!(M + N - k)!} \sum_{i + j = k} \binom{M + N - k}{N - i}
                \le \frac{2^{M + N - k}N!M!(z + t)^k}{k!(M + N - k)!}.
            \end{aligned}
            \end{equation*}
            Replacing $M$, $N$ and $k$ with their definitions, we get
            \begin{equation*}
            \begin{aligned}
                & \sum_{u + v = z + t - 1} \binom{2p(a + 1)  - z}{u}\binom{2pa - t + 1}{v} z^{f(z, u)} t^{g(t, v)}
                \\
                & \le \frac{2^{z + t - 1}(2p(a + 1) - z)!(2pa - t + 1)!(z + t)^{2p(2a + 1) - 2(z + t - 1)}}{(2p(2a + 1) - 2(z + t - 1))!(z + t - 1)!},
            \end{aligned}
            \end{equation*}
            replacing $z + t - 1$ with $l$, we prove the claim.
        \end{proof}

        Back to the proof of Claim \ref{claim:num-shapes-bound}, to uniquely determine the shape $S$, the general idea is the following.
        We first generated the preliminary codeword $W$ from $S$, then attempt to decode it.
        If we encounter a plus or neutral edge, we immediately know the next vertex.
        If we see a minus edge that follows from a plus edge $(u, v)$, we know that the next vertex is again $u$.
        Similarly, if there are chunks of the form $(++\ldots +--\ldots -)$ with the same number of each sign, the vertices are uniquely determined from the first vertex.
        Therefore, we can create a condensed codeword $W^*$ repeatedly removing consecutive pairs of $(+-)$ until none remains.
        For example, the sections $(-+-+-)$ and $(-++--)$ both become $(-)$.
        Observe that the condensed codeword is always unique regardless of the order of removal, and has the form
        \begin{equation*}
            W^* = [(+\ldots+) \text{ or } (-\ldots-)] \ (\text{neutral}) \ [(+\ldots+) \text{ or } (-\ldots-)] \ldots (\text{neutral}) \ [(+\ldots+) \text{ or } (-\ldots-)],
        \end{equation*}
        where we allow blocks of pure pluses and minuses to be empty.
        The minus blocks that remain in $W^*$ are the only ones where we cannot decipher.

        Recall that during decoding, we also reconstruct the tree $T(S)$, and the partial result remains a tree at any step.
        If we encounter a block of minuses in $W^*$ beginning with the vertex $i$, knowing the right endpoint $j$ of the last minus edge is enough to determine the rest of the vertices, which is just the unique path between $i$ and $j$ in the current tree.
        We call the last minus edge of such a block an \emph{important edge}.
        There are two cases for an important edge.
        \begin{enumerate}
            \item If $i$ and all vertices between $i$ and $j$ (excluding $j$) are only adjacent to at most two plus edges in the current tree (exactly for the interior vertices), we call this important edge \emph{simple} and just mark the it with a direction (left or right, in addition to the existing minus).
            For example, $(--\ldots-)$ becomes $(--\ldots(-dir))$ where $dir$ is the direction.

            \item If the edge is non-simple, we just mark it with the vertex $j$, so $(--\ldots-)$ becomes $(--\ldots(-j))$.
        \end{enumerate}
        It has been shown in \cite{vu2005} that the fully codeword $\overline{W}$ resulting from $W$ by marking important edges uniquely determines $S$, and that when the shape of $S$ \emph{is that of a single walk}, the cost of these markings is at most a multiplicative factor of $2(4N + 8)^N$, where $N$ is the number of neutral edges in the preliminary $W$.
        To adapt this bound to our case, we treat the star shape $S$ as a single walk, with a neutral edge marked by $1$ after every $2a + 1$ edges.
        There are $2p - 1$ additional neutral edges from this perspective, making $N = 4p(a + 1) - 2l - 1$ in total.
        Combining this with the bound on the number of preliminary codewords (Claim \ref{claim:num-prelim-codewords-bound}) yields
        \begin{equation*}
            |\mathcal{S}(x, y, z, t)|
            \le \frac{2^{l + 1}(2p(a + 1))!(2pa)!(l + 1)^{2p(2a + 1) - 2l}}{(2p(2a + 1) - 2l)!l!z!(t - 1)!}
            (16p(a + 1) - 8l - 2)^{4p(a + 1) - 2l - 1},
        \end{equation*}
        where $l = z + t - 1$.
        Claim \ref{claim:num-shapes-bound} is proven.
        \end{proof}

        Back to the proof of Lemma \ref{lem:E-power-bound-odd}.
        Temporarily let
        \begin{equation*}
            G_l \defeq 2p(2a + 1) - 2l
            \ \text{ and } \
            F_l \defeq \frac{2^{l + 1}(l + 1)^{G_l}}{G_l!l!}
            (4G_l + 8p - 2)^{G_l + 2p - 1}.
        \end{equation*}
        Note that $(2p(a + 1))! (2pa)!F_l$ is precisely the upper bound on $|\mathcal{S}(x, y, z, t)|$ in Claim \ref{claim:num-shapes-bound}.
        Also let
        \begin{equation*}
            \upperE_2 = \upperE_1 (2p(a + 1))! (2pa)! = \upperE^{2p(2a + 1)}(2p(a + 1))! (2pa)!\|u\|_\infty^{2p}.
        \end{equation*}
        Replacing the appropriate terms in the bound in Claim \ref{claim:num-shapes-bound} with these short forms, we get another series of upper bounds for the last double sum in Eq. \eqref{eq:E-power-bound-odd-temp1}:
        \begin{equation*}
        \begin{aligned}
            % & \E{\left|e^{(\xxx)}_1^T(E^TE)^aE^Tu\right|^{2p}}
            % \\
            % & \le
            & \upperE_2 \sum_{x, y} \|u\|_\infty^{-x - 2y}
            \sum_{l = x + y}^{\lfloor 2pa + x/2 + y \rfloor} \upperE^{-2(l + 1)} F_l
            \sum_{z + t = l + 1} \frac{m^{z - x/2 - y}n^{t - 1}}{z!(t - 1)!}
            \\
            & \le \upperE_2 \sum_{x, y} \|u\|_\infty^{-x - 2y}
            \sum_{l = x + y}^{\lfloor 2pa + x/2 + y \rfloor}
            \frac{\upperE^{-2(l + 1)} F_l}{(l - \lfloor \frac{x}{2} \rfloor - y)!}
            \sum_{z + t = l + 1} \binom{l - \lfloor \frac{x}{2} \rfloor - y}{z - \lfloor \frac{x}{2} \rfloor - y} m^{z - \lfloor \frac{x}{2} \rfloor - y}n^{t - 1}
            \\
            & \le \upperE_2 \sum_{x, y} \|u\|_\infty^{-x - 2y}
            \sum_{l = x + y}^{\lfloor 2pa + x/2 + y \rfloor}
            \frac{\upperE^{-2(l + 1)} F_l}{(l - \lfloor \frac{x}{2} \rfloor - y)!}
            (m + n)^{l - \lfloor \frac{x}{2} \rfloor - y}.
        \end{aligned}
        \end{equation*}
        Temporarily let $C_l$ be the term corresponding to $l$ in the sum above.
        For $l \ge x + y + 1$, we have
        \begin{equation*}
        \begin{aligned}
            \frac{C_l}{C_{l - 1}}
            = \frac{2(m + n)(G_l + 1)(G_l + 2)}{\upperE^2l^3(4G_l + 8p - 2)^2 (l - \lfloor \frac{x}{2} \rfloor - y)}
            \left(1 + \frac{1}{l} \right)^{G_l}
            % \frac{(4G_l + 8p - 2)^{G_l + 2p - 1}}{(4G_l + 8p + 6)^{G_l + 2p + 1}}
            \left( 1 - \frac{4}{2G_l + 4p + 3}
            \right)^{G_l + 2p + 1}.
        \end{aligned}
        \end{equation*}
        The last power is approximately $e^{-2} \approx 0.135$, and for $p \ge 7$ a routine numerical check shows that it is at least $1/8$.
        The second to last power is at least $1$.
        The fraction be bounded as below.
        \begin{equation*}
            \frac{2(m + n)(G_l + 1)(G_l + 2)}{\upperE^2l^3(4G_l + 8p - 2)^2 (l - \lfloor \frac{x}{2} \rfloor - y)}
            \ge \frac{2(m + n)\cdot 1\cdot 2}{\upperE^2l^4(8p - 2)^2}
            \ge \frac{m + n}{16\upperE^2l^4p^2}
            \ge \frac{m + n}{16\upperE^2p^6(2a + 1)^4}.
        \end{equation*}
        Therefore, under the assumption that $m + n \ge 256\upperE^2p^6(2a + 1)^4$, we have $C_l \ge 2C_{l -1}$ for all $l\ge 1$, so
        $\sum_l C_l \le 2C_{l^*}$, where $l^* = \lfloor 2pa + x/2 + y \rfloor$, the maximum in the range.
        We have
        \begin{equation*}
        \begin{aligned}
            2C_{l^*}
            \le 
            & \ 2(m + n)^{2pa}
            \frac{(2\upperE^{-2})^{2pa + \lfloor \frac{x}{2} \rfloor + y + 1} (2pa + \lfloor \frac{x}{2} \rfloor + y + 1)^{2(p - \lfloor \frac{x}{2} \rfloor - y)}}{(2(p - \lfloor \frac{x}{2} \rfloor - y))! \cdot (2pa + \lfloor \frac{x}{2} \rfloor + y)! \cdot (2pa)!}
            \\
            & \ \cdot \left( 16p - 8\left\lfloor \frac{x}{2} \right\rfloor - 8y - 2
            \right)^{4p - 2\lfloor \frac{x}{2} \rfloor - 2y - 1}.
        \end{aligned}
        \end{equation*}
        Temporarily let $d = p - (\lfloor \frac{x}{2} \rfloor + y)$ and $N = p(2a + 1)$, we have
        \begin{equation*}
            2C_{l^*} \le 2(m + n)^{2pa}
            \frac{(2\upperE^{-2})^{N - d + 1} (N - d + 1)^{2d} (8p + 8d - 2)^{2p + 2d - 1}}{(2pa)! \cdot (2d)! \cdot (N - d)!}.
        \end{equation*}
        For each $d$, there are at most $2(p - d)$ pairs $(x, y)$ such that $d = p - (\lfloor \frac{x}{2} \rfloor + y)$, so overall we have the following series of upper bounds for the last double sum in Eq. \eqref{eq:E-power-bound-odd-temp1}:
        \begin{equation}  \label{eq:E-lemma-bound-temp10}
        \begin{aligned}
            % & \E{\left|e^{(\xxx)}_1^T(E^TE)^aE^Tu\right|^{2p}}
            % \\
            & \upperE_2 (m + n)^{2pa} \sum_{d = 0}^{p - 1}
            4(p - d)\|u\|_\infty^{-2(p - d)} \cdot \frac{(2\upperE^{-2})^{N - d + 1} (N - d + 1)^{2d} (8p + 8d - 2)^{2p + 2d - 1}}{(2pa)! \cdot (2d)! \cdot (N - d)!}
            \\
            & \le \upperE_3 (m + n)^{2pa} \sum_{d = 0}^{p - 1}
            \|u\|_\infty^{2d} \cdot \frac{2^{-d}\upperE^{2d} (N - d + 1)^{2d} (8p + 8d - 2)^{2p + 2d - 1}}{(2d)! \cdot (N - d)!},
        \end{aligned}
        \end{equation}
        where
        \begin{equation*}
            \upperE_3 = 4p \frac{\upperE_2\|u\|_\infty^{-2p}(2\upperE^{-2})^{N + 1}}{(2pa)!}
            % = \upperE^{2p(2a + 1)}(2p(a + 1))! (2pa)!\|u\|_\infty^{2p}
            % \frac{\|u\|_\infty^{-2p}(\upperE^2/2)^{- p(2a + 1) - 1}}{(2pa)!}
            = 2^{p(2a + 1) + 3} p \upperE^{-2} (2p(a + 1))!.
        \end{equation*}
        Let us bound the sum at the end of Eq. \eqref{eq:E-lemma-bound-temp10}.
        Temporarily let $A_d$ be the term corresponding to $d$ and $x \defeq 2^{-1/2}\upperE\|u\|_\infty$.
        We have
        \begin{equation*}
        \begin{aligned}
            A_d & = \frac{x^{2d}(N - d + 1)^{2d}}{(2d)!(N - d)!}(8p + 8d - 2)^{2p + 2d - 1}
            \le \frac{x^{2d}N^{3d}}{(2d)!N!}\frac{(16p)^{2p + 2d}}{8p}.
        \end{aligned}
        \end{equation*}
        Therefore
        \begin{equation*}
        \begin{aligned}
            \sum_{d = 0}^{p - 1} A_d
            & \le \frac{(16p)^{2p}}{8p N!} \sum_{d = 0}^{p - 1} \frac{(16pN^{3/2}x)^{2d}}{(2d)!}
            \le \frac{(16p)^{2p}}{8p N!} \sum_{d = 0}^{p - 1} \binom{2p}{2d} (16pN^{3/2}x)^{2d} \frac{e^{2d}}{(2p)^{2d}}
            \\
            & = \frac{(16p)^{2p}}{8p N!} (8eN^{3/2}x + 1)^{2p}
            \le \frac{(16p)^{2p}}{8p N!} (16N^{3/2} \upperE\|u\|_\infty + 1)^{2p}.
        \end{aligned}
        \end{equation*}
        % We use the estimate 
        % We have
        % \begin{equation*}
        % \begin{aligned}
        %     \frac{A_{d + 1}}{A_d}
        %     & = \frac{\upperE^2\|u\|_\infty^2}{2}
        %     \cdot \frac{(N - d)^{2d + 2}}{(N - d + 1)^{2d}}
        %     \cdot \frac{N - d}{2d(2d + 1)}
        %     \cdot \frac{(8p + 8d + 6)^{2p + 2d + 1}}{(8p + 8d - 2)^{2p + 2d - 1}}
        %     \\
        %     & = \frac{\upperE^2\|u\|_\infty^2 (N - d)^3}{4d(2d + 1)}
        %     \left( \frac{N - d}{N - d + 1}
        %     \right)^{2d}
        %     \left( \frac{4p + 4d + 3}{4p + 4d - 1}
        %     \right)^{2p + 2d - 1}
        % \end{aligned}
        % \end{equation*}
        Plugging this into Eq. \eqref{eq:E-lemma-bound-temp10}, we get another upper bound for \eqref{eq:E-power-bound-odd-temp1}:
        \begin{equation*}
        \begin{aligned}
            \upperE_4 (16N^{3/2} \upperE\|u\|_\infty + 1)^{2p} (m + n)^{2ap},
        \end{aligned}
        \end{equation*}
        where
        \begin{equation*}
        \begin{aligned}
            \upperE_4
            & \defeq \upperE_3 \frac{(16p)^{2p}}{8p N!}
            = 2^{p(2a + 1) + 3} p \upperE^{-2} (2p(a + 1))! \frac{(16p)^{2p}}{8p (2ap + p)!}
            \le \frac{2^{2ap} 2^{10p} p^{3p} (a + 1)^p}{8\upperE^2}.
        \end{aligned}
        \end{equation*}
        To sum up, we have
        \begin{equation*}
        \begin{aligned}
            & \E{
            \bigl\|e_{n, 1}^T(E^TE)^aE^T U\bigr\|^{2p}
            }
            % \le \E{
            % \Bigl[ r\sum_{l = 1}^r (e_{n, 1}^T(E^TE)^aE^Tu_l)^2 \Bigr]^p
            % }
            \le
            r^p \sum_{S\in \mathcal{S}} \sum_{P\in \mathcal{P}(S)} u_{\bound{P}} |\E{E_P}|
            \\
            & \quad \quad \le
            \frac{r^p 2^{2ap} 2^{10p} p^{3p} (a + 1)^p}{8\upperE^2}
            (16N^{3/2} \upperE\|u\|_\infty + 1)^{2p} (m + n)^{2ap}
            \\
            & \quad \quad \le
            \left(
                2^5r^{1/2}p^{3/2}\sqrt{2a + 1}
                (2^4p^{3/2}(2a + 1)^{3/2} \upperE\|u\|_\infty + 1)
                \cdot [2(m + n)]^a
            \right)^{2p}.            
            % \\
            % & \quad \quad \le
            % r^{2p} 2^{10p} p^{3p} (a + 1)^p (16p^{3/2})^{2p}
            % ((2a + 1)^{3/2}\upperE\|u\|_\infty + 1)^{2p} [2(m + n)]^{2ap}
            % \\
            % & \quad \quad \le
            % r^{2p} 2^{18p} p^{6p} (a + 1)^p
            % ((2a + 1)^{3/2}\upperE\|u\|_\infty + 1)^{2p} [2(m + n)]^{2ap}.
        \end{aligned}
        \end{equation*}
        Let $D > 0$ be arbitrary.
        By Markov's inequality, for any $p$ such that $m + n \ge 2^8\upperE^2p^6(2a + 1)^4$, the moment bound above applies, so we have
        % \begin{equation*}
        % \begin{aligned}
        %     & \Pr\left(
        %         \sum_{l = 1}^r \bigl|e_{n, 1}^T(E^TE)^aE^Tu_l\bigr|
        %         \ge
        %         Drp^{3/2}\sqrt{2a + 1}(16p^{3/2}(2a + 1)^{3/2}\upperE\|u\|_\infty + 1)[2(m + n)]^a
        %     \right)
        %     \le \left(\frac{2^5}{D}\right)^{-2p}.
        %     \\
        %     % & \le \frac{
        %     %     \E{
        %     %         \Bigl( \sum_{l = 1}^r \bigl|e_{n, 1}^T(E^TE)^aE^Tu_l\bigr| \Bigr)^{2p}
        %     %     }
        %     % }
        %     % {
        %     %     r^{2p} D^{2p} (2a + 1)^p ((2a + 1)^{3/2}\upperE\|u\|_\infty + 1)^{2p}
        %     %     \cdot [2(m + n)]^{2ap}
        %     % }
        %     % \le \left( \frac{512p^3}{D} \right)^{2p}.
        % \end{aligned}
        % \end{equation*}
        \begin{equation*}
            % \sum_{l = 1}^r \bigl|e_{n, 1}^T(E^TE)^aE^Tu_l\bigr|
            \bigl\|e_{n, 1}^T(E^TE)^aE^T U\bigr\|
            \le
            Dr^{1/2}p^{3/2}\sqrt{2a + 1}(16p^{3/2}(2a + 1)^{3/2}\upperE\|u\|_\infty + 1)[2(m + n)]^a
        \end{equation*}
        with probability at least $1 - (2^5/D)^{2p}$.
        Replacing $\|u\|_\infty$ with $\frac{1}{\sqrt{r}}\|U\|_{2, \infty}$, we complete the proof.
    \end{proof}

\subsubsection{Case 2: even powers}

\begin{proof}
    Without loss of generality, assume $k = 1$.
    We can reuse the first part and the notations from the proof of Lemma \ref{lem:E-power-bound-odd} to get the bound
    \begin{equation*}
        % \E{\Bigl(\sum_{l = 1}^r |e_{n, 1}^T(E^TE)^av_l|\Bigr)^{2p}}
        \E{\left\|e_{n, 1}^T(E^TE)^a V\right\|^{2p}}
        \le r^p % r^{2p}
        \sum_{S\in \mathcal{S}} \sum_{P\in \mathcal{P}(S)} v_{\bound{P}} |\E{E_P}|,
    \end{equation*}
    where $v_i = r^{-1/2}\|V_{\cdot, i}\|$.
    Again,
    \begin{equation*}
        \|v\| = 1 \ \text{ and } \|v\|_\infty = r^{-1/2}\|V\|_{2, \infty},
    \end{equation*}
    and $\mathcal{S}$ is the set of shapes such that every edge appears at least twice, $\mathcal{P}(S)$ is the set of stars having shape $S$, and
    \begin{equation*}
        E_P = \prod_{ij\in E(P)}E_{ij}^{m_P(ij)},
        \ \text{ and } \
        v_Q = \prod_{j\in V(Q)}v_j^{m_Q(j)}.
    \end{equation*}
    Note that a shape for a star now consists of walks of length $2a$:
    \begin{equation*}
        S = (S_1, S_2, \ldots, S_{2p}) \ \text{    where } S_r = j_{r0}i_{r0}j_{r1}i_{r1}\ldots j_{ra}.
    \end{equation*}
    We have, for any shape $S$ and $P\in \mathcal{P}(S)$,
    \begin{equation*}
        \E{E_P} \le \upperE^{4pa - 2|E(S)|} \le \upperE^{2pa - 2|V(S)| + 2},
        \quad
        |v_{\bound{P}}| \le \|v\|_\infty^{2p},
        \ \text{ and }
        |\mathcal{P}(S)| \le m^{|V_I(S)|}n^{|V_J(S)| - 1},
    \end{equation*}
    where the power of $n$ in the last inequality is due to $1$ having been fixed in $V_J(S)$. Therefore
    \begin{equation*}
        \sum_{S\in \mathcal{S}} \sum_{P\in \mathcal{P}(S)} v_{\bound{P}} |\E{E_P}|
        \le \upperE_1 \sum_{S\in \mathcal{S}} \upperE^{-2|V(S)|} m^{|V_I(S)|}n^{|V_J(S)| - 1},
        \ \text{ where } \upperE_1 \defeq \upperE^{4pa + 2}\|v\|_\infty^{2p}.
    \end{equation*}
    Let $\mathcal{S}(z, t)$ be the set of shapes $S$ such that $|V_I(S)| = z$ and $|V_J(S)| = t$.
    Let $\mathcal{A}$ be the set of eligible indices:
    \begin{equation*}
        \mathcal{A} \defeq \Bigl\{
            (z, t)\in \N^2: 0\le z, \ 1\le t, \ \text{ and } z + t \le 2pa + 1
        \Bigr\}.
    \end{equation*}
    Using the previous argument in the proof of Lemma \ref{lem:E-power-bound-odd} for counting shapes, we have for $(z, t)\in \mathcal{A}$:
    \begin{equation*}
        |\mathcal{S}(z, t)|
        \le \frac{[(2pa)!]^2 F_l}{z! \cdot (t - 1)!}
        m^z n^{t - 1},
        \ \text{ where } l \defeq z + t - 1 \in [2pa],
    \end{equation*}
    where
    \begin{equation*}
        G_l \defeq 4ap - 2l
        \ \text{ and } \
        F_l \defeq \frac{2^{l + 1}(l + 1)^{G_l}}{G_l!l!}
        (4G_l + 8p - 2)^{G_l + 2p - 1}.
    \end{equation*}
    We have
    \begin{equation*}
    \begin{aligned}
        & \sum_{S\in \mathcal{S}} \sum_{P\in \mathcal{P}(S)} v_{\bound{P}} |\E{E_P}|
        \le \upperE_1 \sum_{l = 0}^{2ap} \upperE^{-2(l + 1)} [(2ap)!]^2 F_l \sum_{z + t = l + 1}
        \frac{m^z n^{t - 1}}{z! \cdot (t - 1)!}
        \\
        & = \upperE_2 \sum_{l = 0}^{2ap} \frac{\upperE^{-2l} F_l}{l!} \sum_{z + t = l + 1} \binom{l}{z} m^z n^{t - 1}
        = \upperE_2 \sum_{l = 0}^{2ap} \frac{\upperE^{-2l} F_l}{l!} (m + n)^l,
    \end{aligned}
    \end{equation*}
    where $\upperE_2 \defeq \upperE_1[(2pa)!]^2 \upperE^{-2} = \upperE^{4ap}[(2pa)!]^2 \|v\|_\infty^{2p}$.
    Let $C_l$ be the term corresponding to $l$ in the last sum above.
    An analogous calculation from the proof of Lemma \ref{lem:E-power-bound-odd} shows that
    under the assumption that $m + n \ge 256\upperE^2p^6(2a)^4$, $C_l \ge 2C_{l - 1}$ for each $l$, so $\sum_{l = 0}^{2pa} C_l \le 2C_{2pa}$, where
    \begin{equation*}
        C_{2pa} = \frac{\upperE^{-4ap} 2^{2ap + 1} (8p - 2)^{2p - 1}}{[(2ap)!]^2} (m + n)^{2ap}.
    \end{equation*}
    Therefore
    \begin{equation*}
    \begin{aligned}
        % & \E{\Bigl(\sum_{l = 1}^r |e_{n, 1}^T(E^TE)^av_l|\Bigr)^{2p}}
        & \E{\left\|e_{n, 1}^T(E^TE)^a V\right\|^{2p}}
        \le
        r^p \sum_{S\in \mathcal{S}} \sum_{P\in \mathcal{P}(S)} v_{\bound{P}} |\E{E_P}|
        \\
        & \le 2r^p \upperE_2 \frac{\upperE^{-4ap} 2^{2ap + 1} (8p - 2)^{2p - 1}}{[(2ap)!]^2} (m + n)^{2ap}
        = 4\left(
            2^3pr^{1/2} \|v\|_\infty [2(m + n)]^a
        \right)^{2p}.
        % = r^{2p} 2^{6p + 2} p^{2p} \|v\|_\infty^{2p} [2(m + n)]^{2ap}.
    \end{aligned}
    \end{equation*}
    Pick $D > 0$, by Markov's inequality, we have
    \begin{equation*}
        \Pr\left(
            % \sum_{l = 1}^r |e_{n, 1}^T(E^TE)^av_l|
            \left\|e_{n, 1}^T(E^TE)^a V\right\|
            \ge Dpr^{1/2}\|v\|_\infty [2(m + n)]^a
        \right)
        % \le \frac{
        %     \displaystyle \E{\Bigl(\sum_{l = 1}^r |e_{n, 1}^T(E^TE)^av_l|\Bigr)^{2p}}
        % }{
        %     r^{2p}D^{2p}\|v\|_\infty^{2p}[2(m + n)]^{2ap}
        % }
        \le \left( \frac{16p}{D}
        \right)^{2p}.
    \end{equation*}
    Replacing $\|v\|_\infty$ with $r^{-1/2}\|V\|_{2, \infty}$, we complete the proof.
\end{proof}

\bibliographystyle{amsplain}
\bibliography{references}

\end{document}